\DeclareSymbolFont{AMSb}{U}{msb}{m}{n}
\documentclass[11pt,a4paper,leqno,noamsfonts]{amsart}
\linespread{1.15}
   \makeatletter
   \renewcommand\@biblabel[1]{#1.}
   \makeatother
\usepackage[english]{babel}
\usepackage[dvipsnames]{xcolor}
\usepackage{graphicx,pifont,soul,physics} 
\usepackage[utopia]{mathdesign}
\usepackage[a4paper,top=3cm,bottom=3cm,
           left=3cm,right=3cm,marginparwidth=60pt]{geometry}
\usepackage[utf8]{inputenc}
\usepackage{braket,caption,comment,mathtools,stmaryrd,ytableau}
\usepackage[usestackEOL]{stackengine}
\usepackage{multirow,booktabs,microtype,relsize}
\usepackage[colorlinks,bookmarks]{hyperref} %
      \hypersetup{colorlinks,%
            citecolor=britishracinggreen,%
            filecolor=black,%
            linkcolor=cobalt,%
            urlcolor=cornellred}
      \setcounter{tocdepth}{1}
      \setcounter{section}{-1}
      \numberwithin{equation}{section}
\usepackage[capitalise]{cleveref}
\usepackage[colorinlistoftodos]{todonotes}
\definecolor{antiquewhite}{rgb}{0.98, 0.92, 0.84}
\definecolor{buff}{rgb}{0.94, 0.86, 0.51}
\definecolor{palecopper}{rgb}{0.85, 0.54, 0.4}
\definecolor{fluorescentyellow}{rgb}{0.8, 1.0, 0.0}
\definecolor{bole}{rgb}{0.47, 0.27, 0.23}


\usepackage{amsmath,float}
\usetikzlibrary{decorations.markings}

\definecolor{cornellred}{rgb}{0.7, 0.11, 0.11}
\definecolor{britishracinggreen}{rgb}{0.0, 0.26, 0.15}
\definecolor{cobalt}{rgb}{0.0, 0.28, 0.67}
\DeclareSymbolFont{usualmathcal}{OMS}{cmsy}{m}{n}
\DeclareSymbolFontAlphabet{\mathcal}{usualmathcal}

\newcommand{\bfk}{\mathbf{k}}

\newcommand{\BA}{{\mathbb{A}}}

\newcommand{\BC}{{\mathbb{C}}}

\newcommand{\BN}{{\mathbb{N}}}

\newcommand{\BP}{{\mathbb{P}}}
\newcommand{\BQ}{{\mathbb{Q}}}
\newcommand{\BR}{{\mathbb{R}}}
\newcommand{\BS}{{\mathbb{S}}}

\newcommand{\BZ}{{\mathbb{Z}}}

\newcommand{\FC}{{\mathfrak{C}}}

\newcommand{\FN}{{\mathfrak{N}}}

\newcommand{\ord}{\mathrm{ord}}

\newcommand{\simto}{\,\widetilde{\to}\,}

\newcommand{\MF}{{\mathsf{MF}}}

\DeclareMathOperator{\Hilb}{Hilb}
\DeclareMathOperator{\Eu}{Eu}
\DeclareMathOperator{\im}{image}
\DeclareMathOperator{\Con}{Con}

\DeclareMathOperator{\Quot}{Quot}

\DeclareMathOperator{\red}{red}



\DeclareMathOperator{\vir}{\mathrm{vir}}

\DeclareMathOperator{\GL}{GL}

\DeclareMathOperator{\length}{length}


\newcommand{\into}{\hookrightarrow}
\newcommand{\onto}{\twoheadrightarrow}

\DeclareFontFamily{OT1}{rsfs}{}
\DeclareFontShape{OT1}{rsfs}{n}{it}{<-> rsfs10}{}
\DeclareMathAlphabet{\curly}{OT1}{rsfs}{n}{it}
\renewcommand\hom{\mathscr{H}\kern-0.3em\mathit{om}}

\DeclareMathOperator{\lHom}{\mathscr{H}\kern-0.3em\mathit{om}}
\DeclareMathOperator{\RRlHom}{\mathbf{R}\kern-0.025em\mathscr{H}\kern-0.3em\mathit{om}}

\DeclareMathOperator{\lExt}{{\mathscr{E}\kern-0.2em\mathit{xt}}}

\newcommand\Spec{\operatorname{Spec}}
\newcommand\Supp{\operatorname{Supp}}
\newcommand\Proj{\operatorname{Proj}}
\newcommand\Bl{\operatorname{Bl}}

\newcommand\id{\operatorname{id}}

\newcommand{\HH}{\mathrm{H}}
\newcommand{\OO}{\mathscr O}
\newcommand{\OI}{\mathscr I}
\newcommand{\OK}{\mathscr K}

\makeatletter

\makeatother

\usepackage[all]{xy}
\usepackage{tikz}
\usepackage{tikz-cd}
\usepackage{adjustbox}
\usepackage{rotating}
\usepackage{comment}

\usetikzlibrary{matrix,shapes,intersections,arrows,decorations.pathmorphing}
\tikzset{commutative diagrams/arrow style=math font}
\tikzset{commutative diagrams/.cd,
mysymbol/.style={start anchor=center,end anchor=center,draw=none}}

\tikzset{
shift up/.style={
to path={([yshift=#1]\tikztostart.east) -- ([yshift=#1]\tikztotarget.west) \tikztonodes}
}
}


\theoremstyle{definition}

\newtheorem*{lemma*}{Lemma}
\newtheorem*{theorem*}{Theorem}
\newtheorem*{example*}{Example}
\newtheorem*{fact*}{Fact}
\newtheorem*{notation*}{Notation}
\newtheorem*{definition*}{Definition}
\newtheorem*{prop*}{Proposition}
\newtheorem*{remark*}{Remark}
\newtheorem*{corollary*}{Corollary}

\newtheorem*{conventions*}{Conventions}

\newtheorem{definition}{Definition}[section]

\newtheorem{example}[definition]{Example}

\newtheorem{conventions}[definition]{Conventions}

\newtheorem{openproblem}[definition]{Open Problem}
\newtheorem{notation}[definition]{Notation}
\newtheorem{remark}[definition]{Remark}

\newtheoremstyle{thm} 
        {3mm}
        {3mm}
        {\slshape}
        {0mm}
        {\bfseries}
        {.}
        {1mm}
        {}
\theoremstyle{thm}
\newtheorem{theorem}[definition]{Theorem}
\newtheorem{corollary}[definition]{Corollary}
\newtheorem{lemma}[definition]{Lemma}
\newtheorem{prop}[definition]{Proposition}
\newtheorem{thm}{Theorem}

\newtheoremstyle{ex} 
        {3mm}
        {3mm}
        {}
        {0mm}
        {\scshape}
        {.}
        {1mm}
        {}
\theoremstyle{ex}

\newtheoremstyle{sol} 
        {3mm}
        {3mm}
        {}
        {0mm}
        {\scshape}
        {.}
        {1mm}
        {}
\theoremstyle{sol}

\newtheorem*{Acknowledgments*}{Acknowledgments}




\newcommand{\nn}{{\mathfrak{n}}}
\newcommand{\mm}{{\mathfrak{m}}}
\newcommand{\Calt}{{\mathcal{T}}}

\newcommand{\Cald}{{\mathcal{D}}}

\newcommand{\sfrac}[2]{
	{\raise0.8ex\hbox{$#1$} \!\mathord{\left/
			{\vphantom {#1 #2}}\right.\kern-\nulldelimiterspace}
		\!\lower0.8ex\hbox{$#2$}}
}

\DeclareMathOperator{\conv}{Conv}

\DeclareMathOperator{\exc}{Exc}

\DeclareMathOperator{\mult}{mult}

\DeclareMathOperator{\SL}{SL}

\usetikzlibrary{patterns}
\DeclareMathAlphabet\BCal{OMS}{cmsy}{b}{n}

\title[On the Behrend function and the blowup of some fat points]{On the Behrend function \\ and the blowup of some fat points}
\author{Michele Graffeo \and Andrea T. Ricolfi}

\begin{document}
\maketitle
\begin{abstract}
The \emph{Behrend function} of a $\BC$-scheme $X$ is a constructible function $\nu_X\colon X(\BC) \to \BZ$ introduced by Behrend, intrinsic to the scheme structure of $X$. It is a (subtle) invariant of singularities of $X$, playing a prominent role in enumerative geometry. To date, only a handful of general properties of the Behrend function are known. In this paper, we compute it for a large class of \emph{fat points} (schemes supported at a single point). We first observe that, if $X \into \BA^N$ is a fat point, $\nu_X$ is the sum of the multiplicities of the irreducible components of the exceptional divisor $E_{X}\BA^N$ in the blowup $\Bl_{X}\BA^N$. Moreover, we prove that $\nu_X$ can be computed explicitly through the normalisation of $\Bl_{X}\BA^N$. 

The proofs of our explicit formulas for the Behrend function of a fat point in $\BA^2$ rely heavily on toric geometry techniques.
Along the way, we find a formula for the number of irreducible components of $E_{X}\BA^2$, where $X \into \BA^2$ is a fat point such that $\Bl_{X}\BA^2$ is normal.
\end{abstract}

{\hypersetup{linkcolor=black}
\tableofcontents}

\section{Introduction}

\subsection{Overview and goal of the paper}\label{intro-1}
Let $X$ be a scheme of finite type over the field $\BC$. One of the (arguably few) intrinsic geometric objects attached to $X$ is a certain cone stack $\FC_X \to X$, called the \emph{intrinsic normal cone}, constructed by  Behrend--Fantechi in their seminal work \cite{BFinc}. This construction has been a breakthrough in enumerative geometry, for it opened the way to a rigorous definition of a cascade of invariants that have been of central importance in algebraic geometry ever since: Gromov--Witten invariants, Donaldson--Thomas invariants, stable pair invariants to name a few.

A few years after the intrinsic normal cone was born, Behrend proved that any scheme $X$ of finite type over $\BC$ carries a canonical constructible function
\[
\nu_X \colon X(\BC) \to \BZ,
\]
defined (cf.~\Cref{sec:behrend}) as the local Euler obstruction of a canonical cycle $\mathfrak c_X \in Z_\ast(X)$ called the \emph{signed support of the intrinsic normal cone} \cite{Beh}. This function is universally referred to as the \emph{Behrend function}, and it has the following remarkable property: whenever $X$ is proper and carries a perfect symmetric obstruction theory (in the sense of Behrend--Fantechi \cite{BFHilb}), the degree of the virtual fundamental class $[X]^{\vir} \in A_0(X)$ attached to the obstruction theory agrees with the weighted Euler characteristic of $X$,
\[
\chi(X,\nu_X) = \sum_{n \in \BZ} n \chi(\nu_X^{-1}(n)),
\]
the `weight' being precisely the Behrend function \cite{Beh}. This result allowed algebraic geometers to compute a huge number of enumerative invariants, previously inaccessible, attached to moduli spaces $X$ satisfying the assumptions of Behrend's theorem. See e.g.~\cite{CohDT,PT13half} for some background and references related to this subject.

Of course, knowing the precise values of the Behrend function is more refined information than knowing just the weighted Euler characteristic. Unfortunately, the Behrend function is quite elusive. We refer the reader to the original papers \cite{Beh,BFHilb} for its main properties, some of which are recalled in \Cref{sec:behrend}. First of all, it is an \emph{invariant of singularities}, in the sense that it pulls back along \'etale maps; in particular, it is sensitive to the scheme structure. It satisfies $\nu_X(p) = (-1)^{\dim_p X}$ if $p$ is a smooth point of $X$. When $X$ is a critical locus, i.e.~a scheme of the form $V(\dd f) \subset U$, for some regular function $f$ on a smooth scheme $U$, the function $\nu_X$ agrees with the \emph{Milnor function} of $(U,f)$. See \Cref{ex:milnor} for more details. Not much more is known about the Behrend function in general. See \Cref{open-problem-DT} for a hard open problem in Donaldson--Thomas theory, related to the Behrend function and also to the geometry of Quot schemes.

\smallbreak
The purpose of this paper is to develop an effective technique to compute the Behrend function of a large class of \emph{fat points}, namely $0$-dimensional schemes $X$ that topologically consist of just one point. We mostly focus on the case where $X$ sits inside the affine plane $\BA^2$, i.e.~it has embedding dimension $1$ or $2$. Such planar situation, arguably the easiest one, already presents (interesting) technical difficulties, confirming that the Behrend function is a subtle invariant of a scheme, no matter how many points the scheme has! For instance, it is easy to observe that
\[
\nu_X = \length(X)
\]
in many \emph{special} cases (see \Cref{sec:first-examples-nu} for several examples), but in general the length of the fat point, namely the number $\length(X)=\chi(\OO_X)$, is not equal to $\nu_X$, and is neither a lower bound nor an upper bound for $\nu_X$. As an example (cf.~\Cref{powermax} for more details), consider the ideal $\mm = (x,y) \subset \BC[x,y]$. Then, if $X_d = \Spec \BC[x,y]/\mm^d$ for $d>1$, one has
\[
\nu_{X_d} = d < \frac{d(d+1)}{2} = \length(X_d).
\]
We anticipate some of the main results proved in this paper in \Cref{sec:main-results}. 
Our main technique is a fine analysis of the multiplicities of the components of the exceptional divisor of the blowup $\Bl_X\BA^N$, where $X \into \BA^N$ is a given fat point. These multiplicities add up to $\nu_X$ by \Cref{lemma:multiplicities_blowup}.

\subsection{Main results}\label{sec:main-results}
Throughout the paper, $\mm = (x,y) \subset \BC[x,y]$ denotes the ideal of the origin $0 \in \BA^2$. All ideals in $\BC[x,y]$ are assumed to be of finite colength, where the \emph{colength} of an ideal $I \subset \BC[x,y]$ is defined to be the $\BC$-vector space dimension of the quotient, namely $\dim_{\BC}\BC[x,y]/I$.

The following result is obtained combining toric geometry techniques with a deep analysis of the blowups of $\BA^2$ along the given ideals.

\begin{thm}[Theorems \ref{TEOREMA1} and \ref{thm:general-tower-nu}]\label{thm:intro1}
Let $1\leq i_1<\cdots < i_s$ be a strictly increasing sequence of positive integers. Consider the ideal $K = \prod_{1\leq k\leq s} (x+f(y)) + \mm^{i_k} \subset \BC[x,y]$, where $f(y) \in \BC[y]$ has degree smaller that $i_s$. Then $K$ has colength $\sum_{1\leq k\leq s}\sum_{1\leq j\leq k}i_j$, and its Behrend number is 
\[
\nu_{\BC[x,y]/K}=\sum_{k=1}^s\sum_{j=1}^ki_j+\underset{j=1}{\overset{s-1}{\sum}}i_j(s-j).
\]
In particular, when $i_k = k$ for all $k = 1,\ldots,s$, the ideal $K_s = \prod_{1\leq k\leq s}(x+f(y)) + \mm^k$ has colength $\binom{s+2}{3}$, and its Behrend number is
\[
\nu_{\BC[x,y]/K_s} = \frac{s(s+1)(2s+1)}{6}.
\]
\end{thm}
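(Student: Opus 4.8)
The plan is to reduce to a monomial ideal and then read both invariants off the toric geometry of the blowup, via \Cref{lemma:multiplicities_blowup}. The factor $\mm^{i_k}$ forces the fat point to be supported at the origin, so $f(0)=0$; then the triangular map $x\mapsto x-f(y)$ is an automorphism of $\BA^2$ fixing $\mm$, and it carries each factor $(x+f(y))+\mm^{i_k}$ to $(x,y^{i_k})$, independently of the harmless normalisation $\deg f<i_s$. As the colength and the Behrend number are invariant under automorphisms of $\BA^2$, I may assume $f=0$ and work with the monomial ideal $\prod_{k=1}^s(x,y^{i_k})$. Expanding the product, its generators are the monomials obtained by selecting $x$ from $j$ of the factors and $y^{i_k}$ from the rest; for fixed $j$ the smallest $y$-exponent comes from pairing the $y$'s with the $s-j$ smallest exponents, so
\[
K=\bigl(x^{j}y^{e_j}\ :\ 0\le j\le s\bigr),\qquad e_j=\sum_{l=1}^{s-j}i_l,
\]
a strictly decreasing staircase since $e_{j+1}-e_j=-i_{s-j}$.

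First I would count standard monomials. Because $e_j$ is strictly decreasing, $x^ay^b\in K$ exactly when $b\ge e_a$ for $a\le s$, and always when $a\ge s$; hence there are $e_a$ standard monomials for each $a\in\{0,\dots,s-1\}$ and none otherwise. Therefore
\[
\length\bigl(\BC[x,y]/K\bigr)=\sum_{a=0}^{s-1}e_a=\sum_{a=0}^{s-1}\sum_{l=1}^{s-a}i_l=\sum_{k=1}^{s}\sum_{j=1}^{k}i_j,
\]
the stated colength, where the last equality is the substitution $k=s-a$. For $i_k=k$ this equals $\sum_{k=1}^{s}\binom{k+1}{2}=\binom{s+2}{3}$.

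For the Behrend number I would invoke \Cref{lemma:multiplicities_blowup}, together with the paper's reduction of the computation to the normalisation of $\Bl_K\BA^2$. For the monomial ideal $K$ this normalisation is the toric surface of the normal fan of the Newton polygon of $K$; its exceptional components are in bijection with the bounded edges, and the component attached to an edge $\sigma$ appears in the exceptional divisor with multiplicity $\min_{x^ay^b\in K}\langle v_\sigma,(a,b)\rangle$, where $v_\sigma$ is the primitive inner normal. The edge from $(j,e_j)$ to $(j+1,e_{j+1})$ has direction $(1,-i_{s-j})$, hence $v_{\sigma_j}=(i_{s-j},1)$; the slopes $-i_s<\dots<-i_1$ are pairwise distinct, so there are exactly $s$ bounded edges. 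Evaluating the normal at the vertex $(j,e_j)$ gives multiplicity $j\,i_{s-j}+e_j$, whence
\[
\nu_{\BC[x,y]/K}=\sum_{j=0}^{s-1}\Bigl(j\,i_{s-j}+\sum_{l=1}^{s-j}i_l\Bigr)=\sum_{k=1}^{s}\sum_{j=1}^{k}i_j+\sum_{j=1}^{s-1}i_j(s-j)
\]
after substituting $k=s-j$ in each sum, which is the claimed formula.

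Finally, for $i_k=k$ the first term is $\binom{s+2}{3}=\tfrac{s(s+1)(s+2)}{6}$ and the second is $\sum_{j=1}^{s-1}j(s-j)=\tfrac{(s-1)s(s+1)}{6}$; their sum is $\tfrac{s(s+1)}{6}\bigl((s+2)+(s-1)\bigr)=\tfrac{s(s+1)(2s+1)}{6}$, as asserted. The expansion of $K$ and the two summation identities are routine; the real content is the dictionary between the intrinsic Behrend function and the toric multiplicities. Here lies the \emph{main obstacle}: $\Bl_K\BA^2$ is in general non-normal, and its toric normalisation is a singular surface (with cyclic quotient singularities along the cones of determinant $i_{s-j}-i_{s-j-1}$, which exceed $1$ whenever the sequence has a gap), so the multiplicities cannot be read off the blowup directly. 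The crux is to justify, via the normalisation result, that the weighted count $\sum_\sigma m_\sigma$ computed on the normalisation equals $\nu_{\BC[x,y]/K}$, and that the singular cones produce only isolated surface singularities rather than additional components of the exceptional divisor.
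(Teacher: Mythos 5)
Your reduction to the monomial tower, the explicit generators $x^jy^{e_j}$, the colength count, and the Newton--polygon arithmetic for the multiplicities are all correct, and they do reproduce the stated formulas. But the step you yourself call ``the real content'' --- that the Behrend number equals the sum $\sum_\sigma m_\sigma$ of toric multiplicities read off the normal fan --- is exactly what is never proved, and your diagnosis of why it is delicate is backwards. You assert that $\Bl_K\BA^2$ is ``in general non-normal'' and that one must therefore work on its toric normalisation. If that were so, your computation would be incomplete even on its own terms: the normalisation formula of the paper (\Cref{thm:formula:monomial}) gives $\nu = \sum_{i,j}d_{ij}e_{ij}$, so besides the multiplicities $e_{ij}$ upstairs you would also need the degrees $d_{ij}$ of the maps from the exceptional components of the normalisation onto those of $\Bl_K\BA^2$, together with the matching of components; you compute none of these, so summing the $m_\sigma$ alone is unjustified.

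In fact the obstacle you worry about is absent: every tower is a \emph{normal} ideal. This is \Cref{lemma:normality-towers} of the paper, and in your notation it follows at once from \Cref{villareal}: the vertices $(j,e_j)$ of the staircase have consecutive integer abscissae and the drops $i_{s-j}$ are strictly decreasing, so the staircase is convex and $A_K=Q_K\cap\BN^2$. Granting this, $\Bl_K\BA^2$ is itself the toric surface of the normal fan of $Q_K$; its exceptional prime divisors correspond to the inner normals $v_{\sigma_j}=(i_{s-j},1)$ of the $s$ bounded edges; its singularities are isolated torus-fixed points of type $A_{i_{j+1}-i_j-1}$, hence away from the generic points of the exceptional curves; and \Cref{lemma:multiplicities_blowup} then gives $\nu_{\BC[x,y]/K}=\sum_\sigma m_\sigma$ with $m_\sigma=\min_{(a,b)\in A_K}\langle v_\sigma,(a,b)\rangle$, which is your computation. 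With that one lemma inserted your argument closes, and it is a genuinely different and more uniform route than the paper's: the paper treats complete towers by realising $\Bl_{K_s}\BA^2$ as an iterated blowup of reduced points (\Cref{toricver}) and computing multiplicities chart by chart (\Cref{TEOREMA1}), then deduces the general case by contracting curves (\Cref{blownoncompl}, \Cref{thm:general-tower-nu}), whereas your normal-fan computation handles both cases simultaneously. As submitted, however, the proposal has a genuine gap at its central step.
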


Ideals as in the statement of \Cref{thm:intro1} are called \emph{towers} in our paper (cf.~\Cref{def:tower}), and they are called \emph{complete towers} when $i_k=k$ for all $k$. In \Cref{TORIC PROD TOWERS} we give a formula for the Behrend number of a product of two complete towers; in \Cref{sec:Algorithm} we examine the case of arbitrary finite products of towers, and we present an algorithm to compute the Behrend number also in this case.

\Cref{thm:intro1} covers a large class of ideals, including \emph{some} monomial ideals. We now present a few more explicit formulas. 

If $I \subset \BC[x,y]$ is \emph{normal} (which means that $\Bl_I\BA^2$ is normal, cf.~\Cref{sec:normal-blowups}), then, by \Cref{normal-factorisation}, it factors uniquely as a product 
\begin{equation}\label{eqn:factorisation}
I = \prod_{k=1}^t \mathfrak n_{\alpha_k,\beta_k}^{\delta_k},
\end{equation}
where $\mathfrak n_{\alpha,\beta}\subset \BC[x,y]$ denotes the normalisation of the ideal $(x^\alpha,y^\beta)$ and $\gcd(\alpha_k,\beta_k)=1$ for all $k=1,\ldots,t$. Thanks to the following explicit result, we know the Behrend number of $\mathfrak n_{\alpha,\beta}$.

\begin{thm}[\Cref{Behrenormalisation}]\label{thm:intro2}
Let $\alpha,\beta>0$ be two positive integers, and let $\mathfrak n_{\alpha,\beta}\subset \BC[x,y]$ be the normalisation of the ideal $(x^\alpha,y^\beta)$. Then,
\[
\nu_{\BC[x,y]/\nn_{\alpha,\beta}}=\frac{\alpha\cdot\beta}{\gcd(\alpha,\beta)}.
\]
\end{thm}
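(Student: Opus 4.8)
The plan is to reduce the whole computation to a single multiplicity via \Cref{lemma:multiplicities_blowup}, and then read that multiplicity off the toric description of the (normal) blowup $\Bl_{\nn_{\alpha,\beta}}\BA^2$. Since $\nn_{\alpha,\beta}$ is by construction the integral closure of the monomial ideal $(x^\alpha,y^\beta)$, the ideal $\nn_{\alpha,\beta}$ is normal and $\Bl_{\nn_{\alpha,\beta}}\BA^2$ is the normalised blowup of $(x^\alpha,y^\beta)$; being the blowup of $\BA^2$ along a monomial ideal, it is a toric surface. By \Cref{lemma:multiplicities_blowup} the Behrend number $\nu_{\BC[x,y]/\nn_{\alpha,\beta}}$ equals the sum of the multiplicities of the irreducible components of the exceptional divisor $E_{\nn_{\alpha,\beta}}\BA^2$, so everything comes down to identifying these components and their multiplicities inside the toric surface.

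First I would set up the toric data. Write $d=\gcd(\alpha,\beta)$ and $\alpha=da$, $\beta=db$, so that $\gcd(a,b)=1$. The Newton polyhedron of $(x^\alpha,y^\beta)$ is $P=\conv\{(\alpha,0),(0,\beta)\}+\BR^2_{\geq0}$, whose unique compact edge joins $(\alpha,0)$ to $(0,\beta)$ and has primitive inner normal $(b,a)$. Consequently the normal fan of $P$ refines the first-quadrant cone $\langle(1,0),(0,1)\rangle$ by inserting the single ray $\rho$ spanned by $(b,a)$. The corresponding toric surface is exactly $\Bl_{\nn_{\alpha,\beta}}\BA^2$, and its unique exceptional curve is the torus-invariant divisor $D_\rho$: the two boundary rays $(1,0)$ and $(0,1)$ give the strict transforms of the coordinate axes, which are not contracted to the origin, whereas $D_\rho$ is. In particular $E_{\nn_{\alpha,\beta}}\BA^2$ is irreducible, and the whole problem is to compute its multiplicity.

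Finally I would compute this multiplicity as the order of the ideal along $\rho$. Since $\OO(-E_{\nn_{\alpha,\beta}}\BA^2)=\nn_{\alpha,\beta}\cdot\OO$, the coefficient of $D_\rho$ equals the order of $(x^\alpha,y^\beta)$ (equivalently of $\nn_{\alpha,\beta}$, as they share the polyhedron $P$) along $D_\rho$, namely $\min\{\langle(b,a),(\alpha,0)\rangle,\langle(b,a),(0,\beta)\rangle\}=\min\{b\alpha,a\beta\}$. As $b\alpha=a\beta=dab=\alpha\beta/d$, the minimum equals $\alpha\beta/\gcd(\alpha,\beta)$, and this single multiplicity is the Behrend number. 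As a check, the boundary rays contribute $0$: along $(1,0)$ the order is $\min\{\alpha,0\}=0$ because $y^\beta$ does not vanish on $\{x=0\}$, and symmetrically for $(0,1)$, confirming $E_{\nn_{\alpha,\beta}}\BA^2=\tfrac{\alpha\beta}{d}\,D_\rho$.

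The main obstacle I anticipate is the bookkeeping in the middle step: rigorously identifying $\Bl_{\nn_{\alpha,\beta}}\BA^2$ with the toric surface attached to the normal fan of $P$ — that is, that passing to the integral closure $\nn_{\alpha,\beta}$ is precisely what normalises the monomial blowup — and then matching the abstract coefficient of $D_\rho$ in the exceptional Cartier divisor with the combinatorial order $\min\{b\alpha,a\beta\}$ under the correct orientation conventions for rays, characters and orders of vanishing. Once these identifications are pinned down, the arithmetic $b\alpha=a\beta=\alpha\beta/\gcd(\alpha,\beta)$ closes the proof immediately.
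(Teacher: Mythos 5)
Your proof is correct, but it follows a genuinely different route from the paper's. The paper proves \Cref{Behrenormalisation} in two steps: first, assuming $\gcd(\alpha,\beta)=1$, it uses Euclid's algorithm to pick $h,k$ with $k\beta-h\alpha=1$ and passes to the finer blowup $\Bl_{\nn_{\alpha,\beta}\cdot\,\nn_{k,h}}\BA^2$, which by \Cref{toricnormal} contains a \emph{smooth} affine toric chart $U\cong\BA^2$ in which the strict transform of $\exc(\Bl_{\nn_{\alpha,\beta}}\BA^2)$ is a coordinate axis; the multiplicity $\alpha\beta$ is then read off an explicit chart computation and transported back along the contraction $\Bl_{\nn_{\alpha,\beta}\cdot\,\nn_{k,h}}\BA^2\to\Bl_{\nn_{\alpha,\beta}}\BA^2$, which is an isomorphism near the generic point of the exceptional curve. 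Second, the case $\gcd(\alpha,\beta)=\delta>1$ is reduced to the coprime one via $\nn_{\alpha,\beta}=\nn_{\alpha/\delta,\beta/\delta}^{\delta}$ (\Cref{powernorm}) and the scaling identity $\nu_{A/I^{\delta}}=\delta\cdot\nu_{A/I}$ (\Cref{blowpowers}). You instead work uniformly, with no case split and no auxiliary blowup, directly on the normal surface $\Bl_{\nn_{\alpha,\beta}}\BA^2$, identified as the toric surface of the normal fan of the Newton polyhedron, i.e.\ the fan with rays $e_1$, $be_1+ae_2$, $e_2$ where $\alpha=da$, $\beta=db$, $d=\gcd(\alpha,\beta)$ (this identification is exactly \Cref{missedlemma}), and you evaluate the multiplicity of the unique exceptional component $D_\rho$ as the monomial valuation
\[
\min\left\{\left\langle (b,a),(\alpha,0)\right\rangle,\ \left\langle (b,a),(0,\beta)\right\rangle\right\}=b\alpha=a\beta=\frac{\alpha\beta}{d}.
\]
This is legitimate: since the surface is normal, $\OO_{X,D_\rho}$ is a DVR whose valuation on a monomial $x^m$ is $\langle m,u_\rho\rangle$ with $u_\rho$ the \emph{primitive} ray generator, and this standard toric fact is precisely what neutralises the pitfall the paper warns about in \Cref{sec:Algorithm} (the example $I=(x,y^2)$, where a naive equation-based computation in a singular chart yields the wrong multiplicity); combining with \Cref{lemma:multiplicities_blowup} then finishes the argument. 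What the paper's detour buys is that every multiplicity computation happens on smooth charts, so no valuation theory on singular toric surfaces is ever invoked; what your argument buys is brevity and uniformity in $\gcd(\alpha,\beta)$, at the price of having to justify, in a fully written-up proof, the order-of-vanishing formula along toric divisors of a normal toric variety and the identification of $\Bl_{\nn_{\alpha,\beta}}\BA^2$ with the normal fan of the Newton polyhedron — both of which are standard and the latter of which the paper supplies.
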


One can describe thoroughly the blowup $\Bl_{I}\BA^2$ along a normal monomial ideal as in \Cref{eqn:factorisation} via toric geometry (cf.~\Cref{cor:madame_bovary}), and this allows one to generalise the identity in \Cref{thm:intro2} to cover all normal monomial ideals in $\BC[x,y]$.

In fact, the existence of the factorisation \eqref{eqn:factorisation} readily implies the following statement.

\begin{thm}[\Cref{bijection-irr-cpt-factorisation}]\label{thm:intro1241}
Let $I \subset \BC[x,y]$ be a normal monomial ideal of finite colength. There is a bijective correspondence 
\[
\Set{
\begin{array}{c}
 \mbox{ideals }\nn_{\alpha,\beta}^\delta \mbox{ appearing in the}\\
  \mbox{factorisation \eqref{eqn:factorisation} of $I$}
\end{array}
}\xleftrightarrow{1:1}\left\{
\begin{array}{c}
\mbox{irreducible}\\
  \mbox{components of }E_I\BA^2
\end{array}\right\},
\]
where $E_I\BA^2$ is the exceptional divisor in the blowup of $\BA^2$ with center the ideal $I$. In particular, if $J \subset \BC[x,y]$ is an arbitrary monomial ideal and $I = \overline J$ is its normalisation, then $E_J\BA^2$ has at most $t$ irreducible components, where $t$ is as in \Cref{eqn:factorisation}.
\end{thm}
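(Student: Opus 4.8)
The plan is to translate the whole statement into toric language, exploiting that a monomial ideal produces a toric blowup. Since $I$ is \emph{normal}, the blowup $\Bl_I\BA^2$ is already the toric variety $X_\Sigma$ attached to the normal fan $\Sigma$ of the Newton polyhedron $\mathrm{Newt}(I)$, a subdivision of the first-quadrant cone $\sigma = \langle e_1,e_2\rangle$ (this is the description recorded in \Cref{cor:madame_bovary}). Under this dictionary, torus-invariant prime divisors of $X_\Sigma$ correspond to rays of $\Sigma$, and the irreducible components of the exceptional divisor $E_I\BA^2$ are precisely those coming from the rays in the \emph{interior} of $\sigma$, the boundary rays $e_1,e_2$ being the strict transforms of the coordinate axes. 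So the entire statement reduces to counting and naming these interior rays.

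First I would treat a single normalised factor. For coprime $\alpha,\beta$, the polyhedron $\mathrm{Newt}(\nn_{\alpha,\beta}) = \mathrm{Newt}((x^\alpha,y^\beta))$ has a unique compact edge, joining $(\alpha,0)$ to $(0,\beta)$, whose primitive inner normal is $(\beta,\alpha)$ (primitivity being exactly $\gcd(\alpha,\beta)=1$); hence $\Sigma(\nn_{\alpha,\beta})$ contributes the single interior ray $\BR_{\geq 0}(\beta,\alpha)$. I would then globalise via multiplicativity of Newton polyhedra: $\mathrm{Newt}(IJ) = \mathrm{Newt}(I)+\mathrm{Newt}(J)$ and $\mathrm{Newt}(I^\delta) = \delta\cdot\mathrm{Newt}(I)$, so the factorisation \eqref{eqn:factorisation} gives $\mathrm{Newt}(I) = \sum_{k=1}^t \delta_k\,\mathrm{Newt}(\nn_{\alpha_k,\beta_k})$. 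Since the normal fan of a Minkowski sum is the common refinement of the summands' normal fans, while dilation leaves the normal fan unchanged, the interior rays of $\Sigma$ are exactly the collection $\{\BR_{\geq 0}(\beta_k,\alpha_k)\}_{k=1}^t$.

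It remains to see that these $t$ rays are pairwise distinct, and this is where I would invoke the uniqueness half of \Cref{normal-factorisation}: if two factors shared the same coprime pair $(\alpha_k,\beta_k)$ they could be merged into a single power, which is excluded by the form of the factorisation; hence the pairs, and with them the primitive rays $(\beta_k,\alpha_k)$, are distinct. This produces exactly $t$ interior rays and the asserted bijection, sending $\nn_{\alpha_k,\beta_k}^{\delta_k}$ to the component of $E_I\BA^2$ cut out by the ray $\BR_{\geq 0}(\beta_k,\alpha_k)$.

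For the final assertion, let $J$ be an arbitrary monomial ideal and $I = \overline J$. As recalled in \Cref{sec:normal-blowups}, blowing up the integral closure realises the normalisation morphism $\mu\colon \Bl_I\BA^2 \to \Bl_J\BA^2$, which is finite, birational and surjective, and an isomorphism over $\BA^2\setminus\{0\}$; consequently $\mu$ maps $E_I\BA^2$ onto $E_J\BA^2$. Being finite, $\mu$ sends each one-dimensional irreducible component of $E_I\BA^2$ onto a one-dimensional irreducible component of $E_J\BA^2$, and surjectivity makes the induced map on components surjective, so $E_J\BA^2$ has at most $t$ irreducible components. The main obstacle I anticipate is not any single step but the bookkeeping at the interface: rigorously matching the \emph{components of the exceptional divisor} with the \emph{interior rays} of the (possibly singular) toric blowup, and establishing the distinctness of the rays, which genuinely relies on the uniqueness — not merely the existence — of the factorisation \eqref{eqn:factorisation}.
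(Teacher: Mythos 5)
Your proof is correct and follows essentially the same route as the paper: both identify $\Bl_I\BA^2$ with the toric surface attached to a fan subdividing the first quadrant, match the irreducible components of $E_I\BA^2$ with the interior rays of that fan, and match those rays with the factors $\nn_{\alpha_k,\beta_k}^{\delta_k}$ coming from \Cref{normal-factorisation} — the paper packages this as the combination of \Cref{powernorm}, \Cref{blowpowers} and \Cref{toricnormal} (equivalently \Cref{cor:madame_bovary}), while you recover the ray/factor matching through Minkowski-sum multiplicativity of Newton polyhedra, a cosmetic difference. Your argument for the final ``at most $t$ components'' claim, via finiteness and surjectivity of the normalisation morphism $Z_I \to \Bl_J\BA^2$, is likewise the intended one.
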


The Behrend number of a non-normal monomial ideal in $\BC[x,y]$ can be computed from some explicit data defined on the normalisation of $\Bl_I \BA^2$. In fact, in \Cref{sec:monomial} we prove a general statement which is true in all dimensions, not just in dimension $2$. We consider an \emph{arbitrary} fat point $I \subset \BC[x_1,\ldots,x_N]$, and the normalisation morphism
\[
\mu_I \colon Z_I \to \Bl_I \BA^N.
\]
We let $\set{D_i|1\leq i\leq s}$ be the irreducible components of the exceptional divisor $E_I\BA^N \subset \Bl_I\BA^N$, we set $Y_I = \mu_I^{-1}(E_I\BA^N)$ and for each $i=1,\ldots,s$ we let $\set{V_j^{(i)}|1\leq j\leq k_i}$ be the irreducible components of $Y_I$ dominating $D_i$. We then consider the two numbers 
\[
d_{ij} = \deg\left(\mu_I\big|_{V_j^{(i)}}\colon V_j^{(i)} \to D_i\right), \qquad e_{ij} = \mult_{V_j^{(i)}}(Y_I).
\]
We obtain the following result.

\begin{thm}[\Cref{thm:formula:monomial}]\label{thm:intro3}
    Let $X \into \BA^N$ be a fat point defined by an ideal $I \subset \BC[x_1,\ldots,x_N]$. Then, there is an identity
    \[
    \nu_{X} = \sum_{i=1}^s \sum_{j=1}^{k_i} d_{ij} e_{ij}.
    \]
\end{thm}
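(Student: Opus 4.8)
The plan is to reduce Theorem~\ref{thm:intro3} to \Cref{lemma:multiplicities_blowup}, which already expresses $\nu_X$ as the sum of the multiplicities of the irreducible components of the exceptional divisor $E_I\BA^N \subset \Bl_I\BA^N$. Thus the content of the statement is the identity
\[
\mult_{D_i}(E_I\BA^N) = \sum_{j=1}^{k_i} d_{ij}\,e_{ij},
\]
after which summing over $i=1,\ldots,s$ finishes the proof. The normalisation morphism $\mu_I\colon Z_I \to \Bl_I\BA^N$ is finite and birational, hence an isomorphism away from a closed subset of codimension $\geq 1$; since the $D_i$ are the codimension-one components of $E_I\BA^N$ and $Z_I$ is normal (so satisfies Serre's condition $S_2$, and in particular $R_1$), the generic points of the $D_i$ are handled by looking at the preimages $\mu_I^{-1}(\eta_{D_i})$, whose closures are exactly the $V_j^{(i)}$ with $1\leq j\leq k_i$.

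First I would set up the comparison at the level of Weil divisors, pulling back the exceptional Cartier divisor along $\mu_I$. Writing $E_I\BA^N = \sum_i \mult_{D_i}(E_I\BA^N)\,D_i$ as a cycle, the pullback $\mu_I^\ast$ (or equivalently the inverse-image scheme $Y_I = \mu_I^{-1}(E_I\BA^N)$ with its natural divisor structure) decomposes as $Y_I = \sum_{i,j} e_{ij}\,V_j^{(i)}$ by definition of $e_{ij} = \mult_{V_j^{(i)}}(Y_I)$. The key local computation is then the standard ramification/projection formula for a finite birational morphism between normal-ish schemes: localising at the generic point $\eta_{D_i}$ of the prime divisor $D_i$, the semilocal ring $(\OO_{\Bl_I\BA^N,\eta_{D_i}})$ is a DVR with uniformiser $\pi_i$ of valuation $\mult_{D_i}(E_I\BA^N)$ on the defining equation of $E_I\BA^N$, and its integral closure in the total ring of fractions is the product of the DVRs $\OO_{Z_I,\eta_{V_j^{(i)}}}$ for $1\leq j\leq k_i$. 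Over each such DVR, the ramification index contributes the valuation $e_{ij}$ and the residue-field degree contributes exactly $d_{ij} = [\kappa(\eta_{V_j^{(i)}}):\kappa(\eta_{D_i})] = \deg(\mu_I|_{V_j^{(i)}})$. The fundamental identity $\sum_j e_{ij}\,d_{ij} = \mult_{D_i}(E_I\BA^N)$ is then precisely the conservation-of-number formula $\sum_j e_{ij} f_{ij} = n$ relating ramification indices, residue degrees, and the degree of the extension of fraction fields, applied to the finite birational extension $\OO_{\Bl_I\BA^N,\eta_{D_i}} \hookrightarrow \OO_{Z_I}\otimes\kappa$.

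I would carry out the steps in the following order: (i) recall \Cref{lemma:multiplicities_blowup} to rewrite $\nu_X$ as a sum of multiplicities over the $D_i$; (ii) verify that $\mu_I$ is finite and birational, so that over the generic points of the $D_i$ we are in the setting of a DVR and its normalisation, and identify $d_{ij}$ as a residue-field degree; (iii) prove the local identity $\mult_{D_i}(E_I\BA^N) = \sum_j d_{ij}\,e_{ij}$ via the $\sum e f = n$ relation; (iv) sum over $i$. The main obstacle I anticipate is step~(iii): one must be careful that $\mu_I$ need not be flat, so the relation is not simply a flat base-change statement but genuinely the localisation of a finite extension of one-dimensional rings, and one must correctly match the scheme-theoretic multiplicity $e_{ij}$ defined via $Y_I = \mu_I^{-1}(E_I\BA^N)$ with the valuation of the uniformiser $\pi_i$ in the overlying DVR. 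Checking that $\deg(\mu_I|_{V_j^{(i)}})$ really computes the residue-field degree (and not some generically-larger quantity) requires knowing that $\mu_I|_{V_j^{(i)}}$ is generically finite of the asserted degree onto $D_i$, which follows from finiteness of $\mu_I$ together with the fact that $V_j^{(i)}$ dominates $D_i$; this is where one invokes that $Z_I$ is $R_1$ so that its local rings at the $\eta_{V_j^{(i)}}$ are indeed DVRs.
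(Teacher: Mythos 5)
Your outline reproduces the paper's strategy -- rewrite $\nu_X$ via \Cref{lemma:multiplicities_blowup}, establish the componentwise identity $\mult_{D_i}(E_I\BA^N)=\sum_{j} d_{ij}e_{ij}$, and sum over $i$ -- but the local mechanism you propose for that identity fails, and this is a genuine gap. You assert that $\OO_{\Bl_I\BA^N,\eta_{D_i}}$ is a DVR. A one-dimensional Noetherian local domain is a DVR if and only if it is integrally closed, and the whole point of the theorem is the case where $\Bl_I\BA^N$ is \emph{not} normal at the generic point of $D_i$: for $I=(x^k,y^k)$, $k\geq 2$ (the paper's \Cref{rectangle}), the blowup is singular along the entire exceptional divisor, the local ring at $\eta_D$ is not a DVR, and there $k_1=1$, $d_{11}=e_{11}=k$ while $\mult_{D}(E_I\BA^2)=k^2$. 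Conversely, whenever your DVR claim does hold, $\mu_I$ is an isomorphism near $\eta_{D_i}$, forcing $k_i=1$, $d_{i1}=1$, $e_{i1}=\mult_{D_i}(E_I\BA^N)$, so the identity is vacuous for that component. (A smaller slip: normality is $R_1$ plus $S_2$; $S_2$ does not imply $R_1$.)

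The second error compounds the first: in the conservation formula $\sum_j e_jf_j=n$ the number $n$ is the degree of the extension of fraction fields, and here that degree is $1$ because $\mu_I$ is birational, $\BC(Z_I)=\BC(\Bl_I\BA^N)$; read literally, your argument would give $\sum_j e_{ij}d_{ij}=1$, which is absurd. The quantity $\mult_{D_i}(E_I\BA^N)$ is not the degree of any field extension; it is the length $\length_{\OO_{X,D_i}}\bigl(\OO_{X,D_i}/(s_I)\bigr)$ with $X=\Bl_I\BA^N$, i.e.\ the order $\ord_{D_i}(s_I)$ in the length-based sense of \Cref{section:normalization}, a definition needed precisely because this local ring carries no valuation. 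What closes the gap is the behaviour of this order under normalisation,
\[
\ord_{D_i}(s_I)=\sum_{j=1}^{k_i}\ord_{V_j^{(i)}}(s_I)\cdot\bigl[\BC(V_j^{(i)}):\BC(D_i)\bigr],
\]
which is the paper's \Cref{fulton_example} (Fulton, Ex.~1.2.3) and is exactly what the paper's proof of \Cref{thm:formula:monomial} invokes. Its proof is a length computation, not a ramification count: with $A=\OO_{\Bl_I\BA^N,\eta_{D_i}}$, $\bar A$ its finite normalisation and $a=s_I$, the sequence $0\to A\to\bar A\to\bar A/A\to 0$, where $\bar A/A$ has finite length, gives via the snake lemma applied to multiplication by $a$ the equality $\length_A(A/aA)=\length_A(\bar A/a\bar A)$; the right-hand side then decomposes over the maximal ideals of $\bar A$ (one for each $V_j^{(i)}$) with DVR-valuations $e_{ij}$ weighted by residue degrees $d_{ij}$. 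With your step~(iii) replaced by this argument, the rest of your outline is the paper's proof.
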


In \Cref{sec:3-fold-difficulties} we argue, via an explicit example, that the toric techniques used in this paper are not directly applicable to handle fat points in higher dimensional affine spaces $\BA^N$. For instance, \Cref{thm:intro1241} fails. However, \Cref{thm:intro3} is true in all dimensions.

\subsection{The Behrend function in DT theory}
As mentioned in \Cref{intro-1}, the Behrend function has a crucial role in those enumerative theories where the moduli spaces involved carry a symmetric obstruction theory. This is for instance the case of Donaldson--Thomas theory (DT theory, for short), an enumerative theory designed to `count' sheaves on smooth $3$-folds \cite{ThomasThesis}. If $X$ is a moduli space of stable sheaves on a projective Calabi--Yau $3$-fold, then the expected dimension of $X$ is $0$. When $X$ really has dimension $0$, it is equal to a disjoint union $X_1\amalg\cdots \amalg X_e$ where each $X_i$ is a fat point, the main object of study in this paper. That is, we have $X = X_1\amalg\cdots \amalg X_e$ where $X_i$ is a fat point. If $X_i$ is reduced for all $i$, then the DT invariant is just the number of points, namely $e$. But in the general case, the DT invariant is
\[
\chi(X,\nu_X) = \sum_{i=1}^e \nu_{X_i},
\]
which is one motivation for the interest in the computation of the Behrend number of a fat point. Even though a moduli space as above is rarely $0$-dimensional, there are examples where this actually happens, see e.g.~\cite[Thm.~3.55 and \S\,4]{ThomasThesis}, but also \cite[Ex.~8.1]{CohDT} and \cite[Thm.~1.1]{Thomas_obstructed}.

\smallbreak
We conclude the introduction with a challenging open problem in DT theory. 

\begin{openproblem}\label{open-problem-DT}
Fix integers $r \geq 1$ and $n\geq 0$, and let $\Quot_{\BA^3}(\OO^{\oplus r},n)$ be the Quot scheme  parametrising length $n$ quotients of the trivial sheaf $\OO^{\oplus r}$ on $\BA^3$, a key character in DT theory \cite{Virtual_Quot,FMR_K-DT}. As proved in \cite{BR18} (see \cite{BFHilb} for the $r=1$ case), there is an identity
\[
\chi(\Quot_{\BA^3}(\OO^{\oplus r},n),\nu) = (-1)^{rn}\chi(\Quot_{\BA^3}(\OO^{\oplus r},n)),
\]
and the value of the Behrend function at a torus-fixed quotient $p=[\OO^{\oplus r} \onto T]$, with respect to the natural $(\BC^{\times})^{3+r}$-action on the Quot scheme, is
\[
\nu(p)=(-1)^{rn}.
\]
However, it is not known whether $\nu$ is \emph{constantly} equal to $(-1)^{rn}$. Its constancy would show that the Quot scheme mentioned above is generically reduced, which is currently unknown even for $r=1$, i.e.~in the case of the Hilbert scheme of points $\Hilb^n(\BA^3)$.

In dimension $N>3$, the question of reducedness of $\Quot_{\BA^N}(\OO^{\oplus r},n)$ has already been answered in the negative: see \cite[\S\,6.5]{Joachim-Sivic} for an example of a generically nonreduced component of $\Quot_{\BA^N}(\OO^{\oplus r},8)$, where $r>3$. See also the recent work of Szachniewicz \cite{Szachniewicz} for a proof of the fact that $\Hilb^{13}(\BA^6)$ is nonreduced.
\end{openproblem}

\subsection{Organisation of contents}
The paper is structured as follows. In \Cref{sec:conventions} we set up the notation, we review the notions of fat points, cones, blowups and their normalisations; we recall the definition of the Behrend function. In \Cref{sec:behrend-first-examples} we prove the key result (\Cref{lemma:multiplicities_blowup}) that we will exploit to perform our computations, and we compute a number of examples of Behrend functions using its elementary properties. In \Cref{sec:towers} we introduce \emph{towers}, we completely describe their blowups (subsection \ref{sec:blowup-tower}), and we prove \Cref{thm:intro1}. An algorithm to generalise such results is explained in \Cref{sec:Algorithm}. In \Cref{sec:monomial-normal} we prove \Cref{thm:intro2} and \Cref{thm:intro1241}. In \Cref{sec:monomial} we express the Behrend number of an arbitrary fat point $I \subset \BC[x_1,\ldots,x_N]$ in terms of data defined on the normalisation $Z_I \to \Bl_I\BA^N$, thus proving \Cref{thm:intro3}. In \Cref{sec:3-fold-difficulties} we give an example involving fat points $X \subset \BA^3$ showing that the analysis we carried out in $\BA^2$ needs nontrivial modifications in order to work in higher dimension.

\begin{conventions}
We work over the field $\BC$ of complex numbers. All \emph{schemes} will be separated and of finite type over $\BC$. A \emph{variety} will be an integral (reduced and irreducible) scheme over $\BC$. If $X$ is a scheme and $V \subset X$ is a subvariety, we denote by $\OO_{X,V}$ the local ring of $X$ at the generic point of $V$. When $V$ is an irreducible component of $X$, we denote by $\mult_VX$ the length of the local artinian ring $\OO_{X,V}$, viewed as a module over itself. The function field of a variety $X$, namely the residue field of $\OO_{X,X}$, will be denoted $\BC(X)$. We shall denote by $\mm = (x,y) \subset \BC[x,y]$ the maximal ideal of the origin $0 \in \BA^2$. More terminology and background will be set in \Cref{sec:conventions}.
\end{conventions}

\subsection*{Acknowledgments}
We wish to thank Ugo Bruzzo, Joachim Jelisiejew, Yunfeng Jiang, Andrea Petracci and Fatemeh Rezaee for very helpful and enlightening discussions around several of the topics discussed in this paper.

\section{Background material}\label{sec:conventions}

In this subsection we fix the notation used throughout the paper, and we collect some frequently used results.

\subsection{Fat points, monomial ideals and the Hilbert scheme}

\begin{definition}\label{def:fat_points}
A \emph{fat point} is a $\BC$-scheme $X$ isomorphic to $\Spec R$, where $(R,\mm_R)$ is a local artinian $\BC$-algebra. The \emph{embedding dimension} of a fat point $X=\Spec R$ is the integer $\dim_{\BC}(\mm_R/\mm_R^2)$. When this number is $1$, we say that $X$ is \emph{curvilinear}.
\end{definition}

Thus a fat point is a $\BC$-scheme $X$ such that $X_{\red} \into X \to \Spec \BC$ is the identity. In other words, it is a $0$-dimensional $\BC$-scheme whose underlying topological space is just one point. The embedding dimension of $X$ is nothing but the smallest dimension of a smooth $\BC$-scheme containing $X$ as a closed subscheme.

\begin{definition}\label{def:length}
The \emph{length} of a fat point $X=\Spec R$ is defined as
\[
\length (X) = \dim_{\BC} \HH^0(X,\OO_X) = \dim_{\BC}(R).
\]
\end{definition}

\begin{notation}
Occasionally, for the sake of readability, if $R=\BC[x_1,\ldots,x_N]/I$ defines a fat point $X=\Spec R \subset \BA^N$, we shall write $\ell_{R}$ instead of $\length(X)$.
\end{notation}

Up to isomorphism of $\BC$-schemes, there is only $\Spec \BC$ of length $1$, only $\Spec \BC[t]/t^2$ of length $2$, and only $\Spec \BC[t]/t^3$ and $\Spec \BC[x,y]/(x^2,xy,y^2)$ of length $3$, the latter being of embedding dimension $2$. If $\bfk$ is an arbitrary algebraically closed field, it is known that there is a finite number of isomorphism classes of local artinian $\bfk$-algebras of length $n\leq 6$, and that this number is infinite when $n >6$. See \cite{Poo1} for a complete classification of finite dimensional algebras, and \cite{MR18} for a classification of $\bfk[x,y]$-\emph{modules} of length up to $4$.

\smallbreak
Let $A = \BC[x_1,\ldots,x_N]$ be a polynomial ring. We say that an ideal $I \subset A$ is of \emph{finite colength} equal to $n$ if $A/I$ is a finite dimensional $\BC$-vector space of dimension $n$, i.e.~if $X=\Spec A/I$ is a disjoint union of fat points. Amongst all fat points $X \subset \BA^N$ of length $n$, there are finitely many special ones that are cut out by \emph{monomial} equations. There is a bijective correspondence between monomial ideals of colength $n$ in $\BC[x_1,\ldots,x_N]$ and $(N-1)$-dimensional partitions of $n$. If $N=2$, a $1$-dimensional partition corresponds to a \emph{Ferrers diagram} (also known in the literature as a \emph{Young diagram}) made of $n$ boxes, the correspondence being depicted in \Cref{fig:monomial-Ferrers}.
\begin{figure}[h!]
    \centering
\begin{tikzpicture}[scale=0.75]
\draw (0,1)--(7,1)--(7,0)--(0,0)--(0,6)--(1,6)--(1,0);
\draw (0,4)--(2,4)--(2,0);
\draw (0,3)--(3,3)--(3,0);
\draw (4,1)--(4,0);
\draw (5,1)--(5,0);
\draw (6,1)--(6,0);

\draw (0,2)--(3,2);
\draw (0,5)--(1,5);

\node at (0.5,0.5) {\small $1$};
\node at (1.5,0.5) {\small $x$};
\node at (2.5,0.5) {\small $x^2$};
\node at (3.5,0.5) {\small $x^3$};
\node at (4.5,0.5) {\small $x^4$};
\node at (5.5,0.5) {\small $x^5$};
\node at (6.5,0.5) {\small $x^6$};
\node at (7.5,0.5) {\small $\textcolor{blue}{x^7}$};

\node at (2.5,2.5) {\small $x^2y^2$};
\node at (1.5,2.5) {\small $xy^2$};
\node at (0.5,2.5) {\small $y^2$};

\node at (2.5,1.5) {\small $x^2y$};
\node at (1.5,1.5) {\small $xy$};
\node at (0.5,1.5) {\small $y$};
\node at (3.5,1.5) {\small $\textcolor{blue}{x^3y}$};

\node at (1.5,3.5) {\small $xy^3$};
\node at (0.5,3.5) {\small $y^3$};
\node at (2.5,3.5) {\small $\textcolor{blue}{x^2y^3}$};

\node at (0.5,4.5) {\small $y^4$};
\node at (1.5,4.5) {\small $\textcolor{blue}{xy^4}$};

\node at (0.5,5.5) {\small $y^5$};
\node at (0.5,6.5) {\small $\textcolor{blue}{y^6}$};
\end{tikzpicture}
\caption{The Ferrers diagram corresponding to the monomial ideal $I=(x^7,x^3y,x^2y^3,xy^4,y^6)$, whose generators define the staircase of the diagram. The length (number of boxes) is $17$.}
    \label{fig:monomial-Ferrers}
\end{figure}
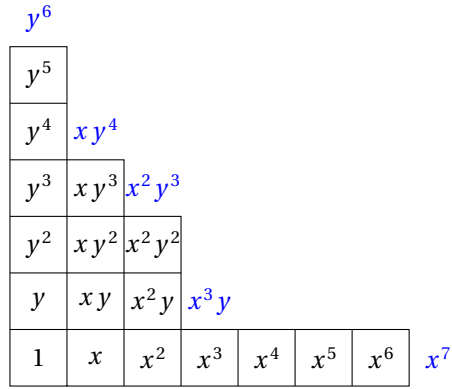

Any finite subscheme $X \subset \BA^N$ is a disjoint union of fat points. The moduli space parametrising finite subschemes $X \subset \BA^N$ of length $n$ is the \emph{Hilbert scheme of points} $\Hilb^n(\BA^N)$. It contains a projective subscheme $\Hilb^n(\BA^N)_0 \subset \Hilb^n(\BA^N)$, called the \emph{punctual Hilbert scheme}, parametrising fat points supported at the origin $0 \in \BA^N$. This scheme is known to be irreducible of dimension $n-1$ in the case $N=2$, by work of Briançon \cite{BRIANCON}. It is also irreducible if $N=3$ and $n\leq 11$, by work of Jelisiejew--Keneshlou \cite{Jelisiejew-Keneshlou}. The locus of \emph{all} fat points $X \subset \BA^N$ of length $n$ is of course given by $\BA^N \times \Hilb^n(\BA^N)_0$.

\subsection{Cones}
A \emph{cone} over a scheme $X$ is an $X$-scheme of the form
\[
\pi\colon \Spec_{\OO_X} \mathscr A \to X,
\]
where $\mathscr A = \bigoplus_{i\geq 0}\mathscr A_i$ is a quasicoherent sheaf of graded $\OO_X$-algebras such that the canonical map $\mathscr A_0 \to \OO_X$ is an isomorphism, $\mathscr A_1$ is coherent and generates $\mathscr A$ over $\mathscr A_0$.
Given a cone $C = \Spec_{\OO_X}\mathscr A \to X$, one can construct another cone
\[
C\oplus \mathbb 1 = \Spec_{\OO_X} \mathscr A[z] \to X
\]
where the $i$-th graded piece of $\mathscr A[z]$ is 
\[
\left(\mathscr A[z]\right)_i 
= \mathscr A_i  \oplus \mathscr A_{i-1}z \oplus  \cdots \oplus \mathscr A_1 z^{i-1} \oplus \mathscr A_0z^i.
\]
On the other hand, the \emph{projective cone} of $C$ is defined to be the $X$-scheme
\[
P(C) = \Proj_{\OO_X} \mathscr A \to X.
\]
The \emph{projective completion} of $C$, namely the projective cone
\[
P(C\oplus \mathbb 1) \to X,
\]
contains $C$ as a dense open subset with closed complement $P(C) \into P(C\oplus \mathbb 1)$, locally cut out by the equation $z = 0$.

The main example to which these constructions apply, of crucial importance in this paper, is the \emph{normal cone} of a closed immersion $X \into M$ of $\BC$-schemes, namely the cone
\[
C_{X/M} = \Spec_{\OO_X}\left(\bigoplus_{i\geq 0}\mathscr I^i/\mathscr I^{i+1}\right) \to X,
\]
where $\mathscr I \subset \OO_M$ is the ideal sheaf of $X \into M$.

\subsection{Blowups and exceptional loci}
Given a variety $M$ along with an ideal sheaf $\mathscr I \subset \OO_M$ cutting out a subscheme $X=V(\mathscr I) \into M$, we shall denote by
\[
\begin{tikzcd}
\Bl_{\mathscr I}M = \Proj_{\OO_M}\left(\displaystyle\bigoplus_{i\geq 0}\mathscr I^i\right) \arrow{r}{\varepsilon_{\mathscr I}} & M
\end{tikzcd}
\]
the blowup of $M$ along $X$. Sometimes we shall adopt the notation $\Bl_XM$, often used in the literature. The map $\varepsilon_{\mathscr I}$ is a projective birational (surjective) morphism of varieties which restricts to an isomorphism over $M\smallsetminus X$. The \emph{exceptional divisor} attached to such a blowup is, by definition, the effective Cartier divisor
\begin{equation}\label{inclusion_EXC_DIV}
E_{\mathscr I}M \into \Bl_{\mathscr I}M
\end{equation}
defined by the (invertible) sheaf of ideals
\[
\varepsilon_{\mathscr I}^{-1}(\mathscr I)\cdot \OO_{\Bl_{\mathscr I}M} = \im \left(\varepsilon_{\mathscr I}^\ast \mathscr I \to \OO_{\Bl_{\mathscr I}M}\right).
\]
In other words, $E_{\mathscr I}M = \Bl_{\mathscr I}M \times_MX$.
If $C_{X/M}=\Spec_{\OO_X}\left(\bigoplus_{i\geq 0}\mathscr I^i/\mathscr I^{i+1}\right)$ is the normal cone of the inclusion $X \into M$, then \eqref{inclusion_EXC_DIV} agrees with the natural inclusion of the projective cone
	    \[
	    P(C_{X/M}) = \Proj_{\OO_X}\left( \bigoplus_{i\geq 0}\mathscr I^i/\mathscr I^{i+1}\right)=E_{\mathscr I}M
	    \]
	    inside $\Bl_{\mathscr I}M$. In other words, the diagram
	    \[
	    \begin{tikzcd}
	    P(C_{X/M}) \arrow{d}\arrow[hook]{r} & \Bl_{\mathscr I}M \arrow{d}{\varepsilon_{\mathscr I}} \\
	    X\arrow[hook]{r} & M
	    \end{tikzcd}
	    \]
is cartesian.
Finally, we set
\[
\exc (\varepsilon_{\mathscr I}) = (E_{\mathscr I}M)_{\red}=P(C_{X/M})_{\red},
\]
and we call this reduced closed subscheme of $\Bl_{\mathscr I}M$ the \emph{exceptional locus} of the blowup. Sometimes, when no confusion is likely to arise, we shall denote it by $\exc(\Bl_{\mathscr I}M)$.

\begin{notation}
More generally, given a projective birational morphism $f\colon Y \to Z$ between quasiprojective varieties, we shall denote by $\exc(f) \subset Y$ the reduction of the preimage of the indeterminacy locus of the birational map $f^{-1}$.
\end{notation}

We shall make extensive use of the following results.

\begin{lemma}[{\cite[\href{https://stacks.math.columbia.edu/tag/01OF}{Tag 01OF}]{stacks-project}}]\label{LEMMATECH}
Let $M$ be a scheme, and let $\mathscr I_1,\mathscr I_2 \subset \OO_M$ be quasicoherent sheaves of ideals. Let $\varepsilon_{\mathscr I_1}\colon \Bl_{\mathscr I_1}M \to M$ be the blowup of $M$ along $\mathscr I_1$.  Then there is a canonical isomorphism of $M$-schemes
\[
\begin{tikzcd}
\Bl_{\varepsilon_{\mathscr I_1}^{-1}(\mathscr I_2) \cdot \OO_{\Bl_{\mathscr I_1}M}}(\Bl_{\mathscr I_1}M) \arrow{r}{\sim} & \Bl_{\mathscr I_1\cdot \mathscr I_2}(M).
\end{tikzcd}
\]
\end{lemma}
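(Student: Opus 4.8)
The plan is to prove the isomorphism by comparing the functors of points of the two blowups, using the universal property of blowing up: for a quasicoherent ideal sheaf $\mathscr I \subset \OO_M$, the blowup $\varepsilon_{\mathscr I}\colon \Bl_{\mathscr I}M \to M$ is the terminal object among $M$-schemes $f\colon T \to M$ for which $f^{-1}(\mathscr I)\cdot \OO_T$ is an invertible ideal sheaf. Equivalently, $\Bl_{\mathscr I}M$ represents the subfunctor of $\Hom_{\Sch}(-,M)$ consisting of those $f\colon T \to M$ such that $f^{-1}(\mathscr I)\cdot \OO_T$ is invertible. First I would unwind what each side of the claimed isomorphism represents. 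Writing $N = \Bl_{\mathscr I_1}M$ and $\mathscr J = \varepsilon_{\mathscr I_1}^{-1}(\mathscr I_2)\cdot \OO_N$, an $M$-morphism $T \to \Bl_{\mathscr J}N$ amounts to an $M$-morphism $g\colon T \to N$ — which exists (uniquely) precisely when $f^{-1}(\mathscr I_1)\cdot \OO_T$ is invertible, where $f = \varepsilon_{\mathscr I_1}\circ g$ — together with the condition that $g^{-1}(\mathscr J)\cdot \OO_T = f^{-1}(\mathscr I_2)\cdot \OO_T$ be invertible. Thus the left-hand side represents the functor of those $f\colon T \to M$ for which \emph{both} $f^{-1}(\mathscr I_1)\cdot \OO_T$ and $f^{-1}(\mathscr I_2)\cdot \OO_T$ are invertible, whereas the right-hand side $\Bl_{\mathscr I_1\cdot \mathscr I_2}M$ represents those $f$ for which the single ideal $f^{-1}(\mathscr I_1\cdot \mathscr I_2)\cdot \OO_T = \bigl(f^{-1}(\mathscr I_1)\cdot \OO_T\bigr)\bigl(f^{-1}(\mathscr I_2)\cdot \OO_T\bigr)$ is invertible.

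By Yoneda, it therefore suffices to prove that, for every $f\colon T \to M$, the product ideal $(f^{-1}\mathscr I_1\cdot \OO_T)(f^{-1}\mathscr I_2\cdot \OO_T)$ is invertible if and only if each of the two factors is. One implication is immediate and completely general: if $\mathscr A, \mathscr B \subset \OO_T$ are invertible ideals, then locally $\mathscr A = (a)$ and $\mathscr B = (b)$ with $a,b$ non-zero-divisors, so $\mathscr A\mathscr B = (ab)$ with $ab$ a non-zero-divisor, whence $\mathscr A \mathscr B$ is invertible. Since invertibility of ideals is a local (stalk-wise) condition, the whole statement reduces to the following purely algebraic lemma, which I expect to be the main obstacle: \emph{if $R$ is a ring and $I, J \subset R$ are ideals such that $IJ$ is an invertible $R$-module, then $I$ and $J$ are each invertible.}

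The hard part is this local lemma, and I would prove it by reducing to the case of a local ring $R$, where invertible ideals are exactly the principal ideals generated by a non-zero-divisor. Assuming $IJ = (t)$ with $t$ a non-zero-divisor, write $t = \sum_{i} a_i b_i$ with $a_i \in I$ and $b_i \in J$. Because $a_i b_j \in IJ = (t)$ we may write $a_i b_j = t\,c_{ij}$; substituting back gives $t = t\sum_i c_{ii}$, so $\sum_i c_{ii} = 1$ as $t$ is a non-zero-divisor, and hence some diagonal coefficient, say $c_{11}$, is a unit because $R$ is local. Then $a_1 b_1 = t c_{11}$ generates $(t) = IJ$ and is a non-zero-divisor, forcing both $a_1$ and $b_1$ to be non-zero-divisors. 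Finally, for any $a \in I$ one has $a b_1 \in IJ = (a_1 b_1)$, so $a b_1 = r a_1 b_1$ for some $r \in R$, and cancelling the non-zero-divisor $b_1$ yields $a = r a_1$; thus $I = (a_1)$, and symmetrically $J = (b_1)$, so both are invertible. Plugging this lemma back into the functorial comparison identifies the two subfunctors of $\Hom_{\Sch}(-,M)$, and Yoneda then produces the canonical isomorphism of $M$-schemes, completing the proof.
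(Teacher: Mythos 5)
The paper does not prove this lemma itself: it imports it from the Stacks Project, whose proof constructs a morphism in each direction out of the universal property of blowing up and checks the composites are the identity. Your proposal tries to shortcut this with a Yoneda argument, and that is exactly where it breaks. The universal property says that $\Bl_{\mathscr I}M \to M$ is \emph{terminal} among $M$-schemes $f\colon T \to M$ such that $f^{-1}(\mathscr I)\cdot\OO_T$ is invertible; it does \emph{not} say that $\Bl_{\mathscr I}M$ represents that subfunctor of $\Hom_{\Sch}(-,M)$, and in general it does not. The set $\Hom_M(T,\Bl_{\mathscr I}M)$ is strictly larger than the set of ``good'' morphisms $T\to M$: take $M=\BA^2$, $\mathscr I=\mm$, and let $T$ be the exceptional divisor $E\cong\BP^1$ (or a single point of $E$) with its inclusion into $\Bl_{\mm}\BA^2$; the composite $T\to\BA^2$ is constant at the origin, so it pulls $\mm$ back to the \emph{zero} ideal, which is not invertible, yet it is a perfectly good $M$-morphism to the blowup. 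For the same reason your parenthetical ``(uniquely)'' fails: a point of $M$ lying in the center lifts to the blowup in a whole fiber's worth of ways. So neither side of the desired isomorphism represents the functor you attach to it; the identification of those two functors (which is correct, and rests on your algebra lemma) therefore says nothing about the two schemes, and Yoneda cannot be invoked.

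The gap is not merely one of phrasing: when the argument is rerun correctly using terminality --- produce a map in each direction, then use uniqueness of $M$-morphisms into a blowup to see the composites are identities --- one of the two directions needs an ingredient your proposal never supplies. Writing $N=\Bl_{\mathscr I_1}M$ and $\mathscr J=\varepsilon_{\mathscr I_1}^{-1}(\mathscr I_2)\cdot\OO_N$, to obtain the map $\Bl_{\mathscr J}N \to \Bl_{\mathscr I_1\mathscr I_2}M$ from the universal property of the right-hand side you must check that the composite $\Bl_{\mathscr J}N \to N \to M$ pulls $\mathscr I_1\mathscr I_2$ back to an invertible ideal. The $\mathscr I_2$-factor is the exceptional divisor of the second blowup, which is fine; but the $\mathscr I_1$-factor is the pullback, under the second blowup, of the exceptional divisor of the first, and --- as the example above shows --- pullbacks of effective Cartier divisors under arbitrary morphisms need not be Cartier. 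One needs the special fact that a \emph{blowup} morphism pulls effective Cartier divisors back to effective Cartier divisors (locally: a nonzerodivisor $f\in A$ remains a nonzerodivisor in the affine blowup algebra $A[\tfrac{I}{a}]$, because that algebra has its $a$-power torsion killed). This is the missing counterpart of your local algebra lemma; that lemma, whose proof you give correctly, is precisely what yields the opposite direction, namely the factorization $\Bl_{\mathscr I_1\mathscr I_2}M \to N \to \Bl_{\mathscr J}N$. With the Cartier-pullback lemma added and the Yoneda language replaced by the two-maps-plus-uniqueness argument, your proof closes up and essentially coincides with the one cited by the paper.
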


\begin{prop}[{\cite[Prop.~IV-22]{GEOFSCHEME}}]\label{blowaffine}
Let $M = \Spec A$ be an affine scheme, and consider a closed subscheme $X = V (f_0,f_1,\ldots,f_r)\into M$. The blowup of $M$ along $X$ agrees with the closure in $M\underset{A}{\times} \BP^{r}_A= \BP^{r}_A$ of the graph of the morphism
\[
\alpha_{(f_0,f_1,\ldots,f_r)} \colon M \smallsetminus X \rightarrow \BP^{r}_A
\]
induced by the map $\OO_M^{\oplus (r+1)} \to \OO_M$ sending $(a_0,a_1,\ldots,a_r) \mapsto \sum_{0\leq i\leq r} a_if_i$.
\end{prop}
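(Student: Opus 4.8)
The plan is to realise $\Bl_I M$, where $I=(f_0,\dots,f_r)\subset A$, as $\Proj$ of the Rees algebra $R=\bigoplus_{d\geq 0}I^d$ (with $I^0=A$), to exhibit a closed immersion $\iota\colon \Bl_I M\into \BP^r_A$ cut out by the chosen generators, and then to identify its image with the closure of the graph by a density argument. First I would note that the generators $f_0,\dots,f_r$ define a surjective homomorphism of graded $A$-algebras $\phi\colon A[T_0,\dots,T_r]\onto R$, $T_i\mapsto f_i\in I=R_1$; surjectivity holds because each $I^d$ is spanned by the degree-$d$ monomials in the $f_i$. Applying $\Proj_A(-)$ turns $\phi$ into a closed immersion $\iota\colon \Bl_I M\into \Proj_A A[T_0,\dots,T_r]=\BP^r_A$ of $A$-schemes; in particular $\iota$ is compatible with the two projections to $M$, so its image sits over $M$.

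Next I would analyse $\iota$ over the open locus $M\smallsetminus X$. There the ideal $I$ is the unit ideal, so $\varepsilon_I$ restricts to an isomorphism $\varepsilon_I^{-1}(M\smallsetminus X)\simto M\smallsetminus X$. On the standard chart $D_+(T_i)=\Spec A[T_0/T_i,\dots,T_r/T_i]$ of $\BP^r_A$ the immersion $\iota$ sends the coordinate $T_j/T_i$ to $f_j/f_i$, since the corresponding chart of $\Proj R$ is $\Spec\big((R[f_i^{-1}])_0\big)$. Consequently, for $p\in M\smallsetminus X$ with $f_i(p)\neq 0$ one has $\iota(\varepsilon_I^{-1}(p))=[f_0(p):\cdots:f_r(p)]=\alpha_{(f_0,\dots,f_r)}(p)$, so $\iota$ identifies $\varepsilon_I^{-1}(M\smallsetminus X)$ with the graph $\Gamma_\alpha\subset (M\smallsetminus X)\times\BP^r$.

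Finally, I would upgrade this generic identification to the whole blowup. Since $M$ is integral (as in all our applications, e.g.\ $M=\BA^N$), the Rees algebra $R\subset A[t]$ is a domain, so $\Bl_I M$ is an integral scheme, birational to $M$ via $\varepsilon_I$; as $X$ is a proper closed subscheme, $\varepsilon_I^{-1}(M\smallsetminus X)$ is dense in the irreducible scheme $\Bl_I M$. Hence $\Bl_I M$ is the scheme-theoretic closure in $\BP^r_A$ of $\varepsilon_I^{-1}(M\smallsetminus X)$, which by the previous step is the closure of $\Gamma_\alpha$, as claimed. The step requiring care — and the main obstacle — is precisely this last identification: the kernel of $\phi$ that cuts out $\iota$ contains, but in general strictly exceeds, the ideal of the obvious $2\times2$ minors $T_if_j-T_jf_i$ vanishing on the graph, so one cannot simply read off defining equations. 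The density argument sidesteps computing $\ker\phi$, at the cost of using that $\Bl_I M$ is reduced and irreducible; for a general affine $M$ one would instead verify directly that $\ker\phi$ is the saturated homogeneous ideal of the graph closure, or reduce to the integral case and invoke \cite{GEOFSCHEME} for the remaining bookkeeping.
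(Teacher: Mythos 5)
The paper never proves this proposition at all: it is quoted directly from Eisenbud--Harris \cite[Prop.~IV-22]{GEOFSCHEME}, so your argument is a self-contained replacement for a citation rather than a variant of an in-paper proof. As such it is correct as far as it goes, and it goes the standard way: the surjection $A[T_0,\ldots,T_r]\onto\bigoplus_{d\geq 0}I^d$, $T_i\mapsto f_i$, induces a closed immersion $\iota\colon\Bl_I M\into\BP^r_A$ of $M$-schemes; the chart computation $T_j/T_i\mapsto f_j/f_i$ correctly identifies $\varepsilon_I^{-1}(M\smallsetminus X)$ with the graph of $\alpha_{(f_0,\ldots,f_r)}$; and density of this open subscheme in $\Bl_I M$ finishes the proof. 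Since every application in the paper has $M=\BA^N$ or an integral toric chart, the integrality hypothesis you invoke in the final step is harmless for the paper's purposes.

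Still, the statement is asserted for an \emph{arbitrary} affine scheme, and there your proof has a gap: the two repairs you sketch for non-integral $A$ (computing the saturation of $\ker\phi$, or ``reducing to the integral case'') are respectively laborious and not obviously available. The clean fix, which also disposes of the ``main obstacle'' you flag, is that you never need $\ker\phi$ nor irreducibility. The ideal sheaf $I\cdot\OO_{\Bl_I M}$ cutting out the exceptional divisor $\varepsilon_I^{-1}(X)$ is invertible --- this is the defining property of the blowup --- so on each affine chart $\Spec B\subset \Bl_I M$ it is generated by a nonzerodivisor $t$, and the localisation map $B\to B_t$ is injective. Hence the open complement $\varepsilon_I^{-1}(M\smallsetminus X)$ is \emph{scheme-theoretically} dense in $\Bl_I M$; since $\Bl_I M$ is closed in $\BP^r_A$, it is therefore the scheme-theoretic closure of the graph inside $\BP^r_A$. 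This proves the proposition for every affine $M$, with ``closure'' read scheme-theoretically (which agrees with your reduced closure in the integral case), and with no hypotheses beyond those in the statement.
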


\subsection{Normalisation and order functions}\label{section:normalization}

Recall that an irreducible quasicompact scheme $X$ is normal if for every closed point $p \in X$ the local ring $\OO_{X,p}$ is normal (integrally closed in its field of fractions). If $X$ is an integral scheme, then a \emph{normalisation} of $X$ is a pair $(Y,\mu)$, where $Y$ is a normal scheme and $\mu\colon Y \to X$ is a morphism such that if $\mu'\colon Y' \to X$ is a dominant morphism from a normal scheme $Y'$, then there exists a unique morphism $\theta \colon Y' \to Y$ such that $\mu\circ \theta = \mu'$.

\begin{prop}[{\cite[\S\,4.1.2, Prop.~1.22 and 1.25]{LIU}}]\label{prop:normalisation}
Let $X$ be an integral scheme. Then there exists a normalisation morphism $\mu \colon Y \rightarrow X$, unique up to unique isomorphism (of $X$-schemes). Moreover, a morphism $f\colon Y\rightarrow X$ is the normalisation morphism if and only if $Y$ is normal, and $f$ is birational and integral. If $X$ is a variety, the normalisation $\mu \colon Y \to X$ is a finite morphism.
\end{prop}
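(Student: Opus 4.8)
The statement bundles four assertions: existence of a normalisation, its uniqueness up to unique isomorphism, the characterisation of the normalisation morphism as the unique integral birational morphism from a normal scheme, and finiteness when $X$ is a variety. The plan is to dispatch uniqueness formally, build existence by a purely local construction, read off the characterisation from that construction, and then isolate finiteness as the only genuinely hard point. Uniqueness is immediate from the defining universal property: if $(Y,\mu)$ and $(Y',\mu')$ both satisfy it, feeding each into the property of the other produces mutually inverse $X$-morphisms, necessarily unique. So I would concentrate on constructing $(Y,\mu)$.

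Write $K = \BC(X)$ for the function field. On an affine open $U = \Spec A \subset X$, with $A$ a domain having $\Frac(A) = K$, let $A'$ be the integral closure of $A$ in $K$. Then $\Spec A'$ is normal, and $\Spec A' \to \Spec A$ is integral (because $A \into A'$ is) and birational (both fraction fields equal $K$). The crucial point enabling the globalisation is that integral closure commutes with localisation, i.e. $(A_f)' = (A')_f$ for every $f \in A$; this forces the local pieces $\Spec A'$ to agree over intersections of affine opens, so they patch to a normal scheme $Y$ equipped with an integral birational morphism $\mu \colon Y \to X$.

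To verify that $(Y,\mu)$ has the required universal property — and simultaneously obtain the stated ``if and only if'' — I would argue locally. A dominant morphism $\mu' \colon Y' \to X$ from a normal (hence integral) scheme induces an inclusion $K \into L := \BC(Y')$. Over an affine $\Spec B \subset Y'$ lying above $\Spec A$, the ring $B$ is integrally closed in $L$ and contains $A$; thus every element of $A' \subset K \subset L$, being integral over $A \subset B$, already lies in $B$. This gives a factorisation $A' \to B$, unique since all rings embed in their fraction fields, and these glue to the desired $\theta \colon Y' \to Y$. The identical computation shows conversely that any integral birational $f \colon Y \to X$ with $Y$ normal satisfies the universal property, hence coincides with $\mu$ by uniqueness — which is precisely the characterisation.

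The finiteness assertion for a variety $X$ is the only step requiring real input, and it is where I expect the main obstacle to lie. After reducing to an affine chart $\Spec A$ with $A$ a finitely generated $\BC$-algebra domain, the goal is to prove $A'$ is a finite $A$-module. I would apply Noether normalisation to find a polynomial subring $R = \BC[t_1,\ldots,t_d] \subset A$ over which $A$ is finite; by transitivity of integral closure, $A'$ is equally the integral closure of $R$ in $K$, and $K/\Frac(R)$ is a finite field extension. Since $\BC$ has characteristic zero this extension is separable, so the trace form on $K$ over $\Frac(R)$ is nondegenerate; together with the normality and noetherianity of $R$, a dual-basis argument confines $A'$ inside a finitely generated $R$-module, whence $A'$ is finite over $R$ (as $R$ is Noetherian) and therefore over $A$. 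This separability-plus-trace argument is the heart of the matter; everything preceding it is formal.
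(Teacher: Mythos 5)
Your proof is correct, but note that the paper does not prove this proposition at all: it is quoted as background from Liu's textbook (\S\,4.1.2, Prop.~1.22 and 1.25 of the cited reference), so there is no internal proof to compare against. Your argument is essentially the standard one found in that source: uniqueness formally from the universal property, existence by gluing the affine-local integral closures $\Spec A'$ (using that integral closure commutes with localisation), the characterisation by the same local computation run in both directions, and finiteness via Noether normalisation plus the trace-form/dual-basis bound. The only point worth flagging is that your finiteness step uses separability of $\BC(X)$ over $\Frac(R)$, i.e.~it is a characteristic-zero argument; the statement in Liu holds for varieties over any field (in positive characteristic one needs a refinement of the trace argument), but since the paper works exclusively over $\BC$ this restriction costs nothing here.
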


The following two results describe the behaviour of birational morphisms with a normal variety as a target. 

\begin{theorem}[Zariski's Main Theorem {\cite[Cor.~11.4]{HARTSHORNE}}]\label{ZMT} 
Let $f\colon X\rightarrow Y$ be a birational projective morphism of noetherian integral schemes, and assume that $Y$ is normal. Then, for every point $y\in Y$, the fibre $f^{-1}(y)$ is connected.
\end{theorem}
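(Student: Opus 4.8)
The plan is to reduce the statement to the theorem on formal functions. The first step is to show that birationality and normality together force $f_\ast\OO_X=\OO_Y$. Since $f$ is projective it is proper, so $f_\ast\OO_X$ is a coherent sheaf of $\OO_Y$-algebras; as $X$ is integral, it is moreover a sheaf of domains and hence torsion-free, so it injects into its generic stalk. Let $L$ be the common function field of $X$ and $Y$. Because $f$ is birational, its generic fibre is $\Spec L$, so that generic stalk is $L$, and $f_\ast\OO_X$ is a finite — hence integral — $\OO_Y$-subalgebra of the constant sheaf $L$. As $Y$ is normal, $\OO_Y$ is integrally closed in $L$, whence $f_\ast\OO_X=\OO_Y$. (Equivalently, this identifies the Stein factorisation of $f$ with $f$ itself, the finite part being an isomorphism by normality.)

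Next, fix $y\in Y$ and write $X_n = X\times_Y\Spec\bigl(\OO_{Y,y}/\mm_y^{\,n+1}\bigr)$ for the infinitesimal neighbourhoods of the scheme-theoretic fibre $X_0=f^{-1}(y)$; they all share the same underlying topological space $\lvert X_y\rvert$. The theorem on formal functions in degree $0$, combined with the equality $f_\ast\OO_X=\OO_Y$ just established, yields a canonical isomorphism
\[
\widehat{\OO_{Y,y}} \;\cong\; (f_\ast\OO_X)^{\wedge}_y \;\cong\; \varprojlim_{n}\HO^0(X_n,\OO_{X_n}).
\]
The left-hand ring is a complete local ring; in particular its spectrum is connected, so it contains no idempotents other than $0$ and $1$.

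To finish I would argue by contradiction. If $X_y$ were disconnected, then $\lvert X_y\rvert$ would split as a disjoint union of two nonempty clopen pieces. Since every $X_n$ has this same underlying space, each $X_n$ decomposes accordingly as a disjoint union of two open subschemes, inducing a product decomposition $\HO^0(X_n,\OO_{X_n})=A_n'\times A_n''$ that is compatible with the transition maps of the tower. Passing to the inverse limit would then exhibit $\widehat{\OO_{Y,y}}$ as a nontrivial product of rings, contradicting its locality; hence $X_y$ is connected. I expect the main obstacle to be the formal-functions isomorphism itself: the identification of the completed stalk $(f_\ast\OO_X)^{\wedge}_y$ with $\varprojlim_n\HO^0(X_n,\OO_{X_n})$ is the deep input, and it is precisely the ingredient I would import as a black box, being the technical heart of this form of Zariski's Main Theorem. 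By comparison the reduction $f_\ast\OO_X=\OO_Y$ and the idempotent bookkeeping are formal, the only subtlety in the latter being the compatibility of the product decompositions across the whole inverse system — which is guaranteed because the disconnection is topological and therefore independent of $n$.
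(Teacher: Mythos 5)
Your proposal is correct, and it coincides with the proof the paper implicitly relies on: the paper does not prove this statement but cites \cite[Cor.~11.4]{HARTSHORNE}, and your argument --- first forcing $f_\ast\OO_X=\OO_Y$ from properness, coherence, birationality and normality (i.e.\ trivialising the Stein factorisation), then combining the theorem on formal functions with the fact that the complete local ring $\widehat{\OO_{Y,y}}$ admits no nontrivial idempotents --- is precisely Hartshorne's proof of that corollary (III.11.3--11.4). The only imported ingredient is the formal-functions isomorphism, which you correctly identify and isolate as the black box; the idempotent bookkeeping across the tower $X_n$ is sound because the clopen decomposition is purely topological and all $X_n$ share the same underlying space.
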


\begin{lemma}[{\cite[\href{https://stacks.math.columbia.edu/tag/0AB1}{Tag 0AB1}]{stacks-project}}]\label{lemmabir+norm=iso} A finite (or even integral) birational morphism $f\colon X\rightarrow Y$ of integral schemes with $Y$ normal is an isomorphism. 
\end{lemma}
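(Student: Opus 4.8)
The plan is to reduce immediately to a purely commutative-algebra statement and then invoke nothing more than the definition of integral closure. Being an isomorphism can be checked locally on the target, and an integral morphism is in particular affine, so I would first cover $Y$ by affine opens $\Spec B \subseteq Y$ with $B$ an integrally closed domain (this is normality of $Y$), and observe that the preimage of each such open is $\Spec A$ for a $B$-algebra $A$ that is integral over $B$. It therefore suffices to show that each structure map $B \to A$ is an isomorphism, whence $f$ is an isomorphism over $\Spec B$ and, by gluing over the cover, globally.

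Next I would translate the two hypotheses into ring-theoretic form. Since $f$ is birational and both $X$ and $Y$ are integral, the generic point of $X$ maps to the generic point of $Y$ and the induced map of function fields is an isomorphism. Concretely, dominance of a morphism of integral affine schemes forces the coordinate-ring map $B \to A$ to be injective, and the identification of function fields means that, after viewing $B$ as a subring of $A$, the two share a common fraction field $K = \Frac(B) = \Frac(A)$. Thus we are reduced to the configuration $B \subseteq A \subseteq K$ with $A$ integral over $B$.

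The conclusion is then forced by normality alone: any $a \in A$ lies in $K = \Frac(B)$ and is integral over $B$, so because $B$ is integrally closed in its fraction field we get $a \in B$. Hence $A \subseteq B$, and together with $B \subseteq A$ this yields $A = B$, so $B \to A$ is an isomorphism. This completes the affine case, and the reduction step above completes the proof.

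I do not expect a substantial obstacle here, since the heart of the statement is essentially a restatement of the definition of integral closure. The only points requiring a little care are the bookkeeping in the reduction to the affine case (checking that ``isomorphism'' is local on the target and that integral/affine morphisms restrict well to affine opens of $Y$) and the passage from the geometric hypothesis \emph{birational} to the two algebraic facts used above, namely injectivity of $B \to A$ and equality of fraction fields; both are routine once one recalls that a dominant morphism of integral affine schemes corresponds to an injection of coordinate rings inducing an isomorphism on fraction fields.
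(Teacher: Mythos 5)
Your proof is correct and is essentially the standard argument given in the reference the paper cites (Stacks Project, Tag 0AB1): the paper itself offers no proof, and your reduction to the affine case followed by the observation $B \subseteq A \subseteq \Frac(B)$ with $A$ integral over the integrally closed ring $B$ is exactly how that cited proof goes. Nothing is missing; the bookkeeping steps you flag (locality of isomorphism on the target, affineness of integral morphisms, injectivity of $B \to A$ and equality of fraction fields from birationality) are all routine and handled correctly.
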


Recall that if $X$ is a variety and $V \subset X$ is a prime cycle of codimension $1$, the order function $\ord_V \colon \BC(X)^\times \to \BZ$ is defined as follows: for $a \in \OO_{X,V}$, one sets $\ord_V(a) = \length_{\OO_{X,V}}(\OO_{X,V}/a\cdot \OO_{X,V})$, and for $h = a/b \in \BC(X)^\times$, one proves easily that the definition $\ord_V(h) = \ord_V(a) - \ord_V(b)$ is well given. This definition generalises the more familiar notion that applies when $X$ is normal: in this case, the local ring $(\OO_{X,V},\mm_{X,V})$ is a discrete valuation ring (and not just a local integral domain), and one defines $\ord_V(a)$ to be the largest integer $k$ such that $a \in \mm_{X,V}^k$. The two notions are related as shown in the next result.

\begin{prop}[{\cite[Ex.~1.2.3]{FULTON}}] \label{fulton_example}
Let $X$ be a variety, $\mu\colon Y\rightarrow X$ the normalisation of $X$, and let $V \into X$ be a subvariety. If $h \in \BC(X)^\times = \BC(Y)^\times$, then
\[
\ord_V(h) = \underset{\mu\colon W \to V}{\sum} \ord_W(h)\cdot  [\BC(W):\BC(V)],
\]
where the sum is over all subvarieties $W\into Y$ which map onto $V$, and $[\BC(W):\BC(V)]$ denotes the degree of the corresponding field extension. 
\end{prop}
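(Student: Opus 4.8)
The plan is to reduce the statement to a computation with one-dimensional local rings and then compare two ways of measuring a length. Since both sides of the claimed identity are additive under multiplication in $\BC(X)^\times$ (for the left-hand side this is the very definition of $\ord_V$ extended to the fraction field, and for the right-hand side it follows from additivity of each valuation $\ord_W$ together with the constancy of the degrees $[\BC(W):\BC(V)]$), writing $h = a/b$ reduces us to the case $h = a \in \OO_{X,V}$ nonzero. Localising at the generic point of $V$, I would set $A = \OO_{X,V}$, a one-dimensional local Noetherian domain with fraction field $\BC(X)$, and let $B$ be the integral closure of $A$ in $\BC(X) = \BC(Y)$. By \Cref{prop:normalisation} the normalisation of a variety is finite, so $B$ is a finite $A$-module; it is the semilocalisation of $Y$ at the (finitely many) generic points of the codimension-$1$ subvarieties $W \into Y$ lying over $V$, and its maximal ideals $\mm_W$ correspond bijectively to these $W$. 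Since $Y$ is normal and each $W$ has codimension $1$, each localisation $B_{\mm_W} = \OO_{Y,W}$ is a discrete valuation ring with valuation $\ord_W$.

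With this setup, the target identity becomes $\length_A(A/aA) = \sum_W \ord_W(a)\cdot[\BC(W):\BC(V)]$, and the strategy is to show that \emph{both} sides equal $\length_A(B/aB)$. For the right-hand side, I would decompose the Artinian ring $B/aB$ as the product of its localisations $B_{\mm_W}/aB_{\mm_W}$ over the maximal ideals of $B$; each such factor is a $B_{\mm_W}$-module of length $\ord_W(a)$, and each of its composition factors is the residue field $B/\mm_W = \BC(W)$, which has dimension $[\BC(W):\BC(V)]$ as a vector space over $A/\mm_A = \BC(V)$. Summing over $W$ yields
\[
\length_A(B/aB) = \sum_W \ord_W(a)\cdot[\BC(W):\BC(V)].
\]

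For the left-hand side, the key input is that $B/A$ has finite length as an $A$-module: since $A$ and $B$ share the fraction field $\BC(X)$ and $B$ is a finite $A$-module, the quotient $B/A$ is a finitely generated torsion module, hence supported only at the maximal ideal of $A$. I would then apply the snake lemma to the endomorphism ``multiplication by $a$'' acting on the short exact sequence $0 \to A \to B \to B/A \to 0$. Because $A$ and $B$ are domains and $a \neq 0$, the vertical maps on $A$ and $B$ are injective, so the snake lemma collapses to the four-term exact sequence
\[
0 \to \ker(a\mid_{B/A}) \to A/aA \to B/aB \to \coker(a\mid_{B/A}) \to 0.
\]
Taking the alternating sum of lengths and using that, for any endomorphism of a finite-length module, the kernel and the cokernel have equal length, the two outer terms cancel and give $\length_A(A/aA) = \length_A(B/aB)$. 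Combining the two computations proves the formula.

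The main obstacle, and the point requiring the most care, is the bookkeeping in the snake-lemma step: one must verify that $B/A$ genuinely has finite length over $A$ (so that multiplication by $a$ on it has kernel and cokernel of equal, finite length) and that the four-term sequence is exact with the indicated maps. The residue-field degree $[\BC(W):\BC(V)]$ enters only through the length-versus-dimension comparison in the computation of $\length_A(B/aB)$, so once the finiteness of $B/A$ is secured, the remainder is a routine, if delicate, application of the additivity of length.
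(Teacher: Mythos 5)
Your proof is correct. The paper does not actually prove this proposition --- it is imported as background from Fulton's \emph{Intersection Theory} (Ex.~1.2.3) --- and your argument is essentially the standard one underlying Fulton's example: reduce by multiplicativity of both sides to $h=a\in\OO_{X,V}$, identify the integral closure $B$ of $A=\OO_{X,V}$ with the semilocal ring of $Y$ at the finitely many codimension-one subvarieties $W$ lying over $V$, compute $\length_A(B/aB)=\sum_W \ord_W(a)\cdot[\BC(W):\BC(V)]$ by splitting the artinian ring $B/aB$ into its localisations and transferring lengths through the residue field extensions, and finally establish $\length_A(A/aA)=\length_A(B/aB)$ via the snake lemma applied to multiplication by $a$ on $0\to A\to B\to B/A\to 0$. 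The two points you single out as delicate are indeed the crux and are handled correctly: $B/A$ is a finitely generated torsion module over the one-dimensional local domain $A$, hence of finite length, and for an endomorphism of a finite-length module the kernel and cokernel have equal length, which makes the four-term exact sequence collapse to the desired equality.
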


\subsection{Normalisation of blowups}\label{sec:normal-blowups}
Recall that if $I$ is an ideal in a polynomial ring $A=\BC[x_1,\ldots,x_N]$, then the \emph{Rees algebra} of $I$ is 
\[
A[It] = A \oplus It \oplus I^2t^2 \oplus I^3t^3 \oplus \cdots \subset A[t].
\]
Since $A$ is a domain, so is $A[It]$, see \cite[\href{https://stacks.math.columbia.edu/tag/01OF}{Tag 01OF}]{stacks-project}, and therefore the blowup $\Bl_I\BA^N$ is again a variety (an integral scheme of finite type over $\BC$). 
If $I$ is monomial, by the general theory of normalisation in this setup (see e.g.~\cite[\S\,II.5]{Lipman_monomial} for a thorough treatment), the integral closure of $A[It]$ is 
\[
\overline{A[It]} = A \oplus \overline It \oplus \overline{I^2}t^2 \oplus \overline{I^3}t^3 \oplus \cdots
\]
where, after setting $x^m = x_1^{m_1}\cdots x_N^{m_N}$ for $m = (m_1,\ldots,m_N) \in \BN^N$, one defines
\begin{equation}\label{eqn:I_bar}
\overline{I^i} = \left(x^m \in A \,\,\big|\,\, (x^m)^p \in I^{ip}\textrm{ for some }p\geq 1\right) \subset A.
\end{equation}
By \cite[Ex.~6.C.9]{Patil}, the inclusion $A[It] \into \overline{A[It]}$ induces an everywhere defined morphism
\[
\mu_I \colon \Proj \overline{A[It]} \to \Bl_I\BA^N,
\]
which agrees with the normalisation morphism.
In general, $A[It]$ is normal if and only if $I^i = \overline{I^i}$ for every $i \geq 1$. 

By \cite[Prop.~3.1]{REID}, in the case $N=2$, the algebra $\overline{\BC[x,y][It]}$ agrees with the Rees algebra $\BC[x,y][\overline{I}t]$ of the monomial ideal $\overline I \subset \BC[x,y]$. In particular the normalisation of $\Bl_I\BA^2$ is the blowup of $\BA^2$ along $\overline I$.
This motivates the following common terminology.

\begin{definition}
We will say that an ideal $I \subset A$ is \emph{normal} if its Rees algebra $A[It]$ is normal. When $A=\BC[x,y]$, we will call $\overline{I}$ the normalisation of $I$.
\end{definition}

We next state a special case of \cite[Prop.~1.1]{MR2029820} suited for our purposes (the general statement involves a polynomial ring in an arbitrary number of variables). 

\begin{prop}[{\cite[Prop.~1.1]{MR2029820}}]\label{villareal}
Let $I=(x^{a_1},x^{a_2}y^{b_2},\ldots, x^{a_{s-1}}y^{b_{s-1}},y^{b_s})\subset \BC[x,y]$ be a monomial ideal of finite colength and let $Q_I\subset \BR^2$ be the subset defined by
\[
Q_I=\conv_{\BQ} ((a_1,0),(a_2,b_2),\ldots,(a_{s-1},b_{s-1}),(0,b_s))+\BQ^2_{\ge0}
\]
where $\conv_{\BQ}(p_1,\ldots,p_s)\subset \BQ^2$ denotes the convex hull of a set of points $p_1,\ldots,p_s \in \BN^2$.
Then, for $i\neq 0$, one has
\[
\overline{I^i} = \left(x^ay^b \,\,\big|\,\, (a,b) \in i\cdot Q_I \cap \BZ^2 \right).
\]
\end{prop}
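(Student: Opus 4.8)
The plan is to establish the asserted equality of monomial ideals by comparing exponents, working directly from the definition \eqref{eqn:I_bar} of $\overline{I^i}$. Write $v_1 = (a_1,0)$, $v_2 = (a_2,b_2),\ldots, v_s = (0,b_s) \in \BN^2$ for the exponent vectors of the listed generators of $I$, so that $I = (x^{v_1},\ldots,x^{v_s})$ and $Q_I = \conv_\BQ(v_1,\ldots,v_s) + \BQ^2_{\ge 0}$. The one tool I need is the standard combinatorial description of a power of a monomial ideal: since $I^{ip}$ is generated by the products of $ip$ generators of $I$, a monomial $x^u$ lies in $I^{ip}$ if and only if $x^u$ is divisible by one such product, i.e.\ if and only if
\[
u \in \Bigl\{\,\textstyle\sum_{k=1}^s c_k v_k \;\Big|\; c_k \in \BZ_{\ge 0},\ \sum_{k=1}^s c_k = ip\,\Bigr\} + \BZ^2_{\ge 0}.
\]
Note that $i\cdot Q_I$ is stable under translation by $\BZ^2_{\ge 0}$, so a monomial $x^m$ lies in the ideal on the right-hand side of the statement precisely when $m\in i\cdot Q_I\cap\BZ^2$; since both ideals in question are monomial, it therefore suffices to show that $x^m\in\overline{I^i}$ if and only if $m\in i\cdot Q_I\cap\BZ^2$.

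For the inclusion $\overline{I^i}\subseteq (x^ay^b \mid (a,b)\in i\cdot Q_I\cap\BZ^2)$, I would take a monomial $x^m \in \overline{I^i}$ and use \eqref{eqn:I_bar} to fix $p\ge 1$ with $x^{pm}\in I^{ip}$. The characterisation above yields integers $c_k \in \BZ_{\ge 0}$ with $\sum_k c_k = ip$ and a vector $u\in\BZ^2_{\ge 0}$ such that $pm = \sum_k c_k v_k + u$. Dividing by $ip$ gives
\[
\frac{m}{i} = \sum_{k=1}^s \frac{c_k}{ip}\,v_k + \frac{u}{ip},
\]
and since $\sum_k \tfrac{c_k}{ip} = 1$ and $\tfrac{u}{ip}\in\BQ^2_{\ge 0}$, the right-hand side lies in $\conv_\BQ(v_1,\ldots,v_s)+\BQ^2_{\ge 0} = Q_I$. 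Hence $m\in i\cdot Q_I$, and $m\in\BZ^2$ by construction.

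For the reverse inclusion I would start from $m\in i\cdot Q_I\cap\BZ^2$ and write $m/i = \sum_k \lambda_k v_k + w$ with $\lambda_k\in\BQ_{\ge 0}$, $\sum_k\lambda_k = 1$, and $w\in\BQ^2_{\ge 0}$. Choosing a positive integer $p$ that clears all the denominators appearing in the $\lambda_k$ and in the coordinates of $w$, the quantities $c_k \defeq ip\lambda_k$ and $u \defeq ip\,w$ become nonnegative integers, with $\sum_k c_k = ip$ and $pm = \sum_k c_k v_k + u$. By the combinatorial description, $x^{pm}\in I^{ip}$, whence $x^m\in\overline{I^i}$ by \eqref{eqn:I_bar}. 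Combining the two inclusions shows that the two monomial ideals contain the same monomials, and hence coincide.

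The argument is essentially bookkeeping, and I expect the only genuinely delicate point to be the denominator-clearing step in the reverse inclusion: it is exactly here that the ``for some $p\ge 1$'' quantifier in \eqref{eqn:I_bar} is used, turning the rational convex-combination data coming from $i\cdot Q_I$ into an honest integral factorisation witnessing membership in $I^{ip}$. One should also verify that no hypothesis beyond $I$ being a monomial ideal is actually consumed in the computation; the finite-colength assumption guarantees that $Q_I$ meets both coordinate axes (through $v_1$ and $v_s$), so that $Q_I$ is a genuine two-dimensional Newton polyhedron, but the identification of $\overline{I^i}$ with $i\cdot Q_I\cap\BZ^2$ goes through verbatim for any monomial ideal.
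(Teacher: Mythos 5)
Your argument is correct, but there is no proof in the paper to compare it against: the paper states this proposition as an imported result, citing \cite[Prop.~1.1]{MR2029820}, and gives no argument of its own. Within the paper's framework your proof is exactly what is required, because the paper takes \eqref{eqn:I_bar} as the \emph{definition} of $\overline{I^i}$ for monomial ideals; granting that, both sides are monomial ideals, and your exponent bookkeeping (the divisibility description of $I^{ip}$, division by $ip$ to land in $Q_I$ for one inclusion, denominator clearing via the ``for some $p\ge 1$'' quantifier for the other) is the standard Newton-polyhedron argument and is complete. Two small points are worth making explicit. First, in the forward inclusion you pass from ``$x^m$ lies in the ideal $\overline{I^i}$'' to ``$(x^m)^p\in I^{ip}$ for some $p$''; a priori a monomial in $\overline{I^i}$ is only a monomial multiple of one of the generators listed in \eqref{eqn:I_bar}, but this is harmless since the defining condition is stable under multiplication by monomials: if $(x^{m'})^p\in I^{ip}$ and $x^m=x^{m'}x^w$ with $w\in\BZ^2_{\ge0}$, then $(x^m)^p=(x^{m'})^px^{pw}\in I^{ip}$. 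Second, your proof establishes the purely combinatorial identity relative to \eqref{eqn:I_bar}; the ring-theoretic content of the cited result, namely that \eqref{eqn:I_bar} genuinely computes the graded pieces of the integral closure $\overline{A[It]}$ of the Rees algebra, is not addressed by your argument, and does not need to be, since the paper imports that fact separately from \cite{Lipman_monomial}. Your closing remark is also accurate: finite colength is never consumed, and the same computation works verbatim for any monomial ideal (in any number of variables).
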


\begin{remark}\label{rmk:normality-convexity}
\Cref{villareal} provides a criterion to establish whether, given a monomial ideal of finite colength $I\subset\BC[x,y]$, the blowup variety $\Bl_I\BA^2$ is normal or not. Explicitly, if $Q_I$ is defined as in \Cref{villareal} and $A_I=\set{(a,b)\in\BN^2|x^ay^b\in I}$, then $\Bl_I\BA^2$ is normal if and only if 
\[
A_I=Q_I\cap \BN^2.
\]
Moreover, we have the equality $Q_I=\conv_{\BQ}(A_I)$ and, as a consequence, if $I,J\subset\BC[x,y]$ are normal ideals, then $IJ$ is normal. Indeed, by general properties of convexes (see {\cite[\S\,2.2.]{COX}}), we have
\begin{align*}
Q_{IJ}\cap \BN^2
&=\conv_{\BQ}(A_{IJ})\cap\BN^2\\
&=\conv_{\BQ}(A_{I}+A_J)\cap \BN^2\\
&=(\conv_{\BQ}(A_{I})+\conv_{\BQ}(A_J))\cap \BN^2\\
&=A_I+A_J\\
&=A_{IJ}.
\end{align*}
Notice that the converse in not true. For instance, setting $\mm = (x,y)$, we have
\[
\mm^3=\mm\cdot (x^2,y^2),
\]
and we shall see in \Cref{esnonnorm} that $(x^2,y^2)$ is not normal.
\end{remark}

\begin{example}
Consider the two ideals $I=(x^2,y^2)$ and $J=(x^2,y^3)$ in $\BC[x,y]$. Then, 
\begin{align*}
A_I&=\Set{(a,b)\in\BN^2 | a,b\geq 2 }, \\
A_J&=\Set{(a,b)\in\BN^2 | a\geq 2 ,b\geq 3 }.
\end{align*}
Since $(1,1)\in (Q_I\cap \BN^2)\smallsetminus A_I$ and $(1,2)\in (Q_J\cap \BN^2)\smallsetminus A_J$, the blowups $\Bl_I\BA^2$ and $\Bl_J\BA^2$ are not normal. The integral closures of the Rees algebras are respectively given by 
\begin{align*}
    \overline{\BC[x,y][It]}&=\BC[x,y][\overline{I}t] \\
    \overline{\BC[x,y][Jt]}&=\BC[x,y][\overline{J}t]
\end{align*}
where $\overline{I}=\mm^2$ and $\overline{J}=(x^2,xy^2,y^3)$.
\end{example}

We will see many examples of normalisations of blouwps of the affine plane $\BA^2$ with center a monomial ideal of finite colength $I\subset\BC[x,y]$ in \Cref{sec:monomial-normal,sec:monomial}.

Ferrers diagrams of ideals which have normal Rees algebras admit a useful description that was given in \cite{VILLAREAL2} and we present below.
\begin{theorem}[{\cite[Thm.~2.13]{VILLAREAL2}}]\label{teovilla}
    Let $I\subset\BC[x,y]$ be minimally generated by the $n+1$ monomials
\begin{equation}\label{eqn:monomial_cazzi}
    x^{a_0} , x^{a_1}y^{b_{n-1}} ,\ldots , x^{a_i}y^{b_{n-i}} ,\ldots, x^{a_{n-1}}y^{b_1}, y^{b_0}
\end{equation}
    where $a_i> a_{i+1}$ and $b_i>b_{i+1}$ for $i=0,\ldots,n-2$. Set $a_n=b_n=0$. If the blowup $\Bl_I\BA^2$ is normal, then there exists an integer $k$ such that $0 \le k \le n$ and
\begin{enumerate}
    \item  $a_n=0, a_{n-1} = 1, a_{n-2} = 2,\ldots , a_k = n - k$,
    \item $b_n=0, b_{n-1} = 1, b_{n-2} = 2, \ldots, b_{n-k} = k $,
    \item $b_i \le \lceil \frac{b_{i-1}+b_{i+1}}{2} \rceil$ for $i= 1,\ldots,n-k-1$,
    \item $a_i \le \lceil \frac{a_{i-1}+a_{i+1}}{2} \rceil$ for $i= 1,\ldots,k-1$.
\end{enumerate}
\end{theorem}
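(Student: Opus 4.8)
The plan is to convert normality into a statement about lattice points of the Newton polygon and then to read off the four conditions one staircase step at a time. First I would invoke the criterion of \Cref{rmk:normality-convexity}: writing $A_I=\set{(a,b)\in\BN^2|x^ay^b\in I}$ and letting $Q_I=\conv_{\BQ}(A_I)+\BQ^2_{\ge0}$ be the Newton region of \Cref{villareal}, the blowup $\Bl_I\BA^2$ is normal if and only if $A_I=Q_I\cap\BN^2$. The problem then becomes combinatorial: every lattice point of $Q_I$ must be dominated (componentwise) by one of the generators $g_i=(a_i,b_{n-i})$, which are linearly ordered by $a_0>\cdots>a_n=0$ and $0=b_n<\cdots<b_0$, with $g_0=(a_0,0)$ and $g_n=(0,b_0)$ the two pure powers. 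I would phrase a failure of normality as an \emph{escaping} lattice point, that is, a point of $Q_I\cap\BN^2$ dominated by no generator.

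The heart of the argument is a local analysis at each pair of consecutive generators. Put $\Delta x_i=a_i-a_{i+1}$ and $\Delta y_i=b_{n-i-1}-b_{n-i}$, and let $T_i$ be the lattice right triangle with legs $\Delta x_i,\Delta y_i$, hypotenuse the chord $g_ig_{i+1}$, and right-angle vertex the reentrant corner $(a_i,b_{n-i-1})$. Being the convex hull of the three lattice points $g_i$, $g_{i+1}$ and $(a_i,b_{n-i-1})\in A_I$, the triangle $T_i$ lies inside $Q_I$; on the other hand, the only lattice points of $T_i$ that belong to $A_I$ are those on its two legs. Normality therefore forces $T_i$ to contain no lattice point off its legs, and by Pick's formula this holds precisely when $\gcd(\Delta x_i,\Delta y_i)=1$ and $(\Delta x_i-1)(\Delta y_i-1)=0$, i.e.\ when $\min(\Delta x_i,\Delta y_i)=1$. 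Thus every step of the staircase is a unit step, in either the $x$- or the $y$-direction.

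I would then use the convexity of the lower boundary $\partial Q_I$, together with $g_0$ and $g_n$ being pure powers, to show that all unit-$y$ steps precede all unit-$x$ steps: a unit-$x$ step sitting below a later unit-$y$ step straightens $\partial Q_I$ into a boundary edge of lattice length $>1$, which carries an escaping point. This produces a pivot index $k$ and gives conditions (1) and (2), namely $b_{n-i}=i$ for $i\le k$ and $a_i=n-i$ for $i\ge k$. Conditions (3) and (4) then record the discrete convexity of the two chains of free coordinates $a_0,\dots,a_k$ and $b_0,\dots,b_{n-k}$: for a generator lying strictly above the chord joining its two neighbours (hence not a vertex of $Q_I$), the lattice points directly below it, down to $\partial Q_I$, must still be dominated, and the exact threshold preventing an escape there is the rounded midpoint bound $a_i\le\lceil (a_{i-1}+a_{i+1})/2\rceil$, respectively $b_i\le\lceil (b_{i-1}+b_{i+1})/2\rceil$.

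The main obstacle is precisely the bookkeeping of generators that are not vertices of $Q_I$. The clean empty-triangle argument of the second step controls only consecutive generators and yields no information beyond the unit-step property; an interior generator sitting one lattice level too high above a boundary edge produces an escaping point directly beneath it while violating none of the local step conditions. Pinning down that the sharp threshold is exactly the ceiling of the midpoint — and checking that this bound is at once forced by normality and consistent with the minimality of the chosen generating set — is the delicate part, and is where the ceilings in conditions (3) and (4) come from.
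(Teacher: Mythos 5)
A preliminary remark on the comparison itself: the paper offers \emph{no} proof of \Cref{teovilla} — it is imported wholesale from Villarreal's paper via the bracketed citation — so there is no internal argument to measure yours against, and your proposal has to stand on its own. Its frame is the right one: by \Cref{villareal} and \Cref{rmk:normality-convexity}, normality is exactly the equality $A_I = Q_I\cap\BN^2$, and your second step is correct — the empty-triangle computation via Pick's theorem does show that normality forces $\min(\Delta x_i,\Delta y_i)=1$ at every step of the staircase (the $\gcd$ condition you list is redundant, being implied by $\min=1$).

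The genuine gap is in your third step. The certificate you propose — a unit-$x$ step before a later unit-$y$ step ``straightens $\partial Q_I$ into a boundary edge of lattice length $>1$, which carries an escaping point'' — is false. Take $I=(x^3,x^2y^3,y^4)$: the step from $(3,0)$ to $(2,3)$ is a unit-$x$ step, the later step from $(2,3)$ to $(0,4)$ is a unit-$y$ step, and $I$ is indeed not normal; but the boundary edge of $Q_I$ runs straight from $(3,0)$ to $(0,4)$, has lattice length $\gcd(3,4)=1$, and carries no lattice point in its relative interior. The escaping points, $(2,2)$ and $(1,3)$, lie \emph{strictly inside} $Q_I$. (In the other direction, an edge of lattice length $>1$ is no certificate of non-normality either: its interior lattice points can be generators, as happens for $\mm^n$.) The claim you need is true and can be repaired as follows. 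After your step two, pass to an innermost offending pair: a strictly steep step with upper endpoint $(u,v)$ and lower endpoint $(u+1,v-q)$, $q\ge 2$, followed by $d\ge 0$ diagonal steps and then a strictly shallow step with far endpoint $(u-d-p,v+d+1)$, $p\ge 2$. The lattice point $(u,v-1)$ dominates no generator, since every generator with abscissa $\le u$ has ordinate $\ge v$; on the other hand, both $(u+1,v-q)$ and $(u-d-p,v+d+1)$ have coordinate sum $\le u+v-1$, hence so does every point of the segment joining them, so the point of that segment with abscissa $u$ has ordinate $\le v-1$ and therefore $(u,v-1)\in Q_I$. This contradiction with normality yields the pivot $k$ and conditions (1)--(2). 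Your fourth step, which you flag as the main obstacle, is then not delicate at all, because the theorem only asserts necessity: if (3) fails, i.e.\ $2b_i\ge b_{i-1}+b_{i+1}+2$, then the point $(i,b_i-1)$ dominates no generator (given (1)--(2)) yet lies on or above the chord joining the neighbouring generators $(i+1,b_{i+1})$ and $(i-1,b_{i-1})$, hence in $Q_I$; symmetrically for (4). No consistency check against minimality of the generating set is required.
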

\begin{remark}\label{conseguenzeVillareal}
If an ideal $I \subset \BC[x,y]$, as in \eqref{eqn:monomial_cazzi}, is normal, then the boundary $\partial Q_I$ of $Q_I$ is piece-wise linear, i.e.
\[
\partial Q_I=\overline{\BQ^2\smallsetminus Q_I}\cap Q_I=s_0\cup\cdots\cup s_{t+1},
\]
where $\overline{\BQ^2\smallsetminus Q_I}$ denotes the Euclidean closure and $s_0,\ldots,s_{t+1}$ are, possibly unbounded, segments with different slopes. Let us also denote by $v_0,\ldots,v_t$ the vertices of $\partial Q_I$ i.e. 
\[
\set{v_i|0\leq i\leq t}=\set{s_i\cap s_j|0\leq i<j\leq t+1}.
\]
For instance, $I=(1)$ if and only if $t=0$ which also implies $v_0=(0,0)$. 

Then, as a consequence of \Cref{teovilla}, up to relabeling the linear pieces and the vertices, the following properties hold:
\begin{itemize}
    \item [$\circ$] the pieces $s_0$ and $s_{t+1}$ are unbounded and respectively supported on the positive half horizontal axis and on the positive half vertical axis,
    \item [$\circ$] for $i=0,\ldots,t$, we have $v_i=s_i\cap s_{i+1}$,
    \item [$\circ$] for $i=1,\ldots,t$, the segments $s_i$ are bounded and supported on certain lines $r_1,\ldots,r_t$, such that each line $r_i$ has negative slope $m_i$ and $0>m_i>m_{i+1}$ for all $i=1,\ldots,t-1$,
    \item [$\circ$] there is a strictly increasing sequence $0=k_0< \cdots< k_t=n$ of positive integers such that
    $v_i=(a_{k_i},b_{n-k_i})$, where $a_n=b_n=0$.
\end{itemize}
Now, the integer $k$ of \Cref{teovilla}, can be chosen as
\[
k=\max \Set{i|m_i\geq -1}.
\]
\end{remark}
Below we show an example of Ferrers diagram of a monomial $0$-dimensional scheme whose associated Rees algebra is normal.
\begin{example}\label{ex:villa} Consider the ideal $I = (x^6,x^4y,x^2y^2,xy^3,y^5) \subset \BC[x,y]$ and $Q_I,A_I$ as in \Cref{villareal,rmk:normality-convexity}. The Ferrers diagram of $I$ is
\[
\begin{tikzpicture}
\coordinate (1) at (-0.54,0.36);
\coordinate (2) at (-0.9,1.07);
\coordinate (3) at (1.25,-0.72);
\coordinate (4) at (0.54,-0.36);
\coordinate (5) at (-0.18,0);
\node ()  at (0,0) {\tiny{\ydiagram{1,1,2,4,6}}};
\foreach \x in {(1), (2), (3), (4), (5)}{
        \fill \x circle[radius=2pt];
    }
\draw[pattern=north east lines] (-0.9,2)-- (2)-- (1)-- (5)-- (4)-- (3) --(2,-0.72);
\end{tikzpicture}
\]
where the highlighted area in the above picture corresponds to $Q_I$. Then, the ideal $I$ is normal because $A_I=Q_I\cap\BN^2$.
\end{example}

\begin{example}\label{esnonnorm}
Set $I = (x^k,y^k) \subset \BC[x,y]$, where $k>1$. Then $I$ is not normal, and the normalisation of $\Bl_I\BA^2$ is given by $\Bl_{\mm^{k}}\BA^2$. Moreover, as we shall see in \Cref{powermax} (but see also \cite[Ex.~II.7.11]{HARTSHORNE}), there is a canonical isomorphism
\[
\begin{tikzcd}
\Bl_{\mm}\BA^2 \arrow{r}{\sim} & \Bl_{\mm^{k}}\BA^2 =
\Bl_{\overline I}\BA^2.
\end{tikzcd}
\]
Composing with the normalisation morphism, one obtains a morphism
\[
\Bl_{\mm}\BA^2 \to \Bl_I\BA^2,
\]
induced of course by $I \subset \overline I \subset \mm$.
An example with $k=5$ is depicted below.
\begin{figure}[h!]
    \centering
\begin{tikzpicture}
\coordinate (2) at (-0.72,1.07);
\coordinate (3) at (1.06,-0.72);
\node ()  at (0,0) {\tiny{\ydiagram{5,5,5,5,5}}};
\foreach \x in {(2), (3)}{
        \fill \x circle[radius=2pt];
    }
\draw[pattern=north east lines] (-0.72,2.4)--(2)--(3) --(2.8,-0.72);
\end{tikzpicture}
\caption{The normalisation of the ideal $I=(x^5,y^5)$ is $\mm^5$.}
    \label{fig:non-normal-square}
\end{figure}
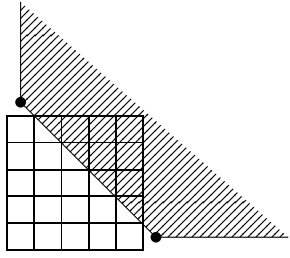
Again, as in \Cref{ex:villa}, if $Q_I\subset \BQ^2$ is defined as in \Cref{villareal}, then the highlighted area in the above figure corresponds to $Q_I$, but this time $Q_I \cap \BN^2 \neq A_I$.
\end{example}

\begin{example}\label{blowuplci}
In general, if $I=(x^k,y^h)$, the blowup $\Bl_I\BA^2$ is canonically isomorphic (see \cite[Prop.~IV-25]{GEOFSCHEME}), over $\BA^2$, to the quasiprojective surface
\[
B=\Set{((x,y),[u:v])\in\BA^2\times \BP^1 | vx^k=uy^h}\into\BA^2\times\BP^1.
\]
In particular, for $h,k>1$, the blowup $\Bl_I\BA^2$ is singular along the exceptional divisor and hence, it is not normal. 
\end{example}

\subsection{Self-intersection inside quasiprojective varieties}
Let $B=\Bl_I\BA^2$ be the blowup of the affine plane with center a fat point supported at the origin. Then $B$ contains a finite number of irreducible projective rational curves $C_1,\ldots,C_r$.

Suppose that $B$ is smooth. Whenever we will talk about self-intersection we will refer to the quadratic form on $\bigoplus_{1\leq i\leq r}\BZ C_i$ induced by the map
\[
\begin{tikzcd}[row sep=tiny]
\set{C_1,\ldots,C_r} \arrow{r}{(-)^2} & \BZ \\
C_i \arrow[mapsto]{r} & C_i^2 = \deg_{C_i}\left(\OO_B(C_i)\big|_{{C_i}}\right).
\end{tikzcd}
\]

\subsection{Definition and main properties of the Behrend function}\label{sec:behrend}
Let $X$ be a scheme of finite type over $\BC$, and let $\Con(X)$ be the abelian group of ($\BZ$-valued) constructible functions on $X$. In \cite{Beh}, Behrend constructs a canonical constructible function
\[
\nu_X \colon X(\BC) \to \BZ,
\]
nowadays referred to as the `Behrend function' of $X$. It has been proven a powerful tool in enumerative geometry, mainly because of the following remarkable property: whenever $X$ is proper and carries a symmetric perfect obstruction theory in the sense of Behrend--Fantechi \cite{BFHilb}, one has an identity
\[
\int_{[X]^{\vir}} 1 = \chi(X,\nu_X),
\]
where the left hand side is the degree of $[X]^{\vir}\in A_0(X)$, the virtual fundamental class attached to the obstruction theory, and the right hand side is the \emph{weighted Euler characteristic} of $X$, the `weight' being $\nu_X$. Explicitly, for a constructible function $\gamma \in \Con(X)$, one defines
\[
\chi(X,\gamma) = \sum_{m \in \BZ} m \chi(\gamma^{-1}(m)).
\]
The Behrend function of a scheme $X$ is defined as 
\[
\nu_X = \Eu(\mathfrak c_X),
\]
where $\Eu\colon Z_\ast(X) \simto \Con(X)$ is the \emph{local Euler obstruction}, an isomorphism from cycles on $X$ to constructible functions on $X$, and $\mathfrak c_X$ is a canonical cycle attached to $X$. The definition of $\Eu$ is recalled in \cite{Beh} and is classical; we refer the reader to Jiang's work \cite{Jiang-Euler-Obs} for more details on local Euler obstruction (both in algebraic and analytic setting) and the Behrend function. Here we recall how the cycle $\mathfrak c_X$ is defined. Suppose $(f\colon U \to X,\iota\colon U \into M)$ is a local embedding for $X$, i.e.~$f$ is an \'etale morphism of $\BC$-schemes, and $\iota$ is a closed immersion into a smooth $\BC$-scheme $M$. Let 
\[
\pi\colon C_{U/M} \to U
\]
be the normal cone of this immersion. Note that $C_{U/M}$ is of pure dimension $\dim M$.
One can form the cycle
\[
\mathfrak c_{U/M} = \sum_{D \subset C_{U/M}}(-1)^{\dim \pi(D)} \mult_D(C_{U/M})\left[ \pi(D) \right]\,\in\,Z_\ast (U),
\]
where the sum ranges over all irreducible components $D$ of $C_{U/M}$, and $\mult_D(C_{U/M})$ denotes the \emph{geometric multiplicity} of the irreducible component $D$, namely the length
\[
\mult_D(C_{U/M}) = \length_{\OO_{C_{U/M},D}}(\OO_{C_{U/M},D})
\]
of the artinian ring $\OO_{C_{U/M},D}$ viewed as a module over itself, see e.g.~\cite[\href{https://stacks.math.columbia.edu/tag/0DR4}{Tag 0DR4}]{stacks-project}. The cycles $\mathfrak c_{U/M}$ naturally glue together along local embeddings to give a cycle $\mathfrak c_X \in Z_\ast(X)$, i.e.~there exists a unique global cycle $\mathfrak c_X$ such that if $(f\colon U \to X,\iota\colon U \into M)$ is a local embedding as above, one has $\mathfrak c_X|_U = \mathfrak c_{U/M}$.

When $X$ has a global embedding $\iota\colon X \into M$ inside a smooth scheme $M$ (e.g.~when $X$ is quasiprojective), we can use the local embedding $(\id_X,\iota)$ and compute directly
\begin{equation}\label{eqn:nu-embedding}
\nu_X = \Eu(\mathfrak c_X) = \sum_{D \subset C_{X/M}}(-1)^{\dim \pi(D)} \mult_D(C_{X/M})\Eu\left(\left[ \pi(D) \right]\right).
\end{equation}
Thus, when $X$ is a fat point, say with embedding dimension $N$, we have a closed immersion of $X$ inside $M=\BA^N$ and Equation \eqref{eqn:nu-embedding} becomes
\begin{equation}\label{formula:fat-behrend}
\nu_X = \sum_{D \subset C_{X/M}}\mult_D(C_{X/M}),
\end{equation}
because the local Euler obstruction of $[X]$ is equal to $1$ and $\dim \pi(D) = 0$. 

\begin{notation}
Occasionally, for the sake of readability, if $R=\BC[x_1,\ldots,x_N]/I$ defines a fat point $X=\Spec R \subset \BA^N$, we shall write $\nu_R$ instead of $\nu_X$, referring to the latter as the \emph{Behrend number} of $I$, since $\Spec R$ has only one point.
\end{notation}

The Behrend function also has the following remarkable property, which has been exploited several times for computations in Donaldson--Thomas theory, see e.g.~\cite{BFHilb,BR18}.

\begin{example}[{\cite[Cor.~2.4\,(iii)]{APPP1}}] \label{ex:milnor}
When $X$ is a \emph{critical locus}, i.e.~$X=V(\dd f)$ is the zero scheme of an exact $1$-form on a smooth scheme $M$, one has the relation
\begin{equation}\label{formula:milnor-fibre}
\nu_X(p) = (-1)^{\dim M}(1-\chi(\MF_{f,p})),
\end{equation}
where $\MF_{f,p}$ is the Milnor fibre of $f$ at $p \in X$. The right hand side is, by definition, the value of the \emph{Milnor function} attached to $(U,f)$. The above situation includes the important case $f=0 \in \OO_M(M)$, which yields $X=M$ and the formula $\nu_M(p) = (-1)^{\dim_p M}$. So the Behrend function of a smooth point of a scheme is always $\pm 1$.
\end{example}

\section{Behrend functions and blowups}
\label{sec:behrend-first-examples}

Ideally, it would be nice to compute the Behrend function of an arbitrary $0$-dimensional $\BC$-scheme.
It is of course enough to perform the computation for fat points, since a finite scheme is a disjoint union of fat points.
The Behrend function of a fat point $X$ is the constant given by Equation \eqref{formula:fat-behrend}, so our goal is to compute this constant exploiting such relation in a large number of cases. 

If $X$ is a (proper) moduli space of sheaves on a Calabi--Yau $3$-fold of dimension equal to the expected dimension, namely $0$, then $X = X_1\amalg \cdots \amalg X_e$ is a disjoint union of fat points $X_i$, and the non-reducedness of $X_i$ is a shadow of the existence of obstructed deformations for the object parametrised by $X_{i,\red} \into X$. Even though there is in general no control on these obstructions, one can compute the Donaldson--Thomas invariant of $X$ as the integer $\nu_{X_1} + \cdots + \nu_{X_e}$.

\subsection{A formula for the Behrend function in terms of blowup}\label{sec:key_relation}
Let $X=\Spec R$ be a fat point over $\BC$, and let $X \into U$ be a closed immersion into a smooth affine scheme $U$. Let $I \subset \OO_U$ be the ideal defining this inclusion, so that $R\cong \OO_U/I$, and let $C = C_{X/U} = \Spec \left(\bigoplus_{d\geq 0} I^d/I^{d+1} \right)$ be the normal cone to $X$ in $U$. As in \Cref{formula:fat-behrend}, we have
\begin{equation}\label{nu_length}
\nu_X=\sum_{D\subset C} \length_{\OO_{C,D}}\left(\OO_{C,D}\right),
\end{equation}
where the sum runs over all irreducible components $D$ of $C$. This sum does not depend on the particular embedding $X \into U$ we picked. If $N=\dim U$, then we know that $C$ is purely $N$-dimensional, but we do not know how many irreducible components it has in general; however, in the case where $I \subset \BC[x,y]$ is a normal monomial ideal, the number of components of the normal cone to the fat point $X=V(I) \into \BA^2$ can be computed via \Cref{bijection-irr-cpt-factorisation}.
Note that a natural choice for $U$ is the affine space $\BA^N$, where $N = \dim_{\BC}(\mathfrak m_R/\mathfrak m_R^2)$ is the embedding dimension of $X=\Spec R$.

\smallbreak
One first observation, towards the computation of $\nu_X$ via  \Cref{nu_length}, is that the projective cone $P(C)$ sits in the projective completion $P(C\oplus \mathbb{1})$ as the divisor `at infinity', with open \emph{dense} complement equal to $C$. Hence we may rewrite \Cref{nu_length} as
\begin{equation}\label{eqn:P(C plus 1)}
\nu_X = \sum_{D\subset C} \length_{\OO_{P(C\oplus \mathbb{1}),P(D\oplus \mathbb{1})}}\left(\OO_{P(C\oplus \mathbb{1}),P(D\oplus \mathbb{1})}\right).
\end{equation}
We notice that $P(C\oplus \mathbb 1)$ is the exceptional divisor of a blowup, just as $P(C)$ is. We consider the embedding $X \into \BA^N \into \BA^N \times \BA^1 = M$, where the second map is induced by the inclusion of the origin $0$ in $\BA^1$. Then, we have an identity
\[
E_XM = P(C\oplus \mathbb{1}) \subset \Bl_XM,
\]
so by \Cref{eqn:P(C plus 1)} we have to determine the geometric multiplicities of the irreducible components of the exceptional divisor $E_XM$.

\Cref{eqn:P(C plus 1)} will be used to explicitly compute the Behrend function of curvilinear schemes (see \Cref{ex:curvilinear}). However, for most of this paper the key relation that will be exploited is the one contained in the following lemma. 

\begin{lemma}\label{lemma:multiplicities_blowup}
Let $X \subset \BA^N$ be a fat point, with normal cone $C=C_{X/\BA^N}=\Spec S$ and associated projective cone $P = P(C) = \Proj S$. Then the association $D \mapsto P(D)$ is a bijective correspondence between irreducible components of $C$ and irreducible components of $P$, and there is an identity
\[
\nu_X = \sum_{D \subset C} \length_{\OO_{P(C),P(D)}} \left(\OO_{P(C),P(D)}\right).
\]
In `blowup language', this can be rephrased as
\[
\nu_X = \sum_{D \subset C} \mult_{P(D)}E_X\BA^N.
\]
\end{lemma}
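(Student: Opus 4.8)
The plan is to transport the multiplicity formula \eqref{formula:fat-behrend}, which reads $\nu_X = \sum_{D \subset C}\mult_D(C)$ with the sum over the irreducible components of the affine normal cone $C = \Spec S$, onto the projective cone $P(C) = \Proj S = E_X\BA^N$. Two things must be established: first, that projectivisation sets up a bijection $D \mapsto P(D)$ between the components of $C$ and those of $P(C)$; second, that this bijection preserves multiplicities, i.e.\ $\mult_D(C) = \mult_{P(D)}(P(C))$ for every component $D$. Granting both, the displayed identity follows by summing \eqref{formula:fat-behrend}, and the blowup rephrasing is immediate from the identification $P(C) = E_X\BA^N$ recalled in \Cref{sec:key_relation}.

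For the bijection I would use the graded structure of $S = \bigoplus_{d\geq 0} I^d/I^{d+1}$, whose degree-zero piece $S_0 = R$ is artinian local with residue field $\BC$. Every minimal prime of a $\BZ_{\geq 0}$-graded ring is homogeneous, so the components of $C$ correspond to the minimal homogeneous primes $\mathfrak p \subset S$. Since $C$ has pure dimension $N \geq 1$ (the reduced point being trivial), and the vertex $V(S_+) = \Spec R$ is $0$-dimensional, no component can equal the vertex; equivalently, no minimal prime contains the irrelevant ideal $S_+$. The standard $\Spec$--$\Proj$ dictionary then identifies these minimal primes bijectively with the minimal homogeneous primes $V_+(\mathfrak p)$ cutting out the components of $\Proj S$, which is precisely the assignment $D \mapsto P(D)$.

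The heart of the argument is the multiplicity equality, and I expect this to be the main technical point. Geometrically it reflects that $C$ minus its vertex is a $\BG_m$-torsor over $P(C)$, so that corresponding components differ by a free $\BG_m$-direction that cannot change the length of the local ring at the generic point. Concretely, fix the homogeneous prime $\mathfrak p = \mathfrak p_D$ and note $\OO_{P(C),P(D)} = S_{(\mathfrak p)}$, the degree-zero part of the homogeneous localisation $S_{((\mathfrak p))}$ obtained by inverting all homogeneous elements outside $\mathfrak p$. Because $\mathfrak p \not\supseteq S_+$, after this localisation there is a homogeneous unit $t$ of minimal positive degree, and one obtains an isomorphism of graded rings $S_{((\mathfrak p))} \cong \OO_{P(C),P(D)}[t,t^{-1}]$; this is where the possible failure of generation in degree one is absorbed, and it is the one genuine subtlety of the proof. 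The local ring $\OO_{C,D} = S_{\mathfrak p}$ is then the ordinary localisation of this Laurent ring at the prime $\mathfrak m\cdot\OO_{P(C),P(D)}[t,t^{-1}]$, where $\mathfrak m$ is the maximal ideal of $\OO_{P(C),P(D)}$.

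To conclude the equality of lengths I would take a composition series of the artinian ring $\OO_{P(C),P(D)}$ with successive quotients isomorphic to its residue field $\kappa$; since $\OO_{P(C),P(D)}[t,t^{-1}]$ is free over $\OO_{P(C),P(D)}$, tensoring produces a filtration with quotients $\kappa[t,t^{-1}]$, each of which becomes $\Frac(\kappa[t,t^{-1}]) = \kappa(t)$, hence of length one, upon localising at $\mathfrak m[t,t^{-1}]$. Therefore $\length_{\OO_{C,D}}(\OO_{C,D}) = \length_{\OO_{P(C),P(D)}}(\OO_{P(C),P(D)})$, that is $\mult_D(C) = \mult_{P(D)}(P(C))$. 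Summing over all components and invoking \eqref{formula:fat-behrend} gives the stated formula, and rewriting it through $P(C) = E_X\BA^N$ yields the blowup form.
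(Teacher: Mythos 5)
Your proof is correct and takes essentially the same route as the paper's: both identify the irreducible components of $C$ with the homogeneous minimal primes $\mathfrak p \subset S$ to obtain the bijection $D \mapsto P(D)$, and both reduce the displayed formula (via \eqref{formula:fat-behrend}) to the equality $\length_{S_{\mathfrak p}}(S_{\mathfrak p}) = \length_{S_{(\mathfrak p)}}\bigl(S_{(\mathfrak p)}\bigr)$, which the paper simply asserts and you justify by the Laurent structure $S_{((\mathfrak p))} \cong S_{(\mathfrak p)}[t,t^{-1}]$ together with a composition-series argument. One small caveat: your remark that this step ``absorbs the possible failure of generation in degree one'' is both unnecessary and not quite right in that generality (for graded rings not generated in degree one, a unit of minimal positive degree need not yield a Laurent ring over the degree-zero part); here the point is that $S = \bigoplus_{d\geq 0} I^d/I^{d+1}$ is always generated in degree one over $S_0$, so $t$ can be taken of degree one and the isomorphism is standard.
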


\begin{proof}
For a general cone $C=\Spec S$ over an affine scheme $X=\Spec R$, the irreducible components $D \subset C$ are themselves cones (over subvarieties of $X$), each of which is given as $D = V(\mathfrak p) = \Spec(S/\mathfrak p)$, where $\mathfrak p \subset S$ is a homogeneous minimal prime ideal. Thus $D \mapsto P(D)$ is a bijection. 

If $X \subset \BA^N$ is a fat point with normal cone $C$ as in the statement, and $D=V(\mathfrak p) \subset C$ is an irreducible component, the local ring $S_{\mathfrak p}$ is artinian, as well as the homogeneous localisation $S_{(\mathfrak p)}$, and we have
\begin{align*}
   \length_{\OO_{C,D}}\left(\OO_{C,D}\right)
&= \length_{S_{\mathfrak p}} \left(S_{\mathfrak p}\right) \\
&= \length_{S_{(\mathfrak p)}} \left(S_{(\mathfrak p)}\right) \\
&= \length_{\OO_{P(C),P(D)}}\left(\OO_{P(C),P(D)}\right),
\end{align*}
which by \Cref{nu_length} implies the formula for $\nu_X$.
\end{proof}

\subsection{The Behrend function of the easiest fat points}\label{sec:first-examples-nu}
We conclude this section with some examples of computation of $\nu_X$ for $X$ a fat point.

\begin{example}[Critical loci]\label{ex:critical-locus}
Let $X=\Spec R$ be a fat point that is also a critical locus, i.e.~the zero locus of an exact $1$-form $\dd f$, for some function $f \in \OO_U(U)$ on a smooth scheme $U$. In particular, $R$ is equal to the Jacobian ring attached to $(U,f)$, whose dimension as a $\BC$-vector space is by definition the Milnor number $\mu_f$. In this case, one has 
\[
\nu_X = \length(X).
\]
Indeed, since $X_{\red}$ is just one point, $X \into U$ is isolated and so \Cref{formula:milnor-fibre} holds, giving
\[
\nu_X = (-1)^{m+1} (1-\chi(\MF_f)),
\]
where $m+1$ is the complex dimension of $U$ and where $\MF_f$ has the same homotopy type of a bouquet of $\mu_f$ spheres $\BS^m \subset \BR^{m+1}$. This implies
\[
\chi(\MF_f) = \mu_f \cdot (1+(-1)^m) - (\mu_f-1) = 1+(-1)^m \mu_f,
\]
which indeed gives
\[
\nu_X = (-1)^{m+1}(1-(1+(-1)^m \mu_f)) = \mu_f = \dim_{\BC}(R) = \length(X).
\]
\end{example}

\begin{example}[Local complete intersections]\label{ex:lci}
Let $X \subset \BA^N$ be fat point that is also a local complete intersection subscheme. Then $C_{X/\BA^N} = N_{X/\BA^N}$ is the total space of a vector bundle over $X$ of rank $N$. Thus $P(C_{X/\BA^N})$ is a $\BP^{N-1}$-bundle over $X$, and as such it is irreducible, with multiplicity equal to $\length(X)$. So 
\[
\nu_X=\length(X).
\]
\end{example}

\begin{example}[Curvilinear scheme]\label{ex:curvilinear}
Fix an integer $n>0$ and consider the curvilinear scheme $X_n = \Spec \BC[t]/t^n$. Then $\nu_{X_n}=n$ follows by both \Cref{ex:critical-locus} and \Cref{ex:lci}. We first confirm this formula by means of \Cref{nu_length}, as follows: for every $d \geq 0$, the $d$-th graded piece of the coordinate ring of $C_{X_n/\BA^1}$ is isomorphic to $R_n = \BC[t]/t^n$ as an $R_n$-module: if $I=(t^n)$, then
\[
I^d/I^{d+1} = \Braket{t^{nd},t^{nd+1},\ldots,t^{nd+n-1}}_{\BC} \cong R_n.
\]
Thus
\[
\bigoplus_{d\geq 0} I^d/I^{d+1} = \bigoplus_{d\geq 0} R_n\cdot z^d = \BC[t,z]/t^n,
\]
proving that
\[
C_{X_n/\BA^1} = \BA^1 \times X_n,
\]
which is irreducible with generic point $(t) \subset \BC[z,t]/(t^n)$ of length $n$.
Alternatively, we could have checked the formula $\nu_{X_n}=n$ through \Cref{eqn:P(C plus 1)} as follows. We can blow up $X_n$ inside $M=\BA^1 \times \BA^1$, obtaining the exceptional divisor
\begin{align*}
P(C_{X_n/\BA^1}\oplus \mathbb 1)&=\Proj \left[\bigoplus_{d\geq 0} (I,z)^d/(I,z)^{d+1}\right] \\
&=\Proj \left[\bigoplus_{d\geq 0}\left(R_n\cdot z^d\oplus \frac{I}{I^2}\cdot z^{d-1}\oplus \cdots\oplus \frac{I^{d-1}}{I^d}\cdot z \oplus \frac{I^d}{I^{d+1}} \right) \right] \\
&\cong \Proj \left[R_n\oplus (R_n\cdot z\oplus R_n\cdot u) \oplus (R_n\cdot z^2\oplus R_n\cdot zu\oplus R_n \cdot u^2) \oplus \cdots \right] \\
&= \Proj R_n[z,u] = \BP^1 \times X_n,
\end{align*}
which again is irreducible with generic point of length $n$.
\end{example}

The following is an instance of both \Cref{ex:critical-locus} and \Cref{ex:lci}.

\begin{example}
Set $X=\Spec R\subset \BA^N$, where $R=\BC[x_1,\ldots,x_N]/(x_1^{e_1},\ldots,x_{N}^{e_N})$. Then, $X$ is the critical locus of the function $\BA^N \to \BA^1$ sending
\[
(x_1,\ldots,x_N) \mapsto \sum_{1\leq i\leq N}\frac{1}{e_i+1}x_i^{e_{i}+1}.
\]
Thus by \Cref{ex:critical-locus} we have
\[
\nu_X=\length(X) = \prod_{1\leq i\leq N}e_i.
\]
Alternatively, this formula also follows from the multiplicativity of the Behrend function, proved in general in \cite[Prop.~1.5\,(ii)]{Beh}.
\end{example}

So far we have only seen instances where the Behrend number of a fat point agrees with its length. In general, the length is neither an upper bound nor a lower bound for the Behrend number, as we shall see in greater detail by means of the core calculations of this paper (see e.g.~\Cref{TEOREMA1}, \Cref{thm:general-tower-nu} and \Cref{rmk:no-upper-bound} for a few instances of this fact).
For now, we present an example of a fat point with embedding dimension $N=2$, that is neither a critical locus nor a local complete intersection. 

\begin{example}[Power of maximal ideal]\label{powermax}
Fix an integer $d>1$. Set $X=\Spec R$, where $R=\BC[x,y]/\mathfrak m^d$. Here $\mathfrak m = (x,y)$ denotes, as ever, the maximal ideal of the origin in $\BA^2$. We have a commutative diagram
\begin{equation}\label{diag:veronese_d}
\begin{tikzcd}
\Bl_{\mathfrak m}\BA^2 \arrow[hook]{r}\arrow{d}{g} & \BA^2 \times \BP^1 \arrow[hook]{d}{\id \times \mathsf v_{1,d}} \\
\Bl_{\mathfrak m^d}\BA^2 \arrow[hook]{r} & \BA^2 \times \BP^d
\end{tikzcd}
\end{equation}
where $\mathsf v_{1,d} \colon \BP^1 \into \BP^d$ is the Veronese embedding, sending $\BP^1$ onto the rational normal curve of degree $d$ inside $\BP^d$. The vertical map $g$ is an isomorphism, which by the commutativity of the diagram commutes with the projections down to $\BA^2$. It follows that, under this isomorphism, the exceptional divisor $E \subset \Bl_{\mathfrak m^d}\BA^2$ corresponds to the preimage of $X$ along $\varepsilon_{\mm}\colon \Bl_{\mathfrak m}\BA^2 \to \BA^2$. 

Now, as in \Cref{blowuplci}, we can write
\[
\Bl_{\mathfrak m}\BA^2 = \Set{((x,y),[u:v])|xv=yu} \subset \BA^2 \times \BP^1,
\]
and, after fixing coordinates $(u,y)$ in the chart $\set{v\neq 0} \subset \Bl_{\mm}\BA^2$, the blouwp map $\varepsilon_{\mm}$ becomes $(u,y) \mapsto (yu,y)$ in this chart. Therefore the pullback of $X = V(\mathfrak m^d)$ along $\varepsilon_{\mm}|_{v\neq 0}$ is the scheme cut out by the ideal
\[
J_v=(y^du^d,(y^{d-1}u^{d-1})y,\ldots,(yu)y^{d-1},y^d)=(y^d)\subset \BC[u,y].
\]
An identical calculation can be done in the chart $u\neq 0$, where one finds the ideal $J_u = (x^d)$ in $\BC[v,x]$. All in all, $\varepsilon_{\mm}^{-1}(X) \subset \Bl_{\mm}\BA^2$ (which is isomorphic to $E = E_{\mm^d}\BA^2$) is defined by the ideal sheaf $\mathscr J^d$, where $\mathscr J$ is the ideal defining the (reduced) exceptional divisor in $\Bl_{\mathfrak m}\BA^2$. It is thus a line with multiplicity $d$. Hence, by \Cref{lemma:multiplicities_blowup}, 
\[
\nu_X = \length_{ \OO_{E,E}}\left(\OO_{E,E}\right) = d.
\]
Note that, in this case, we have $d = \nu_X < \length(X) = (d+1)d/2$.
\end{example}

The previous example can be generalised as follows.

\begin{prop}\label{blowpowers}
Let $I \subset A=\BC[x_1,\ldots,x_N]$ be an ideal of finite colength. Then, for any integer $d>0$, one has a canonical $\BA^N$-isomorphism $\Bl_{I^d}\BA^N\cong\Bl_I\BA^N$, and an identity
\[
\nu_{A/I^d} = d\cdot \nu_{A/I}.
\]
In particular, if $I$ defines a local complete intersection subscheme of $\BA^N$, then
\[
\nu_{A/I^d} = d\cdot \ell_{A/I} = d\cdot \dim_{\BC}(A/I).
\]
\end{prop}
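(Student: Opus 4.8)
The plan is to first establish the $\BA^N$-isomorphism of blowups, then transport the Behrend number computation across it. For the first claim I would invoke \Cref{LEMMATECH} with $\mathscr I_1 = I$ and $\mathscr I_2 = I^{d-1}$. Writing $B = \Bl_I\BA^N$ with structure morphism $\varepsilon = \varepsilon_I$, the lemma yields a canonical $\BA^N$-isomorphism $\Bl_{\mathscr L^{d-1}}(B) \simto \Bl_{I^d}\BA^N$, where $\mathscr L = \varepsilon^{-1}(I)\cdot\OO_B$ is the ideal sheaf of the exceptional divisor $E_I\BA^N$. Since $\mathscr L$ is invertible, so is $\mathscr L^{d-1}$, and blowing up an invertible ideal sheaf is an isomorphism (the blowup is already the universal scheme on which the ideal becomes invertible); hence $B \cong \Bl_{I^d}\BA^N$ over $\BA^N$. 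Alternatively, one can identify $A[I^d t]$ with the $d$-th Veronese subalgebra of the Rees algebra $A[It]$ and use the canonical isomorphism $\Proj R \cong \Proj R^{(d)}$.

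Next I would compare the two exceptional divisors under this identification. The two blowup morphisms down to $\BA^N$ agree, so the pullback of $I^d$ satisfies
\[
\varepsilon^{-1}(I^d)\cdot\OO_B = \left(\varepsilon^{-1}(I)\cdot\OO_B\right)^d = \mathscr L^d,
\]
because the image of $\varepsilon^\ast(I^d) \to \OO_B$ is generated by the $d$-fold products of the image of $\varepsilon^\ast I \to \OO_B$. Thus, as effective Cartier divisors on $B$, one has $E_{I^d}\BA^N = d\cdot E_I\BA^N$. In particular $E_{I^d}\BA^N$ and $E_I\BA^N$ have the same reduction, hence the same irreducible components $D_j$, and the multiplicity of each $D_j$ gets multiplied by $d$: if $s$ is a local generator of $\mathscr L$ near the generic point of $D_j$, then $\ord_{D_j}(s^d) = d\,\ord_{D_j}(s)$. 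Applying \Cref{lemma:multiplicities_blowup} to both fat points $V(I)$ and $V(I^d) \subset \BA^N$ then gives
\[
\nu_{A/I^d} = \sum_j \mult_{D_j}\!\left(E_{I^d}\BA^N\right) = d\sum_j \mult_{D_j}\!\left(E_I\BA^N\right) = d\cdot\nu_{A/I}.
\]

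Finally, the local complete intersection statement is immediate: when $I$ cuts out an l.c.i.\ subscheme, \Cref{ex:lci} gives $\nu_{A/I} = \length(V(I)) = \dim_{\BC}(A/I) = \ell_{A/I}$, whence $\nu_{A/I^d} = d\cdot\ell_{A/I}$. I expect the only delicate point to be the bookkeeping that certifies $E_{I^d}\BA^N = d\,E_I\BA^N$ as divisors — that is, verifying that the pullback ideal really is the $d$-th power $\mathscr L^d$ and that passing from $I$ to $I^d$ introduces no new components. Once this is in place, \Cref{lemma:multiplicities_blowup} does the rest and the identity drops out by the additivity of orders of vanishing.
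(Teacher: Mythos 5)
Your proof is correct, and its numerical core is essentially the paper's own argument: both proofs hinge on \Cref{lemma:multiplicities_blowup}, on the identity $\varepsilon^{-1}(I^d)\cdot\OO_{\Bl_I\BA^N}=\bigl(\varepsilon^{-1}(I)\cdot\OO_{\Bl_I\BA^N}\bigr)^d$, and on the additivity $\ord_{D}(s^d)=d\,\ord_{D}(s)$ of the order function along each component of the exceptional divisor (which also guarantees, as you note, that no new components appear); this is exactly the chain of equalities in the paper's displayed computation, and the l.c.i.\ conclusion via \Cref{ex:lci} is handled identically. Where you genuinely diverge is in constructing the canonical isomorphism $\Bl_I\BA^N\cong\Bl_{I^d}\BA^N$: the paper picks a minimal generating set $f_0,\ldots,f_r$ of $I$ and embeds the two blowups into $\BA^N\times\BP^{r}$ and $\BA^N\times\BP^{\binom{r+d}{d}-1}$ compatibly with the Veronese map $\mathsf v_{r,d}$, as in \Cref{powermax} and Diagram \eqref{diag:veronese_d}, whereas you apply \Cref{LEMMATECH} with $\mathscr I_1=I$, $\mathscr I_2=I^{d-1}$ and then use that blowing up the invertible sheaf $\mathscr L^{d-1}$ is an isomorphism (equivalently, your remark that $A[I^dt]$ is the $d$-th Veronese subalgebra of the Rees algebra $A[It]$ and $\Proj$ is insensitive to passing to Veronese subalgebras). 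Your route is generator-free and somewhat more intrinsic, avoiding the auxiliary projective embedding; the paper's route is more concrete and runs parallel to the explicit computation already carried out for $\mm^d$ in \Cref{powermax}. Both produce the same $\BA^N$-isomorphism, after which the two proofs coincide.
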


\begin{proof}
The second identity follows from the first combined with \Cref{ex:lci}. We have an isomorphism of $\BA^N$-schemes $g\colon \Bl_{I}\BA^N \simto \Bl_{I^d}\BA^N$, which is part of a larger diagram (constructed along the same lines as Diagram \ref{diag:veronese_d}): if we assume $I$ is minimally generated by polynomials $f_0,f_1,\ldots,f_r \in A$, then we have a commutative diagram
\[
\begin{tikzcd}
\Bl_I\BA^N \arrow[hook]{r} \arrow{d}{g} & \BA^N \times \BP^{r} \arrow[hook]{d}{\id \times \mathsf v_{r,d}} \\
\Bl_{I^d}\BA^N \arrow[hook]{r} & \BA^N \times \BP^{\binom{r+d}{d}-1}
\end{tikzcd}
\]
where $\mathsf v_{r,d} \colon \BP^{r} \to \BP^{\binom{r+d}{d}-1}$ is the Veronese embedding.

Now, if $\varepsilon_d \colon \Bl_{I^{d}}\BA^N \to \BA^N$ denotes the blowup morphism, $E_d = E_{I^d}\BA^N\subset \Bl_{I^{d}}\BA^N$ is the exceptional divisor embedded with ideal sheaf $\mathscr I_d \subset \OO_{\Bl_{I^{d}}\BA^N}$, and the inclusion $\Spec A/I^d \subset \BA^N$ has normal cone $C_d$, we compute
\begin{align*}
\nu_{A/I^d} 
&= \sum_{D \subset C_d} \mult_{P(D)}(E_d) \\
&= \sum_{D \subset C_d} \mult_{P(D)}\left(V(\varepsilon_d^{-1}(I^d) \cdot \OO_{\Bl_{I^d}\BA^N})\right) \\
&= \sum_{D \subset C_d} \mult_{g^{-1}P(D)}\left(V(\varepsilon_1^{-1}(I^d) \cdot \OO_{\Bl_{I}\BA^N})\right) \\
&=\sum_{D \subset C_d} \mult_{g^{-1}P(D)}\left(V(\mathscr I_1^d)\right) \\
&=d\cdot\sum_{D\subset C_d} \mult_{g^{-1}P(D)}(E_1) \\
&=d\cdot\sum_{D\subset C_1} \mult_{P(D)}(E_1) \\
&=d\cdot \nu_{A/I},
\end{align*}
as required.
\end{proof}

\section{Towers and their Behrend functions}
\label{sec:towers}

\subsection{Towers and their basic properties}
Before introducing \emph{towers}, special ideals in $\BC[x,y]$ particularly suited for our calculations, we quickly review some basics on curvilinear schemes. 

We focus here on curvilinear schemes (cf.~\Cref{def:fat_points}) of length $n$ supported at the origin $0 \in \BA^2$. These are defined by ideals $I \subset \BC[x,y]$ of the form
\[
I=(f)+\mm^{n},
\]
where $f\in\mm\smallsetminus \mm^2$.
Given such a polynomial $f = ax+by+cx^2+dxy+ey^2+\cdots$, an explicit isomorphism $\BC[t]/t^n \to \BC[x,y]/I$ is given by sending $t + (t^n) \mapsto (ay-bx) + I$. Such association is an isomorphism because $(a,b) \neq (0,0)$ which follows from the condition $f\in\mm\smallsetminus \mm^2$.

	\begin{definition}
	Let $f\in\BC[x,y]$ be any nonzero polynomial, and, for $i\geq 0$, let $f_i$ be its homogeneous part of degree $i$. We will denote by $o(f)$ the \emph{order} of $f$, i.e. 
	\[
	o(f)=\min\Set{i\in\BN|f_i\not=0}.
	\]
	\end{definition}
	
\begin{lemma}[{\cite[Prop.~IV.1.1]{BRIANCON}}]
\label{normalform} Let $I=(f)+\mm^{n}$ be a curvilinear ideal. Then, the polynomial $f$ can be chosen in one of the following forms: either
	\[
	f(x,y)=x+g_x(y),
	\]
where $g_x\in\BC[y]$ is such that $o(g_x)\ge 1$ and $\deg (g_x)<n$, or
	\[
	f(x,y)=y+g_y(x),
	\]
where $g_y\in\BC[x]$ is such that $o(g_y) \ge 1$ and $\deg(g_y)< n$.
\end{lemma}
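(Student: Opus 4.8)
The plan is to exploit the fact, recorded just above the statement, that $R = \BC[x,y]/I$ is a local artinian ring of length $n$ whose cotangent space $\mm_R/\mm_R^2$ is one-dimensional. Writing $f = ax+by+\cdots$ with $(a,b)\neq(0,0)$, I first note that this forces $\mm_R$ to be \emph{principal}: the images $\bar x,\bar y$ of the coordinate functions generate $\mm_R$ as an ideal, and since $f\equiv ax+by \pmod{\mm^2}$ they span the one-dimensional line $\mm_R/\mm_R^2$; by Nakayama's lemma at least one of $\bar x,\bar y$ then already generates $\mm_R$. This dichotomy is exactly what will produce the two alternative normal forms.

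Suppose first that $\bar y$ generates $\mm_R$, equivalently $\bar y \in \mm_R \smallsetminus \mm_R^2$. Then $R \cong \BC[\bar y]/(\bar y^{\,n})$, and since $\bar x \in \mm_R = (\bar y)$ I can write $\bar x = \sum_{1\leq i\leq n-1} c_i \bar y^{\,i}$ for suitable $c_i \in \BC$. Setting $g_x(y) = -\sum_{1\leq i\leq n-1} c_i y^i$, which satisfies $o(g_x)\geq 1$ and $\deg(g_x)<n$ by construction, the element $x + g_x(y)$ maps to $0$ in $R$ and hence lies in $I$. If instead $\bar y \in \mm_R^2$, then its image in the one-dimensional space $\mm_R/\mm_R^2$ vanishes, so $\bar x$ must be the generator, and the symmetric argument yields $y + g_y(x) \in I$ with $o(g_y)\geq 1$ and $\deg(g_y)<n$. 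Either way I obtain a candidate generator of the required shape lying inside $I$.

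It then remains to check that replacing $f$ by this new generator does not change the ideal, i.e. that $(x+g_x(y)) + \mm^n = I$ (and symmetrically in the other case). The inclusion $\subseteq$ is immediate since $x+g_x(y)\in I$ and $\mm^n \subseteq I$. For the reverse inclusion I would compare colengths: the substitution $x \mapsto -g_x(y)$ identifies $\BC[x,y]/\!\left((x+g_x(y))+\mm^n\right)$ with $\BC[y]/(y^n)$, because $g_x \in (y)$ forces the image of $\mm = (x,y)$ to be $(y)$ and hence that of $\mm^n$ to be $(y^n)$. Thus the left-hand ideal has colength exactly $n$, which equals the colength of $I$; a surjection of $\BC$-vector spaces of the same finite dimension is an isomorphism, so the two ideals coincide.

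The argument is structural and I expect no serious obstacle; the only point requiring genuine care is the bookkeeping that separates the two cases — arguing cleanly that at most one of $\bar x,\bar y$ can fail to generate $\mm_R$, and that this failure is precisely what dictates which normal form occurs. A purely computational alternative, closer in spirit to an implicit-function iteration, would instead use the relation $f=0$ to write $x \equiv \psi(x,y)$ with $\psi \in \mm$ carrying no linear $x$-term, and substitute repeatedly until the $x$-dependence is absorbed into $\mm_R^{\,n}=0$; I would prefer the structural route above, since it avoids having to track the orders of the iterated substitutions.
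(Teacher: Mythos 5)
Your proof is correct, and it necessarily differs from the paper's treatment, because the paper does not prove this lemma at all: it is imported verbatim from Brian\c{c}on \cite[Prop.~IV.1.1]{BRIANCON}, so what you wrote is a self-contained substitute for that citation. The structure is sound: from $f\in I$ one gets $a\bar x+b\bar y\in\mm_R^2$, so the cotangent space has dimension at most one and Nakayama makes $\mm_R$ principal, generated by $\bar y$ or by $\bar x$; in the first case the expansion $\bar x=\sum_{i=1}^{n-1}c_i\bar y^{\,i}$ in $R\cong\BC[\bar y]/(\bar y^{\,n})$ yields $x+g_x(y)\in I$ with $o(g_x)\geq 1$ and $\deg g_x<n$, and the case $\bar y\in\mm_R^2$ is symmetric. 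The concluding colength comparison is also right: since $g_x\in(y)$, the surjection $\BC[x,y]\to\BC[y]$, $x\mapsto -g_x(y)$, has kernel $(x+g_x(y))$ and sends $\mm$ onto $(y)$, so $(x+g_x(y))+\mm^n$ is precisely the preimage of $(y^n)$ and has colength $n$; an inclusion of ideals of equal finite colength is an equality.

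One dependence deserves to be made explicit. Your whole argument leans on $\length(\BC[x,y]/I)=n$, quoted from the sentence preceding the lemma. The paper's justification there (``because $(a,b)\neq(0,0)$'') only gives surjectivity of $\BC[t]/t^n\to R$, $t\mapsto ay-bx$; injectivity is exactly the lower bound $\length(R)\geq n$, i.e.~$(ay-bx)^{n-1}\notin I$, and your final step genuinely needs this equality --- from $J\subseteq I$ with colengths $n$ and $\leq n$ respectively one cannot conclude $J=I$. This is not circular, because the bound has a standard proof independent of the lemma: $f\in\mm\smallsetminus\mm^2$ implies that $\OO_{\BA^2,0}/(f)$ is a one-dimensional regular local ring, hence a discrete valuation ring, so $(f)+\mm^n$ has colength exactly $n$. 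I recommend adding that one line, so that your proof does not rest on an assertion the paper itself leaves unproved.
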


\begin{prop}\label{prop:A_n-sing}
Let $I=(f)+\mm^{n} \subset \BC[x,y]$ be a curvilinear ideal with $n\ge2$. Then, the blowup $\Bl_{I}{\BA^2}$ has a Kleinian singular point of type $A_{n-1}$.
\end{prop}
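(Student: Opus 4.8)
The plan is to reduce, via an explicit change of coordinates, to the case of a monomial complete intersection ideal, and then read off the singularity directly from one affine chart of the blowup.

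First I would invoke the normal form of \Cref{normalform}: up to swapping the roles of $x$ and $y$, the generator $f$ of the curvilinear ideal $I = (f) + \mm^n$ may be taken to be $f = x + g(y)$ with $g \in \BC[y]$, $o(g) \geq 1$ and $\deg g < n$. The condition $o(g) \geq 1$ means $g(y) \in (y) \subset \mm$, so the triangular automorphism $\phi$ of $\BC[x,y]$ determined by $\phi(x) = x - g(y)$, $\phi(y) = y$ fixes the origin — hence $\phi(\mm) = \mm$ — and satisfies $\phi(f) = x$. Consequently
\[
\phi(I) = (x) + \mm^n = (x, y^n),
\]
the last equality holding because every degree-$n$ monomial in $x,y$ other than $y^n$ lies in $(x)$. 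Since $\phi$ comes from an automorphism of $\BA^2$, it induces an isomorphism $\Bl_I \BA^2 \cong \Bl_{(x,y^n)} \BA^2$, so it suffices to analyse the blowup of the monomial ideal $(x, y^n)$.

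Next I would apply \Cref{blowuplci} with $k = 1$ and $h = n$, which identifies $\Bl_{(x,y^n)}\BA^2$, as an $\BA^2$-scheme, with the surface
\[
B = \Set{((x,y),[u:v]) \in \BA^2 \times \BP^1 | vx = uy^n} \into \BA^2 \times \BP^1.
\]
It then remains to inspect the two standard affine charts of $B$. In the chart $\{v \neq 0\}$, setting $v = 1$, the defining equation becomes $x = uy^n$, exhibiting this chart as the graph of a morphism $\BA^2 \to \BA^1$; it is therefore smooth. In the complementary chart $\{u \neq 0\}$, setting $u = 1$, the surface is the hypersurface $vx = y^n$ inside $\BA^3$ with coordinates $(x,y,v)$.

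Finally I would identify the singularity of this hypersurface. Its Jacobian $(v, -n y^{n-1}, x)$ vanishes (using $n \geq 2$) only at the origin, so $B$ has a unique singular point, lying in the chart $\{u \neq 0\}$; after renaming the coordinates, the equation $vx = y^n$ is precisely the standard presentation $XY = Z^n$ of the Kleinian (du Val) singularity of type $A_{n-1}$. I do not expect a serious obstacle: the only genuine content is the reduction to the monomial ideal $(x, y^n)$ in the first step, together with the recognition that the surviving affine chart carries the normal form $XY = Z^n$ of $A_{n-1}$, while the remaining chart computations are routine.
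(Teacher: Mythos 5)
Your proof is correct and follows essentially the same route as the paper's: both rest on the normal form of \Cref{normalform} and on the presentation of the blowup of a complete intersection as the hypersurface $\{vf = uy^n\}$ in $\BA^2 \times \BP^1$ (the paper cites Prop.~IV-25 of Eisenbud--Harris directly, which is also what underlies \Cref{blowuplci}). The only difference is cosmetic: you perform the automorphism $(x,y)\mapsto(x-g(y),y)$ up front to reduce to the monomial ideal $(x,y^n)$, whereas the paper keeps $f$ general and absorbs that same change of coordinates into its final ``easy computation'' identifying the singular point as $A_{n-1}$.
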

\begin{proof}
By \Cref{normalform}, we can suppose $f=x+g_x(y)$ and, as a consequence, $I=(x+g_x(y),y^{n})$. In particular, the sequence $x+g_x(y),y^{n}$ is a regular sequence. Thus, by \cite[Prop.~IV-25]{GEOFSCHEME}, we have 
\[
\Bl_{I}{\BA^2}=\Set{((x,y),[u:v])\in\BA^2\times\BP^1|uf-vy^{n}=0}.
\]
Now, an easy computation shows that the point $((0,0),[0:1])$ is a Kleinian singularity of type $A_{n-1}$.
\end{proof}

The next definition introduces our main objects of study for this section.

\begin{definition}\label{def:tower}
We will say that an ideal $K \subset \BC[x,y]$ is a \emph{tower of height} $i_s$ if there exists a polynomial $g_x\in \BC[y]$ or $g_y\in\BC[x]$, of degree strictly smaller than $i_s$, and a strictly increasing sequence of natural numbers $1\le i_1<i_2<\cdots< i_s$, such that 
\[
K=\prod_{k=1}^s (x+g_x(y))+\mm^{i_k}
\]
or
\[
K=\prod_{k=1}^s (y+g_y(x))+\mm^{i_k}.
\]
We will say that a tower is 
\begin{itemize}
\item [(i)] \emph{complete} if $i_k=k$ for $k=1,\ldots,s$, 
\item [(ii)] \emph{monomial} if $g_x=0$, in the first case, or if $g_y=0$, in the second case.
    \end{itemize} 
\end{definition}

\begin{example} \label{generatorsnoncomplete}
Let $1\le i_1<i_2<\cdots<i_{s}$ be a strictly increasing sequence of positive integers, and let
\[
K=\prod_{k=1}^s(x)+\mm^{i_k}=\prod_{k=1}^s(x,y^{i_k})
\]
be a monomial tower of height $i_s$, not necessarily complete. Then
\begin{equation}\label{eqn:nc-tower-explicit}
K=\left(x^s,x^{s-1}y^{i_1},x^{s-2}y^{i_1+i_2},\ldots,xy^{\underset{j=1}{\overset{s-1}{\sum}}i_j},y^{\underset{j=1}{\overset{s}{\sum}}i_j}\right).
\end{equation}
The associated Ferrers diagram is depicted in \Cref{fig:complete-monomial} in the complete case.
\begin{figure}[h!]\scalebox{1.2}{
\begin{tikzpicture}[scale=0.9]
    \draw (0,3)--(0.5,3)--(0.5,0);
    \draw (0,1.5)--(1,1.5)--(1,0);
    \draw (0,0.5)--(1.5,0.5)--(1.5,0)--(0,0);
    \draw (0,1)--(1,1);
    \draw (0,2)--(0.5,2);
    \draw (0,2.5)--(0.5,2.5);
    \draw (-1.5,3)--(-1.5,4)--(-2,4)--(-2,4.5);
    \draw (-2.5,4.5)--(-2.5,3);
    \draw (-2.5,4)--(-2,4)--(-2,3);
    \draw (-2.5,3.5)--(-1.5,3.5);
    \draw (-2.5,5.5)--(-2.5,6)--(-2,6)--(-2,5.5);
    \draw (-2.5,1)--(-2.5,0)--(-1.5,0) ;
    \draw (-2.5,0.5)--(-1.5,0.5);
    \draw (-2,0)--(-2,1);
    \draw[dashed] (-2.5,1)--(-1.5,1)--(-1.5,0);
    \draw[dashed] (0,0)--(0,3);
    \draw[dashed] (-2.5, 3)--(-1.5,3);
    \draw[dashed] (-2.5, 4.5)--(-2,4.5);
    \draw[dashed] (-2.5, 5.5)--(-2,5.5);
    \node at (-2,2) {$\vdots$};
    \node at (-2.25,5.15) {$\vdots$};
    \node at (-0.7,1.5) {$\cdots$};
    \node at (1.7,0.25) {\tiny $x^{s}$};
    \node at (1.4,0.75) {\tiny $x^{s-1}y$};
    \node at (0.95,1.75) {\tiny $x^{s-2}y^3$};
    \node at (0.25,3.25) {\tiny $x^{s-3}y^6$};
    \node at (-2.25,6.4) {\tiny $y^{\underset{i=1}{\overset{s}{\sum}}i}$};
    \node at (-1.55,4.35) {\tiny $xy^{\underset{i=1}{\overset{s-1}{\sum}}i}$};
\end{tikzpicture}}
    \caption{The Ferrers diagram of a complete monomial tower of height $s$.}
    \label{fig:complete-monomial}
\end{figure}
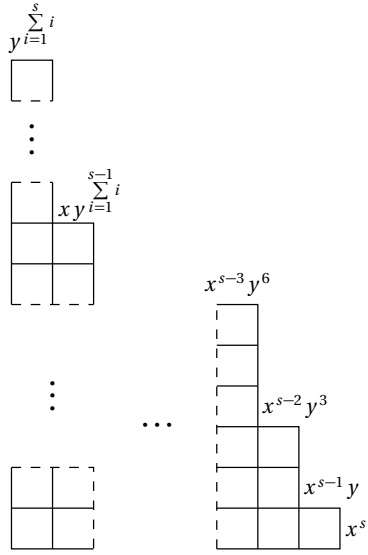
\end{example}

\begin{lemma}\label{lemma:normality-towers} 
The blowup of $\BA^2$ with center an arbitrary tower $K \subset \BC[x,y]$ is a normal surface. Equivalently, every tower is a normal ideal.
\end{lemma}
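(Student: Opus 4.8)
The plan is to reduce to the case of a \emph{monomial} tower by a triangular change of coordinates, and then to read off normality from the convex-geometric criterion recorded in \Cref{rmk:normality-convexity}.

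First I would handle the symmetry: exchanging $x$ and $y$ is an automorphism of $\BC[x,y]$ fixing $\mm$, so it is enough to treat a tower of the form $K=\prod_{k=1}^s (x+g_x(y))+\mm^{i_k}$, where $o(g_x)\geq 1$ (so that $x+g_x(y)\in\mm\smallsetminus\mm^2$, as in \Cref{normalform}). I would then introduce the automorphism $\psi$ of $\BC[x,y]$ given by $x\mapsto x+g_x(y)$, $y\mapsto y$. Since $g_x(y)\in(y)\subset\mm$, both $\psi$ and its inverse $\psi^{-1}\colon x\mapsto x-g_x(y),\ y\mapsto y$ preserve $\mm$ and fix the origin. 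Applying $\psi^{-1}$ factor by factor, and using $\psi^{-1}(x+g_x(y))=x$ together with $\psi^{-1}(\mm^{i_k})=\mm^{i_k}$, I obtain
\[
\psi^{-1}(K)=\prod_{k=1}^s\bigl((x)+\mm^{i_k}\bigr)=\prod_{k=1}^s(x,y^{i_k})=:K_0,
\]
the monomial tower, the last equality being the trivial identity $(x)+\mm^{i_k}=(x,y^{i_k})$ (modulo $x$ one has $\mm\equiv(y)$). As $\psi$ extends to an automorphism of $\BC[x,y][t]$ fixing $t$ and carrying $K_0t$ to $Kt$, the Rees algebras $\BC[x,y][Kt]$ and $\BC[x,y][K_0t]$ are isomorphic; since normality of an ideal is an isomorphism-invariant of its Rees algebra, $K$ is normal if and only if $K_0$ is. This reduces the statement to the monomial tower $K_0$.

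For $K_0$ I would invoke two consequences of \Cref{rmk:normality-convexity}: a product of normal monomial ideals is normal, and a monomial ideal $I$ is normal exactly when $A_I=Q_I\cap\BN^2$. By the first, it suffices to prove that each factor $(x,y^m)$ is normal, and this is the only actual computation. Here
\[
A_{(x,y^m)}=\Set{(a,b)\in\BN^2 | a\geq 1\text{ or }b\geq m},\qquad Q_{(x,y^m)}=\conv_{\BQ}\bigl((1,0),(0,m)\bigr)+\BQ^2_{\geq 0},
\]
and an elementary inspection along the two axes (and of the interior) shows that the lattice points of $Q_{(x,y^m)}$ are precisely the pairs with $a\geq 1$ or $b\geq m$, i.e.\ $A_{(x,y^m)}=Q_{(x,y^m)}\cap\BN^2$. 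Hence $(x,y^m)$ is normal, and so is $K_0$.

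The one step demanding care is the reduction to the monomial case: I must verify that the triangular substitution genuinely preserves $\mm$ — which is exactly where $o(g_x)\geq 1$ enters — so that it induces an isomorphism of the associated blowups (equivalently, of the Rees algebras) and therefore transports normality. Once this is secured, the product structure $K_0=\prod_k(x,y^{i_k})$ does the work, and the verification for the single factor $(x,y^m)$ is immediate from the convexity criterion, so I anticipate no further obstacle.
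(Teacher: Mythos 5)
Your proof is correct and follows essentially the same route as the paper's: reduce to a monomial tower via the triangular coordinate change $(x,y)\mapsto(x-g(y),y)$ (which preserves $\mm$ precisely because $o(g)\geq 1$), then conclude normality from the convexity criterion of \Cref{villareal} and \Cref{rmk:normality-convexity}. The only (harmless) organizational difference is in the monomial step: the paper asserts $A_I=Q_I\cap\BN^2$ directly for the whole monomial tower, whereas you verify it for a single curvilinear factor $(x,y^m)$ and then invoke the product-of-normal-ideals fact from \Cref{rmk:normality-convexity} — both verifications are immediate.
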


\begin{proof}
We first observe that the isomorphism class of a subscheme $X \subset \BA^2$ defined by a tower is completely determined by the sequence of positive integers $1\leq i_1<\cdots < i_s$. Indeed, if $K=\prod_{1\leq k\leq s} (x+g(y))+\mm^{i_k}$, then the automorphism
\begin{equation}\label{eqn:automorphism}
\begin{tikzcd}[row sep=tiny]
\BA^2 \arrow{r}{\sim} & \BA^2 \\
(x,y) \arrow[mapsto]{r} & (x-g(y),y)
\end{tikzcd}
\end{equation}
induces an isomorphism between $V(K)\into \BA^2$ and $V(K')\into \BA^2$, where $K'$ is the monomial tower $\prod_{1\leq k\leq s} (x)+\mm^{i_k}$.
On the other hand, the blowup of $\BA^2$ with center a monomial tower $I \subset \BC[x,y]$ is normal, because the subsets $A_I \subset \BN^2$ and $Q_I\subset \BQ^2$, defined as in \Cref{rmk:normality-convexity} and in \Cref{villareal} respectively, satisfy $A_I=Q_I\cap \BN^2$. The conclusion then follows from \Cref{villareal}.
\end{proof}

\subsection{Blowing up along towers}
\label{sec:blowup-tower}
This subsection contains the key structural results that we will need for the calculation of the Behrend function of a fat point $X\subset \BA^2$ cut out by a tower.

\begin{prop}\label{toricver} Let $s\ge 1$ be a positive integer and let $K_s$ be the complete monomial tower 
    \[
    K_{s}=\prod_{k=1}^s (x,y^{k}).
    \]
Then the blowup $\Bl_{K_{s}}\BA^2$ factors as a sequence of blowups
\[
\begin{tikzcd}
X_s \arrow{r}{\varepsilon_s} &
X_{s-1} \arrow{r}{\varepsilon_{s-1}} & 
\cdots \arrow{r}{\varepsilon_3} & 
X_2 \arrow{r}{\varepsilon_2} & 
X_1 \arrow{r}{\varepsilon_1} & 
\BA^2
\end{tikzcd}
\]
where 
\begin{align*}
    X_1&=\Bl_0\BA^2,\\
    X_{k+1}&=\Bl_{t_{k}}X_k,\quad k= 1,\ldots,s-1. 
\end{align*}
Here, $t_1$ is the toric point of $\exc(\varepsilon_1) \subset X_1$ corresponding to the line $\{ x=0 \}$ and, for $k= 2,\ldots,s-1$, $t_{k}$ is the only toric point of $\exc(\varepsilon_k)\smallsetminus \varepsilon_{k
}^{-1}(\exc(\varepsilon_{k-1}))$.
    
In other words, $X_{s}$ and $\Bl_{K_{s}}{\BA^2}$ are canonically isomorphic as $\BA^2$-schemes.
\end{prop}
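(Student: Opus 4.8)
The plan is to argue by induction on $s$, peeling off one factor of the tower at each step and invoking the iterated blowup lemma \Cref{LEMMATECH}. The base case $s=1$ is tautological, since $K_1=(x,y)=\mm$ and $X_1=\Bl_0\BA^2=\Bl_{K_1}\BA^2$. For the inductive step I would write $K_s=K_{s-1}\cdot(x,y^s)$, where $K_{s-1}=\prod_{k=1}^{s-1}(x,y^k)$, and apply \Cref{LEMMATECH} with $\mathscr I_1=K_{s-1}$ and $\mathscr I_2=(x,y^s)$. Writing $\varepsilon\colon \Bl_{K_{s-1}}\BA^2\to\BA^2$ for the blowup morphism, this produces a canonical isomorphism of $\BA^2$-schemes $\Bl_{K_s}\BA^2\cong \Bl_{\varepsilon^{-1}(x,y^s)\cdot\OO}(\Bl_{K_{s-1}}\BA^2)$. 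By the inductive hypothesis $\Bl_{K_{s-1}}\BA^2\cong X_{s-1}$, so the whole problem reduces to identifying the blowup of $X_{s-1}$ along the total transform of $(x,y^s)$ with the single point blowup $\Bl_{t_{s-1}}X_{s-1}=X_s$.

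The second ingredient is a bookkeeping of the toric structure carried along the induction. Since each $X_{k+1}\to X_k$ is the blowup of a torus-fixed point, it is a star subdivision, so every $X_k$ is a smooth toric surface and I would track its fan explicitly. With the convention $x=\chi^{(1,0)}$, $y=\chi^{(0,1)}$, the fan of $X_1$ has rays $(0,1),(1,1),(1,0)$ and exceptional ray $(1,1)$; the line $\{x=0\}$ is the divisor of the ray $(1,0)$, so $t_1$ is the fixed point of the two-dimensional cone on $(1,1)$ and $(1,0)$. An immediate induction then shows that $t_k$ is the fixed point of the cone on $(k,1)$ and $(1,0)$, that star subdividing it introduces the ray $(k+1,1)$, and hence that $X_k$ has rays $(0,1),(1,1),(2,1),\ldots,(k,1),(1,0)$. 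In particular $X_s$ carries exactly the rays $(0,1),(1,1),\ldots,(s,1),(1,0)$, which (as a consistency check) one recognises as the inner normals to the edges of the Newton polyhedron $Q_{K_s}$ computed from the generators in \Cref{generatorsnoncomplete}, in agreement with \Cref{villareal} and the normality supplied by \Cref{lemma:normality-towers}.

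With the fan of $X_{s-1}$ in hand, the heart of the argument is a local computation of the total transform $\varepsilon^{-1}(x,y^s)\cdot\OO_{X_{s-1}}$, which in each affine toric chart is generated by $x$ and $y^s$ rewritten in the local monomial coordinates. On the chart of the cone spanned by $(k,1)$ and $(k+1,1)$, for $1\le k\le s-2$, one finds $x=p^kq^{k+1}$ and $y^s=p^sq^s$, so that $(x,y^s)=(p^kq^{k+1})$ is principal; the analogous computation on the chart of $(0,1),(1,1)$ gives $(x,y^s)=(q)$. Thus the total transform is invertible away from $t_{s-1}$, where the blowup is an isomorphism. On the chart of the cone spanned by $(s-1,1)$ and $(1,0)$, the chart containing $t_{s-1}$, the same computation yields $x=p^{s-1}q$ and $y^s=p^s$, whence $(x,y^s)=p^{s-1}\cdot(p,q)$, a monomial times the maximal ideal of $t_{s-1}$. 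Since $p^{s-1}$ is a non-zero-divisor, blowing up $p^{s-1}\cdot(p,q)$ coincides with blowing up $(p,q)$: the Rees algebras agree after multiplying the degree-$d$ piece by $p^{(s-1)d}$, or, equivalently, one applies \Cref{LEMMATECH} a second time and notes that the blowup of a principal ideal is an isomorphism. Therefore $\Bl_{\varepsilon^{-1}(x,y^s)\cdot\OO}(X_{s-1})=\Bl_{t_{s-1}}X_{s-1}=X_s$, completing the induction.

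I expect the main obstacle to be the local chart computation above: one must correctly set up the dual bases for each two-dimensional cone of the fan of $X_{s-1}$ and verify, uniformly in $k$, that $(x,y^s)$ becomes principal on every chart except the one containing $t_{s-1}$, where it degenerates to a monomial times the maximal ideal. Everything else is formal once the iterated blowup lemma and the reduction ``$\Bl_{f\mathfrak a}=\Bl_{\mathfrak a}$ for $f$ a non-zero-divisor'' are in place. A secondary point requiring care is keeping the identifications in \Cref{LEMMATECH} compatible with the $\BA^2$-structure throughout the induction, so that the final isomorphism $X_s\cong\Bl_{K_s}\BA^2$ is genuinely canonical as claimed.
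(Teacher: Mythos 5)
Your proof is correct and follows essentially the same route as the paper's: induction on the height, \Cref{LEMMATECH} to reduce the blowup along $K_s$ to a blowup of $X_{s-1}$ along the total transform of $(x,y^s)$, and an explicit toric chart computation showing that this transform is an invertible ideal away from $t_{s-1}$ and equals a monomial times the reduced maximal ideal at $t_{s-1}$ (your charts and monomial identities match the paper's, up to relabelling of coordinates and indices). The only cosmetic difference is that the paper runs the induction from $s$ to $s+1$ and transports the center through the isomorphism $\varphi_s$ via base change, whereas you transport first and apply the lemma after; the content is identical.
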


\begin{proof}
For the sake of readability, we set $I_k=(x,y^{k})$.
The proof goes by induction on the height $s$ of the tower. The first nontrivial case is $s=2$. We want to prove that there exists a canonical isomorphism of $\BA^2$-schemes $\varphi\colon X_2\rightarrow \Bl_{K_2}\BA^2$. 

Lemma \ref{LEMMATECH} implies that $$\Bl_{K_2} \BA^2 \cong \Bl_{\varepsilon_1^{-1}(I_2)\cdot \OO_{X_1}}X_1.$$
Recall (see {\cite[\S\,3.1]{COX}}) that $X_1$ is a toric surface covered by two toric charts $U_i\cong\BA^2$, for $i=0,1$, with the property that, if we call $a_i,b_i$ the toric coordinates on $U_i$, then the maps ${\varepsilon_1}|_{U_i}$, for $i=0,1$, take the the form
\begin{align*}
\varepsilon_1\big|_{U_0}(a_0,b_0)&=(a_0b_0,b_0)\\
\varepsilon_1\big|_{U_1}(a_1,b_1)&=(a_1,a_1b_1).
\end{align*}
As a consequence 
\begin{align*}
    \varepsilon_1\big|_{U_0}^{-1}(I_2)\cdot \BC[a_0,b_0]&=(a_0b_0,b_0^2)=(b_0)\cdot (a_0,b_0), \\
    \varepsilon_1\big|_{U_1}^{-1}(I_2)\cdot \BC[a_1,b_1]&=(a_1,a_1^2b_1^2)=(a_1).
\end{align*}
Therefore, we conclude that $\varepsilon_1^{-1}(I_2)\cdot \OO_{X_1}=\mathscr H_1\cdot \mathscr H_2$ where $\mathscr H_1 \subset \OO_{X_1}$ defines a Cartier divisor and $\mathscr H_2 \subset \OO_{X_1}$ defines a (reduced) toric point $t_1\in X_1$. Thus, we have
\[
\Bl_{\mathscr H_1\cdot\mathscr H_2}{X_1}\cong \Bl_{t_1}X_1=X_2,
\]
which concludes the proof of the base step.

Suppose now that we have a canonical isomorphism of $\BA^2$-schemes
$\varphi_s\colon X_s\simto \Bl_{K_s}\BA^2$. We need to construct a canonical isomorphism
\[
\begin{tikzcd}
\varphi_{s+1}\colon X_{s+1} \arrow{r}{\sim} & \Bl_{K_{s+1}}\BA^2.
\end{tikzcd}
\]
Setting $\psi_s=\varepsilon_1\circ\cdots\circ \varepsilon_s$, we have a commutative diagram
\[
\begin{tikzcd}[row sep=large,column sep=large]
X_{s+1}\arrow[dotted,bend left]{rr}[description]{\varphi_{s+1}}\arrow[swap]{d}{\varepsilon_{s+1}}\arrow{r}{\overline{\varphi}_s} & \Bl_{\varphi_{s}(t_{s})}\Bl_{K_s}\BA^2\arrow{d}{\varepsilon_{\varphi_{s}(t_{s})}} & \Bl_{K_{s+1}}\BA^2 \arrow{d}{\varepsilon_{K_{s+1}}} \\
X_s\arrow{r}{\varphi_s}\arrow[bend right]{rr}[description]{\psi_s} & \Bl_{K_s}\BA^2\arrow{r}{\varepsilon_{K_s}} & \BA^2 
\end{tikzcd}
\]
where the map $\overline{\varphi}_s$ is an isomorphism by the base change properties of blowups \cite[Prop.~IV-21]{GEOFSCHEME}.
 
Now, we exploit the toric variety structure on $X_i$, for all $i \in \BN$. If $N$ denotes the standard 2-dimensional lattice, then the variety $X_{s}$ can be constructed via the fan $\Sigma_{s}$ in $N\otimes_{\BZ} \BR\cong\BR^2$ depicted in \Cref{fan:X_s}.

\begin{figure}[h!]
 \begin{tikzpicture}
 \node at (-1,0.5) {$\Sigma_s=$};
 \draw[ -]  (0,0)--(7,1);
 \draw[ -] (0,0)--(1,1);
 \draw[ -] (0,1)--(0,0)--(1,0);
 \draw[ -] (2,1)--(0,0);
 \node[above] at (2.2,1) {\small $ 2e_1+e_2 $};
 \node[above] at (0.8,1) {\small $ e_1+e_2 $};
 \node[above] at (6.8,1) {\small $ s e_1+e_2 $};
 \node at (2,0.5) {\small $ \cdots $};
 \node[left] at (0,1) {\small $  e_2 $};
 \node[right] at (1,0) {\small $ e_1  $};
 \end{tikzpicture}
 \caption{A fan realising the toric variety $X_s$.}
 \label{fan:X_s}
\end{figure}
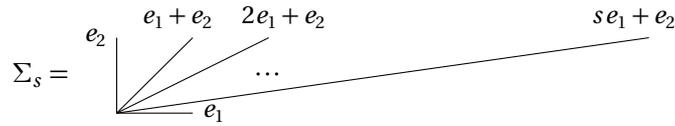

The variety $X_s$ is covered by $s+1$ smooth charts $U_k = \Spec S_k \cong \BA^2$ with toric coordinates $a_k$ and $b_k$, more precisely we set
\begin{align*}
    S_k &= \BC[xy^{-k+1},x^{-1}y^{k}] = \BC[a_k,b_k], \qquad 1\leq k\leq s \\
    S_{s+1} &= \BC[xy^{-s},y] = \BC[a_{s+1},b_{s+1}].
\end{align*}
As above, the maps ${\psi_s}|_{{U_i}}\colon U_i \to \BA^2$, for $i=1,\ldots,s+1$, have the explicit description
 \begin{align*}
 {\psi_s}\big|_{{U_k}}(a_k,b_k) &=(a_k^kb_k^{k-1},a_kb_k), \qquad 1\leq k\leq s\\
 {\psi_s}\big|_{{U_{s+1}}}(a_{s+1},b_{s+1}) &=(a_{s+1}b_{s+1}^{s},b_{s+1}).           
 \end{align*}
Therefore, the ideal sheaf $\psi_s^{-1}(I_{s+1})\cdot\OO_{X_s} \subset \OO_{X_s}$ is given, locally on each chart, by
\begin{align*}
    {\psi_s}\big|_{U_k}^{-1}(I_{s+1})\cdot \BC[a_k,b_k] &=(a_k^kb_k^{k-1},a_k^{s+1}b_k^{s+1})=(a_k^kb_k^{k-1}) \subset S_k, \qquad 1\leq k\leq s \\
    {\psi_s}\big|_{U_{s+1}}^{-1}(I_{s+1})\cdot \BC[a_{s+1},b_{s+1}]&=(a_{s+1}b_{s+1}^{s },b_{s+1}^{s+1})=(b_{s+1}^{s})\cdot (a_{s+1},b_{s+1}) \subset S_{s+1}.
\end{align*}
As a consequence, $\psi_s^{-1}(I_{s+1})\cdot\OO_{X_s}=\mathscr H_1\cdot\mathscr H_2$ where, as above, $\mathscr H_1\subset \OO_{X_s}$ defines a Cartier divisor on $X_{s}$ and $\mathscr H_2$ defines the reduced toric point $t_{s}\in  \exc(\varepsilon_s)\smallsetminus\varepsilon_s^{-1}(\exc(\varepsilon_{s-1}))$. Finally, the statement follows by applying again \Cref{LEMMATECH}.
\end{proof}

\begin{corollary}\label{blowuprod}
Let $K$ be the complete tower $K=\prod_{1\leq k\leq s}(x+g(y))+\mm^{k}$, where $g(y)\in \BC[y]$ is a polynomial of order $o(g)\ge 1$ and degree $\deg(g)<s$. Then, the blowup $\varepsilon_K\colon \Bl_{K}{\BA^2}\rightarrow \BA^2$ factors as a sequence of blowups 
\[
	\begin{tikzcd}
	\Bl_{p_{s-1}}{\Bl_{p_{s-2}}{\cdots\Bl_{p_1}{\Bl_{0}{\BA^2}}}} \arrow{r}{\varepsilon_s} &
	\cdots\arrow{r}{\varepsilon_3} &
	\Bl_{p_1}{\Bl_{0}{\BA^2}}\arrow{r}{\varepsilon_2} &
	\Bl_{0}{\BA^2}\arrow{r}{\varepsilon_1} & 
	\BA^2
	\end{tikzcd}
\]
where $p_1\in\exc(\varepsilon_1)$ and, for all $k=2,\ldots,s-1$, the point $p_k $ belongs to $\exc(\varepsilon_{k})\smallsetminus\varepsilon_{k}^{-1}\exc(\varepsilon_{k-1})$.
\end{corollary}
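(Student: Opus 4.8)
The plan is to reduce the statement to the monomial case already established in \Cref{toricver}, by exploiting the coordinate change \eqref{eqn:automorphism} that straightens out the polynomial $g$. Concretely, I would consider the automorphism $\sigma\colon \BA^2 \to \BA^2$, $(x,y)\mapsto (x-g(y),y)$. Since $o(g)\geq 1$ we have $g(0)=0$, so $\sigma$ fixes the origin; and by (the proof of) \Cref{lemma:normality-towers} it carries the subscheme $V(K)$ isomorphically onto $V(K_s)$, where $K_s=\prod_{1\leq k\leq s}(x,y^k)$ is the complete monomial tower of height $s$ (here the defining sequence is $i_k=k$, and the hypothesis $\deg g<s$ guarantees that $K$ is a genuine complete tower). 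By the functoriality of blowups under an isomorphism of the base that takes one center to the other, $\sigma$ then induces a commutative square whose top arrow $\Bl_{K}\BA^2 \simto \Bl_{K_s}\BA^2$ is an isomorphism lying over $\sigma$.

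Next I would transport the factorisation of \Cref{toricver} through $\sigma$, level by level. Because $\sigma$ fixes the origin, it lifts to an automorphism $\sigma_1$ of $X_1=\Bl_0\BA^2$ lying over $\sigma$; I then build the $K$-tower inductively, declaring the $k$-th center to be $p_k \defeq \sigma_k^{-1}(t_k)$, where $t_k\in X_k$ is the center used in \Cref{toricver} and $\sigma_k\colon Y_k \simto X_k$ is the isomorphism produced at the previous stage. Since $\sigma_k$ is an isomorphism sending $p_k$ to $t_k$, it lifts to an isomorphism $\sigma_{k+1}\colon \Bl_{p_k}Y_k \simto \Bl_{t_k}X_k=X_{k+1}$ lying over $\sigma$, continuing the induction. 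At the top, composing $Y_s\simto X_s=\Bl_{K_s}\BA^2$ with the inverse of the square above (which lies over $\sigma^{-1}$) yields an isomorphism $Y_s\simto \Bl_K\BA^2$ of $\BA^2$-schemes, exhibiting $\Bl_K\BA^2$ as the asserted sequence of point blowups $Y_s\to\cdots\to Y_1=\Bl_0\BA^2\to\BA^2$.

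Finally I would pin down the position of the centers. The characterisation ``$p_k\in\exc(\varepsilon_k)\smallsetminus \varepsilon_k^{-1}(\exc(\varepsilon_{k-1}))$'' is intrinsic and hence invariant under isomorphisms lying over $\sigma$: such isomorphisms preserve exceptional loci and commute with the blowdown maps, so the location of $t_k$ recorded in \Cref{toricver} transports verbatim to $p_k$, with the first center simply landing in $\exc(\varepsilon_1)$. The one genuinely load-bearing point is precisely this transport: the centers $t_k$ in \Cref{toricver} are singled out \emph{torically}, yet $\sigma$ is not a toric morphism once $g\neq 0$, so the images $p_k$ need not be toric points. This is exactly why the corollary records only the intrinsic location of the $p_k$ rather than a toric description, and the step requiring the most care is verifying that the lifts $\sigma_k$ exist at every stage and respect the exceptional structure.
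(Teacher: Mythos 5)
Your proposal is correct and is essentially the paper's own argument: the paper's proof is the one-line statement that it suffices to combine \Cref{toricver} with the automorphism \eqref{eqn:automorphism} from the proof of \Cref{lemma:normality-towers}, which is exactly the reduction you carry out. Your level-by-level transport of the centers $t_k$ to $p_k$ and the remark that the $p_k$ retain only the intrinsic (non-toric) characterisation simply make explicit the details the paper leaves implicit.
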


\begin{proof} 
It is enough to combine \Cref{toricver} with the automorphism \eqref{eqn:automorphism} introduced in the proof of \Cref{lemma:normality-towers}.
\end{proof}

\begin{remark}\label{blownoncompl}
The above corollary, combined with \Cref{lemma:normality-towers}, also allows one to handle the blowup of $\BA^2$ along \emph{any} tower
\[
K=\underset{k=1}{\overset{s}{\prod}} (x+g(y))+\mm^{i_k}.
\]
Indeed, given the complete tower $\overline{K}$ defined by
\[
\overline{K}=\underset{k=1}{\overset{i_s}{\prod}} (x+g(y))+\mm^{k},
\]
the blowup $B=\Bl_K\BA^2$ can be obtained by contracting some projective lines in $\overline{B}=\Bl_{\overline{K}}\BA^2$.

In a little more detail, if we call $\varepsilon\colon B\rightarrow \BA^2$ and $\bar{\varepsilon}\colon \overline{B}\rightarrow \BA^2$ the blowup maps, the same computations as in the proof of \Cref{toricver} show that $\bar{\varepsilon}^{-1}(K)\cdot \OO_{\overline{B}} $ defines a Cartier divisor on $\overline{B}$. Therefore, there is a canonical birational morphism of $\BA^2$-schemes
\[
\varphi\colon \overline{B}\rightarrow B
\]
which has connected fibres by Zariski's Main Theorem (\Cref{ZMT}) (which we may apply since $B$ is normal, by \Cref{lemma:normality-towers}). In particular, the map $\varphi$ is an isomorphism outside from the respective exceptional loci of $\overline{B}$ and $B$ and it may contract some of the irreducible components of $\exc(\overline{B})$. 

 Since any tower is isomorphic to a monomial tower (see the proof of \Cref{lemma:normality-towers}), in order to understand which rational projective curves of $\overline{B}$ are contracted by $\varphi$, we can first suppose that $K$ is a monomial tower. Then, the usual toric geometry methods apply. A fan $\Sigma$ for the toric variety $B$ consists of the following $s+1$ maximal cones
\begin{align*}
    \sigma_0&=\Braket{e_2,i_1e_1+e_2}, \\
    \sigma_1&=\Braket{i_1e_1+e_2,i_2e_1+e_2} ,\\
    &\vdots \\
    \sigma_{s-1}&=\Braket{i_{s-1}e_1+e_2,i_se_1+e_2}, \\
    \sigma_{s}&=\Braket{i_se_1+e_2,e_1}.
\end{align*}
In particular, if we put $i_{0}=0$, the cones $\sigma_{j}$, for $j=0,\ldots,s-1$, correspond either to a smooth point, if $i_{j+1}-i_{j}-1=0$, or to a Kleininian singularity of type $A_{i_{j+1}-i_{j}-1}$ otherwise, whereas the cone $\sigma_{s} $ corresponds to a smooth point of $B$. Now, a fan $\overline{\Sigma}$ for the toric variety $\overline{B}$ has the following maximal cones
\begin{align*}
    \tau_0&=\Braket{e_2,e_1+e_2} ,\\
    \tau_1&=\Braket{e_1+e_2,2e_1+e_2}, \\
    &\vdots \\
    \tau_{i_s-1}&=\Braket{(i_{s}-1)e_1+e_2,i_se_1+e_2} ,\\
    \tau_{i_s}&=\Braket{i_se_1+e_2,e_1}.
\end{align*}
Moreover, the fact that each cone $\tau_i$ of $\overline{\Sigma}$ is contained in some cone $\sigma_j$ of $\Sigma$ implies that there is a morphism of $\BA^2$-schemes from $\overline{B} $ to $B$ which, by universality, must coincide with $\varphi$. Therefore, the lines contracted by $\varphi$ are the lines in $\overline{B} $ corresponding to the rays of $\overline{\Sigma}$ not belonging to $\Sigma$.

Notice also that, if one knows how to compute the Behrend number of a complete tower (which we do, as we shall see in \Cref{TEOREMA1}), then, thanks to this remark, one also knows how to compute the Behrend number of an arbitrary tower. To see this, consider a curve $C\subset \overline{B}$ which is not contracted by $\varphi$. Then, since, when restricted to the complement $U\subset\overline{B}$ of the contracted lines, $\varphi$ is an isomorphism, we have the identity
\[
\mult_{\varphi(C)}(V({\varepsilon}^{-1}(K)\cdot\OO_{ {B}}))=\mult_{C}(V(\bar{\varepsilon}^{-1}(K)\cdot \OO_{\overline{B}})).
\]
\end{remark}
The following example provides a generalisation of \cite[Prop.~IV-40]{GEOFSCHEME}.

\begin{example}\label{exblowcurv}
The easiest non-complete tower one can think of is given by a curvilinear ideal $I=(x)+\mm^{n}=(x,y^{n})$ with $n\ge2$. We can deduce, from the above remark, an alternative way to \Cref{prop:A_n-sing}, to prove that $\Bl_I\BA^2$ has an (isolated) singularity of type $A_{n-1}$. 

Let $K$ be the monomial complete tower
\[
K=\prod_{k=1}^n (x)+\mm^{k} = \prod_{k=1}^n (x,y^k).
\]
Then, \Cref{toricver} implies that the exceptional locus of the map  $\varepsilon_K\colon \Bl_{K}{\BA^2}\rightarrow \BA^2 $ is a chain of $n$ rational smooth projective curves
\[
{E}_1 \cup {E}_2\cup \cdots \cup {E}_{n} \subset \Bl_K\BA^2
\]
and, the classical blowup formula (see {\cite[I-\S\,9, Thm.~(9.1)]{BARTHECC}}), implies:
\begin{equation*}
    E_k^2 =
    \begin{cases}
    -2 & \textrm{ if }k=1,\ldots,n-1 \\
    -1 & \textrm{ if }k=n.
    \end{cases}
\end{equation*}
Now, as a consequence of \Cref{blownoncompl}, the canonical projective birational morphism
\[
\varphi\colon \Bl_K\BA^2\rightarrow \Bl_I\BA^2
\]
contracts the curves
\[
{E}_1,\ldots,{{E}}_{n-1},
\]
and, the characterisation of Kleinian singularities (see {\cite[III-\S\,3, Prop.~(3.4)]{BARTHECC}}), implies that $\Bl_I\BA^2$ has an Kleinian singularity of type $A_{n-1}$.
\end{example}

\subsection{The Behrend function of a tower}
We are ready to tackle the calculation of the Behrend number of a tower. We start with the complete case.

\begin{theorem}\label{TEOREMA1}
	Let $K_s\subset\BC[x,y]$ be a complete tower of height $s$. Then
	\begin{align}
	\ell_{\BC[x,y]/K_s} &=
	\binom{s+2}{3},\label{id1}\\
	\nu_{\BC[x,y]/K_s}&=
	\frac{s(s+1)(2s+1)}{6}.\label{id2}
	\end{align}
	In particular $\ell_{\BC[x,y]/K_s}<\nu_{\BC[x,y]/K_s}$ for all $s>1$.
\end{theorem}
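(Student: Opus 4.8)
The plan is to reduce to the monomial case, compute the length by a direct combinatorial count, and extract the Behrend number from the toric description of the blowup furnished by \Cref{toricver}; the final inequality then follows by comparing the two polynomials. First I would invoke \Cref{lemma:normality-towers}: the automorphism \eqref{eqn:automorphism} identifies $V(K_s)$ with the subscheme cut out by the monomial complete tower $\prod_{k=1}^s(x,y^k)$. Since both $\length$ and the Behrend function are invariants of the isomorphism class of the scheme (the latter pulls back along \'etale maps, hence along isomorphisms), I may assume $K_s=\prod_{k=1}^s(x,y^k)$ throughout, with the explicit generators $x^{s-k}y^{k(k+1)/2}$ for $0\le k\le s$ recorded in \eqref{eqn:nc-tower-explicit}.

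For \eqref{id1} I would count the standard monomials. A monomial $x^ay^b$ lies in $K_s$ iff $a\ge s-k$ and $b\ge k(k+1)/2$ for some $k$; for fixed $a\le s-1$ the binding generator is the one with $k=s-a$, so $x^ay^b\notin K_s$ iff $b<(s-a)(s-a+1)/2$. Hence for each $a\in\{0,\ldots,s-1\}$ there are exactly $(s-a)(s-a+1)/2$ admissible exponents $b$. Reindexing by $m=s-a$ turns the colength into $\sum_{m=1}^s\binom{m+1}{2}$, which equals $\binom{s+2}{3}$ by the hockey-stick identity.

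The core is \eqref{id2}. By \Cref{toricver} the blowup $\Bl_{K_s}\BA^2$ is the \emph{smooth} toric surface whose fan $\Sigma_s$ has rays $e_2$, then $ke_1+e_2$ for $1\le k\le s$, then $e_1$; its exceptional divisor $E_{K_s}\BA^2$ is supported on the $s$ interior toric curves $C_1,\ldots,C_s$ attached to the rays $ke_1+e_2$. Because the surface is smooth and $E_{K_s}\BA^2$ is the Cartier divisor cut out by the monomial ideal $\varepsilon_{K_s}^{-1}(K_s)\cdot\OO$, its multiplicity along $C_k$ is the toric order function, i.e.\ the minimum of the linear form $\langle ke_1+e_2,-\rangle$ over the exponent vectors $(s-j,j(j+1)/2)$ of the generators; this minimum is attained at $j\in\{k-1,k\}$ and equals $ks-\binom{k}{2}$. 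The axis rays $e_1,e_2$ receive multiplicity $0$, so $E_{K_s}\BA^2=\sum_{k=1}^s\bigl(ks-\binom{k}{2}\bigr)C_k$. Feeding this into \Cref{lemma:multiplicities_blowup} and summing gives
\[
\nu_{\BC[x,y]/K_s}=\sum_{k=1}^s\left(ks-\binom{k}{2}\right)=s\binom{s+1}{2}-\binom{s+1}{3}=\frac{s(s+1)(2s+1)}{6},
\]
again via hockey-stick. Subtracting the two formulas then yields $\nu-\ell=\tfrac{s(s+1)}{6}\bigl((2s+1)-(s+2)\bigr)=\tfrac{s(s+1)(s-1)}{6}$, which is strictly positive exactly when $s>1$.

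The step I expect to require the most care is identifying $\mult_{C_k}(E_{K_s}\BA^2)$, defined through the length of $\OO_{P(C),P(D)}$ in \Cref{lemma:multiplicities_blowup}, with the combinatorial order function. This hinges on the smoothness of $\Bl_{K_s}\BA^2$ coming out of \Cref{toricver} --- so that each local ring $\OO_{\Bl_{K_s}\BA^2,C_k}$ is a discrete valuation ring and the length collapses to the order of vanishing --- together with the standard toric dictionary computing the pullback of a monomial ideal, whose Newton-polyhedron formulation is already available through \Cref{villareal} and which I would state carefully before reading off the minima.
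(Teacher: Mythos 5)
Your proof is correct and follows essentially the same route as the paper: reduce to the monomial complete tower via the automorphism of \Cref{lemma:normality-towers}, use \Cref{toricver} for the smooth toric description of $\Bl_{K_s}\BA^2$, compute the multiplicities of the exceptional components $C_1,\ldots,C_s$, and sum them via \Cref{lemma:multiplicities_blowup}. The only difference is bookkeeping: the paper pulls back each curvilinear factor $(x)+\mm^i$ chart by chart and sums the resulting matrix of contributions $a_{ij}=\min\{i,j\}$ (obtaining $\nu=\sum_k k^2$), whereas you read off the total multiplicity $ks-\binom{k}{2}$ along the ray $ke_1+e_2$ as a Newton--polyhedron minimum over the generators' exponents --- the same numbers, since $\sum_{i=1}^s \min\{i,k\} = ks-\binom{k}{2}$.
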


\begin{proof} \Cref{id1} follows directly from \Cref{generatorsnoncomplete} together with the equality\footnote{Such number is known as the $s$-th 
\emph{tetrahedral number}.}
\[
\binom{s+2}{3}=\frac{s(s+1)(s+2)}{6}=\sum_{k=1}^s\sum_{i=1}^k i=\sum_{j=0}^{s-1}\sum_{i=1}^{j+1}i .
\]
We now prove \Cref{id2}. Let $D = \psi_s^{-1}(V(K_s))$ be the subscheme of  $X_s=\Bl_{t_{s-1}}{\cdots}\Bl_{t_1}\Bl_{0}\BA^2$, where the points $t_i$ are as in \Cref{toricver}, defined as the scheme-theoretic preimage of $V(K_s)\subset \BA^2$ via the iterated blowup map $\psi_s\colon X_s\rightarrow \BA^2$. Then, \Cref{toricver} allows us to identify the $\BA^2$-schemes $X_s$ and $\Bl_{K_s}\BA^2$ and, as a consequence, to compute the Behrend number of the ideal $K_s$ as
\[
\nu_{\BC[x,y]/K_s}=\underset{C\subset D}{\sum}\mult_CD,
\]
where the sum ranges over all irreducible components $C$ of $D$. Notice that, if $\varepsilon \colon \Bl_{K_s}\BA^2\rightarrow \BA^2$ denotes the blowup morphism then, under the canonical isomorphism $X_s\cong\Bl_{K_s}\BA^2$, the exceptional locus $\exc(\varepsilon)$ corresponds to $D_{\red}$.

Recall that $D_{\red}$ is a chain of smooth rational projective curves $C_1,\ldots,C_s$, where $C_1$ corresponds to the blowup of the origin of $\BA^2$ under the isomorphism of \Cref{toricver} and
\[
C_i\cap C_j=\begin{cases}
\mbox{one point}&\mbox{if }|i-j|=1\\
\emptyset &\mbox{if }|i-j|>1.
\end{cases}
\]
We thus have to compute the sum
\[
\sum_{i=1}^s \mult_{C_i}D.
\]
Recall also that $X_s$ is covered by $s+1$ charts isomorphic to $\BA^2$ defined by
\begin{align*}
    U_k &=\Spec \BC[xy^{-k+1},x^{-1}y^{k}], \qquad 1\leq k\leq s \\
    U_{s+1} &=\Spec \BC[xy^{-s},y].
\end{align*}
If $a_k,b_k$ are the toric coordinates on $U_k$ for $k=1,\ldots,s+1$, the map $\psi_s$ is
 \begin{align*}
 {\psi_s}\big|_{{U_k}}(a_k,b_k) &=(a_k^kb_k^{k-1},a_kb_k), \qquad 1\leq k\leq s\\
 {\psi_s}\big|_{{U_{s+1}}}(a_{s+1},b_{s+1}) &=(a_{s+1}b_{s+1}^{s},b_{s+1}).           
 \end{align*}
Therefore, the ideal sheaf $\psi_s^{-1}(K_s)\cdot \OO_{X_s} \subset \OO_{X_s}$ is given, locally on each chart, by
\begin{align*}
{\psi_s}\big|_{U_k}^{-1}(K_s)\BC[a_k,b_k]
&=(a_k^kb_k^{k-1},a_kb_k)(a_k^kb_k^{k-1},a_k^2b_k^2)\cdots(a_k^kb_k^{k-1},a_k^kb_k^k)\cdots(a_k^kb_k^{k-1},a_k^{s}b_k^{s}) \\
&=(a_kb_k\cdot a_k^2b_k^2\cdots a_k^{k-1}b_k^{k-1}\cdot a_k^kb_k^{k-1} \cdots a_k^kb_k^{k-1}) \qquad \textrm{ for }k\leq s \\
{\psi_s}\big|_{U_{s+1}}^{-1}(K_s)\BC[a_{s+1},b_{s+1}]
&=(a_{s+1}b^{s}_{s+1},b_{s+1})(a_{s+1}b_{s+1}^{s},b_{s+1}^2)\cdots (a_{s+1}b_{s+1}^{s},b_{s+1}^{s}) \\
 &=(b_{s+1}b_{s+1}^2\cdots  b_{s+1}^{s}).
\end{align*}
We can read from the above formulas the contribution $a_{ij}$ of the curvilinear ideal $(x)+\mm^{i}$ to the multiplicity of the component $C_j$ of the exceptional divisor of $\varepsilon$. This information is encoded in the matrix
 $$A=(a_{i,j})_{i,j\in\{1,\ldots,s \}}=\begin{pmatrix}
 1&1&\cdots &1 &1 \\
 1&2&\cdots &2 &2 \\
 \vdots&\vdots&\ddots&\vdots&\vdots\\
 1&2&\cdots& s-1&s-1\\
 1&2&\cdots&s-1&s
 \end{pmatrix}.$$
For instance, the last column is given by the vector of the exponents of $b_{s+1}$ in the last displayed equation. Notice that, $a_{ij}=\min\{i,j\}$.

The Behrend number of $K_s$ is
\[
\nu_{\BC[x,y]/K_s}=\underset{i,j\in\{1,\ldots,s \}}{\sum }a_{ij}.
\]
 In order to complete the proof we observe that
  $$A=\begin{pmatrix}
 1&1&\cdots &1 &1 \\
 1&1&\cdots &1 &1 \\
 \vdots&\vdots&\ddots&\vdots&\vdots\\
 1&1&\cdots& 1&1\\
 1&1&\cdots&1&1
 \end{pmatrix}+\begin{pmatrix}
 0&0&\cdots &0 &0 \\
 0&1&\cdots &1 &1 \\
 \vdots&\vdots&\ddots&\vdots&\vdots\\
 0&1&\cdots& 1&1\\
 0&1&\cdots&1&1
 \end{pmatrix}+\cdots+\begin{pmatrix}
 0&0&\cdots &0 &0 \\
 0&0&\cdots &0 &0 \\
 \vdots&\vdots&\ddots&\vdots&\vdots\\
 0&0&\cdots& 1&1\\
 0&0&\cdots&1&1
 \end{pmatrix}+\begin{pmatrix}
 0&0&\cdots &0 &0 \\
 0&0&\cdots &0 &0 \\
 \vdots&\vdots&\ddots&\vdots&\vdots\\
 0&0&\cdots&0&0\\
 0&0&\cdots&0&1
 \end{pmatrix}.$$
 Hence, we have
	$$\nu_{\BC[x,y]/K_s}=\sum_{k=1}^s k^2=\frac{s(s+1)(2s+1)}{6}$$
 which complete the proof.
\end{proof}

Comparing Behrend functions, we obtain the following easy corollary.

\begin{corollary}
A complete tower of height at least $2$ is not a curvilinear ideal, i.e.~it has embedding dimension $2$.
\end{corollary}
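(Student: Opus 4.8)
The plan is to argue by contradiction, exploiting the fact, established earlier in the paper, that the Behrend number of a curvilinear fat point coincides with its length. First I would fix a complete tower $K_s \subset \BC[x,y]$ of height $s \geq 2$ and suppose, towards a contradiction, that the associated fat point $X = \Spec \BC[x,y]/K_s$ is curvilinear. By \Cref{def:fat_points} this means $X$ has embedding dimension $1$, so the discussion preceding \Cref{normalform} (or simply the definition) furnishes an isomorphism of $\BC$-algebras $\BC[x,y]/K_s \cong \BC[t]/t^n$, where $n = \ell_{\BC[x,y]/K_s}$ denotes the length.

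Next I would invoke \Cref{ex:curvilinear} (equivalently \Cref{ex:critical-locus} or \Cref{ex:lci}), which computes the Behrend number of the curvilinear scheme $\Spec \BC[t]/t^n$ to be exactly $n$. Since the Behrend function is an invariant of the isomorphism class of the scheme structure, this would force the equality $\nu_{\BC[x,y]/K_s} = \ell_{\BC[x,y]/K_s}$. This, however, directly contradicts the strict inequality $\ell_{\BC[x,y]/K_s} < \nu_{\BC[x,y]/K_s}$ proved in \Cref{TEOREMA1} for all $s > 1$, which is the crux of the whole argument. Hence $K_s$ cannot define a curvilinear subscheme.

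To conclude I would upgrade ``not curvilinear'' to ``embedding dimension exactly $2$''. Since $K_s \subset \BC[x,y]$, the fat point $X$ embeds in $\BA^2$ and therefore has embedding dimension at most $2$; as $\ell_{\BC[x,y]/K_s} = \binom{s+2}{3} > 1$ for $s \geq 2$, the point is non-reduced and so its embedding dimension is at least $1$. Having ruled out the value $1$, the embedding dimension must equal $2$.

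I do not expect any real obstacle here: the entire substance is carried by \Cref{TEOREMA1} together with the elementary computations of the Behrend number of curvilinear schemes in \Cref{ex:curvilinear}. The only point requiring a moment's care is the final upgrade from ``not curvilinear'' to ``embedding dimension $2$'', which is immediate once one records that the ambient ring has two variables and that $X$ is non-reduced for $s \geq 2$.
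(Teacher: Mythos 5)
Your proof is correct and follows exactly the route the paper intends: the corollary appears immediately after \Cref{TEOREMA1} with the remark ``Comparing Behrend functions, we obtain the following easy corollary,'' i.e.~a curvilinear ideal would force $\nu_{\BC[x,y]/K_s} = \ell_{\BC[x,y]/K_s}$ (via \Cref{ex:curvilinear}), contradicting the strict inequality $\ell_{\BC[x,y]/K_s} < \nu_{\BC[x,y]/K_s}$ for $s>1$. Your final upgrade from ``not curvilinear'' to ``embedding dimension exactly $2$'' is a sensible extra sentence of bookkeeping that the paper leaves implicit.
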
 

Following the prescriptions in \Cref{blownoncompl}, with similar techniques, one can prove the following generalisation of \Cref{TEOREMA1}.

\begin{theorem}\label{thm:general-tower-nu}
Let $1\le i_1<\cdots<i_s$ be a strictly increasing sequence of positive integers and let $K=\prod_{1\leq k\leq s} (x+f(y))+\mm^{i_k} $ be a tower of height $i_s$. Then there are identities
\begin{align*}
\ell_{\BC[x,y]/K}
&=\underset{k=1}{\overset{s}{\sum}}\underset{j=1}{\overset{k}{\sum}}i_j, \\
\nu_{\BC[x,y]/K}
&=\ell_{\BC[x,y]/K}+\underset{j=1}{\overset{s-1}{\sum}}i_j(s-j).
\end{align*}
\end{theorem}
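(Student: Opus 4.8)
The plan is to reduce everything to the monomial case and then leverage the complete-tower computation of \Cref{TEOREMA1} through the contraction morphism of \Cref{blownoncompl}. First, using the automorphism \eqref{eqn:automorphism} exactly as in the proof of \Cref{lemma:normality-towers}, I would replace $K$ by the isomorphic monomial tower $\prod_{k=1}^s(x)+\mm^{i_k} = \prod_{k=1}^s(x,y^{i_k})$; since both $\length$ and $\nu$ are invariants of the abstract scheme $V(K)$, this loses nothing. The length identity is then immediate from the explicit generators recorded in \Cref{generatorsnoncomplete}: the Ferrers diagram of the monomial tower has, in the column $x^{s-k}$, exactly $\sum_{j=1}^k i_j$ standard monomials (and the columns $x^{a}$ with $a\geq s$ are empty), so summing the column heights over $k=1,\ldots,s$ yields $\ell_{\BC[x,y]/K}=\sum_{k=1}^s\sum_{j=1}^k i_j$.

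For the Behrend number I would work on the blowup. By \Cref{lemma:multiplicities_blowup}, $\nu_{\BC[x,y]/K}=\sum_{C}\mult_{C}(E_K\BA^2)$, the sum ranging over the irreducible components $C$ of the exceptional divisor of $B=\Bl_K\BA^2$. As recalled in \Cref{blownoncompl}, $B$ is the toric surface whose fan $\Sigma$ has rays $e_2,\,i_1e_1+e_2,\,\ldots,\,i_se_1+e_2,\,e_1$, so its exceptional components are precisely the $s$ torus-invariant divisors $C_1,\ldots,C_s$ attached to the rays $\rho_l=i_le_1+e_2$ (the strict transforms $D_{e_1},D_{e_2}$ of the axes do not lie over the origin). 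Following the prescription of \Cref{blownoncompl}, I would compute each $\mult_{C_l}(E_K\BA^2)$ on the complete-tower model $\overline B=\Bl_{\overline K}\BA^2$ of height $i_s$, where these curves appear as the non-contracted exceptional curves $\widetilde E_{i_l}$ and where the explicit smooth charts of \Cref{toricver} are available: the transfer identity of \Cref{blownoncompl} gives $\mult_{C_l}(E_K\BA^2)=\mult_{\widetilde E_{i_l}}\!\bigl(V(\bar\varepsilon^{-1}(K)\cdot\OO_{\overline B})\bigr)$.

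The core step is then a purely local computation on the chart $U_{i_l}\cong\BA^2$ of $\overline B$ with coordinates $a,b$, where the blowup map reads $x=a^{i_l}b^{i_l-1}$, $y=ab$ and the curve $\widetilde E_{i_l}$ is cut out by $a=0$. Pulling back one curvilinear factor gives $(x,y^{i_k})\cdot\OO_{U_{i_l}}=(a^{i_l}b^{i_l-1},\,a^{i_k}b^{i_k})$, whose order along $\{a=0\}$ — computed in the DVR $\OO_{\overline B,\widetilde E_{i_l}}$ after inverting the unit $b$ — equals $\min(i_l,i_k)$. Since $\ord_{\widetilde E_{i_l}}$ is a valuation and hence additive on the product $K=\prod_{k=1}^s(x,y^{i_k})$, I obtain $\mult_{C_l}(E_K\BA^2)=\sum_{k=1}^s\min(i_l,i_k)$. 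As a cross-check that avoids the charts altogether, the same number arises intrinsically from the toric valuation attached to $\rho_l$, which sends $x\mapsto i_l$ and $y^{i_k}\mapsto i_k$, so the factor $(x,y^{i_k})$ contributes $\min(i_l,i_k)$ directly.

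Summing over the surviving components yields $\nu_{\BC[x,y]/K}=\sum_{l=1}^s\sum_{k=1}^s\min(i_l,i_k)$, and the proof finishes with the elementary rearrangement
\[
\sum_{l=1}^s\sum_{k=1}^s\min(i_l,i_k)=\sum_{j=1}^s i_j\bigl(2(s-j)+1\bigr)=\ell_{\BC[x,y]/K}+\sum_{j=1}^{s-1}i_j(s-j),
\]
obtained by counting, for each $j$, the $2(s-j)+1$ index pairs $(l,k)$ with $\min(l,k)=j$ (each contributing $i_{\min(l,k)}=i_j$), and using the identity $\ell_{\BC[x,y]/K}=\sum_{j=1}^s i_j(s-j+1)$ from the length computation. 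Specializing $i_k=k$ recovers $\sum_k k^2=\frac{s(s+1)(2s+1)}{6}$, consistent with \Cref{TEOREMA1}. The main obstacle I anticipate is bookkeeping rather than conceptual: one must check carefully that the curves $\widetilde E_{i_l}$ are exactly the exceptional curves of $\overline B$ that are \emph{not} contracted by $\varphi\colon\overline B\to B$, so that no spurious component enters the sum and none is missed — this is precisely where the matching of the two fans $\overline\Sigma$ and $\Sigma$ in \Cref{blownoncompl} does the real work.
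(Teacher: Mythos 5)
Your proposal is correct and is essentially the paper's own argument: the paper proves this theorem precisely by "following the prescriptions in \Cref{blownoncompl} with the techniques of \Cref{TEOREMA1}", i.e.\ reducing to the monomial tower, transferring multiplicities from the complete-tower blowup $\overline{B}$ to $B=\Bl_K\BA^2$ via the contraction $\varphi$, and reading off the contribution $\min(i_l,i_k)$ of each factor $(x,y^{i_k})$ to each surviving exceptional curve, exactly as you do. Your chart computation and the valuation-theoretic cross-check, together with the final rearrangement $\sum_{l,k}\min(i_l,i_k)=\ell_{\BC[x,y]/K}+\sum_{j=1}^{s-1}i_j(s-j)$, correctly fill in the details the paper leaves to the reader.
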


We have thus computed the length and the Behrend number of an arbitrary tower, and the latter happens to be greater than the former.

\subsection{Products of towers, Dynkin diagrams and Behrend functions}

\begin{definition}\label{defdinkyn}
Let $X$ be a smooth quasiprojective surface and let $C_1,\ldots,C_s\subset X$ be $s$ distinct rational smooth projective curves with the property that $C_i\cap C_j$ is either empty or a singleton for $i\not=j$. We will call \emph{Dynkin diagram} of the set of curves $\set{C_i | i=1,\ldots,s}$ a diagram made of:
\begin{itemize}
    \item [(i)] $s$ circles that we will call nodes, each labeled by one of the curves, and decorated with its self-intersection, 
    \item [(ii)] for any $i\not= j$ such that $C_i\cap C_j\not=\emptyset$, a segment joining the nodes labeled by $C_i$ and $C_j$.
\end{itemize}
\end{definition}
\begin{example}\label{ex:Dynkin} Let $K=\prod_{1\leq i\leq s}I_i$ be a complete tower. Then, as explained in \Cref{blowuprod} the variety $X=\Bl_K\BA^2$ can be obtained after a sequence of blowups each with center a reduced point and hence, $X$ is smooth. Moreover, the exceptional locus $\exc(\varepsilon)$ of the blowup map
\[
\varepsilon\colon  X\rightarrow\BA^2
\]
consists of a chain of rational smooth projective curves
$\Cald=\{C_1,\ldots,C_s\}$. In particular, they satisfy the same property as the curves in \Cref{defdinkyn}.

Notice that, for all $j=1,\ldots,s$ the ideal sheaf $\varepsilon^{-1}(I_j)\cdot \OO_X \subset \OO_X$ defines a Cartier divisor. As a consequence, we can associate, to each ideal $I_j$ one of the curves $C_i$. We say that the curve $C_i$ corresponds to $I_j$ if the canonical morphism 
\[
\varphi\colon X\rightarrow \Bl_{I_j}\BA^2
\]
contracts all the curves $C_k\subset X$ for $k\not=i$ (see \Cref{exblowcurv}). Notice that this association is well defined. Indeed, $\Bl_{I_j}\BA^2$ is normal by  \Cref{lemma:normality-towers} and hence $\varphi$ has connected fibres by \Cref{ZMT}. As a consequence, only one of the curves $C_i$ can map bijectively onto the irreducible rational curve $\exc(\Bl_{I_j}\BA^2)$.

\begin{figure}[h!]
    \begin{tikzpicture}
    
    \node at (2,1.5) {\footnotesize $\cdots$};
    
    \draw (0.1,1.5)--(0.9,1.5);
    \draw (1.1,1.5)--(1.7,1.5);
    \draw (2.3,1.5)--(2.9,1.5);
    \draw (3.1,1.5)--(3.9,1.5);

\draw (0,1.5) circle (0.1);
		\node[above] at (0,1.6) {\footnotesize $I_1 $ };
		\node[below] at (0,1.4) {\footnotesize $-2 $ };
    
		\draw (1,1.5) circle (0.1);
		\node[above] at (1,1.6) {\footnotesize $I_{2} $ };
		\node[below] at (1,1.4) {\footnotesize $-2 $ };
    
		\draw (3,1.5) circle (0.1);
		\node[above] at (3,1.6) {\footnotesize $I_{s-1} $ };
		\node[below] at (3,1.4) {\footnotesize $-2 $ };
    
		\draw (4,1.5) circle (0.1);
		\node[above] at (4,1.6) {\footnotesize $I_{s} $ };
		\node[below] at (4,1.4) {\footnotesize $-1 $ };
    \end{tikzpicture}
    \caption{The Dynkin diagram of the tower $K$, with each node labeled by an ideal.}
    \label{fig:Dynkin-with-ideals}
\end{figure}
Sometimes, in the literature (see e.g.~\cite{BRIANCON}), the underlying unlabeled diagram is called \emph{bamboo}.
\end{example}

\begin{lemma}\label{ex:length_product_towers}
Let us consider the product two complete monomial towers of height $h$, of the form
\[
I_h= \left( \prod_{k=1}^h (x) + \mm^k\right)\cdot \left(\prod_{k=1}^h (y) + \mm^k\right).
\]
Then, the numbers $\set{\ell_{\BC[x,y]/I_h}|h\geq 1}$ satisfy the recursive relation
\[
\ell_{\BC[x,y]/I_h}=\ell_{\BC[x,y]/I_{h-1}} + h^2 + 3h - 1.
\]
Equivalently, we have 
\[
\ell_{\BC[x,y]/I_h} = \frac{h(h+1)(h+2)}{3}+h^2,
\]
which in turn equals $2\cdot\ell + h^2$, where $\ell$ is the colength of the tower $\prod_{k=1}^h (x) + \mm^k$.
\end{lemma}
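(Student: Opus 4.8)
The plan is to compute $\ell_{\BC[x,y]/I_h}$ combinatorially, as the number of standard monomials of $I_h$, i.e.\ the lattice points of $\BN^2$ lying strictly below the staircase of this monomial ideal. Write $T_x=\prod_{k=1}^h (x)+\mm^k$ and $T_y=\prod_{k=1}^h (y)+\mm^k$, so that $I_h=T_x\cdot T_y$ is a monomial ideal. By \Cref{lemma:normality-towers} both $T_x$ and $T_y$ are normal, and by \Cref{rmk:normality-convexity} a product of normal monomial ideals is again normal, with
\[
A_{I_h}=A_{T_x}+A_{T_y}
\]
(the Minkowski sum of the two staircase regions). Hence $\ell_{\BC[x,y]/I_h}$ equals the number of points of $\BN^2\smallsetminus(A_{T_x}+A_{T_y})$, and the first task is to pin down the Newton polygon $Q_{I_h}=Q_{T_x}+Q_{T_y}$.

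By \Cref{generatorsnoncomplete} in the complete case $i_k=k$, the vertices of $Q_{T_x}$ are the lattice points $(h-m,\binom{m+1}{2})$ for $m=0,\dots,h$, so its bounded boundary edges are the vectors $(-1,m+1)$, $m=0,\dots,h-1$, of slopes $-1,-2,\dots,-h$; by the symmetry $x\leftrightarrow y$ the edges of $Q_{T_y}$ are the vectors $(-m-1,1)$, of slopes $-1,-\tfrac12,\dots,-\tfrac1h$. I would then read off the edges of the Minkowski sum by listing all these edge vectors and arranging them by slope, the two slope $-1$ edges combining into a single edge $(-2,2)$. This produces explicit staircase vertices $V_0,\dots,V_{2h-1}$ of $Q_{I_h}$; in particular the lowest ones are $V_j=(x(V_j),j)$ for $j=0,\dots,h-1$ with
\[
x(V_j)=\frac{h^2+3h}{2}-j(h+1)+\binom{j+1}{2},
\]
while the combined slope $-1$ edge runs from $V_{h-1}=(h+1,h-1)$ to $V_h=(h-1,h+1)$, crossing the diagonal exactly at $(h,h)$.

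The clean way to finish is to exploit the symmetry of $I_h$: since swapping $x$ and $y$ interchanges $T_x$ and $T_y$, the region $A_{I_h}$ is invariant under $(a,b)\mapsto(b,a)$, and so is its complement. Writing $c_b$ for the $x$-coordinate of the staircase at height $b$, the number of standard monomials equals $N_{=}+2M$, where $N_{=}$ counts the below-staircase diagonal points and $M$ counts those with $a>b$. Because the staircase meets the diagonal at $(h,h)$, one gets $N_{=}=h$ immediately, whereas
\[
M=\sum_{b=0}^{h-1}\bigl(c_b-b-1\bigr)
\]
involves only the lowest vertices $V_0,\dots,V_{h-1}$, whose $x$-coordinates are the integers $x(V_j)$ above. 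Substituting that formula and evaluating two elementary polynomial sums yields the closed form $\ell_{\BC[x,y]/I_h}=\tfrac{h(h+1)(h+2)}{3}+h^2$; the stated recursion then follows at once by subtracting the values at $h$ and $h-1$, and the identity $\tfrac{h(h+1)(h+2)}{3}=2\binom{h+2}{3}=2\ell$ is precisely \Cref{TEOREMA1}.

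The main obstacle is bookkeeping rather than conceptual: correctly determining the staircase of $Q_{I_h}$, in particular the order in which the edge vectors of $Q_{T_x}$ and $Q_{T_y}$ occur and the merging of the two slope $-1$ edges. The real payoff of the symmetry reduction is that it confines the actual lattice count to the heights $0\le b\le h-1$, where the vertices $V_j$ are honest lattice points; this sidesteps the steep upper edges of slopes $-2,\dots,-h$, along which the staircase passes through non-integral points and a naive column-by-column count would be cluttered by ceiling functions.
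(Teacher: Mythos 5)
Your proof is correct, but it takes a genuinely different route from the paper's. The paper argues recursively: it rewrites $I_h=(x^2,xy,y^2)(x^3,xy,y^3)\cdots(x^{h+1},xy,y^{h+1})$, lists the $2h+1$ minimal generators $x^{h-i}y^{h+\binom{i+1}{2}}$, $x^{h+\binom{i+1}{2}}y^{h-i}$ for $0\leq i\leq h$, and observes that the Ferrers diagram of $I_h$ is the Ferrers diagram of $I_{h-1}$ shifted diagonally by $(1,1)$ together with a hook consisting of one full row and one full column of length $a_h=h+\binom{h+1}{2}$; this yields the recursion $\ell_{\BC[x,y]/I_h}=\ell_{\BC[x,y]/I_{h-1}}+2a_h-1=\ell_{\BC[x,y]/I_{h-1}}+h^2+3h-1$ directly, and the closed form follows. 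You instead obtain the closed form first by a direct lattice-point count: you use normality of towers (\Cref{lemma:normality-towers}) and of products of normal ideals (\Cref{rmk:normality-convexity}) to identify the staircase region with $Q_{I_h}\cap\BN^2$, determine $Q_{I_h}=Q_{T_x}+Q_{T_y}$ via the standard edge-merging description of Minkowski sums of convex polygons, and count the complement using the $x\leftrightarrow y$ symmetry, recovering the recursion at the end by subtraction. I verified the computations you left implicit: your vertex formula $x(V_j)=\tfrac{h^2+3h}{2}-j(h+1)+\binom{j+1}{2}$ is right, the staircase does meet the diagonal exactly at $(h,h)$ so that $N_{=}=h$ and all points with $\min(a,b)\geq h$ lie in $Q_{I_h}$, and the two elementary sums give $M=h^2+\tfrac{(h-1)h(h+1)}{6}$, whence $h+2M=\tfrac{h(h+1)(h+2)}{3}+h^2$ as claimed. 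The trade-off between the two arguments: the paper's recursion is more elementary and self-contained (pure monomial manipulation, no normality or convexity input, and the nested-diagram structure is exactly the recursion being proved), whereas your argument leans on the toric and convexity toolkit the paper develops for other purposes and generalizes more readily, since the Minkowski-sum description of the Newton polygon applies to any product of normal monomial ideals, where no clean diagram recursion is available; your symmetry reduction is also what keeps the count confined to the integral vertices $V_0,\ldots,V_{h-1}$ and free of ceiling-function bookkeeping along the steep edges.
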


\begin{proof}
The equivalence between the two formulas is straightforward to check and we leave it to the reader. We now prove the former.

For each $h\geq 1$, we have 
\begin{align*}
I_h 
&= \mm^2(x,y^2)(x^2,y)\cdots (x,y^h)(x^h,y) \\
&= (x^2,xy,y^2) (x^3,xy,y^3)\cdots (x^{h+1},xy,y^{h+1}).
\end{align*}
Then $I_h$, a product of $h$ monomial ideals, can be generated by $2h+1$ monomials, namely we have
\begin{equation}\label{eqn:2h+1_monomials}
I_h = \left(x^{h-i}y^{h+\binom{i+1}{2}},x^{h+\binom{i+1}{2}}y^{h-i}\,\,\big|\,\,0\leq i\leq h\right).
\end{equation}
A few examples are given in \Cref{fig:partitions}.
The integers 
\begin{equation}\label{eqn:maximal_power_tower}
    a_h = h + \binom{h+1}{2}, \quad h \geq 1,
\end{equation}
represent the maximal power of $x$ (equivalently, of $y$) appearing among the $2h+1$ generators of $I_h$.
The colength of $I_h$ is computed, thanks to \Cref{eqn:2h+1_monomials}, in a recursive way from the base case $\ell_{\BC[x,y]/I_1} = 3$.
We obtain
\[
\ell_{\BC[x,y]/I_h}=\ell_{\BC[x,y]/I_{h-1}} + 2a_h - 1 = \ell_{\BC[x,y]/I_{h-1}} + h^2 + 3h - 1.
\]
as required.
\end{proof}

The induction described in the proof works as depicted in \Cref{fig:partitions} below.
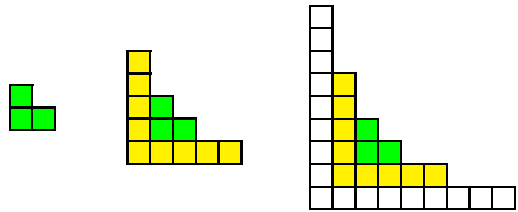
\begin{figure}[h]
\ytableausetup{smalltableaux}
\begin{tikzpicture}
\node at (1,0) {
\begin{ytableau}
*(green) \\
*(green) & *(green)
\end{ytableau}};

\node at (3,0) {\begin{ytableau}
*(yellow) \\
*(yellow) \\
*(yellow) & *(green) \\
*(yellow) & *(green) & *(green) \\
*(yellow) & *(yellow) & *(yellow) & *(yellow) & *(yellow) 
\end{ytableau}};

\node at (6,0) {\begin{ytableau}
*(white) \\
*(white) \\
*(white) \\
*(white) & *(yellow) \\
*(white) & *(yellow) \\
*(white) & *(yellow) & *(green) \\
*(white) & *(yellow) & *(green) & *(green) \\
*(white) & *(yellow) & *(yellow) & *(yellow) & *(yellow) & *(yellow) \\
*(white) & *(white) & *(white) & *(white) & *(white) & *(white) & *(white) & *(white) & *(white) 
\end{ytableau}};
\end{tikzpicture}
    \caption{The ideals $I_h$ for $h=1,2,3$. The lengths are $3$, $12$, $29$, and the heights of the respective Ferrers diagrams are $a_1=2$, $a_2=5$, $a_3=9$.}
    \label{fig:partitions}
\end{figure}

\begin{theorem}\label{TORIC PROD TOWERS} The following properties hold for complete towers.
\begin{enumerate}
    \item 
Let $K_x$ and $K_y $ be two complete towers, of heights  $h_x$ and $h_y$ respectively, of the form
\[
K_x =\underset{k=1}{\overset{h_x}{\prod}} (x+g_x(y))+\mm^{k} ,\qquad  K_y =\underset{k=1}{\overset{h_y}{\prod}} (y+g_y(x))+\mm^{k},
\]
for some $g_x\in\BC[y]$ and $g_y\in \BC[x]$ such that 
\[
[(x+g_x)+\mm^2] \neq  [(y+g_y)+\mm^2]\in\BP(\mm/\mm^2).
\]
Then
\begin{align}
    \ell_{\BC[x,y]/K_x\cdot K_y}&=\ell_{\BC[x,y]/K_x}+\ell_{\BC[x,y]/K_y}+h_x h_y \label{eqn:length_product_of_towers} \\
    \nu_{\BC[x,y]/K_x\cdot K_y}&=\nu_{\BC[x,y]/K_x}+\nu_{\BC[x,y]/K_y}+2h_xh_y -h_x-h_y.\label{nu_product_of_towers}
\end{align}
\label{item1}
\item Let $K_1$ and $K_2$ be two complete towers, of height respectively $h_1$ and $h_2$, of the form 
\[
K_1=\underset{k=1}{\overset{h_1}{\prod}} (x+g_1(y))+\mm^{k},\qquad K_2=\underset{k=1}{\overset{h_2}{\prod}} (x+g_2(y))+\mm^{k}.
\]
for some $g_1\not=g_2\in\BC[y]$ of respective degrees
\[
\deg(g_1)<h_1\mbox{ and }\deg(g_2)<h_2.
\]
Let $d=o(g_1-g_2)$ be the order of $g_1-g_2\in\BC[y]$. Then
\begin{equation}\label{eqn:product-towers-general}
 \nu_{\BC[x,y]/K_1\cdot K_2}=\nu_{\BC[x,y]/K_1}+\nu_{\BC[x,y]/K_2}+2h_1 h_2-d(h_1+h_2).   
\end{equation}
\label{item2}
\end{enumerate}
\end{theorem}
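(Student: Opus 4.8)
The plan is to compute both invariants through an analysis of the exceptional divisor of $\Bl_{K_1\cdot K_2}\BA^2$, exploiting the additivity of order functions together with \Cref{lemma:multiplicities_blowup} and \Cref{TEOREMA1}. After applying the automorphism \eqref{eqn:automorphism} I may assume the first tower is monomial: in case \eqref{item1} a linear change of coordinates, available precisely because the two tangent directions $[(x+g_x)+\mm^2]\neq[(y+g_y)+\mm^2]$ are distinct, turns $K_x,K_y$ into the \emph{transverse} monomial towers $\prod_k(x,y^k)$ and $\prod_k(y,x^k)$; in case \eqref{item2} I may take $K_1=\prod_k(x,y^k)$ monomial and $K_2=\prod_k(x+g)+\mm^k$ with $g=g_1-g_2$ of order $d$. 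In both cases \Cref{lemma:multiplicities_blowup} expresses the Behrend number as the sum over the exceptional curves $v$ of $\Bl_{K_1K_2}\BA^2$ of the multiplicities $\mult_v(E)$; since $\varepsilon^{-1}(K_1K_2)\cdot\OO=\varepsilon^{-1}(K_1)\cdot\varepsilon^{-1}(K_2)\cdot\OO$, each multiplicity splits as $\ord_v(K_1)+\ord_v(K_2)$, so $\nu_{\BC[x,y]/K_1K_2}=\sum_v\ord_v(K_1)+\sum_v\ord_v(K_2)$.

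I would settle case \eqref{item1} first. Here $K_x\cdot K_y$ is a normal monomial ideal (each factor is normal by \Cref{lemma:normality-towers}, and products of normal monomial ideals are normal by \Cref{rmk:normality-convexity}), so $\Bl_{K_xK_y}\BA^2$ is the smooth toric surface whose fan is the common refinement of the two fans appearing in \Cref{blownoncompl}. Its exceptional curves are the central curve over the origin together with the two bamboos of the individual towers, which attach at the two opposite toric fixed points. Along a ray $v=(p,q)$ one has $\ord_v(K_x)=\sum_k\min(p,qk)$ and $\ord_v(K_y)=\sum_k\min(q,pk)$; summing $\ord_v(K_x)$ over the curves of the $K_x$-bamboo reproduces $\nu_{K_x}$ by \Cref{TEOREMA1}, whereas along each of the $h_y-1$ curves of the transverse $K_y$-bamboo the value $\ord_v(K_x)$ collapses to the constant $h_x$. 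Adding the symmetric contribution gives $\nu_{K_x}+\nu_{K_y}+(h_y-1)h_x+(h_x-1)h_y$, which is exactly \eqref{nu_product_of_towers}. For the length \eqref{eqn:length_product_of_towers} I would instead use $Q_{K_xK_y}=Q_{K_x}+Q_{K_y}$ from \Cref{rmk:normality-convexity} and count lattice points of the complement $\BN^2\smallsetminus Q$, generalising the recursion of \Cref{ex:length_product_towers} to unequal heights; the Minkowski cross-term contributes precisely $h_xh_y$.

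Case \eqref{item2} is the crux, because $K_1\cdot K_2$ can no longer be monomialised in a single coordinate system. Using \Cref{LEMMATECH} I would realise $\Bl_{K_1K_2}\BA^2$ as an iterated blowup and track, via the toric pictures of \Cref{blowuprod} and \Cref{blownoncompl}, how the two bamboos interact: since $g$ has order $d$, the strict transforms of the two towers pass through the same infinitely near points until they separate after $d$ blowups, so the exceptional locus is a \emph{tree} made of a shared trunk together with two branches of lengths $h_1-d$ and $h_2-d$. The self-sums $\sum_v\ord_v(K_i)$ over the curves of the $i$-th tower again recover $\nu_{K_i}$ by \Cref{TEOREMA1}; the remaining task is to evaluate $\ord_v(K_1)$ along the $K_2$-branch and symmetrically, which is where the contact order $d$ enters through the behaviour of $\ord_v(x+g)$ under the shift. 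Assembling the $(h_1-d)$ and $(h_2-d)$ branch contributions is designed to produce the cross-term $2h_1h_2-d(h_1+h_2)$ of \eqref{eqn:product-towers-general}.

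The main obstacle is exactly this tangent case: once the towers share a tangent direction the clean toric computation of case \eqref{item1} is unavailable, and one must instead study the tree of rational curves produced by the iterated blowup directly. The delicate point is the bookkeeping of multiplicities along the branch curves — in particular disentangling the interaction between the contact order $d$ and the shift polynomial $g$, including the cancellations that occur when computing $\ord_v(x+g)$ on the branch that ``sees'' the shift — and verifying that these combine into the stated closed form.
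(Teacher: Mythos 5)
Your plan for part \eqref{item1} is correct and is essentially the paper's own proof: after the same reduction to transverse monomial towers, the paper computes exactly your table of orders on the explicit toric fan (its matrix of contributions is your formula $\ord_v(K_x)=\sum_k\min(p,qk)$, and the collapse to the constant value $h_x$ along the transverse bamboo accounts for its off-diagonal blocks). The only difference is the length identity \eqref{eqn:length_product_of_towers}, which the paper proves by a Ferrers-diagram induction on $h_x-h_y$ rather than by your Minkowski-sum lattice-point count; your route is viable but you would still have to actually carry out that count.

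Part \eqref{item2} is where the genuine gap lies, and it is not merely that the computation is deferred. Your structural picture (a tree with a trunk of length $d$ and branches of lengths $h_1-d$, $h_2-d$, with the self-sums recovering $\nu_{K_1}$ and $\nu_{K_2}$) is correct and agrees with the paper, which reaches the same Dynkin diagram by factoring out the trunk ideal and reducing to part \eqref{item1} at a single point. But the one step you leave unverified, namely that ``assembling the branch contributions'' yields $2h_1h_2-d(h_1+h_2)$, is exactly where all the content sits, and it fails whenever $d\geq 2$. Run your own bookkeeping: if $v$ is a curve of the $K_2$-branch, then $\ord_v(y)=1$ and $\ord_v(x+g_2)>d$, hence $\ord_v(x+g_1)=\ord_v\big((x+g_2)+(g_1-g_2)\big)=d$, so the factor $(x+g_1)+\mm^i$ of $K_1$ contributes $\min(d,i)$ to that curve, \emph{not} $1$. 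The cross-term is therefore $(h_2-d)\sum_{i=1}^{h_1}\min(d,i)+(h_1-d)\sum_{i=1}^{h_2}\min(d,i)$, which coincides with $2h_1h_2-d(h_1+h_2)=(h_2-d)h_1+(h_1-d)h_2$ only when $d=1$. Concretely, take $g_1=0$, $g_2=y^2$, $h_1=2$, $h_2=3$, so $d=2$, $K_1=\mm\cdot(x,y^2)$ and $K_2=\mm\cdot(x,y^2)\cdot(x+y^2,y^3)$. The tree consists of the monomial valuations $(1,1)$, $(2,1)$ and one branch curve $w$ with $\ord_w(x+y^2)=3$, $\ord_w(y)=1$, $\ord_w(x)=2$ (branch A is empty); the multiplicities of $E_{K_1K_2}\BA^2$ along these curves are $5$, $8$, $9$, so $\nu_{\BC[x,y]/K_1K_2}=22$, and the unique cross-term is $\ord_w(K_1)=1+\min(2,2)=3$, whereas \eqref{eqn:product-towers-general} predicts a cross-term of $2$ and a total of $5+14+12-10=21$. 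The algorithm of \Cref{sec:Algorithm} also returns $22$ on this example. So your final assembly step cannot succeed as designed: carried out honestly, it produces the cross-term displayed above and thereby contradicts, rather than proves, the closed form \eqref{eqn:product-towers-general} for $d\geq 2$. (The same is true of the paper's own argument, which leaves precisely this step to the reader; the defect is in the stated formula, not in your tree analysis.)
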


\begin{proof} 
First of all, a trivial computation shows that, since $[(x+g_x)+\mm^2] \neq  [(y+g_y)+\mm^2]$ are different points in $\BP(\mm/\mm^2)$, one also has $[(x-g_x)+\mm^2]\neq  [(y-g_y)+\mm^2]\in\BP(\mm/\mm^2)$. In particular, the Jacobian of the map
\[
\begin{tikzcd}[row sep=tiny]
\BA^2 \arrow{r}{\psi} & \BA^2 \\
(x,y) \arrow[mapsto]{r} & (x-g_x (y),y-g_y(x))
\end{tikzcd}
\]
has maximal rank at the origin $0 \in \BA^2$, i.e.~it is a biholomorphism nearby the origin. 
Such observation ensures that, in order to prove \eqref{item1}, it is enough to prove the statement for $g_x =g_y = 0$. Therefore, we have reduced to the case
\begin{align*}
K_x  &= \prod_{k=1}^{h_x} (x) + \mm^k = \prod_{k=1}^{h_x} (x,y^k) \\
K_y  &= \prod_{k=1}^{h_y} (y) + \mm^k = \prod_{k=1}^{h_y} (x^k,y).
\end{align*}
The statement about the length is already proved in the case $h_x=h_y$  (\Cref{ex:length_product_towers}). We prove the general case via an inductive argument. Let us assume, without loss of generality, that $e = h_x-h_y > 0$. We argue by induction on $e$. We set $h=h_y$ and we denote by $K_\bullet^{(h)}$ the tower
\[
K_\bullet ^{(h)} = \prod_{k=1}^{h} (\bullet) + \mm^k ,
\]
for $\bullet\in\{x,y\}$.

\smallbreak
\textbf{Step 1}. Assume $e=1$ (so that $h_x = h+1$). To prove \Cref{eqn:length_product_of_towers} in this case, it is enough to observe that the Ferrers diagram of the ideal
\[
K_x^{(h+1)}\cdot K_y^{(h)} = (x,y^{h+1})\cdot K_x^{(h)}\cdot K_y^{(h)} = (x)\cdot K_x^{(h)}\cdot K_y^{(h)} + (y^{a_h + h + 1}).
\]
is obtained from the Ferrers diagram of $K_x^{(h)}\cdot K_y^{(h)}$ by shifting it to the right by one position and adding a column of height $a_h+h+1$ to the left, where $a_h$ is defined as in \Cref{eqn:maximal_power_tower}. Thus the colength of $K_x^{(h_x)}\cdot K_y^{(h_y)} \subset \BC[x,y]$ is 
\begin{align*}
\ell_{\BC[x,y]/K_x^{(h+1)}\cdot K_y^{(h)}}
&= \ell_{\BC[x,y]/K_x^{(h)}\cdot K_y^{(h)}} + a_h + h + 1 \\
&=h^2 + 2\cdot \binom{h+2}{3} + \binom{h+1}{2} + 2h + 1,
\end{align*}
where we have exploited \Cref{ex:length_product_towers} in the last equality.
It is straightforward to check that this number agrees with
\[
\binom{h+3}{3} + \binom{h+2}{3} + (h+1)h = \ell_{\BC[x,y]/K_x^{(h_x)}} + \ell_{\BC[x,y]/K_y^{(h_y)}} + h_xh_y.
\]
So the base of the induction is proved.

\smallbreak
\textbf{Step 2}. Now we assume \Cref{eqn:length_product_of_towers} up to $e$ and we prove the formula for $e+1$ (so now $h=h_y$ and $h_x=h+e+1$). The ideal we have to compute the length of is
\[
I_{h,e+1} = K_x^{(h+e+1)}\cdot K_y^{(h)} = \left(x,y^{h+1}\right)\left(x,y^{h+2}\right)\cdots \left(x,y^{h+e}\right)\left(x,y^{h+e+1}\right)\cdot K_x^{(h)} \cdot K_y^{(h)}.
\]
By direct calculation, or by an application of \Cref{eqn:nc-tower-explicit} taken with $s=e$ and $i_k=h+k$, one finds that, for every $e>0$, there is an identity
\[
\left(x,y^{h+1}\right)\left(x,y^{h+2}\right)\cdots \left(x,y^{h+e}\right) = \left(x^iy^{(e-i)h+\binom{e+1-i}{2}} \,\,\big|\,\,0\leq i\leq e\right).
\]
We know, by the inductive hypothesis, that
\begin{align*}
\ell_{\BC[x,y]/I_{h,e}} 
&=\ell_{\BC[x,y]/K_x^{(h+e)}} + \ell_{\BC[x,y]/K_y^{(h)}} + (h+e)h \\
&= \binom{h+e+2}{3}+\binom{h+2}{3}+(h+e)h.
\end{align*}
As in \textbf{Step 1}, the Ferrers diagram of the ideal
\[
I_{h,e+1} = \left(x,y^{h+e+1}\right)\cdot I_{h,e} = (x)\cdot I_{h,e} + \left(y^{h+e+1}\right)
\]
is obtained from the Ferrers diagram of $I_{h,e}$ by shifting it to the right by one position, and adding a column of height 
\[
\binom{e+1}{2}+eh+\binom{h+1}{2}+h + (h+e+1)
\]
to the left. The number $\binom{e+1}{2}+eh+\binom{h+1}{2}+h$ is the height of the Ferrers diagram of $I_{h,e}$.
We obtain
\[
\ell_{\BC[x,y]/I_{h,e+1}} = \ell_{\BC[x,y]/I_{h,e}} + \binom{e+1}{2}+eh+\binom{h+1}{2}+h + (h+e+1) .
\]
It is now straightforward to check that this number agrees with
\begin{align*}
\binom{h+e+3}{3} + \binom{h+2}{3} + (h+e+1)h 
&= \ell_{\BC[x,y]/K_x^{(h+e+1)}} + \ell_{\BC[x,y]/K_y^{(h)}} + (h+e+1)h \\
&= \ell_{\BC[x,y]/K_x^{(h_x)}} + \ell_{\BC[x,y]/K_y^{(h_y)}} + h_x\cdot h_y. 
\end{align*}
So we have proved \Cref{eqn:length_product_of_towers}.

\smallbreak
We now move to proving \Cref{nu_product_of_towers}. This equation is implied by the more general \Cref{eqn:product-towers-general}, whose proof is essentially equivalent to that of \Cref{nu_product_of_towers}. Therefore we will give full details on the former and precise indications on how to prove the latter.

We shall use the shorthand notation $I = K_x \cdot K_y $. \Cref{LEMMATECH} implies, together with the usual toric construction, that there is a canonical isomorphism of $\BA^2$-schemes $ \varphi\colon Y\rightarrow \Bl_{I}\BA^2 $ where $Y$ is the toric variety with the following fan
\begin{center}
    \begin{tikzpicture}
 \node at (-1,0.5) {$\Sigma=$};
 \draw[ -]  (0,0)--(7,1);
 \draw[ -] (0,1)--(0,0)--(1,0);
 \draw[ -] (1,1)--(0,0);
 \draw[ -] (2,1)--(0,0) ;
 \draw[ -] (1,2)--(0,0) ;
 \draw[ -] (0,0)--(1,4);
 \node[above] at (1.6,4) {\small $ e_1+h_ye_2 $};
 \node[above] at (0.9,2.5) {\small $ \vdots $};
 \node[above] at (1.6,2) {\small $ e_1+2e_2 $};
 \node[above] at (2.6,1) {\small $ 2e_1+e_2 $};
 \node[above] at (1.2,1) {\small $ e_1+e_2 $};
 \node[above] at (6.8,1) {\small $ h_xe_1+e_2 $};
 \node at (2,0.5) {\small $ \cdots $};
 \node[left] at (0,1) {\small $  e_2 $};
 \node[right] at (1,0) {\small $ e_1  $};
    \end{tikzpicture}
\end{center}
and the structure of $\BA^2$-scheme of $Y$ is induced by the identity map of the standard lattice $\BZ^2\subset\BR^2$.

Now, as in the proof of \Cref{TEOREMA1}, we can create a table encoding the contribution of each ideal $I_{x,i}=(x)+\mm^{i}$ and $I_{y,j}=(y)+\mm^{j}$, for $i=1,\ldots,h_x$ and $j=1,\ldots,h_y$, to the multiplicity of each irreducible component of the exceptional divisor of the blowup $\Bl_{I}\BA^2$. Such table has the following form. 
\begin{center}
    \begin{tikzpicture}[scale=0.9]
    \node at (-5.1,-9) {$I_{y,h_y}$};
    \node at (-5.3,-8) {$I_{y,h_y-1}$};
    \node at (-5.1,-7) {$\vdots$};
    \node at (-5.1,-6) {$I_{y,2}$};
    \node at (-5.45,-5) {$I_{y,1}=\mm$};
    \node at (-5.1,-4) {$I_{x,h_x}$};
    \node at (-5.3,-3) {$I_{x,h_x-1}$};
    \node at (-5.1,-2) {$\vdots$};
    \node at (-5.1,-1) {$I_{x,2}$};
    \node at (-5.45,0) {$I_{x,1}=\mm$};
    
    \draw (-4.6,-9.5)--(-4.6,2);
    \draw (-5.5,0.8)--(4.5,0.8);
    \node at (-4,0) {$1$};
    \node at (-3,0) {$1$};
    \node at (-2,0) {$\cdots$};
    \node at (-1,0) {$1$};
    \node at (0,0) {$1$};
    \node at (1,0) {$1$};
    \node at (2,0) {$\cdots$};
    \node at (3,0) {$1$};
    \node at (4,0) {$1$};

    \node at (-4,-1) {$1$};
    \node at (-3,-1) {$1$};
    \node at (-2,-1) {$\cdots$};
    \node at (-1,-1) {$1$};
    \node at (0,-1) {$1$};
    \node at (1,-1) {$2$};
    \node at (2,-1) {$\cdots$};
    \node at (3,-1) {$2$};
    \node at (4,-1) {$2$};
    
    \node at (-4,-2) {$\vdots$};
    \node at (-3,-2) {$\vdots$};
    \node at (-2,-2) {\reflectbox{$\ddots$}};
    \node at (-1,-2) {$\vdots$};
    \node at (0,-2) {$\vdots$};
    \node at (1,-2) {$\vdots$};
    \node at (2,-2) {$\ddots$};
    \node at (3,-2) {$\vdots$};
    \node at (4,-2) {$\vdots$};
    
    \node at (-4,-3) {$1$};
    \node at (-3,-3) {$1$};
    \node at (-2,-3) {$\cdots$};
    \node at (-1,-3) {$1$};
    \node at (0,-3) {$1$};
    \node at (1,-3) {$2$};
    \node at (2,-3) {$\cdots$};
    \node at (3,-3) {\footnotesize $h_x-1$};
    \node at (4,-3) {\footnotesize $h_x-1$};
    
    \node at (-4,-4) {$1$};
    \node at (-3,-4) {$1$};
    \node at (-2,-4) {$\cdots$};
    \node at (-1,-4) {$1$};
    \node at (0,-4) {$1$};
    \node at (1,-4) {$2$};
    \node at (2,-4) {$\cdots$};
    \node at (3,-4) {\footnotesize $h_x-1$};
    \node at (4,-4) {\footnotesize $h_x$};
    
    \node at (-4,-5) {$1$};
    \node at (-3,-5) {$1$};
    \node at (-2,-5) {$\cdots$};
    \node at (-1,-5) {$1$};
    \node at (0,-5) {$1$};
    \node at (1,-5) {$1$};
    \node at (2,-5) {$\cdots$};
    \node at (3,-5) {$1$};
    \node at (4,-5) {$1$};
 
    \node at (4,-6) {$1$};
    \node at (3,-6) {$1$};
    \node at (-2,-6) {$\cdots$};
    \node at (1,-6) {$1$};
    \node at (0,-6) {$1$};
    \node at (-1,-6) {$2$};
    \node at (2,-6) {$\cdots$};
    \node at (-3,-6) {$2$};
    \node at (-4,-6) {$2$};
    
    \node at (4,-7) {$\vdots$};
    \node at (3,-7) {$\vdots$};
    \node at (-2,-7) {\reflectbox{$\ddots$}};
    \node at (1,-7) {$\vdots$};
    \node at (0,-7) {$\vdots$};
    \node at (-1,-7) {$\vdots$};
    \node at (2,-7) {$\ddots$};
    \node at (-3,-7) {$\vdots$};
    \node at (-4,-7) {$\vdots$};
    
    \node at (4,-8) {$1$};
    \node at (3,-8) {$1$};
    \node at (-2,-8) {$\cdots$};
    \node at (1,-8) {$1$};
    \node at (0,-8) {$1$};
    \node at (-1,-8) {$2$};
    \node at (2,-8) {$\cdots$};
    \node at (-3,-8) {\footnotesize $h_y-1$};
    \node at (-4,-8) {\footnotesize $h_y-1$};
    
    \node at (4,-9) {$1$};
    \node at (3,-9) {$1$};
    \node at (-2,-9) {$\cdots$};
    \node at (1,-9) {$1$};
    \node at (0,-9) {$1$};
    \node at (-1,-9) {$2$};
    \node at (2,-9) {$\cdots$};
    \node at (-3,-9) {\footnotesize $h_y-1$};
    \node at (-4,-9) {\footnotesize $h_y$};
    
    \node at (2,1.5) {\footnotesize $\cdots$};
    \node at (-2,1.5) {\footnotesize $\cdots$};
    
    \draw (0.1,1.5)--(0.9,1.5);
    \draw (-0.1,1.5)--(-0.9,1.5);
    \draw (1.1,1.5)--(1.7,1.5);
    \draw (2.3,1.5)--(2.9,1.5);
    \draw (3.1,1.5)--(3.9,1.5);
    \draw (-1.1,1.5)--(-1.7,1.5);
    \draw (-2.3,1.5)--(-2.9,1.5);
    \draw (-3.1,1.5)--(-3.9,1.5);
    
		\draw (0,1.5) circle (0.1);
		\node[above] at (0,1.6) {\footnotesize $\mm $ };
		\node[below] at (0,1.4) {\footnotesize $-3 $ };
    
		\draw (1,1.5) circle (0.1);
		\node[above] at (1,1.6) {\footnotesize $I_{x,2} $ };
		\node[below] at (1,1.4) {\footnotesize $-2 $ };
    
		\draw (3,1.5) circle (0.1);
		\node[above] at (3,1.6) {\footnotesize $I_{x,h_x-1} $ };
		\node[below] at (3,1.4) {\footnotesize $-2 $ };
    
		\draw (4,1.5) circle (0.1);
		\node[above] at (4,1.6) {\footnotesize $I_{x,h_x} $ };
		\node[below] at (4,1.4) {\footnotesize $-1 $ };
    
		\draw (-1,1.5) circle (0.1);
		\node[above] at (-1,1.6) {\footnotesize $I_{y,2} $ };
		\node[below] at (-1,1.4) {\footnotesize $-2 $ };
    
		\draw (-3,1.5) circle (0.1);
		\node[above] at (-3,1.6) {\footnotesize $I_{y,h_y-1} $ };
		\node[below] at (-3,1.4) {\footnotesize $-2 $ };
    
		\draw (-4,1.5) circle (0.1);
		\node[above] at (-4,1.6) {\footnotesize $I_{y,h_y} $ };
		\node[below] at (-4,1.4) {\footnotesize $-1 $ };
    \end{tikzpicture}
\end{center}
Now, the Behrend number is the sum of all entries of the above table and, the equality
\[
\nu_{\BC[x,y]/I} = \nu_{\BC[x,y]/(K_x \cdot K_y}=\nu_{\BC[x,y]/K_x} +\nu_{\BC[x,y]/K_y}+2h_xh_y -h_x-h_y
\]
is obtained similarly as in the proof of \Cref{TEOREMA1}. Finally, \eqref{item1} is proved.

\smallbreak
In order to prove \eqref{item2}, we reduce to the simpler case $g=g_1=-g_2$ by applying the biholomorphism
\[
\begin{tikzcd}[row sep=tiny]
\BA^2 \arrow{r}{\sim} & \BA^2 \\
(x,y) \arrow[mapsto]{r} & \left( x-\frac{g_1(y)+g_2(y)}{2},y\right).
\end{tikzcd}
\]
In particular, in this case, we have $d=o(g)=o(g_1)=o(g_2)$. Consider the ideals
\begin{align*}
I_i&=(x)+\mm^{i} &\mathrm{for}\,\,i=1,\ldots,d-1,\\
J_j&=\left(x-\frac{j}{\abs{j}}g(y)\right)+\mm^{\abs{j}+d} &\mathrm{for}\,\,-h_1+d \le j  \le h_2-d\,\, \mathrm{and}\,\, j\neq 0,\\
J_0&=(x)+\mm^{d}.
\end{align*}
Then, we can write the ideal $K=K_1 \cdot K_2$ as $K=I\cdot J$, where
\[
I=\left(\underset{i=1}{\overset{d-1}{\prod}}I_i\right)^2, \qquad J=  \left(\underset{j=0}{\overset{h_2-d}{\prod}}J_j\right)\cdot\left( \underset{j=0}{\overset{h_1-d}{\prod}}J_{-j}\right).
\]
Let $\varepsilon_I\colon B_I=\Bl_I\BA^2\rightarrow \BA^2$ be the blowup map. Then,
\[
\Bl_{K}\BA^2=\Bl_{IJ}\BA^2\cong\Bl_{\varepsilon_I^{-1}(J)\cdot \OO_{B_I}}B_I
\]
where the isomorphism is over $\BA^2$. Notice that $B_I$ is a toric variety. A direct computation in toric geometry shows that:
\[
\varepsilon_I^{-1}(J)\cdot \OO_{B_I}=\widetilde{\OI}\cdot\widetilde{\OK}_1\cdot\widetilde{\OK}_2
\]
where $\widetilde{\OI}$ defines a Cartier divisor, while $\widetilde{\OK}_1$ and $\widetilde{\OK}_2$ are ideal sheaves of two $0$-dimensional schemes with the same support $\{p \}\subset B_I$ with the property that $p$ is a toric point. Consider a toric chart $U\subset B_I$ such that $U\cong\BA^2$ and $p\in U$ is the origin. If $a,b$ are toric coordinates on $U$, the two $\BC[a,b]$-modules $\widetilde{\OK}_i(U)$, for $i=1,2$ are complete towers of the form
\[
\widetilde{K}_1=\underset{j=1}{\overset{h_1-d+1}{\prod}}\left( a +\widetilde{g}(b)\right)+\mm_p^{j},\qquad \widetilde{ K}_2=\underset{j=1}{\overset{h_2-d+1}{\prod}}\left(a-\widetilde{g}(b)\right)+\mm_p^{j}, 
\]
where $o(\widetilde{g})=1$ and $\mm_p=(a,b)$ is the ideal of the origin of $U$. Now, the property $o(\widetilde{g})=1$ implies that there is a biholomorphism around $p$ which transforms the towers $\widetilde{K}_1$ and $\widetilde{K}_2$ in the monomial towers $K_x$ and $K_y$ in the first part of the statement. As a consequence, the Dynkin diagram of $\Bl_K\BA^2$ is the following.
\begin{center}
    \begin{tikzpicture}
    
    \node at (2,1.5) {\footnotesize $\cdots$};
    \node at (7,2.5) {\footnotesize $\cdots$};
    \node at (7,0.5) {\footnotesize $\cdots$};
    
    \draw (0.1,1.5)--(0.9,1.5);
    \draw (1.1,1.5)--(1.7,1.5);
    \draw (2.3,1.5)--(2.9,1.5);
    \draw (3.1,1.5)--(3.9,1.5);
    
    \draw (4.071,1.571)--(4.929,2.429);
    \draw (4.071,1.429)--(4.929,0.571);
    
    \draw (5.1,2.5)--(5.9,2.5);
    \draw (6.1,2.5)--(6.7,2.5);
    \draw (7.3,2.5)--(7.9,2.5);
    \draw (8.1,2.5)--(8.9,2.5);
    
    \draw (5.1,0.5)--(5.9,0.5);
    \draw (6.1,0.5)--(6.7,0.5);
    \draw (7.3,0.5)--(7.9,0.5);
    \draw (8.1,0.5)--(8.9,0.5);

		\draw (0,1.5) circle (0.1);
		\node[above] at (0,1.6) {\footnotesize $I_1 $ };
		\node[below] at (0,1.4) {\footnotesize $-2 $ };
    
		\draw (1,1.5) circle (0.1);
		\node[above] at (1,1.6) {\footnotesize $I_{2} $ };
		\node[below] at (1,1.4) {\footnotesize $-2 $ };
    
		\draw (3,1.5) circle (0.1);
		\node[above] at (3,1.6) {\footnotesize $I_{d-1} $ };
		\node[below] at (3,1.4) {\footnotesize $-2 $ };
    
		\draw (4,1.5) circle (0.1);
		\node[above] at (4,1.6) {\footnotesize $J_0 $ };
		\node[below] at (4,1.4) {\footnotesize $-3 $ };
		\draw (0,1.5) circle (0.1);
	
		\node[above] at (5,2.6) {\footnotesize $J_{-1} $ };
		\node[below] at (5,2.4) {\footnotesize $-2 $ };
    
		\draw (5,2.5) circle (0.1);
		\node[above] at (6,2.6) {\footnotesize $J_{-2} $ };
		\node[below] at (6,2.4) {\footnotesize $-2 $ };
    
		\draw (6,2.5) circle (0.1);
		\node[above] at (8,2.6) {\footnotesize $J_{d-h_1+1} $ };
		\node[below] at (8,2.4) {\footnotesize $-2 $ };
    
		\draw (8,2.5) circle (0.1);
		\node[above] at (9,2.6) {\footnotesize $J_{-h_1+d} $ };
		\node[below] at (9,2.4) {\footnotesize $-1 $ };
		\draw (9,2.5) circle (0.1);
		
		\node[above] at (5,0.6) {\footnotesize $J_1 $ };
		\node[below] at (5,0.4) {\footnotesize $-2 $ };
    
		\draw (5,0.5) circle (0.1);
		\node[above] at (6,0.6) {\footnotesize $J_{2} $ };
		\node[below] at (6,0.4) {\footnotesize $-2 $ };
    
		\draw (6,0.5) circle (0.1);
		\node[above] at (8,0.6) {\footnotesize $J_{h_2-d-1} $ };
		\node[below] at (8,0.4) {\footnotesize $-2 $ };
    
		\draw (8,0.5) circle (0.1);
		\node[above] at (9,0.6) {\footnotesize $J_{h_2-d} $ };
		\node[below] at (9,0.4) {\footnotesize $-1 $ };
		\draw (9,0.5) circle (0.1);
    \end{tikzpicture}
\end{center}
At this point, finding the Behrend number is a simple calculation analogous to those made in the proof of \eqref{item1} and we leave it to the reader.
\end{proof}

\begin{remark}\label{productnoncomplete}
Similarly as we have done in \Cref{blownoncompl}, the above proposition can be easily generalised to non-complete towers. For example, in the easiest case when
\[
K_x=\underset{k=1}{\overset{s_x}{\prod}} (x )+\mm^{i_k},\qquad  K_y=\underset{k=1}{\overset{s_y}{\prod}} (y )+\mm^{j_k},
\]
for $1\le i_1<\cdots <i_{s_x} $ and $1\le j_1<\cdots <j_{s_y} $, are two monomial non-complete towers. Then, a fan $\Sigma$ in $ \BR^2$ of the toric variety $X=\Bl_{K_x K_y}\BA^2$ is the following.
\begin{center}
    \begin{tikzpicture}
 \node at (-1,0.5) {$\Sigma=$};
 \draw[ -]  (0,0)--(7,1);
 \draw[ -] (0,1)--(0,0)--(1,0);
 \draw[ -] (2,1)--(0,0);
 \draw[ -] ( 0,0)--(1,4);
 \draw[ -] (1,2)--(0,0) ;
 \node[above] at (1.4,4) {\small $ e_1+ j_{s_y}e_2 $};
 \node[above] at (0.9,2.5) {\small $ \vdots $};
 \node[above] at (1.6,2) {\small $ e_1+j_1 e_2 $};
 \node[above] at (2.6,1) {\small $ i_1 e_1+e_2 $};
 \node[above] at (6.8,1) {\small $ i_{s_x}e_1+e_2 $};
 \node at (2,0.5) {\small $ \cdots $};
 \node[left] at (0,1) {\small $  e_2 $};
 \node[right] at (1,0) {\small $ e_1  $};
    \end{tikzpicture}
\end{center}
Moreover, the Behrend number $\nu_{\BC[x,y]/K_x\cdot K_y}$ can be computed similarly as described in \Cref{blownoncompl}.

Notice that, if the ray $\rho=e_1+e_2$ belongs to the fan $\Sigma$ then, $X$ has only singularity of type $A_n$. While, if $\rho=e_1+e_2\notin \Sigma$ then, $X$ has an isolated singularity of different kind associated to the cone $<e_1+j_1e_2,i_1e_1+e_2>$. In particular, such singularity is never Gorenstein (see {\cite[Prop.~10.1.6.]{COX}}), while the $A_n$ singularities are always Gorenstein. 
\end{remark}

\subsection{Behrend function and Hilbert--Samuel strata}
By work of Briançon \cite{BRIANCON} and Iarrobino \cite{IARRO1}, the punctual Hilbert scheme $\Hilb^n(\BA^2)_0 \subset \Hilb^n(\BA^2)$, parametrising subschemes entirely supported at the origin, contains the locus of the curvilinear schemes $\mathscr C_n$ as a Zariski open (and hence dense) subset. Moreover, the complement $\Hilb^n(\BA^2)_0 \setminus \mathscr C_n$ can be stratified according to the Hilbert--Samuel function, also called the \emph{type}, of fat points (see \cite{IARRO1} for a definition). Let us consider the set-theoretic map
\[
\beta_n\colon \Hilb^n(\BA^2)_0 \to \BZ, \qquad [I] \mapsto \nu_{\BC[x,y]/I}.
\]
We know by \Cref{ex:curvilinear} this function is constantly equal to $n$ on $\mathscr C_n \subset \Hilb^n(\BA^2)_0$. Continuity of $\beta_n$ is of course out of question. In fact, the following example shows that $\beta_n$ is in general \emph{not} even constant on the Hilbert--Samuel strata.

\begin{example}
Consider the two ideals
\[
I=(xy,x^3-y^3),\qquad J=(xy,x^4,y^3).
\]
Then, $\length(\BC[x,y]/I) = 6 = \length(\BC[x,y]/J)$, and since $I$ is a complete intersection we have also 
\[
\nu_{\BC[x,y]/I} = 6
\]
by \Cref{ex:lci}. However, this is not the case for the Behrend number of the ideal $J$. Indeed, $J =(x,y^2)\cdot (x^3,y)$ is a product of curvilinear ideals and hence, in particular, a product of two towers. In order to compute the Behrend number of the ideal $J$ we can proceed as suggested in \Cref{productnoncomplete}. Alternatively, we will show in the next section (see \Cref{prdnoncomplenne}) an algorithm to compute the Behrend number of such kind of ideals. By applying it, one finds
\[
\nu_{\BC[x,y]/J} = 7 > 6.
\]
Finally, we observe that they have the same type 
\[
T(I)=T(J)=(1,2,2,1,0,0,0),
\]
and hence, they belong to the same Hilbert--Samuel stratum of $\Hilb^6(\BA^2)_0$.
\end{example}

\section{An algorithm for the Behrend number of a product of towers}\label{sec:Algorithm}

In the previous section we computed the Behrend number of an arbitrary tower $K \subset \BC[x,y]$ and of the product of two towers. In this section we explain an algorithmic procedure to perform the calculation for an arbitrary (finite) product of towers. This produces a huge number of examples of Behrend numbers of non-monomial schemes, analogously to \Cref{TORIC PROD TOWERS}\,(2).

\subsection{Products of towers: the complete case}
We already observed (cf.~\Cref{ex:Dynkin}) that the nodes of the Dynkin diagram attached to the blowup $\Bl_K\BA^2$ along a complete tower $K\subset \BC[x,y]$ can be labeled by ideals: see \Cref{fig:Dynkin-with-ideals}.

In this section we shall construct more general Dynkin diagrams, each associated with a product of towers; in this subsection we focus on the complete case. In this context,
all the Dynkin diagrams under consideration will have a node $c_\mm$ associated to the maximal ideal $\mm = (x,y) \subset \BC[x,y]$, and all the other nodes will be connected to $c_\mm$ by a sequence of edges. In this section we will use the following terminology: we will say that $c_\mm$ has \emph{level} $1$, while the level of each other node $c$ is defined as 
\[
\textrm{level}(c) = 1 + \big| \textrm{number of edges separating } c \textrm{ from } c_\mm \big|.
\]
For example, one can show that, if $I$ is the ideal 
\begin{equation}\label{eqn:ideal31937}
I=\mm \cdot ((x)+\mm^2)\cdot ((y)+\mm^2)\cdot ((x+y)+\mm^2)\cdot ((x+y)+\mm^3)
\end{equation}
then, the blowup $\Bl_I\BA^2$ is smooth and the Dynkin diagram of $\exc(\Bl_I\BA^2)$ is as in \Cref{fig:dynkin-example}.

\begin{figure}[h!]
\begin{tikzpicture}
\draw (0,0) circle (0.1);
		\node[right] at (0,0) {\footnotesize $\mm$ };
		\node[left] at (0,0) {\footnotesize $-4 $ };

\draw (0,1.5) circle (0.1);
		\node[above] at (0,1.5) {\footnotesize $(x)+\mm^2$ };
		\node[right] at (0,1.5) {\footnotesize $-1 $ };

\draw (1.5,1.5) circle (0.1);
		\node[above] at (1.5,1.5) {\footnotesize $(y)+\mm^2$ };
		\node[right] at (1.5,1.5) {\footnotesize $-1 $ };
		
\draw (-1.5,3) circle (0.1);
		\node[above] at (-1.5,3) {\footnotesize $(x+y)+\mm^3$ };
		\node[left] at (-1.5,3) {\footnotesize $-1 $ };

\draw (-1.5,1.5) circle (0.1);
		\node[left] at (-1.5,1.5) {\footnotesize $(x+y)+\mm^2$ };
		\node[right] at (-1.5,1.5) {\footnotesize $-2 $ };
		
\draw (-0.071,0.071)--(-1.429,1.429);
    \draw (-1.5,2.9)--(-1.5,1.6);
    \draw (0.071,0.071)--(1.429,1.429);
    \draw (0,0.1)--(0,1.4);
    \node at (4,0) {\textbf{LEVEL 1}};
    \node at (4,1.5) {\textbf{LEVEL 2}};
    \node at (4,3) {\textbf{LEVEL 3}};
    \draw[dashed] (-3,0.75)--(5,0.75);
    \draw[dashed] (-3,-0.75)--(5,-0.75);
    \draw[dashed] (-3,2.25)--(5,2.25);
    \draw[dashed] (-3,3.75)--(5,3.75);
\end{tikzpicture}
\caption{The Dynkin diagram of the ideal \eqref{eqn:ideal31937}.}
\label{fig:dynkin-example}
\end{figure}
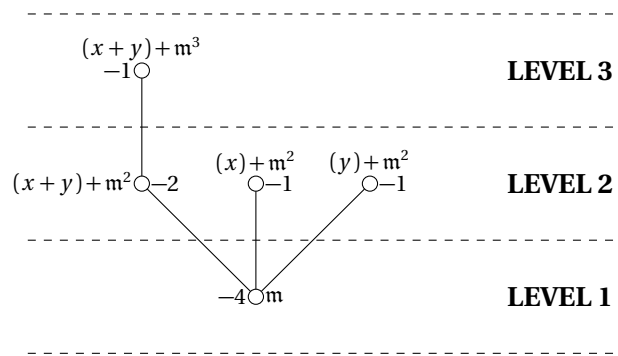

Note that, by construction of the Dynkin diagram associated to the exceptional locus of the blowup with center the product of complete towers, a necessary condition for two nodes to be connected by an edge is that their levels differ by one unit. We will say that a node $c_1$ is a \emph{descendant} of another node $c_2$  (or that $c_2$ is an \emph{ancestor} of $c_1$) if $c_1$ and $c_2$ are connected by a sequence of edges and the level of $c_1$ is greater than the level of $c_2$.

\smallbreak
Here is the general setup. Consider a set of complete towers
\[
\mathcal{T}=\Set{ K_{x,1},\ldots,K_{x,s_x},K_{y,1},\ldots,K_{y,s_y}}
\]
of the form
\[
K_{x,i}=\underset{k=1}{\overset{a_{x,i}}{\prod}}(x+g_{x,i}(y))+\mm^{k},\qquad K_{y,j}=\underset{k=1}{\overset{a_{y,j}}{\prod}}(y+g_{y,j}(x))+\mm^{k},
\]
 for $i=1,\ldots,s_x$ and $j=1,\ldots,s_y$, and their product
\[
T=\underset{K\in\mathcal{T}}{\prod}K.
\]
Then, the blowup $\Bl_T\BA^2$ is smooth and there is an algorithm to construct the Dynkin diagram of the exceptional locus of the map
\[
\varepsilon\colon \Bl_T\BA^2\rightarrow \BA^2.
\]
Moreover, we will show how to compute, starting from such diagram, the Behrend number of the fat point $\BC[x,y]/T$. We briefly describe the algorithm.

Let $h>0$ be the maximum of the heights of the towers in $\mathcal{T}$, i.e. 
\[
h=\max\Set{a_{x,i},a_{y,j}|i=1,\ldots,s_x\mbox{ and }j=1,\ldots,s_y}.
\]
Consider the $h$ equivalence relations on $\mathcal{T}$ defined, for $r=1,\ldots,h$, by
\[
K_{*,i}\sim_r K_{\bullet,j} \quad \Leftrightarrow \quad 
\begin{cases}
r=1, \textrm{or}\\
1<r\le\min\{a_{*,i},a_{\bullet,j} \},\ *=\bullet\mbox{ and }g_{*,i}\equiv g_{\bullet,j} \mod \mm^{r}, \textrm{or} \\
r>\max\{a_{*,i},a_{\bullet,j} \},
\end{cases}
\]
and call classes in excess the classes of the form $[K_{*.i}]^{\sim_r}$ for $r>a_{*,i}$. In particular, there is at most one class in excess for any $r=1,\ldots,h$.

We are now ready to construct the underlying graph of the Dynkin diagram for $\exc(\Bl_T\BA^2)$. We put one node at the first level, namely the node $c_\mm$ corresponding to the unique class in $\Calt/\sim_1$, and, at the $i$-th level, we put a node for each element in $\mathcal{T}/\sim_i$ excluding the possible class in excess. Finally, we add an edge joining the node associated to some class $[K_1]^{\sim_{r_1}}$ to the node associated to some class $[K_2]^{\sim_{r_2}}$ if and only if
\[
\abs{r_1-r_2}= 1\mbox{ and }[K_1]^{\sim_{r_1}}\cap [K_2]^{\sim_{r_2}}\not=\emptyset. 
\]
The self-intersection at each node of level strictly greater then one is given by
\[
-\big|\{\mbox{edges issuing from the node}\}\big|
\]
while, the node $c_{\mm}$ is labeled by the self intersection
\[
-\big|\{ \mbox{edges issuing from }c_\mm\}\big|-1.
\]
This allows to compute the multiplicities of the irreducible components of the exceptional divisor of $\Bl_T\BA^2$. We now briefly explain how to compute Behrend numbers. 

Let $I$ be an ideal appearing as a factor of some tower in $\Calt$, an let $c_I$ be the node of the Dynkin diagram associated to $I$ (see \Cref{ex:Dynkin}). Let also $c$ be any node in the Dynkin diagram and let $D_c$ be the corresponding irreducible component of $\exc(\Bl_T\BA^2)$. Then, the contribution of the ideal $I$ to the multiplicity of the exceptional divisor along the component $D_c$ is given by
\[
\begin{cases}
\mbox{level of } c_I &\mbox{if $c=c_I$ or if $c$ is a descendant of $c_I$,}\\
\mbox{level of } c &\mbox{if $c$ is a ancestor of $c_I$,}\\
1&\mbox{otherwise.}
\end{cases}
\]
Now, summing up all these contributions over all pairs $(I,c)$, one obtains the Behrend number of $\BC[x,y]/T$.

\subsection{Products of towers: the non-complete case}\label{prdnoncomplenne}
Again, the complete case helps us in understanding the non-complete case. 

Let
\[
\mathcal{T}=\Set{ K_{x,1},\ldots,K_{x,s_x},K_{y,1},\ldots,K_{y,s_y}}
\]
be a set of towers of the form
\[
K_{x,i}=\underset{k=1}{\overset{a_{x,i}}{\prod}}(x+g_{x,i}(y))+\mm^{i_{k,i}},\qquad  K_{y,j}=\underset{k=1}{\overset{a_{y,j}}{\prod}}(y+g_{y,j}(x))+\mm^{j_{k,j}},
\]
for $i=1,\ldots,s_x$, $j=1,\ldots,s_y$, $1\le i_{1,i}<\cdots< i_{a_{x,i},i}$ and $1\le j_{1,j}<\cdots< j_{a_{y,j},j}$, and let
\[
T=\underset{K\in\mathcal{T}}{\prod}K
\]
be their product. Consider also the set of complete towers
\[
\widetilde{\mathcal{T}}=\Set{ \widetilde{K}_{x,1},\ldots,\widetilde{K}_{x,s_x},\widetilde{K}_{y,1},\ldots,\widetilde{K}_{y,s_y}}
\]
defined by
\[
\widetilde{K}_{x,i}=\underset{k=1}{\overset{i_{a_{x,i},i}}{\prod}}(x+g_{x,i}(y))+\mm^{k}, \qquad  \widetilde{K}_{y,j}=\underset{k=1}{\overset{j_{a_{y,j},j}}{\prod}}(y+g_{y,j}(x))+\mm^{k}
\]
for $i=1,\ldots,s_x$, $j=1,\ldots,s_y$, and let us set
\[
\widetilde{T}=\underset{K\in\widetilde{\mathcal{T}}}{\prod}K.
\]
Let us also call $\varepsilon\colon B=\Bl_T\BA^2\rightarrow \BA^2$ and $\widetilde{\varepsilon}\colon \widetilde{B}=\Bl_{\widetilde{T}}\BA^2\rightarrow \BA^2$ the blowup maps. Then, $\widetilde{\varepsilon}^{-1}(T)\cdot \OO_{\widetilde{B}}$ defines a Cartier divisor. Hence, there is a canonical $\BA^2$-morphism $\varphi\colon \widetilde{B}\rightarrow B.$
Let 
\[
\exc(\widetilde{B})=\widetilde{C}_1\cup\cdots \cup \widetilde{C}_\alpha \subset \widetilde B, \qquad \exc({B})={C}_1\cup\cdots \cup  {C}_\beta \subset B
\]
be the decompositions of $\exc(\widetilde{B})$ and $\exc({B})$ into irreducible components. Clearly, we have $\alpha\ge \beta$. Then, as per \Cref{ZMT}, the morphism $\varphi$ must contract some of the curves in $\exc(\widetilde{B})$ and it is an isomorphism when restricted to the complement of the contracted curves. In particular, up to reordering the components of $\exc(\widetilde{B})$, the map
\[
\varphi\big|_{\widetilde{B}\smallsetminus(\widetilde{C}_{\beta+1}\cup\widetilde{C}_{\beta+2}\cup \cdots \cup\widetilde{C}_{\alpha})}\colon \widetilde{B}\smallsetminus(\widetilde{C}_{\beta+1}\cup\widetilde{C}_{\beta+2}\cup \cdots \cup\widetilde{C}_{\alpha}) \rightarrow B\smallsetminus\varphi(\widetilde{C}_{\beta+1}\cup\widetilde{C}_{\beta+2}\cup \cdots \cup\widetilde{C}_{\alpha})
\]
is an isomorphism which restricts, for $j=1,\ldots,\beta$, to an isomorphism
\[
\widetilde{C}_j\smallsetminus(\widetilde{C}_{\beta+1}\cup\widetilde{C}_{\beta+2}\cup \cdots \cup\widetilde{C}_{\alpha}) \rightarrow C_j\smallsetminus\varphi(\widetilde{C}_{\beta+1}\cup\widetilde{C}_{\beta+2}\cup \cdots \cup\widetilde{C}_{\alpha}).
\]
Therefore, for $j=1,\ldots,\beta$, the multiplicity of the exceptional divisor $E_T\BA^2=V(\varepsilon^{-1}(T)\cdot \OO_B)$ along $C_j$ equals the multiplicity of the Cartier divisor defined by $\widetilde{\varepsilon}^{-1}(T)\cdot \OO_{\widetilde{B}}$ along $\widetilde{C}_j$.

This observation allows one to compute the Behrend number of any (finite) product of towers. 

\subsection{Examples}
In general, one does not need to pass trough the blowup of a complete tower to compute the Behrend number of some tower $K$. The convenience in introducing the complete towers even in the non-complete case is in the computations. The following example should explain the situation.

\begin{example}
Consider the ideal $I=(x,y^2)\subset \BC[x,y]$ and the blowup $\varepsilon\colon B_I:=\Bl_I\BA^2\rightarrow \BA^2$. Then, $B_I$ is described in \Cref{prop:A_n-sing} and it is a toric surface covered by two charts: the first, $U_1$, is isomorphic to the affine quadric cone, whereas the second, $U_2$, is smooth and isomorphic to $\BA^2$. In particular, $U_1$ is the affine toric variety described by the cone generated by the rays of primitive vectors $,\rho_1=e_2, \rho_2=2e_1+e_2$ and, by standard toric geometry (see {\cite[\S\,3.1]{COX}}), we have an isomorphism of $\BA^2$-schemes 
\[
U_1\cong \Spec\BC[x,x^{-1}y^2,y].
\]
If we introduce the variables $a=x,b=x^{-1}y^2,c=y$, then the restriction to $U_1$ of the blowup map is associated to the $\BC$-algebra homomorphism
\[
\begin{tikzcd}[row sep=tiny]
\BC[x,y] \arrow{r} & S=\BC[a,b,c]/(ab-c^2) \\
x \arrow[mapsto]{r} & a\\
y \arrow[mapsto]{r} & c.
\end{tikzcd}
\]
Computing $\varepsilon^{-1}(I)\cdot \OO_{U_1}$, one finds 
\[
\varepsilon^{-1}(I)\cdot \OO_{U_1}=(a,c^2)=(a,ab)=(a) \subset S,
\]
so that one would be tempted to conclude that $\nu_{\BC[x,y]/I}=1$. This is, in fact, incorrect, because, in the local ring $\OO_{U_1,\exc(\varepsilon)}\cong S_{(a,c)}$ the function $b$ is invertible and we have
$\nu_{\BC[x,y]/I}=\ord_{\exc(\varepsilon)}(a)=\ord_{\exc(\varepsilon)}(c^2b^{-1})=\ord_{\exc{(\varepsilon)}}(c^2)=2\cdot\ord_{\exc(\varepsilon)}(c)=2$, as expected.

This complication never occurs in the case of smooth surfaces.
\end{example}

Even though the procedure described in \Cref{prdnoncomplenne} above is quite straightforward, one may need to use a computer to actually compute the Behrend number of an arbitrary (finite) product of towers. Computing the length of a product of towers, on the other hand, can often be quite complicated. Below we show an example that can be computed by hands. 

\begin{example}
Let $K=\prod_{1\leq k\leq s}(x)+\mm^{i_k}=\prod_{1\leq k\leq s}(x,y^{i_k})$ be a monomial tower with $1<i_1<\cdots< i_{s}$, and set $J=K\cdot \mm^n$ for some integer $n>0$. Then, the following formula holds
\[
J=(x,y)^{n+s}\cap\left(x^s,\left.x^{s-i}y^{n+\underset{k=1}{\overset{i}{\sum}}i_{k}}\ \right|\ i=1,\ldots,s\right).
\]
As the ideal has now taken on a more pleasant form, the following formulas can easily be obtained:
\begin{align*}
    \ell_{\BC[x,y]/J} &=\ell_{\BC[x,y]/K}+\frac{n(n+1)+2ns}{2}, \\
    \nu_{\BC[x,y]/J} &= \nu_{\BC[x,y]/K}+sn+n+s.
\end{align*}
\end{example}

\section{The general normal case}\label{sec:monomial-normal}
In this section, we will completely solve the problem of computing the Behrend number $\nu_{\BC[x,y]/I}$ for a normal monomial ideal $I \subset \BC[x,y]$. Moreover, in \Cref{toricnormal} we will give a toric description of the blowup $\Bl_I\BA^2$ and we will prove a factorisation theorem (\Cref{normal-factorisation}) that allows one to write the ideal $I$ uniquely as a product of powers of much easier ideals, namely the normalisations of the monomial complete intersection ideals $(x^h,y^k)$.

\subsection{The key example}\label{example missed}
Consider the ideals 
\begin{equation}\label{ideals_IJ}
  I=x^2+\mm^3=(x^2,xy^2,y^3),\qquad J=\mm\cdot((x)+\mm^2)\cdot I. 
\end{equation}
In particular $\ell_{\BC[x,y]/I}=5$ and $\ell_{\BC[x,y]/J} = 14$.

We want to perform, for the ideals $I$ and $J$, the same analysis that we did for the ideals in the previous sections. We have (for instance via \Cref{blowaffine})
\[
X_I = \Bl_I\BA^2 = 
\Set{
((x,y),[w_0:w_1:w_2])\in\BA^2\times \BP^2 | 
\begin{array}{c}
  xw_1=yw_2 \\
  y^2w_0=xw_2 \\
  yw_0w_1=w_2^2
\end{array}
}
\]
and the exceptional locus is
\[
D_I=\exc(X_I)=\Set{((x,y),[w_0:w_1:w_2])\in X_I|x=y=w_2=0}\cong\BP^1.
\]
In order to study the variety $X_I$ we cover it with the three affine charts
\[
X_{I,i}=X_I\cap( \BA^2\times \{ w_i\not=0 \}),\quad i=0,1,2.
\]
We will also denote by $D_{I,i}$, for $i=0,1$, the chart on $D_I$ given by $D_{I,i}=D_I\cap X_{I,i}$.

Now, $X_{I,2}$ is smooth, while $X_{I,0}$ and $X_{I,1}$ have each an isolated singular point $p_i\in D_{I,i}$ for $i=0,1$. The singular charts have the form
\[
X_{I,0}\cong \BA^2/G_0,\quad X_{I,1}\cong \BA^2/G_1,
\]
where, given a primitive third root of unity $\xi_3\in\BC^\times$, the groups $G_0,G_1$ are
\[
G_0=\left<\begin{pmatrix} \xi_3 &0 \\ 0 & \xi_3
\end{pmatrix}
\right> \subset \GL(2,\BC),\qquad G_1=\left<\begin{pmatrix}-1 &0 \\ 0 & -1
\end{pmatrix}
\right>\subset \SL(2,\BC).
\]
As a consequence, the surface $X_I$ is normal. This is a general fact about quotient surface singularities and it can also be deduced from \Cref{villareal}.

Let $\varepsilon_I\colon X_I\rightarrow \BA^2$ be the blowup map and let
\[
\varphi\colon \widetilde{X}_I\rightarrow X_I
\]
be the minimal resolution of $X_I$, i.e.~$\widetilde{X}_I$ is a smooth surface and $\varphi$ is a projective birational morphism which does not contract any rational  $(-1)$-curve. It is well known that the variety $\widetilde{X}_I$ is obtained by blowing up the singular points $p_0$ and $p_1$. Let us also denote by $\widetilde{D}_I\subset\exc(\varepsilon_I\circ \varphi)\subset \widetilde{X}_I$ the strict transform of $D_I$, i.e.~the Zariski closure of $\varphi^{-1}(D_I\smallsetminus\{p_0,p_1\})$. Given the description of the singularities, we have that $\exc(\varphi)$ is a disjoint union of two smooth projective rational curves $L_0,L_1$, corresponding respectively to $p_0$ and $p_1$, each of which intersects the line $\widetilde{D}_I$ at a point. Furthermore, the self-intersections of $L_0$ and $L_1$ are respectively 
\[
L_0^2=-3,\quad L_1^2=-2.
\]
Notice that the map $\varepsilon_I\circ \varphi$ is a projective birational morphism of smooth surfaces and hence, by classical theory of surfaces (see \cite[Ch.~III]{BARTHECC}), it follows that $\widetilde{X}_I$ contains a smooth rational projective $(-1)$-curve, and the only possible such curve is $\widetilde{D}_I$. The Dynkin diagram attached to $\set{\widetilde{D}_I,L_0,L_1}$ is depicted in \Cref{fig:dynkin89}.

\begin{figure}[h!]
\begin{tikzpicture}
    
    \draw (0.1,1.5)--(0.9,1.5);
    \draw (1.1,1.5)--(1.9,1.5);

    \draw (0,1.5) circle (0.1);
	\node[above] at (0,1.6) {\footnotesize $L_0 $ };
	\node[below] at (0,1.4) {\footnotesize $-3 $ };
    
	\draw (1,1.5) circle (0.1);
	\node[above] at (1,1.6) {\footnotesize $\widetilde{D}_I $ };
	\node[below] at (1,1.4) {\footnotesize $-1 $ };
    
	\draw (2,1.5) circle (0.1);
	\node[above] at (2,1.6) {\footnotesize $L_1 $ };
	\node[below] at (2,1.4) {\footnotesize $-2 $ };
\end{tikzpicture}
\caption{The Dynkin diagram attached to $\set{ \widetilde{D}_I,L_0,L_1}$.}
\label{fig:dynkin89}
\end{figure}

We claim that there is a canonical isomorphism of $\BA^2$-schemes between $X_J=\Bl_J\BA^2$ and $\widetilde{X}_I$, where $J$ is as in \eqref{ideals_IJ}. Thanks to \Cref{LEMMATECH}, we know that there is a canonical morphism of $\BA^2$-schemes 
\[
\psi\colon X_J\rightarrow X_I.
\]
This follows from the existence of the isomorphism of $\BA^2$-schemes
\[
\begin{tikzcd}
X_J \arrow{r}{\sim} & \Bl_{\varepsilon^{-1}(I)\cdot \OO_B}B
\end{tikzcd}
\]
where $\varepsilon\colon B\rightarrow \BA^2$ is the blowup with center the ideal $\mm\cdot(x,y^2)$, together with the universal property of blowups. In fact, we observe that there are canonical isomorphisms of $\BA^2$-schemes
\[
\begin{tikzcd}
& Z\arrow[swap]{dl}{\widetilde{\vartheta}}\arrow{dr}{\vartheta_J} & \\
\widetilde{X}_I & & X_J
\end{tikzcd}
\]
where $Z$ is the result of an iterated blowup, namely
\[
\begin{tikzcd}
Z \arrow{r}{\mu} & B \arrow{r}{\varepsilon} & \BA^2 
\end{tikzcd}
\]
where $\mu$ is the blowup of $B$ with center the intersection point of the two irreducible components of $\exc(\varepsilon) $.

In order to construct the isomorphisms $\widetilde{\vartheta}$ and $\vartheta_J$, we start by noticing that the surface $Z$ just described is the toric variety associated to the fan $\Sigma$ in $ \BR^2$ shown below.
\begin{center}
\begin{tikzpicture}
\draw (2,1)--(0,0);
\draw  (0,0)--(3,2);
\draw (0,1)--(0,0)--(1,0);
\draw (1,1)--(0,0);
\node[right] at (1,0) {\tiny $e_1$};
\node[above] at (0,1) {\tiny $e_2$};
\node[above] at (0.7,1) {\tiny $e_1+e_2$};
\node[above] at (3,2) {\tiny $3e_1+2e_2$};
\node[above] at (2.5,0.9) {\tiny $2e_1+e_2$};
\end{tikzpicture}
\end{center}
Now, the same computations as those we did in the previous section show that $(\varepsilon\circ\mu )^{-1}(I)\cdot \OO_{Z}$ and $(\varepsilon\circ\mu )^{-1}(J)\cdot \OO_{Z}$ define Cartier divisors on $Z$ and, as a consequence, there exist canonical $\BA^2$-morphisms ${\vartheta}_I\colon Z\rightarrow X_I$ and $\vartheta_J\colon Z\rightarrow X_J$. Moreover, $\vartheta_I$ does not contract any $(-1)$-curve and, as a consequence, it lifts to an isomorphism $\widetilde{\vartheta}\colon Z\rightarrow \widetilde{X}_I$ because of the universal property of the minimal resolution (see {\cite[Thm.~(6.2)]{BARTHECC}}). The map $\vartheta_J$ is an isomorphism because $\varepsilon^{-1}(I)\cdot \OO_{B}$ is the product of a principal (Cartier) ideal sheaf times the ideal sheaf of the reduced intersection point of the two irreducible components of $\exc(\varepsilon)$.

As a consequence, $X_J$ and $\widetilde{X}_I$ are canonically isomorphic and, if we label their Dynkin diagram (see \Cref{fig:dynkin89}) as explained in \Cref{ex:Dynkin}, then we obtain the following diagram. 
\begin{center}
    \begin{tikzpicture}
\draw (0.1,1.5)--(0.9,1.5);
\draw (1.1,1.5)--(1.9,1.5);
\draw (0,1.5) circle (0.1);
	\node[above] at (0,1.6) {\footnotesize $\mm $ };
	\node[below] at (0,1.4) {\footnotesize $-3 $ };
    
\draw (1,1.5) circle (0.1);
	\node[above] at (1,1.6) {\footnotesize $I $ };
	\node[below] at (1,1.4) {\footnotesize $-1 $ };
    
\draw (2,1.5) circle (0.1);
	\node[above] at (2,1.6) {\footnotesize $(x)+\mm^2 $ };
	\node[below] at (2,1.4) {\footnotesize $-2 $ };
    \end{tikzpicture}
\end{center}
Notice that, the Dynkin diagram above is different from those we encountered in \Cref{TORIC PROD TOWERS} or appearing in \Cref{sec:Algorithm}.

\smallbreak
Now we move to the computation of the Behrend numbers $\nu_{\BC[x,y]/I}$ and $\nu_{\BC[x,y]/J}$ exploiting the canonical isomorphisms of $\BA^2$-schemes just described. The computation of $\nu_{\BC[x,y]/J}$ is achieved, just as in the proof of \Cref{TEOREMA1}, via toric geometry, yielding the answer
\[
\nu_{\BC[x,y]/J}=21.
\]
In order to compute $\nu_{\BC[x,y]/I}$, we start by noticing that the morphism $\varphi\circ \widetilde{\vartheta}\colon Z\rightarrow X_I$ contracts two disjoint smooth rational projective curves over two distinct points of $X_I$ and it is an isomorphism outside such curves. Therefore, if $C\subset Z$ is the curve that dominates $\exc(X_I)$, then $\varphi\circ \widetilde{\vartheta}|_{C}$ is a birational morphism and we have
\[
\nu_{\BC[x,y]/I}=\mult_C(V((\varepsilon\circ\mu )^{-1}(I)\cdot \OO_{Z})).
\]
Again, toric geometry applied as in the proof of \Cref{TEOREMA1} gives the answer, namely
\[
\nu_{\BC[x,y]/I}=6.
\]
In \Cref{Behrenormalisation}, we shall describe a general procedure which, in particular, allows one to compute the number $\nu_{\BC[x,y]/I}$.

\subsection{Behrend number and factorisations of normal ideals}

\begin{notation}
Set $I_{h,k}=(x^h,y^k) \subset \BC[x,y]$. Then we let
$\nn_{h,k}=\overline{I}_{h,k}$ be the normalisation of $I_{h,k}$, defined as in \Cref{villareal}.
\end{notation}

\begin{example}
For istance, $\nn_{h,h}=\mm^h$ for all $h\ge 0$. One also has $\nn_{2,3} = (x^2,xy^2,y^3)$.
\end{example}
\begin{lemma}\label{powernorm}
For any $\delta\geq 0$ and $h,k > 0$, there is an identity of ideals
\[
\nn_{h,k}^\delta =\nn_{\delta h,\delta k} \subset \BC[x,y].
\]
\end{lemma}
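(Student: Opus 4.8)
The plan is to reduce the statement to \Cref{villareal}, which identifies the integral closure of a power of a monomial ideal with the lattice points of a suitably dilated Newton region. The key preliminary step is to rewrite a power of a \emph{normalised} ideal as a single integral closure. First I would record that, for any monomial ideal $I\subset\BC[x,y]$, comparing the graded pieces of the identity $\overline{\BC[x,y][It]}=\BC[x,y][\overline{I}t]$ (from \cite[Prop.~3.1]{REID}, recalled in \Cref{sec:normal-blowups}) yields $(\overline{I})^i=\overline{I^i}$ for every $i\ge 1$. Specialising to $I=I_{h,k}=(x^h,y^k)$, this turns the left-hand side into
\[
\nn_{h,k}^{\delta}=(\overline{I_{h,k}})^{\delta}=\overline{I_{h,k}^{\delta}},
\]
valid for $\delta\ge 1$; the case $\delta=0$ is disposed of separately, both sides being the unit ideal.

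Next I would compute $\overline{I_{h,k}^{\delta}}$ via \Cref{villareal}. Since $I_{h,k}$ is generated by the two pure powers $x^h$ and $y^k$, its associated region is $Q_{I_{h,k}}=\conv_{\BQ}((h,0),(0,k))+\BQ^2_{\ge 0}$. The crux is the elementary geometric observation that dilation by $\delta$ fixes the recession cone $\BQ^2_{\ge 0}$ and scales the segment, so that $\delta\cdot Q_{I_{h,k}}=\conv_{\BQ}((\delta h,0),(0,\delta k))+\BQ^2_{\ge 0}=Q_{I_{\delta h,\delta k}}$. Substituting this back into \Cref{villareal} gives $\overline{I_{h,k}^{\delta}}=\overline{I_{\delta h,\delta k}}=\nn_{\delta h,\delta k}$, and combining with the previous display concludes the argument.

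The proof is essentially formal once the two inputs are in place, so I do not expect a genuine obstacle; the only point that requires a little care is the reduction $(\overline{I})^{\delta}=\overline{I^{\delta}}$, which rests on the two-variable Rees-algebra normalisation statement and is precisely what makes the clean identity $\nn_{h,k}^{\delta}=\overline{I_{h,k}^{\delta}}$ available. A self-contained alternative avoiding \cite{REID} would instead invoke \Cref{rmk:normality-convexity}, where one has $Q_{IJ}\cap\BN^2=A_{IJ}$ together with the Minkowski additivity $Q_I+Q_J=Q_{IJ}$ for normal ideals $I,J$: since $\nn_{h,k}$ is normal, so is each power $\nn_{h,k}^{\delta}$, with $Q_{\nn_{h,k}^{\delta}}=\delta\cdot Q_{\nn_{h,k}}=\delta\cdot Q_{I_{h,k}}$, leading to the same conclusion after intersecting with $\BN^2$.
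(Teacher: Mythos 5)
Your proof is correct, and it takes a route that differs from the paper's in its key auxiliary ingredient. The paper's own proof stays entirely inside convex geometry: it combines \Cref{villareal} with the identity $\conv_{\BQ}(\delta v_1,\ldots,\delta v_s)=\conv_{\BQ}\bigl(\bigl\{\textstyle\sum_{i} n_i v_i \;:\; \sum_{i} n_i=\delta,\ n_i\geq 0\bigr\}\bigr)$, which matches the Newton region of $(x^{\delta h},y^{\delta k})$ with the region spanned by the generators of a $\delta$-th power; the identification of the power $\nn_{h,k}^{\delta}$ with a single integral closure is left implicit there. You instead make that identification the pivot: comparing graded pieces in $\overline{\BC[x,y][It]}=\BC[x,y][\overline{I}t]$ (\cite{REID}, recalled in \Cref{sec:normal-blowups}, and legitimately applicable since $I_{h,k}$ is monomial) gives $(\overline{I})^{\delta}=\overline{I^{\delta}}$, hence $\nn_{h,k}^{\delta}=\overline{I_{h,k}^{\delta}}$, after which \Cref{villareal} (applied with $i=\delta$ to $I_{h,k}$ and with $i=1$ to $I_{\delta h,\delta k}$) together with the trivial dilation identity $\delta\cdot Q_{I_{h,k}}=Q_{I_{\delta h,\delta k}}$ closes the argument. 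This buys rigour exactly where the statement is delicate: the paper's convex identity concerns rational convex hulls rather than lattice points, so on its own it does not yield the inclusion $\nn_{\delta h,\delta k}\subseteq\nn_{h,k}^{\delta}$ (one would still need an integer-decomposition argument, special to dimension two), and the Rees-algebra equality of \cite{REID} --- in substance Zariski's theorem on products of complete ideals --- packages precisely this. One caveat on your proposed ``self-contained alternative'' via \Cref{rmk:normality-convexity}: it starts from the normality of $\nn_{h,k}$, which is not automatic from $\nn_{h,k}$ being integrally closed; it requires either the small check $Q_{\nn_{h,k}}=Q_{I_{h,k}}$ fed into the criterion of that remark (a criterion itself underpinned by the same normalisation theory), or \cite{REID} again, so it is a repackaging of the same input rather than a genuinely independent argument.
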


\begin{proof}
This is trivial for $\delta=0,1$, and it follows, for higher $\delta $, combining \Cref{villareal} with the general formula
\[
\conv_{\BQ}(\delta v_1,\ldots,\delta v_s)=\conv_{\BQ} \left( \left.\begin{matrix}
\underset{i=1}{\overset{s}{\sum}}n_iv_i
\end{matrix}\ \right|\ \begin{matrix}  
\underset{i=1}{\overset{s}{\sum}}n_i=\delta,\ n_i\ge 0
\end{matrix}\right) \subset V
\]
for any choice of vectors  $v_1,\ldots,v_s\in V$ in some $\BQ$-vector space $V$.
\end{proof}

\begin{lemma}\label{missedlemma}
Let $X$ be the toric surface with fan $\Sigma$ in $\BR^2$ generated by the primitive vectors
$$\rho_0=e_1,\quad \rho_1 =\beta e_1 +\alpha e_2,\quad \rho_2=e_2.$$
Then $X$ and  $\Bl_{\nn_{\alpha,\beta}}\BA^2$ are canonically isomorphic as $\BA^2$-schemes.
\end{lemma}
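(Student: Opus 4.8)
The plan is to realise both $X$ and $\Bl_{\nn_{\alpha,\beta}}\BA^2$ as the same normal toric surface over $\BA^2$, by computing the fan of the blowup explicitly and matching it with $\Sigma$. First I would record that $\nn_{\alpha,\beta}=\overline{(x^\alpha,y^\beta)}$ is a normal monomial ideal, so that, by the result of Reid recalled in \Cref{sec:normal-blowups}, the Rees algebra $\BC[x,y][\nn_{\alpha,\beta}t]$ is integrally closed and $\Bl_{\nn_{\alpha,\beta}}\BA^2=\Proj\bigoplus_{i\ge 0}\nn_{\alpha,\beta}^i$ is normal. Being the blowup of $\BA^2=U_\sigma$, with $\sigma=\Braket{e_1,e_2}\subset\BR^2$, along a torus-invariant ideal, it is a normal toric surface whose fan refines $\sigma$ and whose structure morphism to $\BA^2$ is the toric morphism induced by the identity of the lattice $N=\BZ^2$.

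Next I would pin down the fan. By \Cref{villareal} the Newton polyhedron of $\nn_{\alpha,\beta}$ is $Q=\conv_{\BQ}((\alpha,0),(0,\beta))+\BQ^2_{\ge 0}$, with the two vertices $(\alpha,0)$ and $(0,\beta)$. The key input is that the normalised blowup is the toric variety of the normal fan of $Q$, equivalently the coarsest refinement of $\sigma$ on which the support function $\nu(u)=\min_{m\in Q}\langle u,m\rangle$ is linear. For $u=(u_1,u_2)\in\sigma$ the minimum is attained at a vertex, so $\nu(u)=\min(\alpha u_1,\beta u_2)$. This function is linear exactly on the two subcones $\set{\alpha u_1\le\beta u_2}$ and $\set{\alpha u_1\ge\beta u_2}$ of $\sigma$, whose common wall is the ray spanned by $\beta e_1+\alpha e_2$ (the locus $\alpha u_1=\beta u_2$). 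Hence the fan has rays $e_1$, $\beta e_1+\alpha e_2$, $e_2$ and maximal cones $\Braket{e_1,\beta e_1+\alpha e_2}$ and $\Braket{\beta e_1+\alpha e_2,e_2}$, which is precisely the fan $\Sigma$ defining $X$. When $\gcd(\alpha,\beta)=1$ the vector $\beta e_1+\alpha e_2$ is primitive, as in the statement; for general $\alpha,\beta$ the ray it spans is unchanged, so $X$ is well defined in either case.

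With the fans matched, I would conclude the identification as $\BA^2$-schemes. Both $X=U_\Sigma$ and $\Bl_{\nn_{\alpha,\beta}}\BA^2=U_\Sigma$ carry structure morphisms to $\BA^2=U_\sigma$ that are the toric morphisms attached to $\id_N$ together with the refinement $\Sigma\to\sigma$; since a toric morphism is determined by the underlying map of fans, the canonical toric isomorphism between the two surfaces commutes with the maps down to $\BA^2$, yielding the asserted canonical isomorphism of $\BA^2$-schemes.

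The step I expect to be the main obstacle is the clean justification that this support-function fan genuinely computes $\Bl_{\nn_{\alpha,\beta}}\BA^2$, and not merely some toric refinement of $\sigma$. I would secure it either by invoking the general toric description of blowups along torus-invariant monomial ideals (cf.~\cite{COX}) for the normal ideal $\nn_{\alpha,\beta}$, or, if a self-contained argument is preferred, by working on the two affine charts $\set{u\neq 0}$ and $\set{v\neq 0}$ of $\Bl_{(x^\alpha,y^\beta)}\BA^2=\set{vx^\alpha=uy^\beta}\subset\BA^2\times\BP^1$ (\Cref{blowuplci}), normalising each binomial chart and checking that the two resulting affine toric pieces are $U_{\Braket{e_1,\beta e_1+\alpha e_2}}$ and $U_{\Braket{\beta e_1+\alpha e_2,e_2}}$, glued along the chart of the ray $\beta e_1+\alpha e_2$; here one uses that $\Bl_{\nn_{\alpha,\beta}}\BA^2$ is the normalisation of $\Bl_{(x^\alpha,y^\beta)}\BA^2$.
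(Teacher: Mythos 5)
Your proof is correct, but it takes a genuinely different route from the paper's. You compute the fan of $\Bl_{\nn_{\alpha,\beta}}\BA^2$ directly: since $\nn_{\alpha,\beta}$ is a normal ideal, its blowup coincides with the normalised blowup, which is the toric variety of the normal fan of the Newton polyhedron $Q=\conv_{\BQ}((\alpha,0),(0,\beta))+\BQ^2_{\ge 0}$; the support function $\min(\alpha u_1,\beta u_2)$ is piecewise linear on $\sigma=\Braket{e_1,e_2}$ with unique wall the ray through $\beta e_1+\alpha e_2$, which recovers $\Sigma$. The paper argues in the opposite direction: starting from $X=U_\Sigma$, it checks on the two charts that $\varepsilon^{-1}\big((x^\alpha,y^\beta)\big)\cdot\OO_X$ is invertible (equal to $(y^\beta)$ and $(x^\alpha)$ respectively), so the universal property of blowups yields a canonical birational $\BA^2$-morphism $\psi\colon X\to\Bl_{(x^\alpha,y^\beta)}\BA^2$; then $\psi$ is finite (proper with finite fibres), hence by uniqueness of the normalisation (\Cref{prop:normalisation}) and Reid's identification $\overline{\BC[x,y][It]}=\BC[x,y][\overline{I}t]$ it \emph{is} the normalisation morphism, identifying $X$ with $\Bl_{\nn_{\alpha,\beta}}\BA^2$ over $\BA^2$. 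The trade-off: the paper's argument uses only tools already set up in the text, its one delicate point being the finiteness of $\psi$; yours is more systematic and produces the fan in one stroke, essentially anticipating \Cref{toricnormal} and \Cref{cor:madame_bovary}, but it leans on the standard toric fact that the normalised blowup along a torus-invariant monomial ideal is the toric variety of the normal fan of its Newton polyhedron. You correctly flagged this as the step requiring justification, and both of your proposed fixes are sound; in particular the chart-by-chart one (normalising the binomial charts $\BC[x,y,x^{-\alpha}y^{\beta}]$ and $\BC[x,y,x^{\alpha}y^{-\beta}]$ of $\set{vx^\alpha=uy^\beta}$ to the affine toric surfaces of the cones $\Braket{\beta e_1+\alpha e_2,e_2}$ and $\Braket{e_1,\beta e_1+\alpha e_2}$, glued along the chart of the common ray) is self-contained and closest in spirit to the paper's own computation. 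Your remark on primitivity of $\beta e_1+\alpha e_2$ when $\gcd(\alpha,\beta)>1$ is also a point handled more carefully than in the statement itself.
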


\begin{proof}
The variety $X$ is, by construction covered by two charts $U_1$ and $U_2$ respectively associated to the cones $\sigma_1=\braket{\rho_0,\rho_1}$ and $\sigma_2=\braket{\rho_1,\rho_2}$. In particular, by standard toric geometry (see {\cite[\S\,3.1]{COX}}), there exist two integers $s_1,s_2\ge0$ and Laurent monomials $m_{1,1},\ldots,m_{1,s_1},m_{2,1},\ldots,m_{2,s_2}\in\BC(x,y)$ (the cone $\sigma_i$ is smooth if and only if $s_i=0$  and no Laurent monomial is needed) such that
\begin{align*}
    U_1&=\Spec \BC\left[x^{\alpha}y^{-\beta},y,m_{1,1},\ldots,m_{1,s_1}\right]  \\
    U_2&=\Spec \BC\left[x, x^{-\alpha}y^\beta  ,m_{2,1},\ldots,m_{2,s_2}\right].
\end{align*}
Let us denote by $S_1=\BC[x^{\alpha}y^{-\beta},y,m_{1,1},\ldots,m_{1,s_1}]$ and $S_2=\BC[x, x^{-\alpha}y^\beta  ,m_{2,1},\ldots,m_{2,s_2}]$ the affine rings of $U_1$ and $U_2$ and by $\varepsilon\colon X\rightarrow \BA^2$ the structure morphism. Then, if $I=(x^\alpha,y^\beta)$, we have
\[
\varepsilon\big|_{U_1}^{-1}(I)\cdot \OO_{U_1}=(y^\beta)\subset S_1,\qquad \varepsilon\big|_{U_2}^{-1}(I)\cdot \OO_{U_2}=(x^\alpha) \subset  S_2,
\]
which implies that the sheaf $ \varepsilon^{-1}(I)\cdot \OO_X$ defines a Cartier divisor on $X$.

As a consequence we have a cononical birational morphism of $\BA^2$-schemes
$\psi\colon X\rightarrow \Bl_I\BA^2$. If such a morphism is finite then, by \Cref{prop:normalisation}, it must coincide with the normalisation morphism and this would provide an isomorphism of $\Bl_I\BA^2$-schemes between $X$ and $\Bl_{\nn_{\alpha\beta}}\BA^2$ (which in particular is an isomorphism of $\BA^2$-schemes).
Finally, the morphism $\psi$ is finite because it is proper and has finite fibres. Indeed, it is an isomorphism away from the exceptional loci $\exc(X)$ and $\exc(\Bl_I\BA^2)$, and it is a dominant morphism between irreducible projective curves when restricted to the exceptional loci.
\end{proof}

\begin{theorem}\label{toricnormal}
Let $X$ be a toric surface which admits a fan $\Sigma_X$ in $ \BR^2 $ that covers the first quadrant $\BR^2_{\ge0}$ i.e.~$\Sigma_X$ is generated by the rays with primitive vectors $\rho_0=e_1,\rho_{r+1}=e_2,\rho_k=m_ke_1+n_ke_2$, for $k=1,\ldots,r$, where $m_k,n_k>0$ and $\gcd(m_k,n_k)=1$. Then, there is a canonical isomorphism 
\[
    \begin{tikzcd}
    X \arrow{r}{\sim} & \Bl_I\BA^2
    \end{tikzcd}
\]
where $I=\prod_{1\leq k\leq r} \nn_{n_k,m_k}$.
\end{theorem}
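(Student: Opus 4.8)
The plan is to construct a canonical birational morphism $\psi\colon X\to\Bl_I\BA^2$ over $\BA^2$ and then promote it to an isomorphism by a normality argument, exactly as in the proof of \Cref{missedlemma}, of which this is the higher-rank analogue. Since the fan $\Sigma_X$ subdivides the single cone $\braket{e_1,e_2}$ whose toric variety is $\BA^2$, there is a proper birational toric structure morphism $\varepsilon\colon X\to\BA^2$, and $X$ is a normal surface. For each $k=1,\dots,r$ the fan $\Sigma_X$ refines the fan of $B_k:=\Bl_{\nn_{n_k,m_k}}\BA^2$, which by \Cref{missedlemma} has rays $e_1,\ \rho_k=m_ke_1+n_ke_2,\ e_2$; this produces a toric $\BA^2$-morphism $\pi_k\colon X\to B_k$, and since the sheaf cutting out the exceptional divisor of $B_k$ is invertible, its pullback $\varepsilon^{-1}(\nn_{n_k,m_k})\cdot\OO_X$ is invertible as well. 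Taking the product over $k$ shows that $\varepsilon^{-1}(I)\cdot\OO_X=\prod_k\varepsilon^{-1}(\nn_{n_k,m_k})\cdot\OO_X$ is invertible, so the universal property of the blowup yields the desired canonical $\BA^2$-morphism $\psi\colon X\to\Bl_I\BA^2$ (torus-equivariant, by uniqueness).

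Next I would record that the target is normal. Each $\nn_{n_k,m_k}$ is integrally closed, hence a normal ideal, so by \Cref{rmk:normality-convexity} the product $I=\prod_k\nn_{n_k,m_k}$ is again normal; therefore $\Bl_I\BA^2$ is a normal (toric) surface. The morphism $\psi$ is birational, and since $\varepsilon=\varepsilon_I\circ\psi$ is proper while $\varepsilon_I\colon\Bl_I\BA^2\to\BA^2$ is separated, $\psi$ is proper by cancellation. By \Cref{lemmabir+norm=iso} it then suffices to prove that $\psi$ is finite, i.e. that it contracts no curve: $\psi$ is already an isomorphism over $\BA^2\smallsetminus\{0\}$, so the only candidate contracted curves are the torus-invariant exceptional curves $D_1,\dots,D_r\subset X$ corresponding to $\rho_1,\dots,\rho_r$.

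The heart of the argument is therefore to rule out contraction of the $D_k$, and this is where the Newton-polygon description enters. Writing $Q_I$ for the region of \Cref{villareal}, I would use the identities $A_{IJ}=A_I+A_J$ and $Q_\bullet=\conv_\BQ(A_\bullet)$ of \Cref{rmk:normality-convexity} to obtain the Minkowski decomposition $Q_I=\sum_{k=1}^r Q_{\nn_{n_k,m_k}}$, in which each summand $Q_{\nn_{n_k,m_k}}=\conv_\BQ((n_k,0),(0,m_k))+\BQ^2_{\ge0}$ contributes a single bounded edge with primitive inner normal exactly $\rho_k=m_ke_1+n_ke_2$. Since $\gcd(m_k,n_k)=1$ the normals $\rho_1,\dots,\rho_r$ are pairwise distinct, so the bounded edges of $Q_I$ have normals precisely $\rho_1,\dots,\rho_r$; equivalently the inner normal fan of $Q_I$ is exactly $\Sigma_X$. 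Because the normalized blowup of $\BA^2$ along a monomial ideal is the toric variety of the normal fan of its Newton polyhedron (the content of \Cref{villareal}, via the description of $\overline{I^i}$ through $i\cdot Q_I$, together with the convex-geometry dictionary of \cite[\S\,2.2]{COX}), and because $I$ is normal so that $\Bl_I\BA^2$ equals its own normalization, the fan of $\Bl_I\BA^2$ is $\Sigma_X$. Consequently $\psi$ realizes the identity refinement $\Sigma_X\to\Sigma_X$, so it contracts no $D_k$ and is quasi-finite, hence finite, and \Cref{lemmabir+norm=iso} identifies it with the asserted canonical isomorphism.

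I expect the main obstacle to be precisely this last step: matching the normalized blowup of the monomial ideal $I$ with the normal fan of its Newton polyhedron. A more self-contained route, avoiding any external toric dictionary, is to argue numerically instead: the pullback divisor $\psi^\ast E_I=\sum_k a_k D_k$, with $a_k=\ord_{D_k}(\varepsilon^{-1}(I))>0$ equal to the value at $\rho_k$ of the support function of $Q_I$, satisfies $(\psi^\ast E_I)\cdot D_k\neq0$ because the Minkowski computation shows this support function is non-linear (has a genuine corner) across the ray $\rho_k$; were $D_k$ contracted, the projection formula would force $(\psi^\ast E_I)\cdot D_k=0$, a contradiction. Either way, the crux is the Minkowski-sum analysis of $Q_I$.
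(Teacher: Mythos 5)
Your proof is correct, but it takes a genuinely different route from the paper's. The paper deduces the theorem by induction from \Cref{LEMMATECH}: since $I=\prod_k\nn_{n_k,m_k}$, the blowup $\Bl_I\BA^2$ is an iterated blowup along the pullbacks of the $\nn_{n_k,m_k}$, and each stage is identified, by the chart computation underlying \Cref{missedlemma}, with the toric surface obtained by inserting the single ray $\rho_k$; after $r$ stages one obtains $X$. You instead build the comparison map $\psi\colon X\to\Bl_I\BA^2$ in one step from the universal property of the blowup (using \Cref{missedlemma} only to see that each $\varepsilon^{-1}(\nn_{n_k,m_k})\cdot\OO_X$ is invertible), note that the target is normal because a product of normal ideals is normal (\Cref{rmk:normality-convexity}), and reduce the theorem to showing that $\psi$ contracts none of the invariant curves $D_1,\ldots,D_r$ --- exactly the point where this theorem is harder than \Cref{missedlemma}, whose exceptional locus is irreducible so that quasi-finiteness is immediate. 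Your treatment of that point is sound: the Minkowski decomposition $Q_I=\sum_k Q_{\nn_{n_k,m_k}}$ (available from \Cref{rmk:normality-convexity}, which gives $Q_{IJ}=Q_I+Q_J$ for normal ideals) forces the support function of $Q_I$ to have a genuine corner at each $\rho_k$, since the corner contributed by $Q_{\nn_{n_k,m_k}}$ cannot be cancelled by the other (concave, piecewise linear) summands; hence $(\psi^{\ast}E_I)\cdot D_k\neq 0$, no $D_k$ is contracted, properness plus quasi-finiteness gives finiteness, and \Cref{lemmabir+norm=iso} concludes. Comparing the two: the paper's induction stays at the level of elementary toric chart computations and needs no intersection theory or normal fans, at the price of silently repeating the local analysis of \Cref{missedlemma} at each stage; your argument is global and isolates the conceptual content --- that $\Sigma_X$ is the inner normal fan of the Newton polyhedron $Q_I$, essentially what \Cref{cor:madame_bovary} records later --- but it leans on standard toric facts (support functions, degrees of Cartier divisors on compact invariant curves, the projection formula) that the paper never invokes. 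Of your two variants for the non-contraction step, the numerical one is preferable: the dictionary between normalized blowups of monomial ideals and normal fans of Newton polyhedra, while standard, is a heavier import than anything used in the paper, and once granted it yields the theorem almost immediately, rendering the rest of your argument redundant.
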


\begin{proof}
The statement follows by applying \Cref{missedlemma} and \Cref{LEMMATECH}.
\end{proof}

\begin{theorem}\label{Behrenormalisation} Let $\alpha,\beta>0$ be two positive integers. Then,
\[
\nu_{\BC[x,y]/\nn_{\alpha,\beta}}=\frac{\alpha\cdot\beta}{\gcd(\alpha,\beta)}.
\]
\end{theorem}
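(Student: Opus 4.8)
The plan is to reduce the statement to the coprime case and then read off the Behrend number directly from the toric model of the blowup provided by \Cref{missedlemma}. Write $g=\gcd(\alpha,\beta)$ and $\alpha=g\alpha'$, $\beta=g\beta'$ with $\gcd(\alpha',\beta')=1$. By \Cref{powernorm} one has $\nn_{\alpha,\beta}=\nn_{\alpha',\beta'}^{\,g}$, so \Cref{blowpowers} yields
\[
\nu_{\BC[x,y]/\nn_{\alpha,\beta}}=g\cdot \nu_{\BC[x,y]/\nn_{\alpha',\beta'}}.
\]
Since $\tfrac{\alpha\beta}{g}=g\alpha'\beta'$, it suffices to establish $\nu_{\BC[x,y]/\nn_{\alpha,\beta}}=\alpha\beta$ under the extra hypothesis $\gcd(\alpha,\beta)=1$, which I assume from now on.

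In the coprime case, \Cref{missedlemma} identifies $\Bl_{\nn_{\alpha,\beta}}\BA^2$ with the toric surface $X$ whose fan is generated by $\rho_0=e_1$, $\rho_1=\beta e_1+\alpha e_2$ (primitive, as $\gcd(\alpha,\beta)=1$) and $\rho_2=e_2$, the structure morphism $\varepsilon\colon X\to\BA^2$ being induced by the identity on the lattice. The preimage $\varepsilon^{-1}(0)$ is the single torus-invariant prime divisor $D=D_{\rho_1}$ attached to the unique interior ray, so the exceptional divisor $E=E_{\nn_{\alpha,\beta}}\BA^2$ has $D$ as its only irreducible component (in agreement with \Cref{thm:intro1241}, the factorisation of $\nn_{\alpha,\beta}$ consisting of the single factor $\nn_{\alpha,\beta}$). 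As $X$ is normal, being a toric variety, the local ring $\OO_{X,D}$ is a discrete valuation ring, whence $\mult_D E=\ord_D\!\left(\varepsilon^{-1}(\nn_{\alpha,\beta})\cdot\OO_X\right)$, and \Cref{lemma:multiplicities_blowup} gives $\nu_{\BC[x,y]/\nn_{\alpha,\beta}}=\mult_D E$.

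It then remains to compute this order. For a monomial one has $\ord_D(x^ay^b)=\langle (a,b),\rho_1\rangle=\beta a+\alpha b$, so the order of the ideal sheaf along $D$ is $\min\set{\beta a+\alpha b\,|\,x^ay^b\in\nn_{\alpha,\beta}}$. By \Cref{villareal}, the exponent vectors of the monomials in $\nn_{\alpha,\beta}$ are precisely the lattice points of $Q=\conv_{\BQ}((\alpha,0),(0,\beta))+\BQ^2_{\ge0}$. The linear functional $(a,b)\mapsto\beta a+\alpha b$ takes the value $\alpha\beta$ at both vertices $(\alpha,0)$ and $(0,\beta)$, and $Q$ lies in the half-plane $\set{\beta a+\alpha b\ge\alpha\beta}$ whose boundary line carries the slanted edge of $Q$; hence the minimum over $Q\cap\BZ^2$ equals $\alpha\beta$, attained at these integral vertices. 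Therefore $\mult_D E=\alpha\beta$, which settles the coprime case and, combined with the reduction above, proves the theorem.

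The only genuinely delicate point is that this order of vanishing must be read on the \emph{normal} model: it is exactly the normality of $\nn_{\alpha,\beta}$ (equivalently, of $X=\Bl_{\nn_{\alpha,\beta}}\BA^2$) that makes $\OO_{X,D}$ a discrete valuation ring and lets one identify $\mult_D E$ with the naive minimum $\min\set{\beta a+\alpha b}$, without the correction term of \Cref{fulton_example} that a non-normal blowup would force one to include. Once normality is in hand, the computation is purely combinatorial, and the key observation is simply that the slanted edge of $Q$ lies on the line $\set{\beta a+\alpha b=\alpha\beta}$.
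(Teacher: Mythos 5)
Your proposal is correct, and its handling of the coprime case is genuinely different from the paper's. The paper, after the same reduction via \Cref{powernorm} and \Cref{blowpowers}, does \emph{not} compute on $\Bl_{\nn_{\alpha,\beta}}\BA^2$ itself: it uses Euclid's algorithm to pick $h,k$ with $k\beta-h\alpha=1$, passes to the auxiliary blowup $\Bl_{\nn_{\alpha,\beta}\cdot\nn_{k,h}}\BA^2$ (whose fan refines that of \Cref{missedlemma} by the ray $he_1+ke_2$), finds there a \emph{smooth} affine chart $U\cong\Spec\BC[s,t]$ on which the blowup map is $(s,t)\mapsto(s^ht^\beta,s^kt^\alpha)$, computes the multiplicity of the pulled-back ideal along the strict transform $C=V(t)$ by hand, and then transports this number down via the contraction $\lambda\colon \Bl_{\nn_{\alpha,\beta}\nn_{k,h}}\BA^2\to\Bl_{\nn_{\alpha,\beta}}\BA^2$, which is an isomorphism near the generic point of $C$. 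You instead stay on the normal surface $X=\Bl_{\nn_{\alpha,\beta}}\BA^2$, observe that normality makes $\OO_{X,D}$ a discrete valuation ring at the unique exceptional component $D=D_{\rho_1}$, and evaluate $\mult_D E$ as $\min\Set{\beta a+\alpha b | x^ay^b\in\nn_{\alpha,\beta}}$ via the standard toric formula $\ord_{D_{\rho_1}}(x^ay^b)=\langle (a,b),\rho_1\rangle$ together with \Cref{villareal}; the answer $\alpha\beta$ then falls out of the fact that the slanted edge of $Q$ lies on the line $\beta a+\alpha b=\alpha\beta$. Your route is shorter and more conceptual: no auxiliary model, no contraction argument, and the Behrend number is exhibited as the value of the support function of the Newton polyhedron at the primitive normal of its bounded edge (and, as you note, it would even give the general case directly, since for $\gcd(\alpha,\beta)=\delta$ the primitive generator of the ray is $(\beta/\delta,\alpha/\delta)$). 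What the paper's more hands-on construction buys is an explicit smooth chart with coordinates, a device it reuses elsewhere (e.g.\ in the degree computations of \Cref{corHOPE}), whereas your argument leans on the toric valuation formula, which the paper never invokes explicitly but which is standard and covered by its blanket reference to \cite{COX}.
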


\begin{proof}
Suppose first that $\gcd(\alpha,\beta)=1$. Then, Euclid's algorithm provides two positive integers $h,k\in \BN$ such that
$$ k\beta-h\alpha=1.$$
Let $J$ be the ideal $$J=\nn_{\alpha,\beta}\cdot\nn_{k,h}.$$
\Cref{toricnormal} implies that there exists an open affine subset $U\subset \Bl_J\BA^2$, isomorphic to $\BA^2$, such that 
\[
U \cong
\begin{cases}
\Spec(\BC[x^{-\alpha}y^\beta,x^ky^{-h}]) & 
\textrm{if }\frac{\alpha}{\beta}<\frac{k}{h} \\
\Spec(\BC[x^{\alpha}y^{-\beta},x^{-k}y^{h}]) & 
\textrm{if }\frac{\alpha}{\beta}>\frac{k}{h}.
\end{cases}
\]
We put $\alpha /\beta>k/h$, the other case being identical. If we denote by $\varepsilon\colon \Bl_J\BA^2\rightarrow \BA^2$ the blowup map, and by $s=x^{\alpha}y^{-\beta},t=x^{-k}y^{h}$ the affine coordinates on $U$, then the restriction of the blowup map to $U$ is given by
\[
\begin{tikzcd}[row sep=tiny]
U \arrow{r}{\varepsilon|_U} & \BA^2 \\
(s,t)\arrow[mapsto]{r} & (s^ht^\beta, s^kt^\alpha).
\end{tikzcd}
\]
By construction, the intersection $\exc(U)=\exc(\varepsilon)\cap U$ consists of the two coordinate axes of $U$. In particular, given the natural map $\lambda\colon \Bl_J\BA^2\rightarrow \Bl_{\nn_{\alpha,\beta}}\BA^2$, the strict transform of $\exc(\Bl_{\nn_{\alpha,\beta}}\BA^2)$ via $\lambda|_{U}$ is the irreducible component of  $\exc(U)$ given by $C=V(t)$. This implies that 
\[
\nu_{\BC[x,y]/\mathfrak  n_{\alpha,\beta}}=\mult_C\left(\varepsilon|_{U}^{-1}(\nn_{\alpha,\beta}\right)\cdot \OO_U)=\alpha\cdot\beta,
\]
where, the first equality follows from the fact that $\lambda$ is an isomorphism away from its exceptional locus $\exc(\lambda)$ and the second follows from an easy computation.

Suppose now that $\gcd(\alpha,\beta)=\delta>1$. Then, by \Cref{powernorm,blowpowers}, we have
\[
\nu_{\BC[x,y]/\nn_{\alpha,\beta}}=\nu_{\BC[x,y]/\nn_{\alpha',\beta'}^\delta}=\delta\cdot \nu_{\BC[x,y]/\nn_{\alpha',\beta'}} = \delta \cdot \alpha'\cdot \beta',
\]
where $\alpha'=\alpha/\delta$ and $\beta'=\beta/\delta$, i.e.
\[
\nu_{\BC[x,y]/\nn_{\alpha,\beta}}=\frac{\alpha\cdot\beta}{\delta},
\]
as required.
\end{proof}

\begin{remark}
Exploiting toric geometry techniques and the isomorphism of \Cref{toricnormal}, one can generalise the computation of $\nu_{\BC[x,y]/\mathfrak n_{\alpha,\beta}}$ to an arbitrary normal monomial ideal, along the lines of the example fully worked out in \Cref{example missed}.
\end{remark}

\begin{prop}\label{conjecture}
Let $I$ be the ideal generated by the monomials
\[
x^{a_0},x^{a_1}y^{b_{n-1}},\ldots,x^{a_i}y^{b_{n-i}},\ldots,x^{a_{n-1}}y^{b_{1}},y^{b_0},
\]
where  $a_i> a_{i+1},b_i>b_{i+1}$ and we also put $a_n=b_n=0$. Suppose that $I$ is normal.

Let $0 = i_0 < \cdots < i_t= n$ be the strictly increasing sequence of positive integers such that 
\[
v_k=(a_{i_k},b_{n-i_k}), \mbox{ for }k=0,\ldots,t,
\]
are the vertices of $\partial Q_I$ (see \cref{conseguenzeVillareal}); then
\begin{equation}\label{ideal:product_normalisations}
I=\underset{k=1}{\overset{t}{\prod}}\nn_{ a_{i_{k-1}}-a_{i_{k}},b_{n-i_k}-b_{n-i_{k-1}}}.
\end{equation}
\end{prop}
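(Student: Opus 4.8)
The plan is to introduce the candidate ideal
\[
J=\prod_{k=1}^t \nn_{\alpha_k,\beta_k},\qquad \alpha_k=a_{i_{k-1}}-a_{i_k},\quad \beta_k=b_{n-i_k}-b_{n-i_{k-1}},
\]
and to prove the equality of ideals $I=J$. Note first that $\alpha_k,\beta_k>0$, because $(a_i)$ and $(b_i)$ are strictly decreasing while $i_{k-1}<i_k$. Each factor $\nn_{\alpha_k,\beta_k}=\overline{(x^{\alpha_k},y^{\beta_k})}$ is normal, since by \Cref{villareal} its exponent set equals $Q\cap\BN^2$; hence $J$ is normal by \Cref{rmk:normality-convexity}. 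Thus $I$ and $J$ are both normal monomial ideals, and for such ideals the exponent set is recovered as $A=Q\cap\BN^2$. It therefore suffices to prove the equality of convex regions $Q_I=Q_J$, as this forces $A_I=A_J$ and hence $I=J$. (One should resist trying to argue via the toric blowup and \Cref{toricnormal}: that recovers only the fan of $\Bl_I\BA^2$, i.e.\ the primitive edge-normals, and so loses the edge lengths — equivalently, the blowup does not determine the ideal. The direct comparison of $Q$-regions is what keeps the multiplicities.)

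To compute $Q_J$ I would invoke the Minkowski-sum identity $Q_{MN}=Q_M+Q_N$ established in the course of \Cref{rmk:normality-convexity}; inducting on $t$ gives
\[
Q_J=\sum_{k=1}^t Q_{\nn_{\alpha_k,\beta_k}},\qquad Q_{\nn_{\alpha_k,\beta_k}}=\conv_\BQ\big((\alpha_k,0),(0,\beta_k)\big)+\BQ^2_{\ge 0}.
\]
Each summand is the corner region whose unique bounded boundary edge runs from $(\alpha_k,0)$ to $(0,\beta_k)$ — direction vector $(-\alpha_k,\beta_k)$, slope $-\beta_k/\alpha_k$ — and whose recession cone is $\BQ^2_{\ge 0}$. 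In the Minkowski sum the recession cone is again $\BQ^2_{\ge 0}$, while the lower-left boundary is the concatenation of the bounded edges of the summands arranged in order of decreasing slope. By \Cref{conseguenzeVillareal} these slopes are exactly the numbers $m_k$, which are strictly decreasing and in particular pairwise distinct; hence no two edges are parallel, none merge, and the concatenation consists of precisely $t$ edges with direction vectors $(-\alpha_1,\beta_1),\dots,(-\alpha_t,\beta_t)$, in this order.

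Finally I would pin down the vertices of $Q_J$ by telescoping. The bottom-right vertex is the sum of the summands' bottom-right vertices $(\alpha_k,0)$, namely $\big(\sum_k\alpha_k,0\big)=(a_0,0)=v_0$, since $\sum_k\alpha_k=a_{i_0}-a_{i_t}=a_0-a_n=a_0$. Walking up the boundary,
\[
v_0+\sum_{j=1}^k(-\alpha_j,\beta_j)=\Big(a_0-\sum_{j=1}^k\alpha_j,\ \sum_{j=1}^k\beta_j\Big)=(a_{i_k},b_{n-i_k})=v_k,
\]
so the vertices of $Q_J$ are exactly $v_0,\dots,v_t$, the vertices of $\partial Q_I$ of \Cref{conseguenzeVillareal}. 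Consequently $Q_J=\conv_\BQ(v_0,\dots,v_t)+\BQ^2_{\ge 0}=Q_I$, which completes the argument. The genuinely delicate step is the Minkowski-sum computation of the second paragraph: one must verify that the slope ordering from \Cref{conseguenzeVillareal} makes the summand edges concatenate into $\partial Q_I$ with no merging, after which the telescoping identities for $\sum\alpha_j$ and $\sum\beta_j$ fix the vertices exactly.
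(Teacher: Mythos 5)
Your proposal is correct, and it shares the paper's skeleton: introduce $J=\prod_{k}\nn_{\alpha_k,\beta_k}$, use normality of $I$ and $J$ to reduce the ideal-theoretic equality $I=J$ to the equality of regions $Q_I=Q_J$, and exploit the Minkowski-sum description of $Q_J$ together with the telescoping identities $\sum_{j\le k}(-\alpha_j,\beta_j)=v_k-v_0$. Where you genuinely diverge is the mechanism for concluding $Q_I=Q_J$. You compute the boundary of $\sum_k Q_{\nn_{\alpha_k,\beta_k}}$ explicitly, invoking the structure theorem that bounded edges of a Minkowski sum of planar convex regions with recession cone $\BQ^2_{\ge0}$ concatenate in slope order, with no merging since the slopes $m_k$ are pairwise distinct by \Cref{conseguenzeVillareal}; this is exactly the step you flag as delicate, and it is the one place your argument rests on a standard convexity fact proved neither by you nor in the paper. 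The paper's proof avoids it entirely by a double inclusion: it writes $Q_J=\conv_{\BQ}(A)+\BQ^2_{\ge0}$, where $A$ is the set of all subset sums $(a_0,0)+\sum_{j\in\Delta}(v_j-v_{j-1})$ over $\Delta\subset\{1,\ldots,t\}$ (the vertex sums of the Minkowski sum of the segments), then observes that $Q_I\subset Q_J$ because each vertex $v_k$ is the subset sum for $\Delta=\{1,\ldots,k\}$, while $Q_J\subset Q_I$ because every element of $A$ lies in $Q_I$ by convexity. So the paper trades your boundary-structure theorem for an elementary containment argument needing nothing beyond convexity of $Q_I$; your version, in exchange, pins down $\partial Q_J$ edge by edge and makes the geometric picture more explicit, but a fully self-contained write-up would have to prove the slope-ordered concatenation claim, which the paper's route shows is avoidable.
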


\begin{proof}
Let us set 
\[
J=\underset{k=1}{\overset{t}{\prod}}\nn_{ a_{i_{k-1}}-a_{i_{k}},b_{n-i_k}-b_{n-i_{k-1}} },
\]
and let $Q_I,Q_J\subset \BQ^2$ be defined as in \Cref{villareal}. Then, the blowup $\Bl_J\BA^2$ is a normal surface, as per \Cref{rmk:normality-convexity}, and therefore the claim is equivalent to the equality
\[
Q_I=Q_J.
\]
Since, in general, we have $Q_{\nn_{\alpha,\beta}}=\conv_\BQ((\alpha,0),(0,\beta)) +\BQ^2_{\ge 0}$, we also have $Q_J=\conv_{\BQ}(A) +\BQ^2_{\ge0}$ where
\[
A=
\Set{
(a_0,0)+\underset{j\in\Delta}{\sum}[(a_{i_{j}},b_{n-i_j})-(a_{i_{j-1}},b_{n-i_{j-1}}) ] | \Delta\subset \{1,\ldots,t\}
    }.
\]
Notice that $Q_I\subset Q_J$ because $v_0\in A$ and
\[
v_k =  (a_0,0)+\sum_{j=1}^k\,\left[(a_{i_{j}},b_{n-i_j})-(a_{i_{j-1}},b_{n-i_{j-1}})\right]\in A
\]
for all $k=1,\ldots,t$.
On the other hand, the inclusion $A\subset Q_I$ is an easy consequence of the convexity of $Q_I$ and it implies $Q_J\subset Q_I$.
\end{proof}
\begin{example}
Let $I=(x^6,x^4y,x^2y^2,xy^3,y^5)$ be the same ideal as in \Cref{ex:villa}. Then, $I$ is normal and it factors as
\[
I=\nn_{1,2}\cdot\nn_{1,1}\cdot\nn_{2,1}^2.
\]
\end{example}

\begin{remark}
Thanks to the celebrated Pick's theorem on lattice polygons, we can compute the Behrend number of the ideals of the form $\nn_{\alpha,\beta}$ as well as the length of normal ideals $I$ given as in \Cref{ideal:product_normalisations}. In particular, we have
\[
\ell_{\BC[x,y]/\nn_{\alpha,\beta}}
=\frac{\alpha\beta+\alpha+\beta-\gcd(\alpha,\beta)}{2}
\]
and, for $I$  as in \Cref{ideal:product_normalisations},
\[
\ell_{\BC[x,y]/I}
=\frac{a_0+b_0+\displaystyle\sum_{k=1}^t \left[  \det\begin{pmatrix}a_{i_{j-1}}&b_{n-i_{j-1}}\\a_{i_{j}}&b_{n-i_{j}}
\end{pmatrix}-\gcd(a_{i_{j-i}}-a_{i_j},b_{n-i_j}-b_{n-i_{j-1}})\right]}{2}.
\]
\end{remark}

\begin{corollary}\label{cor:madame_bovary}
Let $I\subset\BC[x,y]$ be the ideal \eqref{ideal:product_normalisations} appearing in \Cref{conjecture}. Then $\Bl_I\BA^2$ is canonically isomorphic, as an $\BA^2$-scheme, to the toric surface whose fan is generated by the primitive vectors
$$\rho_0,\ldots,\rho_{t+1}$$
defined by $\rho_0=e_1$, $\rho_{t+1}=e_2$ and 
\[
\rho_k=\beta_k\cdot e_1+ \alpha_k \cdot e_2  \mbox{ for }k=1,\ldots,t
\]
where, if $\delta_k=\gcd(a_{i_{k-1}}-a_{i_{k}},b_{n-i_k}-b_{n-i_{k-1}})$, then
\[
\alpha_k=\frac{a_{i_{k-1}}-a_{i_{k}}}{\delta_k}  \mbox{ and } \beta_k=\frac{b_{n-i_k}-b_{n-i_{k-1}}}{\delta_k}.
\]

In particular, there is a bijective correspondence 
\[
\Set{
\Sigma \mbox{ fan in }N\otimes_{\BZ}\BR\cong\BR^2 | \begin{array}{c}
 N\cong\BZ^2,\\
  \Supp(\Sigma )=\BR^2_{\ge0} 
\end{array}
    }
\xleftrightarrow{1:1}
\Set{
I=\prod_{k=1}^t\nn_{\alpha_k,\beta_k} | 
\begin{array}{c}
 (\alpha_i,\beta_i)\not=(\alpha_j,\beta_j) \mbox{ for }i\not=j,\\
  \gcd(\alpha_i,\beta_i)=1 
\end{array}
    }.
\]
\end{corollary}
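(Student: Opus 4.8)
The plan is to derive both assertions from \Cref{conjecture} and \Cref{toricnormal}, the only genuine issue being that the lattice lengths of the bounded edges of $Q_I$ do not affect the blowup.

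First I would unwind the factorisation. By \Cref{conjecture} one has $I=\prod_{k=1}^t\nn_{a_{i_{k-1}}-a_{i_k},\,b_{n-i_k}-b_{n-i_{k-1}}}$, and \Cref{powernorm} rewrites each factor as $\nn_{\alpha_k,\beta_k}^{\delta_k}$ with $\gcd(\alpha_k,\beta_k)=1$, so that $I=\prod_{k=1}^t\nn_{\alpha_k,\beta_k}^{\delta_k}$. A direct computation identifies $\rho_k=\beta_ke_1+\alpha_ke_2$ with the primitive inner normal to the bounded edge of $\partial Q_I$ joining $v_{k-1}$ to $v_k$. By \Cref{conseguenzeVillareal} these edges have pairwise distinct slopes, increasing from the horizontal to the vertical axis, so the $\rho_k$ are distinct and, together with $\rho_0=e_1$ and $\rho_{t+1}=e_2$, their adjacent cones subdivide the first quadrant; hence they generate a fan $\Sigma$ with $\Supp(\Sigma)=\BR^2_{\ge 0}$.

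The key reduction is to pass from $I$ to the square-free product $I'=\prod_{k=1}^t\nn_{\alpha_k,\beta_k}$. Applying \Cref{toricnormal} to $I'$ gives $\Bl_{I'}\BA^2\cong X$, the toric surface attached to $\Sigma$. Writing $I=I'\cdot J$ with $J=\prod_k\nn_{\alpha_k,\beta_k}^{\delta_k-1}$, \Cref{LEMMATECH} identifies $\Bl_I\BA^2$ with the blowup of $X$ along $\varepsilon_{I'}^{-1}(J)\cdot\OO_X$. Since $\Sigma$ refines the fan of each $\Bl_{\nn_{\alpha_k,\beta_k}}\BA^2$ from \Cref{missedlemma}, there is a toric morphism $X\to\Bl_{\nn_{\alpha_k,\beta_k}}\BA^2$ along which the invertible ideal sheaf $\nn_{\alpha_k,\beta_k}\cdot\OO$ pulls back to an invertible sheaf; therefore $\varepsilon_{I'}^{-1}(J)\cdot\OO_X$ is an invertible (Cartier) ideal sheaf, and blowing it up is an isomorphism. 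This yields $\Bl_I\BA^2\cong\Bl_{I'}\BA^2\cong X$, which is the first assertion.

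It remains to check that the two assignments in the displayed correspondence are mutually inverse. The map $\Sigma\mapsto\prod_k\nn_{n_k,m_k}$ of \Cref{toricnormal} and the map $I\mapsto\Sigma$ just produced are inverse to one another: a two-dimensional fan with support $\BR^2_{\ge 0}$ is determined by its set of rays, so passing from $\Sigma$ to the associated ideal and reading off the primitive normals returns $\Sigma$; conversely, distinct coprime pairs $(\alpha_k,\beta_k)$ give distinct primitive rays $\rho_k$, and the bounded edges of the Minkowski sum $Q_{I}=\sum_kQ_{\nn_{\alpha_k,\beta_k}}$ (see \Cref{rmk:normality-convexity}) are exactly the segments of lattice length one with primitive normals $\rho_k$, so reading them off reconstructs the factors and hence the product. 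I expect the reduction step to be the main obstacle: one must argue with care that the non-primitive powers $\delta_k$ contribute only Cartier divisors on $X$ and so leave the blowup unchanged, for otherwise the factorisation of \Cref{conjecture} and the fan of \Cref{toricnormal} would not match on the nose.
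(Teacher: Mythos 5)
Your proof is correct and takes essentially the same route the paper intends: the paper states this corollary without a separate argument, as the direct combination of \Cref{conjecture}, \Cref{powernorm}, \Cref{missedlemma}/\Cref{toricnormal} and \Cref{LEMMATECH}, which is exactly the toolkit you assemble. Your explicit check that the exponents $\delta_k$ only produce invertible (Cartier) pullbacks on the toric surface $X$ — so that the blowup is unchanged and the fan depends only on the primitive normals $\rho_k$ — is precisely the detail the paper leaves implicit, and your verification of it is sound.
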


We note that \Cref{conjecture} can be interpreted as a factorisation statement, as follows.

\begin{corollary}\label{normal-factorisation}
Let $\mathfrak{N}$ be the set of normal monomial ideals in $\BC[x,y]$. Then, every $I \in \mathfrak{N}$ factors as a product of ideals in $\mathfrak{N}$,
\begin{equation}\label{eqn:factorisation3333}
I=\prod_{k=1}^t \nn_{\alpha_k,\beta_k}^{\delta_k},
\end{equation}
where $\delta_k\ge1$ and $\gcd(\alpha_k,\beta_k)=1$ for $k=1,\ldots,t$. Such factorisation is unique up to reordering the factors.
\end{corollary}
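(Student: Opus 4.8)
The plan is to read off both existence and uniqueness from the geometry of the Newton polygon $Q_I$, using \Cref{conjecture}, \Cref{powernorm} and the behaviour of $Q$ under products recorded in \Cref{rmk:normality-convexity}.

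For existence, I would first write $I$ in terms of its minimal monomial generators as in \eqref{eqn:monomial_cazzi}. Since $I$ is normal, \Cref{conjecture} applies verbatim and produces a factorisation $I=\prod_{k=1}^{t}\nn_{A_k,B_k}$, where $A_k=a_{i_{k-1}}-a_{i_{k}}$ and $B_k=b_{n-i_k}-b_{n-i_{k-1}}$ are the coordinate differences of consecutive vertices of $\partial Q_I$. Setting $\delta_k=\gcd(A_k,B_k)$ and $(\alpha_k,\beta_k)=(A_k/\delta_k,B_k/\delta_k)$, \Cref{powernorm} rewrites each factor as $\nn_{A_k,B_k}=\nn_{\alpha_k,\beta_k}^{\delta_k}$ with $\gcd(\alpha_k,\beta_k)=1$, which is exactly \eqref{eqn:factorisation3333}. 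Moreover the primitive vectors $(\alpha_k,\beta_k)$ are pairwise distinct: by \Cref{conseguenzeVillareal} the bounded edges $s_1,\dots,s_t$ of $\partial Q_I$ have strictly decreasing slopes, and the segment contributed by $\nn_{\alpha_k,\beta_k}^{\delta_k}$ has slope $-\beta_k/\alpha_k$, so distinct edges give distinct coprime pairs.

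For uniqueness I would pass through the map $I\mapsto Q_I$. Recall from \Cref{rmk:normality-convexity} that for a normal monomial ideal $A_I=Q_I\cap\BN^2$, so this assignment is injective on $\mathfrak{N}$; the same remark (together with $Q_I=\conv_{\BQ}(A_I)$) gives $Q_{IJ}=Q_I+Q_J$, the Minkowski sum, for $I,J\in\mathfrak{N}$. Now, for coprime $(\alpha,\beta)$ the region $Q_{\nn_{\alpha,\beta}}=\conv_{\BQ}((\alpha,0),(0,\beta))+\BQ^2_{\ge0}$ has a single bounded boundary edge, the segment from $(\alpha,0)$ to $(0,\beta)$, of primitive direction $(-\alpha,\beta)$ and lattice length $1$; by \Cref{powernorm} the polygon $Q_{\nn_{\alpha,\beta}^{\delta}}=Q_{\nn_{\delta\alpha,\delta\beta}}$ has the same edge direction and lattice length $\delta$. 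Given any factorisation of the form \eqref{eqn:factorisation3333} with distinct coprime bases, I would then compute $Q_I=\sum_{k=1}^{t}Q_{\nn_{\alpha_k,\beta_k}^{\delta_k}}$; because the $(\alpha_k,\beta_k)$ are distinct coprime pairs their edges have pairwise distinct slopes, so no two of them merge in the Minkowski sum. Hence the bounded part of $\partial Q_I$ has exactly $t$ edges, the $k$-th with primitive direction $(-\alpha_k,\beta_k)$ and lattice length $\delta_k$. This edge data is intrinsic to $Q_I$, hence (by injectivity of $I\mapsto Q_I$) to $I$, so the multiset $\set{(\alpha_k,\beta_k,\delta_k)\,|\,1\le k\le t}$ is determined by $I$, giving uniqueness up to reordering.

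The only genuinely delicate point is the Minkowski-sum bookkeeping in the uniqueness step, namely that bounded edges with pairwise distinct outer normals never coalesce, so that the edges of the sum are in bijection with the chosen atoms and carry exactly the factor data. This is standard convex geometry and is in effect already encoded in the description of $\partial Q_I$ in \Cref{conseguenzeVillareal}, so I expect no real obstruction beyond stating it carefully; everything else is a direct assembly of \Cref{conjecture}, \Cref{powernorm} and \Cref{rmk:normality-convexity}.
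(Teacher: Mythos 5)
Your proposal is correct and follows essentially the same route as the paper: existence is obtained exactly as there, by combining \Cref{conjecture} with \Cref{powernorm} to pass from the edge vectors of $\partial Q_I$ to coprime pairs, and uniqueness rests on the fact that the edge directions and lattice lengths of $\partial Q_I$ are intrinsic to $I$ (via $A_I = Q_I\cap\BN^2$ from \Cref{rmk:normality-convexity}). The only difference is that you spell out the Minkowski-sum bookkeeping behind the uniqueness claim, which the paper leaves implicit when it presents the corollary as a reinterpretation of \Cref{conjecture}; your elaboration is accurate and fills that gap correctly.
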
 
 
A similar property cannot be expected to hold on a larger class of ideals than $\FN$. For instance, as mentioned in \Cref{rmk:normality-convexity}, if we drop the normality assumption we have, for the same ideal, two factorisations $\mm^3=\mm\cdot(x^2,y^2)$, where $(x^2,y^2)$ is not normal.

Combining \Cref{powernorm,blowpowers,toricnormal} with one another, we also obtain the following correspondence, announced as \Cref{thm:intro1241} in the introduction.

\begin{theorem}\label{bijection-irr-cpt-factorisation}
Let $I \subset \BC[x,y]$ be a normal monomial ideal of finite colength. There is a bijective correspondence 
\[
\Set{
\begin{array}{c}
 \mbox{ideals }\nn_{\alpha,\beta}^\delta \mbox{ appearing in the}\\
  \mbox{factorisation \eqref{eqn:factorisation3333} of $I$}
\end{array}
}\xleftrightarrow{1:1}\left\{
\begin{array}{c}
\mbox{irreducible}\\
  \mbox{components of }E_I\BA^2
\end{array}\right\}.
\]
In particular, if $J \subset \BC[x,y]$ is an arbitrary monomial ideal and $I = \overline J$ is its normalisation, then $E_J\BA^2$ has at most $t$ irreducible components, where $t$ is as in \Cref{eqn:factorisation}.
\end{theorem}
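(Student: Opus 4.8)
The plan is to extract both sides of the asserted bijection from a single combinatorial object: the fan of the toric surface $\Bl_I\BA^2$ supplied by \Cref{cor:madame_bovary}. First I would invoke \Cref{normal-factorisation} to write $I = \prod_{k=1}^t \nn_{\alpha_k,\beta_k}^{\delta_k}$ with $\delta_k \geq 1$ and pairwise distinct coprime pairs $(\alpha_k,\beta_k)$; by \Cref{powernorm} each factor may equally be written $\nn_{\delta_k\alpha_k,\delta_k\beta_k}$, so $I$ has exactly the shape of ideal treated in \Cref{conjecture} and \Cref{cor:madame_bovary}. The latter then identifies $\Bl_I\BA^2$, canonically over $\BA^2$, with the toric surface whose fan $\Sigma$ covers $\BR^2_{\geq0}$ and has rays $\rho_0 = e_1$, $\rho_{t+1}=e_2$, together with one interior ray $\rho_k = \beta_k e_1 + \alpha_k e_2$ for each factor $\nn_{\alpha_k,\beta_k}^{\delta_k}$. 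Observe that, by \Cref{blowpowers}, the exponents $\delta_k$ are invisible to the blowup, so they play no role in the count of components: only the $t$ primitive directions matter.

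Next I would locate $E_I\BA^2$ inside this toric model. The blowup map $\varepsilon\colon \Bl_I\BA^2 \to \BA^2$ is the toric morphism induced by the identity of the lattice, and $\BA^2$ is the toric variety of the single cone $\Braket{e_1,e_2}$. The two boundary rays $\rho_0,\rho_{t+1}$ are rays of this cone, so the associated invariant divisors are the strict transforms of the coordinate axes and are not contracted; by contrast, each interior ray $\rho_k$ lies in the relative interior of $\Braket{e_1,e_2}$, so its orbit closure $D_{\rho_k}$ is contracted to the torus-fixed point, i.e.~to the origin supporting $V(I)$. Hence $\exc(\varepsilon) = D_{\rho_1}\cup\cdots\cup D_{\rho_t}$, and these $t$ torus-invariant prime divisors are exactly the irreducible components of $E_I\BA^2$ (multiplicities notwithstanding). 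Assigning to the factor $\nn_{\alpha_k,\beta_k}^{\delta_k}$ the component $D_{\rho_k}$ then yields the bijection, since both families are indexed by the set of interior rays of $\Sigma$.

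For the concluding assertion about an arbitrary monomial ideal $J$ with $I = \overline{J}$, I would use that $\Bl_I\BA^2$ is the normalisation of $\Bl_J\BA^2$, as recalled in \Cref{sec:normal-blowups}. Calling $\mu\colon \Bl_I\BA^2 \to \Bl_J\BA^2$ the resulting finite, surjective, birational morphism (\Cref{prop:normalisation}), one has $\varepsilon_J\circ\mu = \varepsilon_I$, so $\mu$ carries $\exc(\varepsilon_I)$ onto $\exc(\varepsilon_J)$; being finite and surjective, it sends the $t$ components $D_{\rho_k}$ onto irreducible subsets whose one-dimensional images exhaust the components of $E_J\BA^2$. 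Therefore $E_J\BA^2$ has at most as many irreducible components as $E_I\BA^2$, namely at most $t$.

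The step I expect to be the main obstacle is the precise identification, within $\Sigma$, of the invariant divisors constituting the exceptional locus: one must argue cleanly, via the orbit–cone correspondence for the toric morphism $\varepsilon$, that the interior rays are the contracted ones while the boundary rays $e_1,e_2$ give the non-contracted strict transforms of the axes. Once this is in place, matching the $t$ factors to the $t$ exceptional components, and deducing the bound for $E_J\BA^2$, is formal.
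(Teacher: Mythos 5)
Your proof is correct and follows essentially the same route as the paper: the paper's own (one-line) proof combines \Cref{powernorm}, \Cref{blowpowers} and \Cref{toricnormal} (through \Cref{conjecture} and \Cref{cor:madame_bovary}) exactly as you do, matching factors of the factorisation with interior rays of the fan and hence with the contracted invariant divisors, and your write-up merely makes explicit the orbit--cone argument and the finiteness/surjectivity of the normalisation map for the ``in particular'' clause, which the paper leaves implicit. The only cosmetic point is that the irrelevance of the exponents $\delta_k$ is more precisely attributed to \Cref{powernorm} together with the primitivity of the rays in \Cref{cor:madame_bovary} than to \Cref{blowpowers} (which concerns a single power $I^d$ rather than distinct powers on distinct factors), but this does not affect the argument.
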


\section{The Behrend function of a fat point via normalisation}\label{sec:monomial}
In this section we will prove \Cref{thm:intro3} (\Cref{thm:formula:monomial} below).

Let $I \subset \BC[x,y]$ be a monomial ideal of finite colength. When $\Bl_I\BA^2$ is not normal, the computation of the Behrend number of $I$ poses some difficulties, but the main result in this section resolves them explicitly. More precisely, we shall prove a general formula  for the Behrend number
\begin{equation}\label{eqn:b_monomial_general}
    \nu_{\BC[x_1,\ldots,x_N]/I} = \sum_{D \subset E_I\BA^N} \mult_D \left(E_I\BA^2\right)
\end{equation}
of an arbitrary fat point $I \subset A= \BC[x_1,\ldots,x_N]$ supported at $0 \in \BA^N$. Such formula involves algebraic data defined through the normalisation morphism
\[
\mu_I \colon Z_I \to \Bl_I\BA^N.
\]
We note here that, when $I$ is monomial, the normalisation $Z_I$ is explicit, being equal to $\Proj \overline{A[It]}$ (cf.~\Cref{sec:normal-blowups}) and one has $Z_I=\Bl_{\overline I}\BA^2$ when $N=2$ (and $I$ is monomial), where $\overline I$ is defined in \Cref{eqn:I_bar}.

\subsection{The key example}
We present in this subsection the key example (with $N=2$) of the more general formula that will be proven just afterwards (\Cref{thm:formula:monomial}).

\begin{example}\label{rectangle}
Let $k\ge 2$ be an integer. Then, the ideal $I=(x^k,y^k) \subset \BC[x,y]$ satisfies
\[
\nu_{\BC[x,y]/I}=\ell_{\BC[x,y]/I}=k^2,
\]
 by \Cref{ex:lci}. As explained in \Cref{blowuplci}, the blowup $\Bl_{I}{\BA^2}$ identifies with the $\BA^2$-surface $V(vx^k-uy^k)\subset\BA^2\times\BP^1$. As a consequence, $\Bl_{I}{\BA^2}$ is singular in codimension 1 and hence it is not normal. Then, as observed in \cref{esnonnorm}, \Cref{villareal} implies that there is a canonical isomorphism of $\BA^2$-schemes between $\Bl_{\mm}\BA^2$ and the normalisation of $\Bl_I\BA^2$. Under this identification, the normalisation map $\mu_I \colon \Bl_{\mm}\BA^2 \to \Bl_I\BA^2$ can be realised as the restriction of the morphism
\[
\begin{tikzcd}[row sep=tiny]
\BA^2\times\BP^1 \arrow{r} & \BA^2\times\BP^1 \\
((x,y),[u:v])\arrow[mapsto]{r} & ((x,y),[u^k:v^k])
\end{tikzcd}
\]
to the subscheme $\Bl_{\mm}\BA^2 \subset \BA^2\times\BP^1$.
The exceptional locus $D=\exc(\Bl_{I}\BA^2)$ is irreducible and satisfies 
\[
\deg\left(\exc(\Bl_{\mm}\BA^2) \xrightarrow{\mu_I} D \right) = k,
\]
for such map agrees with the map $\BP^1 \to \BP^1$ sending $[u:v] \mapsto [u^k:v^k]$.
Notice that, if $\varepsilon\colon \Bl_\mm\BA^2\rightarrow \BA^2$ is the blowup map and $Y_I \subset \Bl_{\mm}\BA^2$ is the subscheme defined by the ideal sheaf $\varepsilon^{-1}(I)\cdot \OO_{\Bl_{\mm}\BA^2} \subset \OO_{\Bl_{\mm}\BA^2}$, then
\[
\mult_{\exc(\Bl_{\mm}\BA^2)} (Y_I)=k,
\]
hence $\nu_{\BC[x,y]/I} = k^2$ is also obtained as
\[
k^2 = \deg\left(\exc(\Bl_{\mm}\BA^2) \xrightarrow{\mu_I} D \right)\cdot \mult_{\exc(\Bl_{\mm}\BA^2)} (Y_I).
\]
\end{example}

\subsection{The general formula}
Let $I \subset A=\BC[x_1,\ldots,x_N]$ be the ideal defining a fat point in $\BA^N$ supported at $0 \in \BA^N$.
The normalisation morphism 
\[
\mu_I \colon  Z_I \to \Bl_I\BA^N
\]
is a finite morphism by \Cref{prop:normalisation} (and is induced by the inclusion of $A$-algebras $A[It] \into \overline{A[It]}$ in the special case where $I$ is monomial, cf.~\Cref{sec:normal-blowups}). Note that $Z_I$ is also a blowup of a fat point in $\BA^N$ supported at $0 \in \BA^N$, and $\mu_I$ is an $\BA^N$-morphism. In other words, there is a commutative diagram
\[
\begin{tikzcd}
Z_I \arrow{rr}{\mu_I} \arrow[swap]{dr}{\bar{\varepsilon}} & & \Bl_I\BA^N  \arrow{dl}{\varepsilon}\\
& \BA^N &
\end{tikzcd}
\]
where $\mu_I$ restricts to a morphism $\exc(\overline{\varepsilon}) \to \exc(\varepsilon)$ between exceptional loci. Let 
\[
D_1,\ldots,D_s \,\subset \,E_I\BA^N
\]
be the irreducible components of the exceptional locus $\exc(\varepsilon)$, each of which is taken with the reduced structure. Note that, since $E_I\BA^N$ is purely of codimension $1$, each $D_i$ has dimension $N-1$. For instance, if $N=2$, each $D_i$ is a (possibly singular) rational curve. Consider the Cartier divisor
\[
Y_I = \mu_I^{-1}(E_I\BA^N) = \bar{\varepsilon}^{-1}(V(I)) =  V(\bar{\varepsilon}^{-1}(I)\cdot \OO_{Z_I}) \subset Z_I,
\]
and notice that $Y_{I,\red} = \exc(\bar{\varepsilon})$. Hence $Y_I$ and the exceptional divisor of $Z_I$ share the same irreducible components. For each $i=1,\ldots,s$, let  
\[
V_1^{(i)},\ldots,V_{k_i}^{(i)} \,\subset \,Y_I
\]
be the irreducible components covering $D_i$, each taken with the reduced structure. The restrictions
\[
\mu_{ij} = \mu_I \big|_{V_j^{(i)}}\colon V_{j}^{(i)} \to D_i
\]
are finite dominant morphisms of varieties, and we set
\[
d_{ij} = \deg \mu_{ij}.
\]
The subscheme $Y_I \subset Z_I$ is an effective Cartier divisor, hence it is determined by an invertible ideal sheaf $\mathscr I \subset \OO_{Z_I}$. Consider the canonical section
\[
s_I \,\in\, \HH^0\left(Z_I,\mathscr I^\ast\right) \,\subset \,\BC(Z_I) = \BC(\Bl_{I}\BA^N)
\]
attached to the Cartier divisor $Y_I$.
For every pair $(i,j)$, where $i=1,\ldots,s$ and $j=1,\ldots,k_i$, we can define
\[
e_{ij} = \ord_{V_j^{(i)}}(s_I) = \mult_{V_j^{(i)}}(Y_I),
\]
namely we set $e_{ij}$ to be the order of vanishing of the rational function $s_I \in \BC(Z_I)$ along the prime $(N-1)$-cycle $V_j^{(i)}$.

We can now state and prove an explicit formula for the Behrend number of $I$.

\begin{theorem}\label{thm:formula:monomial}
Let $I \subset \BC[x_1,\ldots,x_N]$ be the ideal of a fat point $X \into \BA^N$. Then
\[
\nu_{\BC[x_1,\ldots,x_N]/I} = \sum_{i=1}^s \sum_{j=1}^{k_i} d_{ij} e_{ij}.
\]
\end{theorem}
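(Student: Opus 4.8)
The plan is to reduce the statement to \Cref{lemma:multiplicities_blowup} and then transport the multiplicity computation from the (possibly non-normal) blowup $\Bl_I\BA^N$ up to its normalisation $Z_I$ by means of Fulton's formula (\Cref{fulton_example}). Applying \Cref{lemma:multiplicities_blowup} to the embedding $X \into \BA^N$ gives
\[
\nu_{\BC[x_1,\ldots,x_N]/I} = \sum_{i=1}^s \mult_{D_i}\left(E_I\BA^N\right),
\]
so it suffices to prove, for each fixed $i$, the local identity
\[
\mult_{D_i}\left(E_I\BA^N\right) = \sum_{j=1}^{k_i} d_{ij}\,e_{ij},
\]
and then sum over $i=1,\ldots,s$.

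First I would exploit that $E_I\BA^N = V(\varepsilon^{-1}(I)\cdot \OO_{\Bl_I\BA^N})$ is an effective Cartier divisor, so its ideal sheaf $\mathscr I_E$ is invertible. After choosing an open neighbourhood of the generic point $\eta_i$ of $D_i$ on which $\mathscr I_E = (h_i)$ for a single non-zero-divisor $h_i \in \BC(\Bl_I\BA^N)^\times$, the very definition of the generalised order function recalled before \Cref{fulton_example} yields $\mult_{D_i}(E_I\BA^N) = \ord_{D_i}(h_i)$. I would then apply \Cref{fulton_example} to the normalisation $\mu_I\colon Z_I \to \Bl_I\BA^N$, the subvariety $D_i$, and the rational function $h_i$, obtaining
\[
\ord_{D_i}(h_i) = \sum_{W \to D_i} \ord_W(h_i)\cdot [\BC(W):\BC(D_i)],
\]
where the sum ranges over the prime divisors $W$ of $Z_I$ mapping onto $D_i$.

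The next step identifies the three ingredients on the right-hand side with the data in the statement. Since $\mu_I$ is finite by \Cref{prop:normalisation}, it preserves dimension, so every irreducible component of $Y_I = \mu_I^{-1}(E_I\BA^N)$ dominates exactly one $D_i$; consequently the prime divisors $W$ of $Z_I$ mapping onto $D_i$ are precisely the components $V_1^{(i)},\ldots,V_{k_i}^{(i)}$, and $[\BC(V_j^{(i)}):\BC(D_i)] = \deg \mu_{ij} = d_{ij}$. Because $Y_I = \mu_I^\ast(E_I\BA^N)$ as Cartier divisors (both ideal sheaves being generated by the inverse image of $I$, using $\varepsilon\circ\mu_I = \bar\varepsilon$), the function $h_i$ pulls back to a local generator of the ideal of $Y_I$ at each $V_j^{(i)}$; since $Z_I$ is normal, the local ring $\OO_{Z_I,V_j^{(i)}}$ is a discrete valuation ring, so $\ord_{V_j^{(i)}}(h_i)$ equals the multiplicity $\mult_{V_j^{(i)}}(Y_I)$, which is the quantity $e_{ij} = \ord_{V_j^{(i)}}(s_I)$ appearing in the statement. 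Substituting these identifications into the displayed formula gives $\mult_{D_i}(E_I\BA^N) = \sum_{j=1}^{k_i} d_{ij}e_{ij}$, and summing over $i$ completes the argument.

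The point requiring care — rather than a genuine obstacle — is the bookkeeping forced by the non-normality of $\Bl_I\BA^N$: the order $\ord_{D_i}$ must be the length-theoretic one of the paper, and one must check that the local generator $h_i$ of $\mathscr I_E$ simultaneously computes $\mult_{D_i}(E_I\BA^N)$ downstairs and generates $\mathscr I_{Y_I}$ at each $V_j^{(i)}$ upstairs, so that \Cref{fulton_example} applies verbatim. All of this is local at the codimension-one points involved, which is exactly the regime for which Fulton's formula is designed, so no further global input is needed.
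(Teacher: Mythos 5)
Your proposal is correct and follows essentially the same route as the paper: both reduce via \Cref{lemma:multiplicities_blowup} (Equation \eqref{eqn:b_monomial_general}) and then apply \Cref{fulton_example} to the normalisation $\mu_I\colon Z_I \to \Bl_I\BA^N$, identifying the field-extension degrees with the $d_{ij}$ and the orders along the $V_j^{(i)}$ with the $e_{ij}$. The only cosmetic difference is that the paper works with the single canonical section $s_I \in \BC(Z_I)^\times$ of the Cartier divisor $Y_I$, whereas you use local generators $h_i$ of the ideal sheaf of $E_I\BA^N$ near each $D_i$; these agree up to units at the relevant codimension-one points, so the two computations coincide.
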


\begin{proof}
We know by \Cref{fulton_example} applied to $Y=Z_I$ and $X=\Bl_I\BA^N$ that
\[
\ord_{D_i}(s_I) = \sum_{j=1}^{k_i} \ord_{V_j^{(i)}}(s_I)\cdot \left[\BC(V_j^{(i)}):\BC(D_i)\right] =  \sum_{j=1}^{k_i} e_{ij} d_{ij}.
\]
for every $i=1,\ldots,s$. On the other hand, we have
\[
\ord_{D_i}(s_I) = \mult_{D_i}(E_I\BA^N).
\]
The sought after relation then follows from \Cref{eqn:b_monomial_general} by summing over $i$.
\end{proof}

\begin{example}
We generalise here \Cref{rectangle}. Let $h,k\ge1$ be integers and let $\delta=\gcd(h,k)$ be their greatest common divisor. Consider the complete intersection ideal $I_{h,k}=(x^h,y^k)$, and the normalisation map
\[
\mu\colon \Bl_{\nn_{h,k}}\BA^2\rightarrow  \Bl_{I_{h,k}}\BA^2.
\]
Then, the exceptional loci $(E_{\nn_{h,k}}\BA^2)_{\red}$ and  $(E_{I_{h,k}}\BA^2)_{\red}$ are both irreducible, and $\deg \mu|_{(E_{\nn_{h,k}}\BA^2)_{\red}}=\delta$. This follows from the formulas in \Cref{powernorm,thm:formula:monomial}, namely
\[
\nn_{h,k}=\nn_{h',k'}^\delta
\]
where $k'=k/\delta$ and $h'=h/\delta$.

For instance, given $I_{4,6}=(x^4,y^6)$, one has $\nn_{4,6}=\nn_{2,3}^2$ and, as a consequence of \Cref{blowpowers}, up to isomorphism, the normalisation map has the form
\[
\mu\colon \Bl_{\nn_{2,3}}\BA^2\rightarrow \Bl_{I_{4,6}}\BA^2.
\]
Then, with the same notation as in \Cref{thm:formula:monomial}, $\nu_{\BC[x,y]/I_{4,6}}=24$ because $I_{4,6}$ is a complete intersection, $d= \deg(\mu|_{E_{\nn_{2,3}}})=2=\gcd(4,6)$ because of what we just said, and a direct computation in toric geometry (see \Cref{example missed}) shows $e=12$ where, if $\varepsilon\colon  \Bl_{\nn_{2,3}}\BA^2\rightarrow \BA^2$ denotes the blowup map, then $e=\mult_{E_{\nn_{2,3}}}( \varepsilon^{-1}(I_{4,6} )\cdot \OO_{\Bl_{\nn_{2,3}}\BA^2} )$. 
\end{example}

\begin{example}\label{samedegreemonomials}
Let $I\subset \mm\subset\BC[x,y]$ be an ideal of finite colength generated by $s+1$ monomials 
\[
m_0,\ldots,m_s \in \BC[x,y],
\]
all of degree $\delta$. Then, by \Cref{villareal}, the normalisation of $\Bl_I\BA^2$ is given by $\Bl_\mm\BA^2$. In particular, the exceptional locus $\exc(\Bl_I\BA^2)$ is irreducible. Consider the rational map
\[
\begin{tikzcd}[row sep=tiny]
\BA^2 \arrow[dashed]{r}{\varphi_I} & \BP^s \\
(a,b) \arrow[mapsto]{r} & {[}m_0(a,b):\cdots:m_s(a,b){]}
\end{tikzcd}
\]
whose indeterminacy locus is the origin $\{0\}=V(\sqrt{I})\subset \BA^2$. The fact that all the monomials have the same degree $\delta$ implies that $\varphi_I$ induces a morphism 
\[
\begin{tikzcd}[row sep=tiny]
\BP^1 \arrow{r}{\overline{\varphi}_I} & \BP^s \\
{[}a:b{]} \arrow[mapsto]{r} & {[}m_0(a,b):\cdots:m_s(a,b){]}.
\end{tikzcd}
\]
Let $X\subset \BA^2\times \BP^s$ be the Zariski closure of the graph of the map $\varphi_I$, i.e. 
\[
X=\overline{\Set{((a,b),q)\in(\BA^2\smallsetminus \{0\})\times\BP^s | q={[}m_0(a,b):\cdots:m_s(a,b){]}}}\subset \BA^2\times \BP^s.
\]
Then, by \Cref{blowaffine}, there is a canonical isomorphism of $\BA^2$-schemes $\Bl_I\BA^2\cong X$.
 We claim that, if we identify $\BP^s\cong \{ 0\}\times \BP^s\subset \BA^2\times \BP^s$, then $\Im(\overline{\varphi}_I)=\exc(\varepsilon_I)$ where $\varepsilon_I\colon X\rightarrow \BA^2$ is the structure morphism, i.e.~the restricion of the canonical projection onto $\BA^2$. Since $\Im(\overline{\varphi}_I)$ has dimension one and $\exc{(\varepsilon_I)}$ is irreducible, in order to prove $\Im(\overline{\varphi}_I)=\exc(\varepsilon_I)$, it is enough to prove $\Im(\overline{\varphi}_I)\subset\exc(\varepsilon_I)$.
 
Let $p=(0,[m_0(a,b):\cdots:m_s(a,b)])$ be a point in $\Im(\overline{\varphi}_I)$ and let $L_{a,b}=V(bx-ay)\subset \BA^2$ be a line trough the origin of $\BA^2$. Let us denote by $\varphi_{a,b}$ the restriction  $\varphi_{a,b}={\varphi_I}|_{L_{a,b}}$ and by $X_{a,b}\subset X$ the Zariski closure of its graph in $\BA^2\times \BP^s$. Then, the map $\varphi_{a,b}$ is constant and we have
\[
\varphi_{a,b}\equiv [m_0(a,b):\cdots:m_s(a,b)].
\]
As a consequence, $p\in X_{a,b }\subset X$ which proves $\Im(\overline{\varphi}_I)\subset\exc(\varepsilon_I)$.
 
Notice that, if $k>0$ is an integer and $J\subset\BC[x,y]$ is the ideal $J = (m_0^k,\ldots,m_s^k)$, then $\Im(\overline{\varphi}_I)=\Im(\overline{\varphi}_J)$
and 
\[
\deg (\overline{\varphi}_J \colon \BP^1 \to \Im(\overline{\varphi}_J)) = k\cdot \deg (\overline{\varphi}_I \colon \BP^1 \to \Im(\overline{\varphi}_I)).
\]
\end{example}

\begin{prop}\label{corHOPE} Let $h,k\in\BN$ be two positive integers and let $\delta = \gcd(h,k)$ be their greatest common divisor. Consider the ideals $I=(x^h,y^h)$, $J=(x^k,y^k)$ in $\BC[x,y]$, and their product
\[
IJ=(x^{h+k},x^ky^h,x^hy^k,y^{h+k}).
\]
Then, the Behrend number of the subscheme defined by $IJ \subset \BC[x,y]$ is
\[
\nu_{\BC[x,y]/IJ}=\delta \cdot(h+k).
\]
\end{prop}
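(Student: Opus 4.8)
The plan is to compute $\nu_{\BC[x,y]/IJ}$ through the normalisation formula of \Cref{thm:formula:monomial}, so the first task is to identify the normalisation $\mu\colon Z_{IJ}\to \Bl_{IJ}\BA^2$ together with the exceptional data $d_{ij},e_{ij}$. I would begin by computing $\overline{IJ}$. The four generators of $IJ=(x^{h+k},x^ky^h,x^hy^k,y^{h+k})$ have exponent vectors $(h+k,0)$, $(k,h)$, $(h,k)$, $(0,h+k)$, and the two middle ones lie on the segment $\{a+b=h+k\}$ joining the outer two; hence, by \Cref{villareal} and \Cref{rmk:normality-convexity}, the region $Q_{IJ}$ satisfies $Q_{IJ}=Q_{\mm^{h+k}}$ and therefore $\overline{IJ}=\mm^{h+k}$. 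By the discussion of \Cref{sec:normal-blowups} the normalisation is then $Z_{IJ}=\Bl_{\overline{IJ}}\BA^2=\Bl_{\mm^{h+k}}\BA^2$, which is canonically isomorphic to $\Bl_{\mm}\BA^2$ by \Cref{powermax} (or \Cref{blowpowers}); in particular $Z_{IJ}$ is smooth with irreducible exceptional locus $\cong\BP^1$.

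Next I would pin down the shape of the formula. Since all four generators of $IJ$ have the same degree $h+k$, \Cref{samedegreemonomials} applies and shows that $E_{IJ}\BA^2$ is irreducible, equal to the image $D$ of the morphism $\overline\varphi\colon \BP^1\to\BP^3$, $[a:b]\mapsto[a^{h+k}:a^kb^h:a^hb^k:b^{h+k}]$. Thus in \Cref{thm:formula:monomial} we have $s=1$ and, because $\exc(Z_{IJ})$ is irreducible, also $k_1=1$, so the formula collapses to $\nu_{\BC[x,y]/IJ}=d_{11}\,e_{11}$, with $d_{11}=\deg\bigl(\mu|_{\exc(Z_{IJ})}\colon \exc(Z_{IJ})\to D\bigr)$ and $e_{11}=\mult_{\exc(Z_{IJ})}(Y_{IJ})$.

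For $d_{11}$ I would observe that the map $\exc(Z_{IJ})=\BP^1\to D$ induced by $\mu$ is exactly $\overline\varphi$: along the direction $[a:b]$ the pullback of $\varphi_{IJ}$ tends to $[a^{h+k}:a^kb^h:a^hb^k:b^{h+k}]$. Writing $t=b/a$, this reads $t\mapsto[1:t^h:t^k:t^{h+k}]$, and two parameters $t_1,t_2$ have the same image iff $t_1^h=t_2^h$ and $t_1^k=t_2^k$, i.e.\ iff $(t_1/t_2)^{\delta}=1$; hence $\overline\varphi$ has degree $\delta$ onto $D$ and $d_{11}=\delta$. For $e_{11}$ I would work in the chart of $\Bl_{\mm}\BA^2\cong Z_{IJ}$ with coordinates $(u,y)$ and blowup map $\varepsilon(u,y)=(uy,y)$: pulling back $IJ$ gives $(u^{h+k}y^{h+k},u^ky^{h+k},u^hy^{h+k},y^{h+k})=(y^{h+k})$, so $Y_{IJ}$ is the exceptional divisor $\{y=0\}$ taken with multiplicity $h+k$, whence $e_{11}=h+k$. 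Combining, $\nu_{\BC[x,y]/IJ}=d_{11}e_{11}=\delta(h+k)$, as claimed.

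The routine steps are the two local computations, of $\overline{IJ}$ and of $e_{11}$; the conceptual heart of the argument — and the only place where the $\gcd$ enters — is the degree computation $d_{11}=\delta$. The point to be careful about is precisely that, although $E_{IJ}\BA^2$ is irreducible, the normalisation $\mu$ is \emph{not} an isomorphism over it: its restriction to the exceptional curve is a degree-$\delta$ cover of $D$, and it is this hidden multiplicity (invisible on $\Bl_{IJ}\BA^2$ itself, but detected by \Cref{fulton_example} inside \Cref{thm:formula:monomial}) that accounts for the factor $\gcd(h,k)$ in the final answer.
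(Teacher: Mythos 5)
Your proof is correct, and up to the last step it runs along the same lines as the paper's: both identify the normalisation of $\Bl_{IJ}\BA^2$ with $\Bl_{\mm}\BA^2$, observe that all exceptional loci involved are irreducible so that \Cref{thm:formula:monomial} collapses to $\nu_{\BC[x,y]/IJ}=d\cdot e$, and obtain $e=h+k$ by the identical chart computation. Where you genuinely diverge is the computation of $d$, which is the crux and the only place the $\gcd$ enters. The paper argues indirectly, in two steps: when $\gcd(h,k)=1$ it factors the degree-$h$ and degree-$k$ normalisations $\mu_I\colon\Bl_{\mm}\BA^2\to\Bl_I\BA^2$ and $\mu_J\colon\Bl_{\mm}\BA^2\to\Bl_J\BA^2$ of \Cref{rectangle} through $\Bl_{IJ}\BA^2$, and multiplicativity of the function-field degrees of the exceptional curves forces $d$ to divide both $h$ and $k$, hence $d=1$; for $\delta>1$ it reduces to the coprime case $I'=(x^{h/\delta},y^{h/\delta})$, $J'=(x^{k/\delta},y^{k/\delta})$ via the coordinatewise $\delta$-th power map $\BP^3\to\BP^3$ and the degree remark at the end of \Cref{samedegreemonomials}. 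You instead identify $\mu|_{\exc(Z_{IJ})}$ explicitly with $[a:b]\mapsto[a^{h+k}:a^kb^h:a^hb^k:b^{h+k}]$ --- which is legitimate, since $\mu$ is an $\BA^2$-morphism, hence the identity off the exceptional loci, so its value at $((0,0),[a:b])$ is the limit of the graph points over $(ta,tb)$ as $t\to 0$ --- and then count generic fibres: $t_1,t_2$ have the same image iff $t_1^h=t_2^h$ and $t_1^k=t_2^k$, iff $(t_1/t_2)^{\delta}=1$, giving $d=\delta$ directly. Your route is shorter and more self-contained; what the paper's detour buys is a reusable mechanism (divisibility of degrees along towers of blowups, plus the power-map reduction), whereas your fibre count exploits the special feature that all generators of $IJ$ have the same degree, so that the exceptional curve admits the global parametrisation $\overline\varphi$ of \Cref{samedegreemonomials}.
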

\begin{proof}
First of all we observe that, by \Cref{villareal}, there is a canonical isomorphism of $\BA^2$-schemes between $\Bl_{\mm}\BA^2$ and the normalisation of $\Bl_{IJ}{\BA^2}$.
We have (see \Cref{section:normalization}) the following commutative diagram
\[
\begin{tikzcd}[row sep=large]
& & \Bl_I\BA^2 \arrow[bend left=20]{drr}[description]{\varepsilon_I} & & \\
\Bl_{\mm}\BA^2\arrow{rr}{\mu_{IJ}}\arrow[bend left=20]{urr}[description]{\mu_I}\arrow[bend right=20,swap]{drr}[description]{\mu_J} & & \Bl_{IJ}\BA^2\arrow{rr}{\varepsilon_{IJ}}\arrow[swap]{u}{\theta_I}\arrow{d}{\theta_J} & & \BA^2 \\
& & \Bl_J\BA^2\arrow[bend right=20,swap]{urr}[description]{\varepsilon_J} & &
\end{tikzcd}
\]
where all the maps are birational morphisms. The maps $\varepsilon_I,\varepsilon_J,\varepsilon_{IJ}$ and $\theta_I,\theta_J$ are the blowup morphisms and $\mu_I,\mu_J,\mu_{IJ}$ are the normalisation morphisms. Moreover, any composition $\Bl_\mm\BA^2\rightarrow\BA^2$ which connects $\Bl_{\mm}\BA^2$ and $\BA^2$ coincides with the blowup map $\varepsilon_{\mm}\colon \Bl_\mm\BA^2\rightarrow \BA^2$. Notice that, since $\exc(\varepsilon_\mm)$ is irreducible, also $\exc(\varepsilon_I),\exc(\varepsilon_J)$ and $\exc(\varepsilon_{IJ})$ are irreducible because they are dominated by $\exc(\varepsilon_\mm)$.

As a consequence, in order to compute (through \Cref{thm:formula:monomial}) the Behrend number of the ideal $IJ$, we have to compute only two numbers, namely
\[
e=\mult_{\exc(\varepsilon_{\mm})}\left(\varepsilon_\mm^{-1}(IJ)\cdot \OO_{\Bl_\mm\BA^2}\right)
\]
and
\[
d=\deg\left(  {\mu_{IJ}}\big|_{\exc(\varepsilon_{\mm})}\right).
\]
We start from the computation of $e$. As usual, $\Bl_\mm\BA^2$ is covered by two charts $U_0$ and $U_1$ isomorphic to $\BA^2$. In order to compute $e$, it is enough to focus on $U_0$. We introduce toric coordinates $a,b$ and the map $\varepsilon_\mm$ restricts to the map 
\[
\begin{tikzcd}[row sep=tiny]
U_0 \arrow{r}{{\varepsilon_\mm}|_{U_0}} & \BA^2 \\
(a,b) \arrow[mapsto]{r} & (ab,b).
\end{tikzcd}
\]
Hence, we have
\[
{\varepsilon_\mm}\big|_{U_0}^{-1}(IJ)\cdot \BC[a,b]=(b^{h+k})\subset \BC[a,b],
\]
and, as a consequence, $e=h+k$.

Now we move to the computation of $d$. We split this computation in two steps.\\
\textbf{Step 1:} Suppose $\delta =1$.

Since all the exceptional loci of the varieties in the above diagram are irreducible rational curves, we get a commutative diagram of fields extensions
\[
\begin{tikzcd}[row sep=large]
& & \BC(t)\arrow[hook',swap]{dll}{\varphi_I}\arrow[hook]{d}{\psi_I} \\
\BC(t) & & \BC(t)\arrow[hook',swap]{ll}{\varphi_{IJ}} \\
& & \BC(t)\arrow[hook',swap]{u}{\psi_J}\arrow[hook]{ull}{\varphi_J}
\end{tikzcd}
\]
where, up to canonical identifications, we have
\[
\begin{tikzcd}
\varphi_\bullet={\mu_\bullet}\big|_{\exc(\Bl_\mm\BA^2)}^{\ast}\colon \BC(\exc(\Bl_\bullet\BA^2))\arrow{r}{} &  \BC(\exc(\Bl_{\mm}\BA^2))
\end{tikzcd}
\]
and
\[
\begin{tikzcd}
\psi_\bullet={\theta_\bullet}\big|_{\exc(\Bl_{IJ}\BA^2)}^{\ast}\colon \BC(\exc(\Bl_{\bullet}\BA^2))\arrow{r}{} &  \BC(\exc(\Bl_{IJ}\BA^2)).
\end{tikzcd}
\]
Now, as a consequence of general field theory and of \Cref{rectangle}, we have
\begin{align*}
    [\BC(t):\psi_I(\BC(t))]\cdot [\BC(t):\varphi_{IJ}(\BC(t))]&=[\BC(t):\varphi_I(\BC(t))] \\
    [\BC(t):\psi_J(\BC(t))]\cdot [\BC(t):\varphi_{IJ}(\BC(t))]&=[\BC(t):\varphi_J(\BC(t))] \\
    [\BC(t):\varphi_I(\BC(t))]&=h \\
    [\BC(t):\varphi_J(\BC(t))]&=k,
\end{align*}

which, together with the hypothesis $\delta=\gcd(h,k)=1$, imply
\begin{align*}
[\BC(t):\psi_I(\BC(t))]&=h,\\
[\BC(t):\psi_J(\BC(t))]&=k,\\
[\BC(t):\varphi_{IJ}(\BC(t))]&=1.
\end{align*}
Thus, $d=\delta =1$.

\smallbreak
\textbf{Step 2:} Suppose $\delta >1$. Consider the positive integers $h'=h/\delta$ and $k'=k/\delta$ and the ideals $I'=(x^{h'},y^{h'})$ and $J'=(x^{k'},y^{k'})$. Let $f$, $f'$ and $g$ be the rational maps defined as follows:
\[
\begin{tikzcd}[row sep=tiny]
\BA^2 \arrow[dashed]{r}{f} & \BP^3 \\
(x,y) \arrow[mapsto]{r} & {[}x^{h+k}:x^h y^k:x^ky^h:y^{h+k}{]}
\end{tikzcd}
\]
\[
\begin{tikzcd}[row sep=tiny]
\BA^2 \arrow[dashed]{r}{f'} & \BP^3 \\
(x,y) \arrow[mapsto]{r} & {[}x^{h'+k'}:x^{h'} y^{k'}:x^{k'}y^{h'}:y^{h'+k'}{]}
\end{tikzcd}
\]
\[
\begin{tikzcd}[row sep=tiny]
\BP^3 \arrow{r}{g} & \BP^3 \\
{[}w_0:w_1:w_2:w_3{]} \arrow[mapsto]{r} & {[}w_0^\delta:w_1^\delta:w_2^\delta:w_3^\delta{]}.
\end{tikzcd}
\]
Then, a trivial computation shows that the diagram
\[
\begin{tikzcd}
\BA^2\arrow[dashed,swap]{d}{f} \arrow[dashed]{r}{f'} & \BP^3\arrow{dl}{g} \\
\BP^3
\end{tikzcd}
\]
commutes. By \Cref{blowaffine}, we have canonical isomorphisms of $\BA^2$-schemes
\begin{equation}\label{identifications}
    X\cong \Bl_{IJ}\BA^2,\qquad X'\cong \Bl_{I'J'}\BA^2
\end{equation}
where, if we denote by $\Gamma(f)$ and $\Gamma(f')$ the graphs of the rational maps $f$ and $f'$, then $X ,X'\subset \BA^2\times\BP^3$ are respectively defined as the Zariski closures of $\Gamma(f)$ and $\Gamma(f')$, i.e.~$X=\overline{\Gamma(f)}$ and $X'=\overline{\Gamma(f')}$,
and the $\BA^2$-structure morphism is given, in both cases, by the restriction of the first projection. Define now the morphism $\lambda_{IJ}$ as the restriction of the map 
\[
\id_{\BA^2}\times g \colon  \BA^2\times\BP^3\rightarrow  \BA^2\times\BP^3
\]
to $X'$. Up to the identifications \eqref{identifications} we have a commutative diagram
\[
\begin{tikzcd}[row sep=large,column sep=large]
& 
\Bl_{I'}\BA^2 \arrow{r}{\lambda_I} & 
\Bl_{I}\BA^2 \arrow[bend left=20]{dr}[description]{\varepsilon_I} & \\
\Bl_{\mm}\BA^2\arrow[bend left=20]{ur}[description]{\mu_{I'}}\arrow[bend right=20]{dr}[description]{\mu_{J'}} \arrow{r}{\mu_{I'J'}} & 
\Bl_{I'J'}\BA^2 \arrow[swap]{u}{\theta_{I'}}\arrow{r}{\lambda_{IJ}}\arrow{d}{\theta_{J'}} & \Bl_{IJ}\BA^2 \arrow{r}{\varepsilon_{IJ}} \arrow{d}{\theta_{J}} \arrow[swap]{u}{\theta_{I}} & 
\BA^2 \\
&
\Bl_{J'}\BA^2\arrow{r}{\lambda_J} & 
\Bl_{J}\BA^2\arrow[bend right=20]{ur}[description]{\varepsilon_J} &
\end{tikzcd}
\]
where
\begin{itemize}
    \item [$\circ$] the maps $\mu_{I'},\mu_{J'}$ and $\mu_{I'J'}$ are the normalisation morphisms,
    \item [$\circ$] the maps $\lambda_{I},\lambda_{J}$ are defined similarly to $\lambda_{IJ}$,
    \item [$\circ$] the compositions $\mu_{I}=\lambda_{I}\circ \mu_{I'},\mu_{J}=\lambda_{J}\circ \mu_{J'}$ and $\mu_{IJ}=\lambda_{IJ}\circ\mu_{I'J'} $ are the normalisation morphisms mentioned above,
    \item [$\circ$] any composition $\Bl_\bullet\BA^2\rightarrow \BA^2$ agrees with the blowup map $\varepsilon_\bullet \colon \Bl_\bullet\BA^2\rightarrow \BA^2$.
\end{itemize}
We know, by general theory, that
\[
\deg\left(  {\mu_{IJ}}\big|_{{\exc(\varepsilon_{\mm})}}  \right)=\deg\left(  {\mu_{I'J'}}\big|_{{\exc(\varepsilon_{\mm})}}  \right)\cdot \deg\left(  {\lambda_{IJ}}\big|_{{\exc(\varepsilon_{I'J'})}}  \right)
\]
and we also know, by \textbf{Step 1}, that
\[
\deg\left(  {\mu_{I'J'}}\big|_{{\exc(\varepsilon_{\mm})}}  \right)=1.
\]
Therefore, we have
\[
d=\deg\left(  {\lambda_{IJ}}\big|_{\exc(\varepsilon_{I'J'})}  \right).
\]
Finally, as a consequence of \Cref{samedegreemonomials}, if we call $E=(X'\cap \{0\}\times \BP^3)_{\red}$ the exceptional locus of $X'$, then we have 
\[
\deg \lambda_{IJ}\big|_E=\delta.
\]
This complete the proof.
\end{proof}

\begin{example}
For $h=k$ we find the formula
$$\nu_{\BC[x,y]/(x^h,y^h)^2}=2h^2=2\cdot \nu_{\BC[x,y]/(x^h,y^h)},$$
which may also be deduced from \Cref{blowpowers}.
For $I=(x^2,y^2)(x^3,y^3)$ and $J=(x^2,y^2)(x^6,y^6)$ we find
\[
\nu_{\BC[x,y]/I}= 5, \qquad \nu_{\BC[x,y]/J}=16.
\]
\end{example}

\begin{remark}
Let $\mu_\bullet$, for $\bullet \in \{ I,J,IJ,I',J',I'J' \}$, be defined as in the proof of \Cref{corHOPE}, and let $\vartheta_\bullet$ be the restrictions ${\vartheta_\bullet}={\mu_\bullet}|_{\exc(\Bl_\mm\BA^2)}
$. Then, up to isomorphism, the maps ${\vartheta_\bullet}$ are of the form 
\[
{\vartheta_\bullet}=\pi\circ \mathsf v_{1,d}\colon \BP^1\rightarrow \BP^s
\]
where 
$$\mathsf v_{1,d}\colon \BP^1\rightarrow\BP^d$$
is the $d$-th Veronese embedding of $\BP^1$ for some positive integer $d\ge1$ and 
$$\pi\colon \BP^d\dashrightarrow \BP^s$$
is the projection onto some coordinate projective subspace of dimension $s\le d$.
\end{remark}

The blowup $\Bl_{IJ}\BA^2$ of a product of ideals as in \Cref{corHOPE} has a peculiarity that the authors find here for the first time: its exceptional locus is (in general) not normal. For example, if $I=(x^3,y^3)$ and $J=(x^2,y^2)$, the exceptional locus $\exc(\Bl_{IJ}\BA^2)$ has two cusps as singularities. The general situation is described by the following result.

\begin{prop}
Fix positive integers $h,k>1$ and let $\delta=\gcd(h,k)$ be their greatest common divisor. Set
\[
I=(x^h,y^h)\cdot (x^k,y^k)=(x^{h+k},x^hy^k,x^ky^h,y^{h+k}) \subset \BC[x,y].
\]
Then, the exceptional locus of $\Bl_{I}\BA^2$ is an irreducible projective rational curve with two singularities of local equations of the form $\alpha^{h/\delta}-\beta^{k/\delta}=0$.
\end{prop}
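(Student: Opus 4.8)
The plan is to realise the exceptional locus as the image of an explicit map from $\BP^1$ and then read off its singularities from a monomial parametrisation. Since the four generators $x^{h+k},x^hy^k,x^ky^h,y^{h+k}$ of $I$ all have the same degree $h+k$, \Cref{samedegreemonomials} applies: it identifies $\Bl_I\BA^2$ with the closure of the graph of the rational map attached to these monomials, and it identifies the exceptional locus $C=\exc(\Bl_I\BA^2)$ with the image of the morphism
\[
\overline{\varphi}_I\colon \BP^1\to \BP^3,\qquad [a:b]\mapsto [a^{h+k}:a^hb^k:a^kb^h:b^{h+k}].
\]
In particular $C$ is at once irreducible, projective and rational, being the image of $\BP^1$ under a morphism, so only the analysis of its singular points remains.

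Next I would produce the normalisation explicitly. Writing $h=\delta h'$ and $k=\delta k'$ with $\gcd(h',k')=1$, I factor $\overline{\varphi}_I=\nu\circ g_\delta$, where $g_\delta([a:b])=[a^\delta:b^\delta]$ and
\[
\nu\colon \BP^1\to\BP^3,\qquad [\sigma:\tau]\mapsto [\sigma^{h'+k'}:\sigma^{h'}\tau^{k'}:\sigma^{k'}\tau^{h'}:\tau^{h'+k'}].
\]
A direct check gives $\nu(\BP^1)=C$, and $\nu$ is injective over the torus $\{\sigma\tau\neq 0\}$: two parameters with the same image force $(\tau/\sigma)^{h'}=(\tau/\sigma)^{k'}=1$, whence $\tau/\sigma=1$ since $\gcd(h',k')=1$. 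Thus $\nu$ is a finite birational morphism from the smooth curve $\BP^1$, i.e.\ the normalisation of $C$ (\Cref{prop:normalisation}); correspondingly $\overline{\varphi}_I=\nu\circ g_\delta$ has degree $\delta$ onto $C$, matching the degree of the normalisation morphism computed in \Cref{corHOPE}.

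I would then confine the singularities to the two points $P_0=\nu([1:0])=[1:0:0:0]$ and $P_\infty=\nu([0:1])=[0:0:0:1]$. Over the torus, the parametrisation $\tau\mapsto (w_1/w_0,w_2/w_0,w_3/w_0)=(\tau^{k'},\tau^{h'},\tau^{h'+k'})$ has nonvanishing derivative for $\tau\neq 0$ (as $k'\geq 1$), so it is an immersion; combined with the injectivity just established, $\nu$ is a local isomorphism there and $C$ is smooth. Since moreover each of $P_0,P_\infty$ has a single $\nu$-preimage, no nodes can arise, and $C$ is smooth away from $P_0$ and $P_\infty$.

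Finally I would compute the two complete local rings. In the chart $w_0\neq 0$, setting $\alpha=w_1/w_0=\tau^{k'}$ and $\beta=w_2/w_0=\tau^{h'}$ (so that $w_3/w_0=\tau^{h'+k'}=\alpha\beta$), the ring $\widehat{\OO}_{C,P_0}$ is the subring of $\BC[[\tau]]$ generated by $\tau^{k'},\tau^{h'},\tau^{h'+k'}$, which equals $\BC[[\tau^{h'},\tau^{k'}]]$. Because $\gcd(h',k')=1$, the assignment $\alpha\mapsto\tau^{k'}$, $\beta\mapsto\tau^{h'}$ identifies this ring with $\BC[[\alpha,\beta]]/(\alpha^{h'}-\beta^{k'})$, giving $P_0$ the local equation $\alpha^{h/\delta}-\beta^{k/\delta}=0$; the symmetric computation in the chart $w_3\neq 0$ (using $s=a/b$, $\sigma=s^\delta$) yields the same type at $P_\infty$. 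The main obstacle is exactly this last step together with the smoothness claim: one must rule out non-immersion points and extra branches over the torus, and correctly identify the monomial curve $\BC[[\tau^{h'},\tau^{k'}]]$ with the planar cusp $\alpha^{h'}-\beta^{k'}=0$, a standard fact valid precisely when $\gcd(h',k')=1$. I would note that when $h/\delta=1$ or $k/\delta=1$ the corresponding equation defines a smooth branch, so the two points are genuine singularities exactly when $h/\delta,k/\delta>1$.
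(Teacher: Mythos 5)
Your proposal is correct and takes essentially the same route as the paper: both identify the exceptional locus with the image of $\overline{\varphi}_I\colon[a:b]\mapsto[a^{h+k}:a^hb^k:a^kb^h:b^{h+k}]$ via \Cref{samedegreemonomials}, after which the paper dismisses the rest as ``an easy computation'' --- exactly the computation (factoring through $[a:b]\mapsto[a^\delta:b^\delta]$, recognising the coprime-exponent map as the normalisation, and computing the complete local rings at the two torus-fixed points) that you carry out in detail. Your closing caveat is also a fair observation about the statement itself: when $h\mid k$ or $k\mid h$ the local equation $\alpha^{h/\delta}-\beta^{k/\delta}=0$ defines a smooth branch, so ``two singularities'' should then be read as ``two distinguished points with these local equations''.
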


\begin{proof}
As explained in \Cref{samedegreemonomials}, the image of the map
\[
\begin{tikzcd}[row sep=tiny]
\BP^1 \arrow{r}{\varphi_I} & \BP^3 \\
{[}a:b{]}\arrow[mapsto]{r} & {[}a^{h+k}:a^hb^k:a^kb^h:b^{h+k}{]}
\end{tikzcd}
\]
is isomorphic to the exceptional locus of $\Bl_I\BA^2$. The statement follows now by an easy computation.
\end{proof}

\begin{remark}\label{rmk:no-upper-bound}
Let $h,k>1$ be two natural numbers such that $\gcd(h,k)=1$ and $h^2<h+k<k^2$. Consider the ideals $I=(x^h,y^h)$ and $J=(x^k,y^k)$. Then, we have the following inequalities:
\[
\nu_{\BC[x,y]/I}< \nu_{\BC[x,y]/IJ}<\nu_{\BC[x,y]/J}.
\]
For instance, this happens for $h=2$ and $k=3$.
\end{remark}

\begin{corollary}[of \Cref{corHOPE}] Let $d_1,\ldots,d_s$ be positive integers. Given the ideals $I_k=(x^{d_k},y^{d_k})$, for $k=1,\ldots,s$, we have
\[
\nu_{\BC[x,y]/I_1\cdots I_s}=\gcd(d_1,\ldots,d_s)\cdot \sum_{k=1}^s d_k.
\]
\end{corollary}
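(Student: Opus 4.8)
The plan is to reduce the $s$-factor product to a single application of the general formula \Cref{thm:formula:monomial}, following the template of the two-factor case in \Cref{corHOPE}. Write $I=I_1\cdots I_s$ and $D=\sum_{k=1}^s d_k$. The first step is to identify the normalisation of $\Bl_I\BA^2$. Since each generator of $I$ is a product of one generator from each $I_k$, every monomial generator of $I$ has total degree exactly $D$; by \Cref{villareal} and \Cref{rmk:normality-convexity} the region $Q_I$ is the Minkowski sum of the $Q_{I_k}=\{a+b\ge d_k\}$, hence equals $\{a+b\ge D\}=Q_{\mm^D}$, so that $\overline I=\mm^D$. By \Cref{esnonnorm} (equivalently \Cref{powermax,blowpowers}) the normalisation morphism is therefore $\mu_I\colon \Bl_{\mm}\BA^2\to \Bl_I\BA^2$, and $\exc(\Bl_{\mm}\BA^2)\cong\BP^1$ is irreducible; being dominated by it, $\exc(\Bl_I\BA^2)$ is irreducible as well. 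Consequently \Cref{thm:formula:monomial} collapses to a single summand $\nu_{\BC[x,y]/I}=d\cdot e$, where $e=\mult_{\exc(\Bl_{\mm}\BA^2)}(Y_I)$ and $d=\deg(\mu_I|_{\exc(\Bl_{\mm}\BA^2)})$.

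Next I would compute $e$ by the chart calculation of \Cref{rectangle} and \Cref{corHOPE}: on the chart $U_0\cong\BA^2$ of $\Bl_{\mm}\BA^2$ with coordinates $(a,b)$ and $\varepsilon_{\mm}|_{U_0}(a,b)=(ab,b)$, a generator $x^py^q$ of $I$ with $p+q=D$ pulls back to $a^pb^{D}$; since $y^D\in I$ contributes the exponent $p=0$, we get $\varepsilon_{\mm}^{-1}(I)\cdot\OO_{U_0}=(b^{D})$, whence $e=D=\sum_{k=1}^s d_k$.

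The main work is the degree $d$, and here I would argue through \Cref{samedegreemonomials}. Set $g=\gcd(d_1,\ldots,d_s)$ and $d_k'=d_k/g$, so $\gcd(d_1',\ldots,d_s')=1$, and let $I'=\prod_k(x^{d_k'},y^{d_k'})$ and $D'=D/g$. Each generator of $I$ is the $g$-th power of the corresponding generator of $I'$, so by \Cref{samedegreemonomials} the parametrising maps $\overline{\varphi}_I,\overline{\varphi}_{I'}\colon \BP^1\to\BP^N$ have the same image (namely the exceptional locus, under the identification $\Bl_I\BA^2\cong\overline{\Gamma(\varphi_I)}$) and satisfy $\deg\overline{\varphi}_I=g\cdot\deg\overline{\varphi}_{I'}$. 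It remains to check that $\overline{\varphi}_{I'}$ is birational onto its image: writing $u=x/y$, the ratios $x^{d_k'}y^{D'-d_k'}/y^{D'}=u^{d_k'}$ lie in the function field of the image, and these generate $\BC(u^{\gcd(d_1',\ldots,d_s')})=\BC(u)=\BC(\BP^1)$, so $\deg\overline{\varphi}_{I'}=1$ and $d=\deg\overline{\varphi}_I=g$. Identifying $\overline{\varphi}_I$ with $\mu_I|_{\exc}$ as in \Cref{samedegreemonomials}, we conclude $d=\gcd(d_1,\ldots,d_s)$, and therefore $\nu_{\BC[x,y]/I}=d\cdot e=\gcd(d_1,\ldots,d_s)\cdot\sum_{k=1}^s d_k$.

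I expect the delicate point to be pinning down the degree $d$ exactly rather than merely bounding it, i.e.\ the birationality of $\overline{\varphi}_{I'}$ in the coprime case. The function-field computation above (equivalently, the field-theoretic multiplicativity of degrees used in Step~1 of the proof of \Cref{corHOPE}) is precisely what makes this rigorous, and it is the only genuinely non-formal ingredient, the rest being the normalisation identification and a routine chart computation.
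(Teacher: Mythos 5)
Your proposal is correct, and it follows exactly the route the paper intends: the statement is given as a corollary of \Cref{corHOPE} with no separate proof, and your argument is precisely the $s$-factor generalisation of that proof — identify the normalisation of $\Bl_{I_1\cdots I_s}\BA^2$ with $\Bl_{\mm}\BA^2$, read off $e=\sum_k d_k$ from the standard chart, compute the degree $d$ of the normalisation map on exceptional loci via the power trick of \Cref{samedegreemonomials}, and conclude with \Cref{thm:formula:monomial}. The only step where you genuinely deviate is the coprime case: Step 1 of the proof of \Cref{corHOPE} gets $d=1$ from divisibility, since multiplicativity of degrees along the canonical maps $\Bl_{\mm}\BA^2\to\Bl_{I_1\cdots I_s}\BA^2\to\Bl_{I_k}\BA^2$ shows $d$ divides each $\deg(\mu_{I_k}|_{\exc})=d_k$ (the latter supplied by \Cref{rectangle}), whereas you compute the pullback function field directly: the coordinate ratios restrict to $u^{d_k'}$ and Bézout gives $\BC(u^{d_1'},\ldots,u^{d_s'})=\BC(u)$. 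Both arguments are sound; yours is more self-contained (it needs neither \Cref{rectangle} nor the statement of \Cref{corHOPE}), and in fact it proves slightly more — applied to the original exponents it shows the pullback field is $\BC(u^{\gcd(d_1,\ldots,d_s)})$, so $d=\gcd(d_1,\ldots,d_s)$ drops out at once and the reduction to the coprime case could be skipped entirely.
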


\begin{example}
Consider the ideal $I=(x,y)\cdot(x^2,y^2)\cdot(x^3,y^3)\cdots(x^s,y^s)$ then
\[
\nu_{\BC[x,y]/I}=\underset{k=1}{\overset{s}{\sum}} k=\binom{s+1}{2}.
\]
\end{example}

\section{Some difficulties in dimension 3}\label{sec:3-fold-difficulties}
In \Cref{sec:monomial-normal} we proved that any normal monomial ideal $I\subset\BC [x, y]$ factors in a unique way as a product of powers of ideals of the form $\nn_{\alpha,\beta}$. Furthermore, we noticed in \Cref{bijection-irr-cpt-factorisation} that there is a bijective correspondence between the ideals that appear in such factorisation and the irreducible components of the exceptional divisor of the blowup $\Bl_I\BA^2$. This correspondence has allowed us, in numerous cases, to calculate the Behrend number of $I$. Unfortunately, as we show in the discusion below, the situation is more complicated in higher dimension.

\smallbreak
Let $I,J\subset \BC[x,y,z]$ be the curvilinear ideals defined by
\[
I=(x^2,y,z), \qquad J=(x,y^2,z),
\]
and let $\mm_{\BA^3}=(x,y,z)\subset\BC[x,y,z]$, be the maximal ideal of the origin $0\in\BA^3$.
We want to study the blowup of the ideal $IJ$. We will show that the exceptional divisor $E_{IJ}\BA^3$ decomposes into three irreducible components instead of the expected two.
 
First, we deal with the blowup $\varepsilon_I\colon B_I=\Bl_I\BA^3\rightarrow\BA^3$ and then we will move to the analysis of $\Bl_{IJ}\BA^3$. Since $I$ is a complete intersection, we have
\[
\nu_{\BC[x,y,z]/I}=\ell_{\BC[x,y,z]/I}=2,
\]
by \Cref{ex:lci}. Moreover, as a consequence of \cite[Ex.~IV-26]{GEOFSCHEME}, we have
 \[
 B_I=\Set{((x,y,z),[u_0:u_1:u_2])|\rank\begin{pmatrix}
 x&y&z^2\\u_0&u_1&u_2
 \end{pmatrix}\le 1}\subset \BA^3\times\BP^2.
 \]
 Notice that $\exc(\varepsilon_I)\cong\BP^2$ and that the threefold $B_I$ is singular along the projective line
 \[
 L=\Set{((0,0,0),[\lambda:\mu:0])|[\lambda:\mu]\in\BP^1}\subset B_I.
 \]
Let us now focus on the open neighborhood of $L$ defined by
 \[
 U=B_I\cap((\BA^3\times\{u_0\not=0\})\cup (\BA^3\times\{u_1\not=0\})).
 \]
The projection $U \to \BP^1$ sending $(p,[u_0:u_1:u_2])\mapsto [u_0:u_1]$ is an isotrivial family of singular surfaces of type $A_1$, which shows that $B_I$ is normal (this can also be deduced from a general version of \Cref{villareal}, see \cite[Prop.~1.1]{MR2029820}). Notice that the base of the family corresponds to the pencil of planes containing $V(I)$ as a closed subscheme.
 
Alternatively, similarly as we have done in \Cref{exblowcurv}, we could have built $B_I$ in the following way. Let $\varepsilon_{\mm_{\BA^3}}\colon B_{\mm_{\BA^3}}=\Bl_{\mm_{\BA^3}}\BA^3\rightarrow\BA^3$ be the blowup of $\BA^3$ at the origin. Then, a direct computation shows that 
\[
\varepsilon_{\mm_{\BA^3}}^{-1}(I)\cdot\OO_{B_{\mm_{\BA^3}}}=\mathscr H_1\cdot\mathscr H_2,
\]
where $\mathscr H_1$ is the ideal sheaf of a Cartier divisor and $\mathscr H_2$ is the ideal sheaf of a reduced point $p\in\exc(\varepsilon_{\mm_{\BA^3}})\subset B_{\mm_{\BA^3}}$. Therefore, the decomposition into irreducible components of the exceptional locus of $B_{{\mm_{\BA^3}}\cdot I}=\Bl_{{\mm_{\BA^3}}\cdot I}\BA^3$ is given by
 \[
 \exc(B_{{\mm_{\BA^3}}\cdot I})=S_1\cup S_2,
 \]
 where 
 \begin{itemize}
     \item [$\circ$] $S_1\cong \BP^2$,
     \item [$\circ$] $S_2\cong \Bl_q\BP^2$ for some $q\in\BP^2$, and agrees with the strict transform of the exceptional locus $\exc(B_{\mm_{\BA^3}})$ via the blowup map $\lambda_{\mm_{\BA^3}}\colon B_{{\mm_{\BA^3}}\cdot I}\rightarrow B_{\mm_{\BA^3}}$ induced by \Cref{LEMMATECH}, 
     \item [$\circ$] $S_1\cap S_2=\exc(S_2)\cong\BP^1$.
 \end{itemize}
 Now, consider the following canonical morphisms
\[
\begin{tikzcd}
B_{\mm_{\BA^3}} &
B_{{\mm_{\BA^3}}\cdot I}\arrow[swap]{l}{\lambda_{\mm_{\BA^3}}}\arrow{r}{\lambda_I} & 
B_I
\end{tikzcd}
\]
where the existence of $\lambda_I$ follows by the universal property of $B_I$. Since $B_{{\mm_{\BA^3}}\cdot I}$ and $B_{\mm_{\BA^3}}$ are smooth and $B_I$ is normal, all the morphisms above have connected fibres by \Cref{ZMT}. In particular, they are isomorphisms outside their exceptional loci, ${\lambda_I}{|_{S_1}}\colon S_1 \rightarrow \exc{(B_I)}$ is an isomorphism and ${\lambda_I}{|_{S_2}}\colon S_2 \rightarrow L\subset \exc{(B_I)}$ coincides with the tautological projection of the blowup of the projective plane at a point.
 
Both constructions confirm the correspondence in \Cref{bijection-irr-cpt-factorisation}. Unfortunately, such relation cannot, in general, be expected in dimension 3. To see this, consider this time the ideal $K=IJ$. Then, as above, we have canonical morphisms
\[
 \begin{tikzcd}
B_{\mm_{\BA^3}} &
B_{{\mm_{\BA^3}}\cdot K} \arrow[swap]{l}{\theta_{\mm_{\BA^3}}}\arrow{r}{\theta_K} & 
B_K
 \end{tikzcd}
\]
and we can apply again \Cref{ZMT} because $B_{{\mm_{\BA^3}}\cdot K}$ and $B_{\mm_{\BA^3}}$ are smooth and $B_K$ is normal as per \Cref{villareal}. The analogue of the description above is
\begin{itemize}
 \item [$\circ$] $\exc(B_{{\mm_{\BA^3}}\cdot K})=S_1\cup S_2\cup S_3$,
 \item [$\circ$] $S_2\cong\BP^2\cong S_3$,
 \item [$\circ$] $S_1\cong \Bl_{q_1,q_2}\BP^2$, and agrees with the strict transform of the exceptional locus $\exc(B_{\mm_{\BA^3}})$ via the blowup map $\theta_{\mm_{\BA^3}}$, 
 \item [$\circ$] $S_2\cap S_3=\emptyset$,
 \item [$\circ$] $S_i\cap S_1=L_i$ for $i=2,3$, where $L_2$ and $L_3$ are the irreducible (disjoint) components of $\exc(\theta_{\mm_{\BA^3}}|_{S_1})$.  
\end{itemize}
 Finally, one can prove that the map $\theta_K$ contracts one line to a singular point, namely the strict transform (via $\theta_{\mm_{\BA^3}}$) of the line trough the two points that correspond to $q_1,q_2$ via the isomorphism $S_1\cong\Bl_{q_1,q_2}\BP^2$ mentioned above. As a consequence, the irreducible components of $\exc(B_{K})$ are:
 \[
 \theta_K(S_1)\cong\BP^1\times\BP^1,\quad \theta_K(S_2)\cong \BP^2\cong\theta_K(S_3).
 \]
One can also find, via a direct computation, the Behrend number of the ideal $K$, which is
\[
\nu_{\BC[x,y,z]/K}=8.
\]

\medskip
The above discussion shows that, even for towers, generalising to dimension $3$ the constructions and algorithms carried out in \Cref{sec:towers,sec:Algorithm} is a nontrivial task, that we leave for future research.

\bibliographystyle{amsplain-nodash}
\bibliography{bib}

\bigskip

\medskip
\noindent
\emph{Michele Graffeo}, \texttt{mgraffeo@sissa.it} \\
\textsc{Scuola Internazionale Superiore di Studi Avanzati (SISSA), Via Bonomea 265, 34136 Trieste, Italy}

\medskip
\noindent
\emph{Andrea T. Ricolfi}, \texttt{andreatobia.ricolfi@unibo.it} \\
\textsc{Università di Bologna, Piazza di Porta S.~Donato 5, 40126 Bologna, Italy}

\end{document}